\definecolor{royalblue(web)}{rgb}{0.25, 0.41, 0.88}
\numberwithin{equation}{section}
\newcommand{\ssb}{\begin{adjustwidth}{2.5em}{0pt}}
\newcommand{\sse}{\end{adjustwidth}}
\setlist[enumerate,1]{label={(\roman*)}, ref={(\roman*)}}
\newcommand\N{\mathbb{N}}
\newcommand\Z{\mathbb{Z}}
\newcommand\D{\mathbb{D}}
\newcommand\R{\mathbb{R}}
\def\C{\mathbb{C}}
\newcommand\Hb{\mathbb{H}}
\newcommand\Pb{\mathbb{P}}
\newcommand\Eb{\mathbb{E}}
\newcommand{\Var}{\mathrm{Var}}
\newcommand{\Cov}{\mathrm{Cov}}
\newcommand{\cle}{\mathrm{CLE}}
\newcommand{\sle}{\mathrm{SLE}}
\newcommand{\mfrak}{\mathfrak{m}}
\newcommand{\tfrak}{\mathfrak{t}}
\newcommand{\nlg}{\mathfrak{n}}
\newcommand{\Ccal}{\mathcal{C}}
\newcommand{\lfrak}{\mathfrak{l}}
\newcommand{\sfrak}{\mathfrak{s}}
\newcommand{\ndms}{\mathbbm{n}}
\newcommand{\Fcal}{\mathcal{F}}
\newcommand{\eb}{\mathbbm{e}}
\newcommand{\efrak}{\mathfrak{e}}
\newcommand\Xbf{\textbf{X}}
\newcommand\Zbf{\textbf{Z}}
\newcommand{\Mcal}{\mathcal{M}}
\theoremstyle{plain}
\newtheorem{Thm}{Theorem}[section]
\newtheorem{Thm-fr}{Théorème}[section]
\newtheorem*{Thm*}{Theorem}
\newtheorem{Prop}[Thm]{Proposition}
\newtheorem*{Prop*}{Proposition}
\newtheorem{Def}[Thm]{Definition}
\newtheorem*{Def*}{Definition}
\newtheorem{Lem}[Thm]{Lemma}
\newtheorem*{Cor*}{Corollary}
\newtheorem{Cor}[Thm]{Corollary}
\theoremstyle{definition}
\newtheorem*{Ex*}{Example}
\newtheorem{Rk}[Thm]{Remark}
\newtheorem*{Rmq*}{Remarques}
\newtheorem{Rks}[Thm]{Remarks}
\title{\textsc{Growth-fragmentations, Brownian cone excursions and SLE$_6$ explorations of a quantum disc} }
\author{William Da Silva\thanks{University of Vienna, Austria, \texttt{william.da.silva@univie.ac.at}} \quad \quad  Ellen Powell\thanks{Durham University, UK, \texttt{ellen.g.powell@durham.ac.uk}} \quad \quad  Alex Watson\thanks{University College London, UK, \texttt{alexander.watson@ucl.ac.uk}}}
\date{} 
\begin{document}

\maketitle

\vspace{-2em}
\begin{abstract}
  The aim of this article is to present a growth-fragmentation process naturally embedded in a Brownian excursion from boundary to apex in a cone of angle $2\pi/3$. This growth-fragmentation process corresponds, via the so-called mating-of-trees encoding \cite{DMS}, to the quantum boundary length process associated with a branching SLE$_6$ exploration of a $\gamma=\sqrt{8/3}$ quantum disc. However, our proof uses only Brownian motion techniques, and along the way we discover various properties of Brownian cone excursions and their connections with stable L\'{e}vy processes. Assuming the mating of trees encoding, our results imply several fundamental properties of the $\gamma=\sqrt{8/3}$--quantum disc SLE$_6$--exploration.
\end{abstract}

\tableofcontents

%-------------------------------------------------------------------%
%               INTRODUCTION
%-------------------------------------------------------------------%

\section{Introduction}

%-------------------------------------------------------------------%
%             MAIN RESULT
%-------------------------------------------------------------------%
\subsection{Main result for Brownian cone excursions} \label{sec: intro main result cones}

Let $\theta\in (\pi/2, \pi)$. Our starting point is a correlated Brownian motion pair, $W=((W^1_t,W^2_t))_{t\ge 0}$, satisfying
\begin{equation} \label{eq: cov MoT}
  \quad \Var(W_t^1)=\Var(W^2_t)=\mathbbm{a}^2t, \quad \Cov(W^{{1}}_t,W^{{2}}_t)=-\cos(\theta)\mathbbm{a}^2t,
  \quad \mathbbm{a}=\sqrt{{2}/{\sin(\theta)}},
\end{equation} for $t\ge 0$. We distinguish two types of {special} times for $W$: the first ones were considered in \cite{Bur-cones,Shi,DMS} and the second ones in \cite{LG-cones}. For compactness, in what follows let us denote $\R_+=[0,\infty)$ and $\R_+^*=(0,\infty)$. We say that $s\ge 0$ is {contained in a \textbf{forward cone excursion} if there exists $t\in [0,s)$ such that $W_r\in W_t+(\R_+^*)^2$ for $r\in (t,s]$; otherwise, we say that $s$ is a \textbf{forward cone-free time}.} We say that $t\ge 0$ is a \textbf{backward cone time} if $W_{{r}}\in W_t+(\R_+^*)^2$ for all ${r}\in (0,t)${: this is equivalent to asking that the two co-ordinates of $W$ reach a simultaneous running infimum at time $t$}. We shall see that one can construct local times $\ell_\theta$ and $\lfrak_\theta$, and their inverses $\uptau_\theta$ and $\tfrak_{\theta}$, on the set of forward cone-free times and backward cone times respectively.\footnote{Although our main results deal with the case when $\theta=2\pi/3$, we still provide a few results in the general case, which is why we keep $\theta$ as a subscript in general.}

For $z\in \partial \R_+^2 \setminus\{0\}$, we write {$P_\theta^z$} for the law of $W$ started from the point $z$ on the boundary and conditioned to \emph{remain in the positive quadrant} $(\R_+^*)^2$ until exiting \emph{at the origin} at time $\zeta$. The Brownian conditioning above is singular; we will make its meaning more precise later on in \cref{sec: forward cones}.
An excursion with law $P_\theta^z$ corresponds to %the \emph{time reversal} of 
 {a \emph{forward cone excursion}}.  The reason for the  ``cone'' terminology is that under the shear transformation
\begin{equation} \label{eq: correlation transformation}
  \mathbf{\Lambda}:= \frac{1}{\mathbbm{a}}
  \begin{pmatrix}
    \frac{1}{\sin \theta} & \frac{1}{\tan \theta} \\
    0 & 1
  \end{pmatrix},
\end{equation}
$W$ is mapped to a standard planar Brownian motion, and the quadrant $\R_+^2$ onto the closure of the cone $\Ccal_{\theta}:=\mathbf{\Lambda}(\R_+^2) = \{z\in\C, \; \arg(z)\in(0,\theta)\}$ with apex angle $\theta$. See \cref{rk: cone transformation}. 

\begin{figure}[ht]
	\bigskip
	\begin{center}
		\includegraphics[scale=0.9]{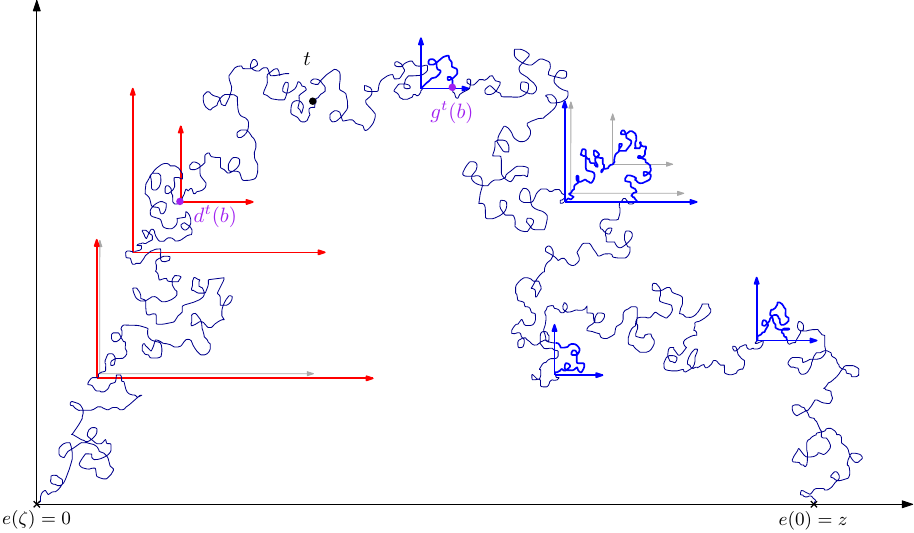}
	\end{center}
	\caption{The growth-fragmentation $\mathbf{Z}$ embedded in cone excursions. If $t$ is a time in the excursion, we record the forward cone excursions of $e^{t,-}$ (blue). We also depict some nested cone excursions in grey, to suggest that there is an accumulation of them inside each maximal excursion. For $0\le {b} \le {\varsigma^t}$, we construct (purple) an interval {$(g^t(b),d^t(b))$} containing $t$ such that {$g^t(b)=t-\uptau^t(b)$} and {$d^t(b)-t$} is the first simultaneous running infimum (red) of the path $e^{t,+}$ run from time $t$ to time $\zeta$, that also falls below the whole trajectory from $g^t(b)$ to $t$.  
	} 
	\label{fig: GF-cones}
\end{figure}

Let $e$ be an excursion under $P^z_\theta$, $z\in \partial \R_+^2\setminus \{0\}$, and for $t\in (0,\zeta(e))$ let
\[
  e^{t,-}:=(e(t-s)-e(t), 0\le s\le t) \quad \text{and} \quad e^{t,+}:=(e(t+s)-e(t), 0\le s\le \zeta-t).
\]
{That is, $e^{t,-}$ is the time reversal of the path from $e(0)=z$ to $e(t)$ (so, roughly speaking, corresponds to following the path starting from $e(t)$ and ``going right'' in Figure \ref{fig: GF-cones}), while $e^{t,+}$ is the path from $e(t)$ to $e(\zeta)=0$ (starting from $e(t)$ and ``going left'' in Figure \ref{fig: GF-cones}).} 
By local absolute continuity with respect to $W$, one may define the forward cone-free times of $e^{t,-}$ and the backward cone times of $e^{t,+}$ as above. This yields in particular an inverse local time $\uptau^t_\theta$ related to the forward cone-free times of $e^{t,-}$ which is an increasing process with some lifetime $\varsigma^t_{\theta}$.

Our main result concerns the case $\theta=2\pi/3$, and in this case we simply write $P^z$ for the measure discussed above and drop the subscript $\theta$ for all the quantities. Let {$e$} be a process with law $P^z$ for some $z\in \partial \R_+^2\setminus \{0\}$. For each $t\in (0,\zeta)$ we define a sequence of non-decreasing intervals $(g^t(b), d^{t}(b))$ containing $t$, indexed by $b\in [0,\varsigma^t]$. We first simply let $\tilde{g}^t(b) := \uptau^t({b})$, and {then set $g^t(b)=t-\tilde{g}^t(b)$, so $\tilde{g}^t(b)$ is the first time that $e^{t,-}$ has accumulated local time $b$ on the complement of its forward cone excursions, and $g^t(b)$ is the corresponding time in the excursion $e$}. We then define

{
\begin{equation} \label{eq: def d_t and g_t}
  d^t(b)
  = \inf\{ u>t :
    e(r) \in e(u) + \R^2_+
    \text{ for all }
    r \in [g^t(b), u]
  \},
\end{equation}
%and {set $d^t(b)=t+\tilde{d}^t(b)$, so that $\tilde{d}^t(b)$ 
In words, $d^t(b)$ is the first simultaneous running infimum of $e$ after time $t$ that also falls below $e([g^t(b),t])$.} See Figure \ref{fig: GF-cones}.

Finally, define for all $a\in [0,\varsigma^t]$,
\begin{equation} \label{eq: cal Z and Z^t}
{\mathcal{Z}^t(a) 
  :=   
  e(g^t((\varsigma^t-a)^-)) - e(d^t((\varsigma^t-a)^-))} 
  \quad \text{and} \quad
  Z^t(a) 
  := 
  \lVert \mathcal{Z}^t(a) \rVert_1.
\end{equation}
{Notice the reversal of time here: as $a$ increases we are considering the difference of $e$ at the end points of the intervals $(g^t(b),d^t(b))$ with $b=\varsigma^t-a$, which are \emph{decreasing} from $(0,\zeta)$ to $\emptyset$}, {so that $\mathcal{Z}^t(0)= z$ and $\mathcal{Z}^t(\varsigma^t)=0$}. We may view $\mathcal{Z}^t$ as a process defined for all {(local) times $a$} by sending it to the cemetery state $0$ after time $\varsigma^t$.
Note that by construction of $d^t$, $\mathcal{Z}^t(a)$ lies in the positive quadrant $\R_+^2$, so that $Z^t(a)$ is nothing but the sum of the co-ordinates of $\mathcal{Z}^t(a)$. We observe that this {construction} extends naturally to define $Z^t$ simultaneously for all $t\in(0,\zeta)$. Indeed, the collection $\varsigma^r, g^r((\varsigma^r-a)-), d^r((\varsigma^r-a)-)$ is already defined simultaneously (on an event of probability one) for all $r\in \mathbb{Q}\cap (0,\zeta)$ and $a<\varsigma^r$. Moreover, for any $a>0$, the $(g^r((\varsigma^r-a)-), d^r((\varsigma^r-a)-))$ for $r\in \mathbb{Q}\cap (0,\zeta)$ are a countable collection of {either equal or} disjoint intervals, which can only decrease in size or split as $a$ increases. Thus for any $t\in (0,\zeta)$, 
	\[
	\varsigma^t=\sup\{a: t\in (g^r((\varsigma^r-a)-), d^r((\varsigma^r-a)-)) \text{ for some } r\in \mathbb{Q}\cap (0,\zeta)\}, 
	\]
is well-defined and for $a<\varsigma^t$, all intervals of the form $(g^r((\varsigma^r-a)-), d^r((\varsigma^r-a)-)) \text{ with } r \in \mathbb{Q}\cap (0,\zeta)$ {containing $t$} must coincide, so we can set $\mathcal{Z}^t(a)=e(g^r((\varsigma^r-a)^-)) - e(d^r((\varsigma^r-a)^-))$ for any such $r$, and $Z^t(a)=\lVert \mathcal{Z}^t(a) \rVert_1$.
We then set
\begin{align} \label{eq: GF cones}
  \Zbf(a) & = \big\{ Z^t(a), \; t\in(0,\zeta) \text{ such that } \varsigma^t>a \big\} \nonumber \\
  & = \big\{ Z^t(a), \; t\in\mathbb{Q}\cap (0,\zeta) \text{ such that } \varsigma^t>a \big\},
\end{align}
for $a\ge 0$, where the second equality above is clear from the preceding discussion. We emphasise that many times $t$ correspond to the same value of $Z^t(a)$, but we only record this value once in the above set. 

{To unpack this definition a little, first notice that when $a=0$, the interval $(g^t({\varsigma^t}-a),d^t({\varsigma^t}-a))$ will be equal to $(0,\zeta)$ for all $t$, and so $\mathbf{Z}(0)$ will consist of a single element ${|z|}={\lVert e(0)-e(\zeta)\rVert_1}$. As $a$ increases, this will no longer be the case, and $\mathbf{Z}$ will contain more elements. Indeed, for $s\ne t$, as long as $s\in
(g^t(\varsigma-a),d^t(\varsigma-a))$, the values of $Z^t(a)$ and $Z^s(a)$ will coincide. However, {as soon as this breaks down}, the intervals will ``split'' and the corresponding element of $\mathbf{Z}$ will become two.}

Our main theorem shows that $\Zbf$ is a growth-fragmentation process and moreover, that this process is explicitly described \textit{via} a positive self-similar Markov process with index $\frac32$. To be more specific, under $\Pb_x$, $x>0$, let $(X^{3/2}(a), 0\le a<T_0)$ be the positive self-similar Markov process with index $\frac32$ starting from $x$, {killed at the first time $T_0$ when it reaches $0$}, given by 
\[
  X^{3/2}(a)
  :=
  x \exp(\xi(\tau(x^{-3/2}a))), \quad a<T_0,
\]
where $\xi$ is a Lévy process with Laplace exponent 
\[
  \Phi_{3/2}(q) 
  :=
  -\frac{16}{3}q + 2\int_{-\log(2)}^0 (\mathrm{e}^{qy}-1-q(\mathrm{e}^y-1)) \mathrm{e}^{-3y/2}(1-\mathrm{e}^y)^{-5/2}\mathrm{d}y,
  \quad
  q\in \R,
\]
and $\tau$ is the Lamperti time change
\[
  \tau(t) 
  :=\inf\{s\ge 0, \int_0^s \mathrm{e}^{\tfrac32\xi(u)}\mathrm{d}u>t\}, \quad t\ge 0.
\]
The \textbf{growth-fragmentation process} driven by $X^{3/2}$ can be roughly constructed as follows. At time $t=0$, the system starts from one particle $\mathcal{X}_{\varnothing}$ with initial size $x>0$, which then evolves as $X^{3/2}$ under $\Pb_x$. Conditionally on $\mathcal{X}_{\varnothing}$, one starts a new particle at any time $t$ when $\mathcal{X}_{\varnothing}$ has a negative jump, starting from $y=-\Delta \mathcal{X}_{\varnothing}(t):=\mathcal{X}_{\varnothing}(t^-)-\mathcal{X}_{\varnothing}(t)$ and whose behaviour is governed by independent copies of $\Pb_y$. This constructs the children of $X$, for which we repeat the same procedure, thus creating the second generation, and so on. For $a\ge 0$, we let $\Xbf^{3/2}(a)$ denote the collection of sizes of the cells alive at time $a$. More details are provided in \cref{sec: GF process}. 
Our first main result in the following.

\begin{Thm}[Growth-fragmentation process: cone excursions] \label{thm: main}
  Under $P^z$, the process $\mathbf{Z}$ has the same law as $\mathbf{X}^{3/2}$ under $\Pb_{|z|}$.
\end{Thm}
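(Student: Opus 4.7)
The strategy is to identify $\Zbf$ with the growth-fragmentation generated by $X^{3/2}$ by verifying the two defining properties: positive self-similar Markov dynamics of index $3/2$ for each tagged cell (with the prescribed Lamperti exponent $\Phi_{3/2}$), and the branching property producing independent daughters at each negative jump with mass equal to the jump size. For the single-cell dynamics, fix $t \in (0, \zeta)$ and study $(Z^t(a))_{a \in [0, \varsigma^t]}$. Brownian scaling $\lambda e(\cdot / \lambda^2) \sim P^{\lambda z}$ when $e \sim P^z$, together with the fact that the local time on backward cone times for $\theta = 2\pi/3$ rescales with index $\pi/\theta = 3/2$, yields the $3/2$-self-similarity of $Z^t$. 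A Markov property in $a$ follows from the strong Markov property of $W$ applied at the appropriate stopping times: the successive jumps of $\uptau^t$ for the left endpoint $g^t$, and the simultaneous running infima of $e^{t,+}$ (below varying thresholds) for the right endpoint $d^t$. Hence $Z^t$ is a positive self-similar Markov process of index $3/2$, started at $|z|$ and absorbed at $0$ at time $\varsigma^t$, and by Lamperti's representation $Z^t(a) = |z| \exp(\xi(\tau(a / |z|^{3/2})))$ for some (possibly killed) L\'evy process $\xi$ that remains to be identified with the one of Laplace exponent $\Phi_{3/2}$.

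\textbf{Computing $\Phi_{3/2}$.} The jumps of $Z^t$ occur exactly at the endpoints of forward cone excursions of $e^{t,-}$: when such an excursion of duration $\delta$ terminates, $g^t(b)$ jumps to the left by $\delta$ and $d^t(b)$ adjusts upward accordingly, producing a negative jump in $Z^t$. Applying It\^o excursion theory for $W$ at forward cone-free times, together with the explicit entrance law of Brownian forward cone excursions in $\Ccal_{2\pi/3}$ --- accessible by transforming via $\mathbf{\Lambda}$ to a standard planar cone --- one can compute the law of the relative jump $\Delta Z^t(a) / Z^t(a^-)$. The support of the log-jump measure being $(-\log 2, 0)$ reflects that the tagged cell retains more than half of its mass at each split; the exponents $-3/2$ and $-5/2$ in $e^{-3y/2}(1-e^y)^{-5/2}$ come from the $3/2$-self-similarity combined with the polynomial form of the cone-excursion entrance law; and the leading constant $2$ together with the drift $-\tfrac{16}{3}$ are pinned down by the normalization of the local time $\lfrak_{2\pi/3}$, the drift being the compensation needed to make $\xi$ into a genuine L\'evy process.

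\textbf{Branching property and main obstacle.} At each jump time of $Z^t$, the detached piece of $e$ is, by the strong Markov property of $W$ applied at the two endpoints of the forward cone excursion in question, an independent Brownian cone excursion under $P^{z'}$ with $|z'|$ equal to the jump size of $Z^t$. Iterating the preceding analysis recursively on each such daughter excursion produces the entire genealogical tree of cells, and the independence of daughters follows from the Poisson point process structure of the forward cone excursions of $e^{t,-}$ (and of the analogous excursions at the right endpoints). This identifies $\Zbf$ with $\Xbf^{3/2}$ under $\Pb_{|z|}$. The principal difficulty is the exact identification of $\Phi_{3/2}$: this requires precise control on the joint distribution of the starting point and accumulated local time for a Brownian forward cone excursion in $\Ccal_{2\pi/3}$, and careful bookkeeping to produce the exact constants (the leading $2$ and the drift $-\tfrac{16}{3}$) together with the $-\log 2$ cutoff. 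All of this relies on a detailed use of the transformation $\mathbf{\Lambda}$ to reduce to Brownian motion in a standard planar cone, where the requisite densities are tractable but still nontrivial to extract.
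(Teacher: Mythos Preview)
Your proposal has a genuine gap at its first step. For a \emph{fixed} time $t\in(0,\zeta)$, the branch $(Z^t(a))_a$ is \emph{not} the driving process $X^{3/2}$ of the growth-fragmentation. The process $X^{3/2}$ is the \emph{locally largest} evolution, characterised by its L\'evy measure being supported on $(-\log 2,0)$ in the logarithmic variable. But for a generic $t$, at each split the branch toward $t$ follows whichever of the two daughter intervals contains $t$, and this need not be the larger one; so $Z^t$ can more than halve itself, its log-jump measure is not truncated at $-\log 2$, and it cannot have Lamperti exponent $\Phi_{3/2}$. Worse, $Z^t$ is not Markov in $a$ for fixed $t$: its future after time $a$ depends on \emph{where} $t$ sits inside the current interval $(g^t(\varsigma^t-a),d^t(\varsigma^t-a))$, not merely on the length $Z^t(a)$ of that interval. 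Your appeal to the strong Markov property of $W$ does not supply this, because the relevant conditioning involves the location of $t$, which is not a function of the path up to the stopping time.

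The paper circumvents this by working with two distinguished branches rather than a generic one. First, under the duration-biased law $\overline{P}^z$, the branch $Z^T$ toward a \emph{Lebesgue-sampled} time $T$ is shown to be a spectrally negative $3/2$-stable process conditioned to be absorbed at $0$ (Proposition~\ref{prop: law uniform exploration}). The key to its Markov property is the target-invariance statement (Proposition~\ref{prop: target invariance}): conditionally on $Z^T(a)$, the relative position $\mathcal{Z}^T(a)/Z^T(a)$ is uniform on $\{(u,1-u):u\in(0,1)\}$ and independent of the whole trajectory of $Z^T$. This is precisely what makes the one-dimensional projection Markov, and establishing it occupies most of Section~\ref{sec: special cones}, via the Bismut description of $\nlg$, the explicit joint law of displacement and duration at $\theta=2\pi/3$, and the identification of the auxiliary process $S$ as a conditioned $3/2$-stable process. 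Second, the locally largest branch $Z^*=Z^{t^*}$ is obtained from $Z^T$ by an explicit $h$-transform with $h(x)=x^{-2}$ restricted to paths whose jumps stay above $-\log 2$ (Lemma~\ref{eq: lem LGR} and Theorem~\ref{thm: law loc largest}); the Lamperti exponent $\Phi_{3/2}$, including the drift $-16/3$ and the factor $2$, falls out of this transform rather than from any direct cone-entrance-law computation. The branching property is then argued much as you sketch.

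In short, the missing idea is that one must pass through a carefully chosen \emph{random} target (uniform, then locally largest) for which the one-dimensional boundary-length process is genuinely Markov; the exact constants in $\Phi_{3/2}$ then come from an $h$-transform of a conditioned stable process, not from a bare excursion-theoretic calculation of the type you outline.
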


The process $\Xbf^{3/2}$ was first introduced by Bertoin, Curien and Kortchemski in \cite{BCK} (see also \cite{BBCK}). We comment on this connection {and related work} in \cref{sec: related work} and provide additional details in \cref{sec: GF process}.

%-------------------------------------------------------------------%
%             LQG RESULTS
%-------------------------------------------------------------------%

\subsection{Interpretation in terms of Liouville quantum gravity and Schramm--Loewner evolutions} 
\label{sec: intro LQG}
Liouville 	quantum gravity (LQG) surfaces are a family of ``canonical'' random fractal surfaces, that conjecturally describe the large-scale behaviour of discrete random surfaces called random planar maps. Such surfaces were first considered in the physics literature \cite{HK,polyakov1981quantum,KPZ}: see \cite{Dup-She} for a comprehensive list of references. Informally speaking, a $\gamma$--Liouville quantum gravity surface parametrised by $D\subset \mathbb{C}$ should be a random Riemannian surface with metric tensor  
\begin{equation} \label{eq: LQG metric}
  \mathrm{e}^{\gamma h(z)} (\mathrm{d}x^2+\mathrm{d}y^2), \quad z=x+iy\in D,
\end{equation}
where $\mathrm{d}x^2+\mathrm{d}y^2$ is the Euclidean metric tensor and $h$ is a variant of the planar Gaussian free field. The issue with this definition is that $h$ is not a random function but a random distribution, so that making sense of its exponential requires some highly non-trivial work. Nevertheless, one can give a meaning to \eqref{eq: LQG metric} for $\gamma\in(0,2)$ in a number of different ways. The first progress in this direction was to construct the associated \textbf{quantum area measure} $\mu^\gamma_h$ on $D$, using the so-called \textbf{Gaussian multiplicative chaos} approximation procedure, \cite{Kah, Dup-She, Ber-GMC}. Similarly, one can construct a \textbf{quantum boundary length measure} $\nu_h^{\gamma}$ on $\partial D$, and more generally on some curves in $D$, including $\sle_{\kappa}$ or $\sle_{\kappa'}$--type curves when $\kappa:=\gamma^2$ and $\kappa':=16/\gamma^2$ \cite{She-zip}. %for $\gamma\in(0,2)$ by using multiple approximation procedures \cite{Kah, Dup-She, Ber-GMC}. Likewise, one can construct the \emph{LQG boundary length measure} $\nu_h^{\gamma}$ on $\partial D$ or more generally on some curves in $D$, including $\sle_{\kappa}$ or $\sle_{\kappa'}$ type of curves, for $\kappa:=\gamma^2$ and $\kappa'=16/\gamma^2$ \cite{She-zip}.
More recently, a metric $D_h^\gamma$ corresponding to \eqref{eq: LQG metric} was constructed via approximation, \cite{Dingetal-tightness,GM-existenceuniqueness} (see also \cite{MS-BM1} for the case $\gamma=\sqrt{8/3}$ and \cite{DG-supercritical} for the critical and supercritical cases).

Quantum surfaces conjecturally correspond to the scaling limits of random planar maps coupled with critical statistical mechanics models. % In this setting, the measures $\mu^{\gamma}_h$ and $\nu_h^{\gamma}$ are expected to be the scaling limits of the counting measures on vertices and on boundary vertices respectively. This is already known for a few models of planar maps conformally embedded in the plane via the Tutte embedding \cite{GMS-Tutte} or the Cardy embedding \cite{HS}. For several models of uniform planar maps, this has also been proved in the so-called Gromov-Hausdorff-Prokhorov topology: see \cite{LG-BrownianMap, Mie,BM,GM-quadrangulations,BMR}. The present work focuses on the case $\gamma=\sqrt{8/3}$, sometimes called \emph{pure gravity}, and associated with uniform random planar maps. We denote $\mu_h=\mu_h^{\gamma}$ and $\nu_h=\nu_h^{\gamma}$ for $\gamma=\sqrt{8/3}$.
In these discrete couplings the randomness of the map and the statistical mechanics decoration are finely tuned so that the partition function of the latter matches the distribution of the planar map. One example is the FK--decorated map model, {that can be seen as} a model on loop-decorated planar maps, where the probability of observing a given map and collection of loops is proportional to $\sqrt{q}^L$, where $q$ is a parameter and $L$ is the total number of loops. %or equivalently on planar maps $\mfrak$ decorated with a subset of edges $\tfrak$. The probability of occurrence of such a pair $(\mfrak,\tfrak)$  is proportional to $ \sqrt{q}^{\# \mathsf{loops}}$
%for some parameter $q>0$, where $\# \mathsf{loops}$ is the \emph{number of loops} formed when drawing the interface between $\tfrak$ and its dual $\tfrak'$. Improving on Mullin's bijection in the spanning tree case \cite{Mul}, Sheffield \cite{She} presented a way to encode such a model by a non-Markovian random walk on $\Z^2$, or equivalently a pair of discrete random trees. Such a bijection goes under the name of the \textbf{hamburger-cheeseburger bijection}. 
Despite the fact that at the discrete level the map and loops %$(\mfrak,\tfrak)$ 
are not independent, it is believed that in an appropriate scaling limit (for example, as the number of faces in the map %$\mfrak$ 
goes to $\infty$ and the whole picture is embedded in $\mathbb{C}$ in a suitable way) %the random metric defined by graph distance in $\mfrak$ and the random loop collection corresponding to $\tfrak,\tfrak'$ decouple. 
the loops and the geometry of the map decouple. Moreover, the limiting geometry of the random map should be described by a $\gamma$--Liouville quantum gravity surface and the limiting loop collection should be an independent, nested \textbf{conformal loop ensemble} $\cle_{\kappa'}$, which is a random collection of nested, non-crossing loops in the disc \cite{She-exploration,SW}, where 
\begin{equation} \label{eq: relation q kappa}
  q = 2 + 2\cos(8\pi/\kappa'), \quad \gamma = 4/\sqrt{\kappa'}.
\end{equation}

Although such a convergence statement is not proven, it is known that one can encode the FK--decorated map model in terms of non-Markovian random walk on $\Z^2$ \cite{Mul,She}, and \cite{She} further proved that this random walk converges to a correlated Brownian motion as in \eqref{eq: cov MoT} with \begin{equation}\label{eq: intro relation gamma theta}
	\gamma=\sqrt{4\theta/\pi}, \text{ equivalently } \kappa'=4\pi/\theta.
\end{equation}
Moreover, there is an analogue of this encoding in the continuum, which gives 
a way to encode an independent {CLE$_{\kappa'}$}, $\gamma$--LQG surface pair in terms of
such a Brownian motion,
% the \emph{mating} of two continuum random trees, 
\cite{DMS}. In fact, the main theorem of \cite{DMS} describes a correspondence between the correlated Brownian motion and a $\gamma$--LQG surface together with an independent space-filling curve called \textbf{space-filling} $\sle_{\kappa'}$, but this space-filling curve can be used to define {an} entire nested CLE$_{\kappa'}$ (and vice-versa).
%The continuum version of the interface between $\tfrak,\tfrak'$ is a random space-filling curve   $\eta$ called \textbf{space-filling} $\sle_{\kappa'}$.

The version of this theorem most directly related to our work will be when the $\gamma$--Liouville quantum gravity surface is something called a \textbf{unit boundary length quantum disc.} This is a natural quantum surface with boundary (that is, with the topology of a disc, or parameterised by $D\subset \C$ with $\partial D\ne \emptyset$) that has finite (but random) quantum area and quantum boundary length equal to $1$. It should arise as the scaling limit of random planar maps with an outer perimeter of given length. In this setting, the analogous result to \cite{DMS} is \cite[Theorem 1.1]{AG}. It says that if one draws a \emph{counterclockwise} space-filling $\sle_{\kappa'}$ $\eta$ on top of a unit boundary length $\gamma$--quantum disc\footnote{These will be defined precisely in Section \ref{sec: mot}.} parametrised by the unit disc $\mathbb{D}\subset \mathbb{C}$, and parametrises $\eta$ by quantum area, then the change $L_t$ and $R_t$ in quantum boundary lengths of the left and right sides of $\eta([0,t])$ relative to time $0$ as in \cref{fig: space-filling sle on disc}, normalised so that $(L_0,R_0)=(0,1)$, has the law $P^{(0,1)}_\theta$ from below \eqref{eq: cov MoT}, where $\theta$ is as in \eqref{eq: intro relation gamma theta}. See Theorem \ref{thm: MoT complete} for an exact statement.
%\begin{equation}\label{eq: intro relation gamma theta}
%  \gamma=\sqrt{4\theta/\pi}, \text{ equivalently } \kappa'=4\pi/\theta.
%\end{equation}

\bigskip
\begin{figure}[ht]
  \begin{center}
    \includegraphics[scale=0.7]{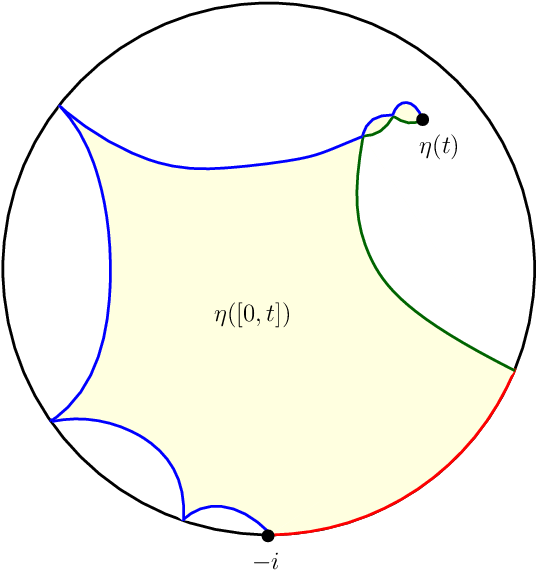}
  \end{center}
  \caption{A unit boundary length quantum disc decorated with an independent space-filling $\sle_{\kappa'}$ $\eta$ from $-i$ to $-i$, parametrised by quantum area.  $L_t$ corresponds to the quantum length of the blue curve. $R_t$ corresponds to {one plus} the quantum length of the green curve \emph{minus} the quantum length of the red one. 
  }
  \label{fig: space-filling sle on disc}
\end{figure}

Our main result \cref{thm: main} concerning $P_\theta^{(0,1)}=P^{(0,1)}$ with
$\theta=2\pi/3$ thus corresponds to the case $\gamma=\sqrt{8/3}$, $\kappa'=6$,
and can be rephrased as follows.

Let $(\D,h,-i)$ a singly marked
unit-boundary $\sqrt{8/3}$--quantum disc and $\eta$ a space-filling $\sle_6$ (see Section \ref{sec: mot} for precise definitions).  Let $e=(L,R)$ be the associated correlated Brownian excursion given by \cite[Theorem 1.1]{AG} (see also Theorem \ref{thm: MoT complete}), as described above.

For any deterministic point $z\in \mathbb{D}$, one can consider the \textbf{branch} $\eta^z$
of $\eta$ towards $z$, in the sense that it does not explore the components of $\D$
that it disconnects from $z$ along its way. In other words, we erase intervals of time on which $\eta$ is visiting such a component. {In the Brownian motion picture,} such intervals correspond precisely to forward cone excursions for $e^{t_z,-}$ where $t_z$ is the almost surely unique time that $\eta(t_z)=z$ (but they are visited by $\eta$ in the opposite order to their appearance in $e^{t_z,-}$). Thus we can parametrise $\eta^{z}$ run backwards, from $z$ to $-i$, by the inverse local time {$\uptau^{t_z}$} for $e^{t_z,-}$ defined in Section \ref{sec: intro main result cones}. After then reversing time, this branch has the law of a radial SLE$_6$ from $i$ to $z$ \cite{MS-space-filling,DMS}, and this time parametrisation is called its {\bf quantum natural time parametrisation} in \cite{DMS}.

\begin{figure}%[ht]
  \bigskip
  \centering
  \subfigure[]{\includegraphics[scale=0.9,page=1]{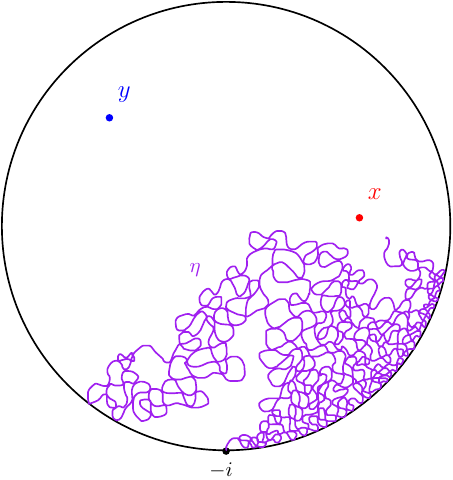}} 
  \hspace{1cm}
  \subfigure[]{\includegraphics[scale=0.9,page=2]{Figures/branch-sle-lqg.pdf}} 
  \caption{Branches of the space-filling $\sle_6$ $\eta$ on the $\sqrt{8/3}$--quantum disc towards $x$ and $y$. (a) The branch of $\eta$ towards $x$ and $y$ (purple) is the same. (b) The two branches get disconnected: a loop has been cut out, surrounding $x$. The branch $\eta^x$ targeted at $x$ is shown in (purple and then) red, and the branch $\eta^y$ targeted at $y$ is in (purple and then) blue.}
  \label{fig: branches SLE x and y}
\end{figure}

\begin{figure}%[ht]
  \bigskip
  \centering
  \includegraphics{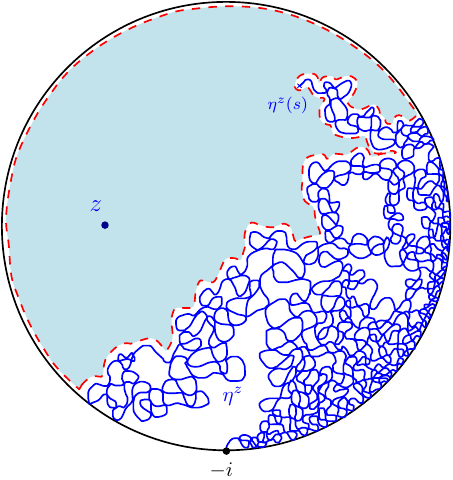} 
  \caption{The total boundary process towards $z\in\D$. At time $s$ along the branch $\eta^z$, we record the total boundary length (dashed red) of the component containing $z$ (blue).}
  \label{fig: total boundary}
\end{figure}

Simultaneously constructing the branches
towards all points in the disc (with rational co-ordinates, say) yields a {\bf branching SLE$_6$}. It has the property that for any two points, the branches coincide until $\eta$
disconnects them (see \cref{fig: branches SLE x and y}).
{With a slight abuse of notation, we let $\varsigma^z$ denote the lifetime of the branch $\eta^z$, \textit{i.e.} $
\varsigma^z := \inf\{s>0 : \eta^z(s)=z\},
$
so that since $\eta^z$ is parametrised using {$\uptau^{t_z}$}, we have $\varsigma^z=\varsigma^{t_z}$, where the right-hand side is as defined in Section \ref{sec: intro main result cones}. 
At fixed (quantum natural time) $a\in (0,\varsigma^z)$, one can define $D^z_a $ to be the connected component of $\D\setminus \eta^{z}([0,a])$ containing $z$. %simultaneously for all $z\in \D$ such that $\varsigma^z>a$. 
%Note that $D^z_b = D^y_b$ for all $b\le a$ and $y\in D^z_a$.
}Again via the correspondence with $e=(L,R)$ it is straightforward to see that 
\begin{equation} \label{eq: D^z_a into cones}
D^z_a
%=
%\eta\big(\big\{s\in (0,\zeta): g^s(a) = g^{t_z}(a) \big\}\big)
=
\eta ((g^{t_z}(a), d^{t_z}(a))).
\end{equation}

Thus for any $a>0$, the collection $\{D^z_a, \, z\in \mathbb{Q}^2\cap \D\}$ is a countable collection of open sets containing all $y\in \mathbb{Q}^2\cap \D$ such that $\varsigma^{t_y}>a$.
If we record the quantum boundary lengths (using $h$) of these sets, this yields
a countable collection of positive real numbers that we call $\widetilde{\Zbf}(a)$ {(see \cref{fig: total boundary})}. By \eqref{eq: D^z_a into cones} and the definition of $(L,R)$ from $(\eta,h)$ we have that
%Moreover, if $(L_t,R_t)\sim P^1$ is the left/right boundary length process
%associated to $\eta$ and the unit-boundary length quantum disc, we have
\begin{align*}\widetilde{\Zbf}(a) & = \big\{ Z^t(a), \; t\in(0,\zeta) \text{ such that } t=t_z \text{ for some } z\in \mathbb{Q}^2\cap \D \text{ and } \varsigma^t>a \big\} \\
	& =\Zbf(a), 
\end{align*}
as processes in $a>0$, where $Z^t, \Zbf$ are constructed from $e=(L,R)$ as in
\eqref{eq: GF cones}. The second equality holds since the $t_z$ for $z\in\mathbb{Q}^2\cap \D$ are dense in $(0,\zeta)$ and by the same reasoning as in \eqref{eq: GF cones}. %{(we give more details about this equality in the proof below)}. 
From \cref{thm: main} we therefore obtain the following
explicit description of the law of $\widetilde{\Zbf}$: 

\begin{Thm}[Growth-fragmentation: $\sqrt{8/3}$--LQG] \label{thm: mainLQG}
  The branching total boundary length process $\widetilde{\mathbf{Z}}$ described above, defined from a space filling $\sle_6$ exploration of a $\gamma=\sqrt{8/3}$ unit boundary length quantum disc, has the law of the growth-fragmentation process $\mathbf{X}^{3/2}$ from \cref{sec: intro main result cones} {under $\Pb_1$}.
\end{Thm}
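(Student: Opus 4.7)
The plan is for \cref{thm: mainLQG} to follow essentially directly from \cref{thm: main} combined with the mating-of-trees encoding, since the bulk of the argument is already packaged into the discussion preceding the statement. Concretely, the strategy splits into three steps: (a) use the mating-of-trees to identify the boundary length process $(L,R)$ with a Brownian cone excursion of law $P^{(0,1)}$; (b) check that the branching total boundary length process $\widetilde{\mathbf{Z}}$ coincides with the process $\mathbf{Z}$ built from this excursion as in \cref{sec: intro main result cones}; and (c) apply \cref{thm: main} with $z=(0,1)$.

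For step (a), I would invoke \cref{thm: MoT complete} to obtain that the left/right boundary length processes $e=(L,R)$ associated with a unit boundary length $\sqrt{8/3}$--quantum disc $(\mathbb{D},h,-i)$ decorated by an independent counterclockwise space-filling $\sle_6$ curve $\eta$ parametrised by quantum area have law $P^{(0,1)}$. For step (b), the identification $\widetilde{\mathbf{Z}}(a)=\mathbf{Z}(a)$ for all $a>0$ is exactly the content of the paragraphs immediately preceding \cref{thm: mainLQG}. It rests on two ingredients: first, the set-theoretic equality $D^z_a=\eta((g^{t_z}(a),d^{t_z}(a)))$, which follows from the fact that the branch $\eta^z$, parametrised by its quantum natural time $\uptau^{t_z}$, coincides with $\eta$ after excising the forward cone excursions of $e^{t_z,-}$; and second, the quantum boundary length identity $\nu^\gamma_h(\partial D^z_a)=Z^{t_z}(a)$, obtained by interpreting the coordinates of $e$ as running changes in the left and right frontier lengths along $\eta$, so that the total quantum length of $\partial D^z_a$ equals the sum of the increments of $L$ and $R$ over the corresponding interval, i.e.\ $\|e(g^{t_z}(\varsigma^{t_z}-a^-))-e(d^{t_z}(\varsigma^{t_z}-a^-))\|_1$. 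Finally, for step (c), \cref{thm: main} applied at $z=(0,1)$ (so $|z|=1$) gives that $\widetilde{\mathbf{Z}}=\mathbf{Z}$ has the law of the growth-fragmentation $\mathbf{X}^{3/2}$ under $\Pb_1$.

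The main (and really only) technical obstacle lies in step (b), namely the quantum boundary length identity $\nu^\gamma_h(\partial D^z_a)=Z^{t_z}(a)$. This requires carefully matching the quantum natural time parametrisation of $\eta^z$ with the inverse local time $\uptau^{t_z}$ on forward cone-free times of $e^{t_z,-}$, and checking that the $\ell^1$--displacement of $e$ across the interval $(g^{t_z}(\varsigma^{t_z}-a^-), d^{t_z}(\varsigma^{t_z}-a^-))$ genuinely captures both the left and right contributions of $\partial D^z_a$ in the quantum length measure. Since both ingredients are built into the mating-of-trees formalism and the definitions of the branch $\eta^z$ and of its quantum natural time, the verification is essentially bookkeeping, but should be written out cleanly with explicit reference to \cref{thm: MoT complete} and the precise definition of $\eta^z$ recalled in Section \ref{sec: intro LQG}.
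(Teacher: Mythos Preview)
Your proposal is correct and matches the paper's approach exactly: the paper states explicitly that \cref{thm: mainLQG} follows directly from \cref{thm: main} via the mating-of-trees encoding (\cref{thm: MoT complete}) and the identification $\widetilde{\mathbf{Z}}=\mathbf{Z}$ established in the discussion preceding the statement. No separate proof is given in the paper beyond this.
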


We stress once again that our proof of \cref{thm: mainLQG} relies only on Brownian motion arguments (assuming the mating of trees), since it comes as a corollary of \cref{thm: main}. We comment on related work in \cref{sec: related work} and provide more LQG background in \cref{sec: mot}.
{It is {possible} that \cref{thm: mainLQG} could be proved using variants of the arguments in \cite{MSW}. The aim of the present paper is to provide an elementary, purely Brownian, proof of \cref{thm: mainLQG}, establishing on the way new elements of excursion theory for $2\pi/3$ cone times. In particular we do not use the target-invariance property of $\sle_6$, but rather derive it from excursion-theoretic techniques. More generally we believe that these arguments provide a new toolbox for the $\sqrt{8/3}$--LQG, $\sle_6$ coupling.}

\subsection{Further results for cone excursions and their counterparts for SLE and LQG}
\label{sec: further}

We describe a few additional results that we obtain along the way and explain how they translate in the setup of $\sle_6$ on $\sqrt{8/3}$--LQG. Most results appear in some form or another in the LQG literature, 
but we emphasise again that our proofs are elementary, using only Brownian motion arguments. It is likely that the excursion theory we develop for cone points in the present paper can be used to extract more information on both sides. {We also stress that, apart from their connection to $\sqrt{8/3}$--LQG, it is not clear and somehow surprising from the Brownian perspective that $\frac{2\pi}{3}$ cone times display so many special features.}

The first result we obtain is the explicit density of the duration $\zeta$ under $P^z$, which was derived in \cite[Theorem 1.2]{AG}. 
\begin{Prop}[Law of duration conditioned on displacement] 
  \label{prop: intro law duration}
  Under ${P^{(0,1)}}$, the law of $\zeta$ is given by 
  \[
    P^{{(0,1)}}(\zeta\in\mathrm{d}t)
    =
    \frac{3^{-3/4}}{\sqrt{2\pi}} \mathrm{e}^{-\frac{1}{2\sqrt{3} t}} t^{-5/2}  \mathrm{d}t.  
  \]
  Moreover, the law of $\zeta$ under $P^z$ is that of $|z|^2 \zeta$ under $P^{{(0,1)}}$.
\end{Prop}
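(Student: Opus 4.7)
\emph{Plan.} The scaling identity follows from two symmetries of $W$. If $W$ has law $P^z$ then $W'_s := |z|^{-1}W_{|z|^2 s}$ has the same covariance (by Brownian scaling), starts from $z/|z|\in \{(0,1),(1,0)\}$, and inherits the event ``stay in $\R_+^2$ and exit at $0$''. Moreover the swap $(W^1,W^2)\leftrightarrow(W^2,W^1)$ preserves both the quadrant and the covariance \eqref{eq: cov MoT}, so $\zeta$ has the same distribution under $P^{(0,1)}$ and $P^{(1,0)}$. Combined, these give the $|z|^2$-scaling and reduce the task to identifying the density of $\zeta$ under $P^{(0,1)}$.

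Next, apply the linear shear $\mathbf{\Lambda}$ of \eqref{eq: correlation transformation}: it preserves time and sends $W$ under $P^{(0,1)}$ to a standard planar Brownian motion started from $z_0 := \mathbf{\Lambda}((0,1))$, ``conditioned'' to stay in the cone $\mathcal{C}_{2\pi/3}$ and exit at its apex (in the It\^o-excursion sense to be made precise in Section~2). A direct computation gives $|z_0|^2 = 1/\sqrt{3}$. It therefore suffices to show that the lifetime of such a cone excursion issued from a boundary point at distance $r_0$ from the apex of $\mathcal{C}_\theta$ has density
\[
  p_{r_0}(t) = \frac{r_0^{2\pi/\theta}}{2^{\pi/\theta}\,\Gamma(\pi/\theta)}\,t^{-1-\pi/\theta}\exp\!\left(-\frac{r_0^2}{2t}\right);
\]
specialising to $\theta=2\pi/3$ (so $\pi/\theta=3/2$ and $2^{3/2}\Gamma(3/2)=\sqrt{2\pi}$) and $r_0=3^{-1/4}$ (so $r_0^{2\pi/\theta}=3^{-3/4}$) recovers the claim.

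To obtain $p_{r_0}$, my plan is to use the eigenfunction expansion
\[
  p_t^{\mathcal{C}_\theta}\!\big((r_0,\phi_0),(r,\phi)\big) = \frac{2}{\theta t}\,e^{-(r_0^2+r^2)/(2t)}\sum_{n\ge 1}I_{n\pi/\theta}\!\left(\tfrac{r_0 r}{t}\right)\sin\!\tfrac{n\pi\phi_0}{\theta}\sin\!\tfrac{n\pi\phi}{\theta}
\]
of the heat kernel of planar Brownian motion killed on exiting $\mathcal{C}_\theta$. The It\^o entrance law from $(r_0,0)$ is $(1/r_0)\partial_{\phi_0}p_t^{\mathcal{C}_\theta}|_{\phi_0=0}$, and the flux density of exits at a boundary point $(r',0)$ at time $t$ is a further outward normal derivative (with a symmetric contribution from the boundary ray $\phi=\theta$). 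The small-argument asymptotics $I_\nu(x)\sim (x/2)^\nu/\Gamma(\nu+1)$ show that, upon integrating over exit points in $B(0,\epsilon)\cap\partial\mathcal{C}_\theta$ and sending $\epsilon\to 0$, only the $n=1$ angular mode contributes to leading order, yielding an un-normalised lifetime density proportional to $t^{-1-\pi/\theta}e^{-r_0^2/(2t)}$. Normalisation via the explicit integral $\int_0^\infty t^{-1-\pi/\theta}e^{-r_0^2/(2t)}\,dt = r_0^{-2\pi/\theta}\cdot 2^{\pi/\theta}\Gamma(\pi/\theta)$ then gives $p_{r_0}$. The main obstacle is bridging this formal calculation to the precise construction of $P^{z_0}_\theta$ in Section~2: concretely, one must check that $P^{z_0}_\theta$ is indeed the vague $\epsilon\to 0$ limit of $P^{z_0}$ conditioned on exiting $\mathcal{C}_{2\pi/3}$ in $B(0,\epsilon)$, which legitimises the ``$n=1$ mode extraction''. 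Once that bridge is in place, everything else is a direct computation.
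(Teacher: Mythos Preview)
Your approach is correct in outline and the numerics check out (in particular $|\mathbf{\Lambda}(0,1)|^2=1/\sqrt{3}$ and the normalising constant $2^{3/2}\Gamma(3/2)=\sqrt{2\pi}$), but it is genuinely different from the route the paper takes.

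The paper does \emph{not} compute the lifetime density directly via the cone heat kernel. Instead it first proves the stronger \cref{prop: joint law displacement/duration}, giving the full joint law of $(e(0),\zeta)$ under the \emph{backward} cone excursion measure $\nlg$, by matching Laplace exponents: Le~Gall's formula \eqref{eq: LG formula G} for $H(a,b,\lambda)=\nlg(1-\mathrm{e}^{-ae^1(0)-be^2(0)-\lambda\zeta})$ is massaged into closed form (\cref{lem: expression G}) and shown to equal the transform $\hat{H}$ of the explicit measure $\mu$ on the right of \eqref{eq: joint density nlg}. The statement you are proving then follows by conditioning on $e(0)$ and sending the start point to the boundary via \cref{prop: convergence measures}.

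Your heat-kernel computation is more elementary and more transparent for the bare statement of \cref{prop: intro law duration}; the eigenfunction argument and the ``only $n=1$ survives'' extraction are standard for Brownian motion in cones. The acknowledged gap --- identifying $P^{z}_\theta$ with the $\varepsilon\to 0$ limit of Brownian motion from $z$ conditioned to exit $\mathcal{C}_{2\pi/3}$ in $B(0,\varepsilon)$ --- is real but fillable: the characterisation invoked at the end of the proof of \cref{prop: convergence measures} (via \cite[Theorem~3.1]{MS-mot}, that $P^z_\theta$ is the unique law starting at $z$, supported on paths in $(\R_+^*)^2$, with the transitions of Brownian motion conditioned to stay in the quadrant) is exactly what you need to justify taking normal derivatives of the Dirichlet heat kernel.

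What the paper's longer route buys is \cref{prop: joint law displacement/duration} itself, which (i) resolves Le~Gall's question on the angular part of the L\'evy measure of $W\circ\tfrak$ when $\theta=2\pi/3$, and (ii) supplies the conditional independence of $\zeta$ and $e(0)/\lVert e(0)\rVert_1$ given $\lVert e(0)\rVert_1$ (\cref{rk: independence duration/position}(i)), which is the crucial input in Step~\ref{step:random-time} of the proof of \cref{prop: target invariance Q_L}. Your direct argument does not deliver these, so if you adopt it you would still need a separate derivation of \eqref{eq: sqrt(8/3) law displacement} and the independence statement for the rest of the paper to go through.
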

\noindent {Actually, the above result will be proved by taking a limit from a much stronger result giving the joint law of the displacement and duration of \emph{backward cone excursions} (starting from the interior of the quadrant $\R_+^2$). This stronger version solves a question that was raised by Le Gall \cite{LG-cones} (see Remark (ii) on page 613) about giving an explicit formula for the Lévy measure of planar Brownian motion subordinated on the set of backward cone times. We provide an explicit expression for this Lévy measure in the case when $\gamma=\sqrt{8/3}$, thereby solving the question in the case when $\alpha=\pi/3$ and $\nu=3/2$ in Le Gall's notation.}
\cref{prop: intro law duration} allows us to describe the law of the area of a unit-boundary quantum disc, as in \cite{AG}.

\begin{Cor}[Law of area of unit-boundary quantum disc] \label{cor: intro law area}
  For $\gamma=\sqrt{8/3}$, the law of the area of a unit-boundary quantum disc is
  \[
    \frac{3^{-3/4}}{\sqrt{2\pi}} \mathrm{e}^{-\frac{1}{2\sqrt{3} t}} t^{-5/2}  \mathrm{d}t.  
  \]
  Moreover, the law of the area of a quantum disc with boundary length $x>0$ is $x^2$ times that of a unit-boundary quantum disc. 
\end{Cor}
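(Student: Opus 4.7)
The plan is to deduce the corollary directly from \cref{prop: intro law duration} together with the mating-of-trees correspondence recalled in \cref{sec: intro LQG} (to be made precise in \cref{sec: mot}). The key point is that when $(\D,h,-i)$ is a unit-boundary $\sqrt{8/3}$--quantum disc and $\eta$ is an independent counterclockwise space-filling $\sle_6$ from $-i$ to $-i$, parametrised by quantum area, then by \cite[Theorem 1.1]{AG} (equivalently Theorem \ref{thm: MoT complete}) the associated boundary length process $(L,R)$ has law $P^{(0,1)}$. Since $\eta$ is parametrised by quantum area and $\eta$ is space-filling, the lifetime $\zeta$ of the excursion coincides (almost surely) with the total quantum area $\mu_h^\gamma(\D)$ of the disc.

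Hence the law of $\mu_h^\gamma(\D)$ equals the law of $\zeta$ under $P^{(0,1)}$, which by \cref{prop: intro law duration} is
\[
  \frac{3^{-3/4}}{\sqrt{2\pi}}\, \mathrm{e}^{-\frac{1}{2\sqrt{3} t}}\, t^{-5/2}\, \mathrm{d}t.
\]
This gives the first part of the statement.

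For the scaling, I would give two (equivalent) arguments. The first is an immediate consequence of the scaling assertion in \cref{prop: intro law duration}: a quantum disc with boundary length $x>0$ corresponds via mating-of-trees to a correlated Brownian excursion with law $P^{z}$ for $z=(0,x)$ (up to the irrelevant choice of starting point on the boundary of the quadrant, which does not affect the duration), and \cref{prop: intro law duration} states that $\zeta$ under $P^{z}$ has the law of $|z|^2\zeta = x^2 \zeta$ under $P^{(0,1)}$. Alternatively, one may invoke the intrinsic $\gamma$-LQG scaling relation: adding the constant $c = \frac{2}{\gamma}\log x$ to the field $h$ scales boundary lengths by $x$ and quantum area by $x^2$ (when $\gamma=\sqrt{8/3}$, so that $\frac{4}{\gamma^2}=3/2$, but here the relevant factor is just $x^{\gamma\cdot 2/\gamma}= x^2$ on the area side, matching the Brownian scaling). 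This consistency is of course built into the mating-of-trees correspondence, so both viewpoints give the same conclusion.

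I do not anticipate any real obstacle here beyond carefully recalling the definition of a unit-boundary quantum disc and of the quantum area/boundary measures, and checking that the convention for parametrisation of $\eta$ in \cite{AG} matches the one used in \cref{sec: intro main result cones}. All of this will be set up in detail in \cref{sec: mot}, so the proof in the body of the paper reduces to a one-line invocation of \cref{prop: intro law duration} and Theorem \ref{thm: MoT complete}.
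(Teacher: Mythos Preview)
Your proposal is correct and matches the paper's approach: in \cref{rk: independence duration/position}(ii) the paper states that \cref{prop: intro law duration} and \cref{cor: intro law area} follow directly from \cref{prop: joint law displacement/duration} together with the mating-of-trees correspondence (\cref{thm: MoT complete}), by taking a limit as the start point is sent to the boundary via \cref{prop: convergence measures}. Since you take \cref{prop: intro law duration} as given and apply \cref{thm: MoT complete}, this is exactly the same argument (with the boundary-limit step already absorbed into \cref{prop: intro law duration}).
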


The second result we obtain is a new (purely Brownian) proof of the so-called \textbf{target-invariance} of $\sle_6$ on the $\sqrt{8/3}$--quantum disc, that was obtained by Miller and Sheffield \cite{MS-mot}.
We first state the result as a property of Brownian excursions. Recall from \cref{sec: intro main result cones} that $P^z$ denotes the law of a correlated Brownian excursion in the positive quadrant starting from $z\in\partial \R_+^2\setminus \{0\}$ and ending at the origin, with correlation \eqref{eq: cov MoT}. Introduce the probability measure
\begin{equation} \label{eq: intro def P bar}
  \overline{P}^z (\mathrm{d}T,\mathrm{d}e)
  :=
  \sqrt{3} \lVert z\rVert_1^{-2} \mathds{1}_{\{ 0\le T\le \zeta(e)\}} \mathrm{d}T P^z(\mathrm{d}e)
\end{equation}
on $\R_+ \times E$, {which consists in sampling a uniform time in the excursion weighted by its duration.}
It can be checked from \cref{prop: intro law duration} that $\Eb^{P^z}[\zeta] = {\lVert z \rVert_1^2}/{\sqrt{3}}$, which ensures that $\overline{P}^z$ is a probability measure . %Recall from \eqref{eq: cal Z and Z^t} the definition of $Z^t$, $t\in(0,\zeta)$. 
Under $P^z$ we define the processes $Z^t$ and $\mathcal{Z}^t$ for $t\in(0,\zeta)$ as in \eqref{eq: cal Z and Z^t}, and the total local time $\varsigma^t$ as in \cref{sec: intro main result cones}.

\begin{Prop}[Target-invariance: cone excursions] \label{prop: intro target inv cone}
  Under $\overline{P}^z$ and for all $a\ge 0$, on the event that $\varsigma^T>a$, $\frac{\mathcal{Z}^T(a)}{Z^T(a)}$ is independent of $(Z^T(b), b\ge 0)$ and distributed as $(U,1-U)$ with $U$ uniform in $(0,1)$.
\end{Prop}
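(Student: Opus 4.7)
The plan is to recast the expectation under $\overline P^z$ as a duration-weighted sum over cone intervals at level $a$, and then exploit the Markov/regenerative structure of cone excursions together with the scaling identity from Proposition~\ref{prop: intro law duration}.

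First I would unpack the definition of $\overline P^z$ and apply Fubini. For any bounded measurable $\Phi,\Psi$,
\[
\Eb^{\overline P^z}\!\bigl[\Phi\bigl(\tfrac{\mathcal Z^T(a)}{Z^T(a)}\bigr)\,\Psi\bigl((Z^T(b))_b\bigr)\,\mathds 1_{\varsigma^T>a}\bigr]
=\tfrac{\sqrt 3}{\lVert z\rVert_1^2}\,\Eb^{P^z}\!\Big[\sum_{I}(d_I-g_I)\,\Phi\bigl(\tfrac{\mathcal Z_I}{Z_I}\bigr)\,\Psi\bigl((Z^{t_I}(b))_b\bigr)\Big],
\]
where the sum ranges over the countable collection of disjoint cone intervals $I=(g_I,d_I)$ at level $a$ in $e$, with $\mathcal Z_I:=e(g_I)-e(d_I)$, $Z_I:=\lVert\mathcal Z_I\rVert_1$, and where the size profile $(Z^{t_I}(b))_b$ is independent of the choice of $t_I\in I$.

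Next I would invoke a Markov/regenerative property for cone excursions (developed earlier in the paper in the treatment of forward cones): conditionally on the ``outside'' of $I$ together with the boundary vector $\mathcal Z_I$, the sub-path $(e(g_I+s)-e(d_I))_{0\le s\le d_I-g_I}$ is an independent cone excursion started at $\mathcal Z_I$ and killed upon reaching $0$. Applying Proposition~\ref{prop: intro law duration} then yields $\Eb[d_I-g_I\mid\text{outside},\mathcal Z_I]=Z_I^2/\sqrt 3$, while Theorem~\ref{thm: main} implies that the conditional law of the internal size profile $(Z^{t_I}(b))_{b>a}$ depends on $\mathcal Z_I$ only through its norm $Z_I$. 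Substituting back, the duration weight $|I|$ is replaced by a size-squared weight $Z_I^2$, and the proposition is reduced to showing that the intensity measure of $\mathcal Z_I$ on cones at level $a$, after weighting by $Z_I^2$, disintegrates in the size/shape coordinates $(s,u)=(Z_I,(\mathcal Z_I)_1/Z_I)$ as $\tilde\nu(ds)\otimes du$, with the outer environment depending on $\mathcal Z_I$ only through $s$.

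The main obstacle is establishing this uniform factorisation in the shape variable $u$. I would tackle it via excursion theory at \emph{backward} cone times, which (as the paper develops) govern the splitting events, i.e.\ the negative jumps of the size process $Z^T$. Combining the explicit joint law of ``displacement versus duration'' for backward cone excursions derived along the way to Proposition~\ref{prop: intro law duration} with the symmetry of the law of $W$ under coordinate swap $(W^1,W^2)\leftrightarrow(W^2,W^1)$, one should argue that at each splitting event the detached boundary mass is distributed uniformly between the two coordinates. Iterating this uniform choice through the tree of nested cones produces, at level $a$, a uniform shape marginal independent of the whole size profile. This is the technically heaviest step and is where the special value $\theta=2\pi/3$ enters crucially: at other angles the intensity does not have such a clean product structure.

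Combining the sum-over-cones rewriting, the Markov reduction to an $s^2$-weighted intensity measure, and the uniform disintegration of the shape variable yields that on the event $\varsigma^T>a$, $\mathcal Z^T(a)/Z^T(a)$ is uniform on the simplex $\{(u,1-u):u\in[0,1]\}$ and independent of $(Z^T(b))_{b\ge 0}$ under $\overline P^z$, as claimed.
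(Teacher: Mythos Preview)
Your proposal has a circularity problem: you invoke Theorem~\ref{thm: main} to conclude that the conditional law of the internal size profile $(Z^{t_I}(b))_{b>a}$ of a sub-excursion depends on $\mathcal Z_I$ only through $Z_I=\lVert\mathcal Z_I\rVert_1$. But in the paper's logical order, Theorem~\ref{thm: main} is proved in Section~\ref{sec: proof gf} by first establishing the law of the uniform exploration (Proposition~\ref{prop: law unif intro}), whose proof in turn relies on the target invariance in its $Q_L$ form (Proposition~\ref{prop: target invariance Q_L strong}). So Theorem~\ref{thm: main} is downstream of the very statement you are trying to prove, and cannot be used as an input here.

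Separately, your argument for the uniform factorisation of the shape variable is not sufficient as stated. The coordinate-swap symmetry $(W^1,W^2)\leftrightarrow(W^2,W^1)$ only tells you that the law of the shape is invariant under $u\mapsto 1-u$; it does not force uniformity on $(0,1)$, and ``iterating through the tree of nested cones'' does not obviously preserve or produce uniformity either. The paper's route is quite different: it uses the Bismut description of $\overline{\nlg}$ (Theorem~\ref{thm: Bismut}) to transfer the problem to a whole-plane Brownian setting, where one works directly with the pair $(Y,S)$ built from two independent correlated Brownian motions under $Q_L$. The crucial step (Step~\ref{step:random-time} in the proof of Proposition~\ref{prop: target invariance Q_L}) shows that at the special random time $\ell(t)$, the shape $Y/S$ is uniform and independent of $S$; this is exactly where the explicit joint law of Proposition~\ref{prop: joint law displacement/duration} enters, via the fact that the density of $(e(0),\zeta)$ under $\nlg$ depends on $e(0)$ only through $\lVert e(0)\rVert_1$. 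Uniformity is then pushed to deterministic times by a compensation-formula argument, strengthened to independence from the whole process $S$, and finally transported back to $\overline P^z$ via Bismut and disintegration over the start point. Your sum-over-cones rewriting is a reasonable first move, but without the Bismut bridge and the random-time computation it does not reach the conclusion.
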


\noindent We rephrase the above result in terms of $\sle_6$ explorations of the $\sqrt{8/3}$--quantum disc. In the setting of \cref{sec: intro LQG}, we consider a unit boundary length quantum disc $(\D,h,-i)$ with law reweighted by its total quantum area $\mu_h^\gamma(\D)$, and given $h$, we sample  $z^{\bullet}$ in $\D$ according to the quantum area measure $\mu_h^\gamma$.
We then look at the branch $\eta^{z^\bullet}$ targeted at $z^\bullet$ and define $(L^\bullet, R^\bullet)$ as the left and right quantum boundary length process of the component containing this point, when $\eta^{z^\bullet}$ is parametrised by quantum natural time. Write $Z^\bullet := L^\bullet + R^\bullet$ for the total boundary length process, and $\varsigma^\bullet$ for the duration of the branch $\eta^{z^\bullet}$. 

\begin{Cor}[Target-invariance: LQG]
  \label{cor: intro target inv lqg}
  For all $a\ge 0$, on the event that $\varsigma^{\bullet}>a$,  $\left( \frac{L^\bullet(a)}{Z^\bullet(a)},  \frac{R^\bullet(a)}{Z^\bullet(a)}\right)$ is independent of $(Z^\bullet(b), b\ge 0)$ and distributed as $(U,1-U)$ with $U$ uniform in $(0,1)$.
\end{Cor}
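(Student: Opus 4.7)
The plan is to deduce \cref{cor: intro target inv lqg} directly from \cref{prop: intro target inv cone} by transferring the statement through the mating-of-trees correspondence (\cite[Theorem 1.1]{AG}, i.e.\ \cref{thm: MoT complete}). Concretely, start from the triple $(\D, h, -i)$ with an independent space-filling $\sle_6$ $\eta$ from $-i$ to $-i$ parametrised by quantum area; by the mating-of-trees, the pair $(L, R) = e$ has law $P^{(0,1)}$ and $\zeta = \mu_h^{\sqrt{8/3}}(\D)$.

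The first step is to match the reweighted-and-rooted LQG law with $\overline{P}^{(0,1)}$. Reweighting the disc by $\mu_h^{\sqrt{8/3}}(\D)$ corresponds on the Brownian side to reweighting $P^{(0,1)}$ by $\zeta$. Conditionally on $h$ and $\eta$, sampling $z^\bullet$ from the normalised quantum area measure is equivalent, via the area-parametrisation of $\eta$, to sampling $T\in[0,\zeta]$ uniformly, and then setting $z^\bullet=\eta(T)$ (so $t_{z^\bullet}=T$ almost surely). Using \cref{prop: intro law duration} to compute $\Eb^{P^{(0,1)}}[\zeta]=1/\sqrt{3}$, one sees that the joint law of $((L,R),T)$ obtained this way is precisely $\overline{P}^{(0,1)}$ as defined in \eqref{eq: intro def P bar}.

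The second step is to identify $(L^\bullet, R^\bullet)$ with the Brownian object $\mathcal{Z}^T$. By the discussion around \eqref{eq: D^z_a into cones}, if $\eta^{z^\bullet}$ is parametrised by quantum natural time then $D^{z^\bullet}_a=\eta((g^{T}(a),d^{T}(a)))$ for $a<\varsigma^T=\varsigma^\bullet$, and the two quantum boundary length coordinates of this component read off against $(L,R)$ are exactly the two components of $e(g^T(a))-e(d^T(a))=\mathcal{Z}^T(a)$; hence $(L^\bullet(a),R^\bullet(a))=\mathcal{Z}^T(a)$ and $Z^\bullet(a)=Z^T(a)$ as processes in $a$. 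Applying \cref{prop: intro target inv cone} under $\overline{P}^{(0,1)}$ with this identification yields the corollary.

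The only delicate point is checking that the marked point $z^\bullet$ really corresponds to the uniformly chosen time $T$ in $\overline{P}^{(0,1)}$ with the correct total mass, but this is a direct consequence of the area-parametrisation of $\eta$ together with \cref{prop: intro law duration}. Everything else is a translation through the dictionary provided by \cref{thm: MoT complete} and the construction of the branch in quantum natural time explained in \cref{sec: intro LQG}.
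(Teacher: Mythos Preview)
Your proposal is correct and follows essentially the same approach as the paper, which simply states that the corollary ``follows directly from the mating-of-trees theorem (\cref{thm: MoT complete})'' after proving \cref{prop: intro target inv cone}; you have fleshed out exactly the dictionary the paper has in mind (reweighting by area $\leftrightarrow$ reweighting by $\zeta$, sampling $z^\bullet$ from $\mu_h$ $\leftrightarrow$ sampling $T$ uniformly in $[0,\zeta]$, and $(L^\bullet,R^\bullet)\leftrightarrow \mathcal{Z}^T$). One small notational slip: your displayed equality $e(g^T(a))-e(d^T(a))=\mathcal{Z}^T(a)$ should use $\varsigma^T-a$ in place of $a$ inside $g^T,d^T$ to match the definition \eqref{eq: cal Z and Z^t}, but this does not affect the argument.
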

\noindent \cref{cor: intro target inv lqg} states that {given the total boundary length process $Z^\bullet$ as we explore towards $z^{\bullet}$, the position of the tip of $\eta^{z^\bullet}$ on the boundary of $D^{z^\bullet}_a$ at any time $a$ is distributed uniformly according to quantum boundary length. In particular, we can resample the location of the tip according to quantum boundary length at any time without changing the law of the boundary length process.} %the target point can be resampled uniformly on the boundary of the domain at any time $a$.  
The above two results will be proved at the end of Section \ref{sec: target invariance}.

In addition, we can describe explicitly the law of the total boundary length process $Z^\bullet$. This is a continuum analogue of the third item of \cite[Proposition 6.6]{BBCK}; see also  \cite{GM-sle6} for closely related results in a slightly different setting. 
More details on Lévy processes and their conditionings will be given in \cref{sec: pssMp and Levy}.
\begin{Prop}[Law of the uniform exploration: cone excursions]
  In the setting of \cref{prop: intro target inv cone}, the process $(Z^T(a), 0\le a<\varsigma^T)$ evolves as a spectrally negative $\frac32$--stable Lévy process conditioned to be absorbed at $0$, started at $z$. More precisely, it has normalising constant $c_\Lambda=2$ {as in \cref{sec: pssMp and Levy}}. \label{prop: law unif intro}
\end{Prop}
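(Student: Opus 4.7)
My plan is to combine the target invariance of \cref{prop: intro target inv cone} with the growth-fragmentation identification of \cref{thm: main} and a spine (tagged-cell) decomposition at the appropriate Malthusian exponent.

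First, by the target invariance just established, under $\overline{P}^z$ the scalar process $(Z^T(a))_{a\ge 0}$ is independent of the angular component $\mathcal{Z}^T(a)/Z^T(a) = (U_a, 1-U_a)$. Combined with the Markov structure inherited from \cref{thm: main} (in which $\mathcal{Z}^T$ traces the lineage of a cell in $\mathbf{X}^{3/2}$, a pssMp of index $3/2$), this implies that $Z^T$ is itself a pssMp of index $3/2$ started from $\lVert z\rVert_1$.

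I next identify this pssMp as the spine of $\mathbf{X}^{3/2}$ obtained by size-biasing at the Malthusian exponent $\omega = 2$. Under $\overline{P}^z$ the time $T$ is sampled uniformly on $(0,\zeta)$; by \cref{prop: intro law duration} and self-similarity, the expected temporal footprint of any sub-cell of size $\ell$ scales as $\ell^2$, so uniform sampling in time is equivalent to size-biasing the cells of $\mathbf{X}^{3/2}$ by $\mathcal{X}_u^2$ at every generation, which is precisely the intrinsic-martingale spine construction at $\omega = 2$. That $\omega = 2$ is indeed a Malthusian exponent follows from the identity $\kappa_{3/2}(2) = 0$, where
\[
  \kappa_{3/2}(q) := \Phi_{3/2}(q) + \int_{-\log 2}^0 (1-e^y)^q \cdot 2(1-e^y)^{-5/2} e^{-3y/2}\,\mathrm{d}y,
\]
which reduces via the substitution $u = 1-e^y$ to the beta-type integral $\int_0^{1/2} u^{-1/2}(1-u)^{-5/2}\,\mathrm{d}u = 8/3$.

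The standard spine decomposition (``stay with mother'' with weight proportional to $e^{2y}$, or ``switch to daughter'' with weight proportional to $(1-e^y)^2$ at each mother-daughter split) then gives that the Lamperti driver of $Z^T$ under $\overline{P}^z$ has Laplace exponent $\widehat{\Phi}(q) = \kappa_{3/2}(q+2)$. A direct calculation identifies this with the Laplace exponent of the Lamperti driver of the spectrally negative $3/2$-stable Lévy process absorbed at $0$ (as recalled in \cref{sec: pssMp and Levy}) with normalising constant $c_\Lambda = 2$; the absorption at $0$ matches the natural killing of the pssMp at $\varsigma^T$. The main obstacle is the identification with the $M_2$-spine in the second step: one must decompose $\zeta$ into contributions of individual cells of $\mathbf{X}^{3/2}$ and verify that each scales as $\mathcal{X}_u^2$, as predicted by Malthusian theory. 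Once this is done, the remaining computation is a routine manipulation of Laplace exponents using the same beta integrals as for $\kappa_{3/2}(2)$.
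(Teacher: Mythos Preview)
Your argument has a logical circularity: you invoke \cref{thm: main} to identify $\mathbf{Z}$ with $\mathbf{X}^{3/2}$ and then appeal to the tagged-cell spine theory of \cite{BBCK} at $\omega=2$, but in this paper \cref{thm: main} is proved \emph{after} \cref{prop: law unif intro} and in fact relies on it. The dependency chain is: \cref{prop: law unif intro} (Section~\ref{sec: law unif}) is used to derive the law of the locally largest fragment $Z^*$ (\cref{thm: law loc largest}, via the $h$-transform in \eqref{eq: law Z* h-transform}), which in turn is the driving cell process needed for the proof of \cref{thm: main} in Section~\ref{sec:proof of main thm}. So as written, your proof assumes a result whose derivation rests on the very statement you are proving.

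The paper's route is entirely different and uses no growth-fragmentation input. Via the Bismut description of $\overline{\nlg}$ (\cref{thm: Bismut}), the time-reversal of $Z^T$ is expressed in terms of the process $S$ of \eqref{eq: def Y(a) and S(a)}, built from two independent correlated Brownian motions; the law of $S$ has already been pinned down in \cref{thm: main cone end} (by a direct generator computation) as the spectrally positive $\tfrac{3}{2}$--stable process conditioned to stay positive, with $c_\Lambda=2$. Disintegrating over the start point using \cref{prop: joint law displacement/duration}, writing $\xi^{\uparrow}$ as the $h^{\uparrow}(x)=x$ Doob transform of the killed stable process, and applying L\'evy duality with respect to Lebesgue measure turns this into the $h^{\searrow}(x)=x^{-1/2}$ transform of the spectrally \emph{negative} $\tfrac{3}{2}$--stable process, i.e.\ $\xi^{\searrow}$ as in \cref{sec: pssMp and Levy}.

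Your spine argument is nonetheless a valid alternative \emph{provided} \cref{thm: main} is established independently (the paper itself remarks this is plausible via \cite{MSW}-type LQG arguments). In that case, since $\Eb^{P^z}[\zeta]=\lVert z\rVert_1^2/\sqrt{3}$, uniform sampling of $T$ on $(0,\zeta)$ does amount to tagging a cell of $\mathbf{X}^{3/2}$ with weight $\mathcal{X}_u^2$, and the $\omega=2$ spine of \cite{BBCK} is indeed the $\tfrac{3}{2}$--stable process conditioned to be absorbed at $0$. (Note incidentally that target invariance is not really needed as an input here: the growth-fragmentation $\mathbf{X}^{3/2}$ already records only scalar sizes, so once \cref{thm: main} and spine theory are in hand, the law of $Z^T$ follows directly.) But within the logical architecture of this paper, where the point is precisely to give a self-contained Brownian derivation, your argument cannot stand as a proof.
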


\begin{Cor}[Law of the uniform exploration: LQG]
  In the setting of \cref{cor: intro target inv lqg}, the process $(Z^{\bullet}(a), 0\le a<\varsigma^{\bullet})$ evolves as a spectrally negative $\frac32$--stable Lévy process conditioned to be absorbed at $0$, started at $z$. More precisely, it has normalising constant $c_\Lambda=2$ {as in \cref{sec: pssMp and Levy}}. \label{cor: law unif intro}
\end{Cor}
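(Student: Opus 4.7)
The strategy is to transport \cref{prop: law unif intro} from the Brownian side to the LQG side via the mating-of-trees identification summarised in \cref{sec: intro LQG}. By \cite[Theorem 1.1]{AG} (see also \cref{thm: MoT complete}), if $(\D,h,-i)$ is a unit boundary length $\sqrt{8/3}$--quantum disc decorated with an independent counterclockwise space-filling $\sle_6$ curve $\eta$ from $-i$ to $-i$ parametrised by quantum area, then the pair $e=(L,R)$ of left/right quantum boundary length processes has law $P^{(0,1)}$, and moreover the total quantum area $\mu_h^\gamma(\D)$ coincides with the excursion duration $\zeta(e)$.

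The plan is then to match the joint law described in \cref{cor: intro target inv lqg} with the measure $\overline{P}^{(0,1)}$ from \eqref{eq: intro def P bar}. Reweighting the disc by $\mu_h^\gamma(\D)$ and sampling $z^\bullet$ from the normalised quantum area measure $\mu_h^\gamma/\mu_h^\gamma(\D)$ amounts, via the above identification and the fact that $\eta$ is parametrised by quantum area, to reweighting $P^{(0,1)}$ by $\zeta$ and independently sampling the quantum-area time $T := t_{z^\bullet}$ uniformly on $[0,\zeta]$. The resulting joint law of $(T,e)$ is exactly $\overline{P}^{(0,1)}$; the normalising constant $\sqrt{3}$ in \eqref{eq: intro def P bar} is consistent with $\Eb^{P^{(0,1)}}[\zeta]=1/\sqrt{3}$ from \cref{prop: intro law duration}.

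Under this identification, the branch $\eta^{z^\bullet}$ of $\eta$ targeted at $z^\bullet$ parametrised by quantum natural time is encoded by the inverse local time $\uptau^T$ for the forward cone-free times of $e^{T,-}$, as recalled just below \cref{thm: mainLQG}. Hence $\varsigma^\bullet=\varsigma^T$, and the total quantum boundary length of the component $D^{z^\bullet}_a$ equals $Z^T(a)=\lVert \mathcal{Z}^T(a)\rVert_1$ for all $a<\varsigma^T$; equivalently $Z^\bullet=Z^T$ as processes. Applying \cref{prop: law unif intro} under $\overline{P}^{(0,1)}$ then yields the claimed description of the law of $Z^\bullet$, including the value $c_\Lambda=2$ of the normalising constant.

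There is no real obstacle here beyond this bookkeeping: the only point meriting care is the exact matching of the weighting by $\zeta$ on the Brownian side with the biasing by quantum area on the LQG side, which crucially uses that space-filling $\sle_6$ is parametrised by quantum area, together with the fact that the quantum natural time on branches corresponds precisely to the inverse local time $\uptau^T$ on $e^{T,-}$.
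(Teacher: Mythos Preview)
Your proposal is correct and follows exactly the route the paper takes: the paper simply notes after \cref{prop: law uniform exploration} that this proves \cref{prop: law unif intro} and hence \cref{cor: law unif intro}, the transfer to the LQG side being precisely the mating-of-trees bookkeeping you spell out (matching the area-biased disc with a quantum-typical marked point to $\overline{P}^{(0,1)}$ and identifying $Z^\bullet$ with $Z^T$). Your write-up is in fact more detailed than what the paper provides for this step.
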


Furthermore, we obtain the following pathwise construction of the spectrally positive $\frac32$--stable Lévy process conditioned to stay positive, which is of independent interest. The construction can be seen as a planar version of the one-dimensional construction of Bertoin \cite{bertoin1993splitting}, albeit in the special case of the $\frac32$--stable process. Our proof however does not rely on \cite{bertoin1993splitting} and it is not clear to us whether one implies the other. We only state an informal version since the claim requires to introduce quite a bit of notation.

\begin{Prop}[Pathwise construction of the spectrally positive $\frac{3}{2}$--stable process conditioned to stay positive -- informal version]
  \label{prop: intro pathwise 3/2 stable cond}
  Let $W$ and $W'$ two independent correlated Brownian motions, with correlation as in \eqref{sec: BM construction conditioning}, started from $0$. Denote by $W\circ \tfrak$ (\textit{resp.} $W'\circ\uptau$) the process time-changed by the inverse local time of its backward cone times (\textit{resp.} forward {cone-free} times). Introduce the first passage time 
  \[
    \sfrak(a) := \inf\{s\ge 0, \; W'(\uptau(t)) \in W(\tfrak(s))+\R_+^2 \; \text{ for all } \; t\le a \}.
  \]
  Then, the process $S$ defined as 
  \[
    S(a):= \lVert W'\circ\uptau(a)-W\circ \tfrak(\sfrak(a)) \rVert_1, \quad a\ge 0,
  \]
  is the spectrally positive $\frac32$--stable Lévy process conditioned to stay positive (with $c_\Lambda=2$).
\end{Prop}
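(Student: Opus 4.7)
The plan is to obtain the proposition from Proposition~\ref{prop: law unif intro} via a time-reversal argument followed by a scaling limit that decouples the two halves of a cone excursion into two independent planar Brownian motions.

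First, Proposition~\ref{prop: law unif intro} gives that, under $\overline{P}^z$, the process $(Z^T(a))_{0\le a<\varsigma^T}$ is a spectrally negative $\tfrac{3}{2}$-stable Lévy process started from $\|z\|_1$ and conditioned to be absorbed at~$0$, with $c_\Lambda=2$. By the classical time-reversal identity relating a spectrally one-sided Lévy process at its hitting time of a lower point to the dual process conditioned to stay positive, the reversed process $a\mapsto Z^T(\varsigma^T-a)$ has the law of a spectrally positive $\tfrac{3}{2}$-stable Lévy process, with the same $c_\Lambda=2$, started from $0$, conditioned to stay positive and run up to its first-passage time at $\|z\|_1$. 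I would then give a pathwise description of $Z^T(\varsigma^T-a)$ in terms of the two halves $e^{T,-}$ and $e^{T,+}$ of the excursion: by definition of $g^T$ and $d^T$, the left endpoint $g^T(b)=T-\uptau^T(b)$ is driven by the inverse local time of forward cone-free times of $e^{T,-}$, while $d^T(b)$ is the first time the simultaneous running infimum of $e^{T,+}$ past $T$ dominates coordinate-wise the trajectory $e([g^T(b),T])$. Taking $\ell^1$-norms, this is exactly the first-passage synchronisation defining $\sfrak$ in the statement, with backward cone times of $e^{T,+}$ playing the role of $W\circ\tfrak$ and forward cone-free times of $e^{T,-}$ playing the role of $W'\circ\uptau$.

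Finally, I would conclude by sending $\|z\|_1\to\infty$ under Brownian scaling. By local absolute continuity, the rescaled pair $(e^{T,-},e^{T,+})$ converges jointly, away from the excursion endpoints, to a pair of \emph{independent} correlated planar Brownian motions started from $0$ with covariance \eqref{eq: cov MoT}; under this limit the pathwise description above of $Z^T(\varsigma^T-a)$ becomes the construction of $S$ from $(W,W')$, while the target law becomes that of the spectrally positive $\tfrac{3}{2}$-stable Lévy process conditioned to stay positive with $c_\Lambda=2$, as claimed. The main obstacle is this last step: one must prove joint convergence, under the scaling, of the two halves of the excursion \emph{together with} their inverse local times $\uptau^T$ and their first-passage times $d^T$, and deal with the fact that $\sfrak$ is a first-passage time and hence only upper semicontinuous in standard Skorokhod-type topologies. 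Showing that $\sfrak$ is almost-surely continuous at the limiting pair relies on the fine excursion structure at angle $\tfrac{2\pi}{3}$ developed earlier in the paper, and is precisely where the specific value $\theta=2\pi/3$ enters in a crucial way.
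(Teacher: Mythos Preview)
Your argument is circular within the paper's logical structure. You invoke Proposition~\ref{prop: law unif intro} (the law of $Z^T$ under $\overline{P}^z$) as an input, but in the paper that result is \emph{deduced from} the statement you are trying to prove: the proof of Proposition~\ref{prop: law uniform exploration} passes through equation~\eqref{eq:key_Z^T}, which expresses $Z^T$ under $\overline{\nlg}$ in terms of $S$ under $Q_L$, and then explicitly uses \cref{thm: main cone end} (the precise version of Proposition~\ref{prop: intro pathwise 3/2 stable cond}) to identify the time-reversal of $S$ and carry out the duality step. So unless you supply an independent proof of Proposition~\ref{prop: law unif intro}, your chain of implications closes on itself.

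Even setting circularity aside, the scaling-limit step is both the weakest and the least necessary part of your plan. The Bismut description (\cref{thm: Bismut}) already says that under the infinite measure $\overline{\nlg}$, the pair $(e^{T,-},e^{T,+})$ is \emph{exactly} a correlated Brownian motion $W'$ run to inverse local time $\varsigma^T$ together with an independent correlated Brownian motion $W$ stopped at its first simultaneous running infimum below $e^{T,-}$; no limit $\|z\|_1\to\infty$ is needed to decouple. The scaling limit only arises because you insist on working under the probability measure $\overline{P}^z$, where the two halves are genuinely dependent, and then you face the Skorokhod-continuity issues for $\sfrak$ that you yourself flag as unresolved.

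The paper's route is entirely different and avoids both problems: it first proves directly, via the explicit joint law of start point and duration under $\nlg$ (\cref{prop: joint law displacement/duration}) and the Bismut description, that under $Q_L$ the angle $Y(a)/S(a)$ is uniform and independent of $S$ (\cref{prop: target invariance Q_L}). This yields the Markov property of $S$ (\cref{cor: Markov S}), and then the law of $S$ is identified by a direct computation of its infinitesimal generator (\cref{thm: main cone end}), matching it against the known generator~\eqref{eq: generator conditioned stable} of the $\tfrac{3}{2}$-stable process conditioned to stay positive. Only afterwards is Proposition~\ref{prop: law unif intro} obtained, by reversing time and using Lévy-process duality.
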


\noindent We emphasise that it is not even clear \textit{a priori} that the above construction of $S$ yields a Markov process. We prove this in \cref{sec: target invariance}, and the full claim in \cref{sec: BM construction conditioning}.

The above claim has a natural LQG interpretation. Since it concerns (correlated) Brownian motion in the whole plane, the setup here corresponds to Liouville quantum gravity surfaces called {\bf quantum cones}, which have infinite area, {and whole-plane space-filling SLE$_6$, see \cite{DMS} or \cite[Chapter 9]{BP}}. In this case one can also define a branch of the $\sle_6$ corresponding to any point in the domain as in \cref{sec: intro LQG} and re-parametrise it by quantum natural time. Through the mating of trees for quantum cones (see \cite[Theorem 1.9]{DMS}), \cref{prop: intro pathwise 3/2 stable cond} then describes the law of the total quantum boundary length process along the branch corresponding to a quantum-typical point, {but time-reversed}. The law is that of a spectrally positive $\frac32$--stable Lévy process conditioned to stay positive. The two-sided Brownian motion in \cite[Theorem 1.9]{DMS} is given by gluing the two paths $W$ and $W'$ from \cref{prop: intro pathwise 3/2 stable cond}, and the Liouville--typical point then corresponds to time $0$.

Finally, we recover a natural martingale for the growth-fragmentation process $\bf Z$. {Recall from \cref{sec: intro main result cones} that  growth-fragmentation processes are constructed generation by generation, starting from an initial common ancestor $Z$ say, grafting copies of $Z$ at each jump time of $Z$, and so on.} Denote by $\mathcal{Z}_u$ the particle indexed by $u\in \cal U$ in the growth-fragmentation process, and {write $|u|$ for the generation of $u$}. See \cref{sec: GF process} for a rigorous definition. 
{Note that $\mathcal{Z}_u$ depends on the choice of initial ancestor $Z$. In the following claim, we fix an arbitrary choice of $Z$ (the claim holds regardless of that choice).}

\begin{Thm}\label{thm: martingale}
  Let $z\in \partial \R_+^2 \setminus \{0\}$. Under $P^z$, the process 
  \[
    \Mcal(n) :=
    \frac{1}{\sqrt{3}}\sum_{|u|=n} \mathcal{Z}_u(0)^2, 
    \quad n\ge 1,  
  \]
  is a martingale. Furthermore, it is uniformly integrable and converges $P^z$--almost surely and in $L^1$ to the duration $\zeta$ of the excursion.
\end{Thm}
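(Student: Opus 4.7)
The idea is to recognise $\Mcal(n)$ as the Doob martingale of $\zeta$ with respect to a natural filtration $(\Fcal_n)_{n\ge 0}$, which yields the martingale property, uniform integrability, and identification of the limit in one stroke. Using \cref{thm: main}, I realise $\mathbf{Z}$ inside a cone excursion $e$ of law $P^z$: each particle $\mathcal{Z}_u$ with $|u|=n$ corresponds to a sub-excursion of $e$ on some interval $I_u = (g_u, d_u) \subset (0,\zeta)$, with initial size $\mathcal{Z}_u(0) = \lVert e(g_u) - e(d_u) \rVert_1$ and duration $\zeta_u := d_u - g_u$. I let $\Fcal_n$ be the $\sigma$-algebra generated by the restrictions of $e$ to the intervals $I_v$ with $|v| < n$, so that $\mathcal{Z}_u(0)$ for $|u|\le n$, and hence $\Mcal(n)$, is $\Fcal_n$-measurable.

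The key input is the additivity
\begin{equation*}
\zeta = \sum_{|u|=n} \zeta_u \quad P^z\text{-a.s., for every } n\ge 0.
\end{equation*}
The complement of $\bigsqcup_{|u|=n} I_u$ inside $(0,\zeta)$ decomposes into the forward cone-free times and backward cone times of the main excursion and of the sub-excursions of generations strictly less than $n$; by the construction of the local times $\ell_\theta$ and $\lfrak_\theta$ in \cref{sec: intro main result cones}, each of these finitely many sets carries a non-trivial local time and therefore has vanishing Lebesgue measure. Justifying this rigorously -- in particular, checking that the intervals $I_u$ at generation $n$ really tile $(0,\zeta)$ up to such cone-type times, and controlling the iteration of this property through the first $n$ generations -- is the main technical step, but it follows from the structural results underpinning \cref{thm: main}.

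Once the additivity is in place, the branching property of $\mathbf{Z}$ (equivalently, the strong Markov property of the underlying correlated Brownian pair, applied at the endpoints $g_u, d_u$) yields that conditionally on $\Fcal_n$, the sub-excursions $(e|_{I_u})_{|u|=n}$ are independent, each distributed as a cone excursion with law $P^{e(g_u)-e(d_u)}$. \cref{prop: intro law duration} and the scaling relation $\Eb^{P^z}[\zeta] = |z|^2/\sqrt{3}$ deduced from it then give $\Eb[\zeta_u \mid \Fcal_n] = \mathcal{Z}_u(0)^2/\sqrt{3}$, and summing over $|u|=n$ using the additivity of the previous paragraph produces $\Eb[\zeta \mid \Fcal_n] = \Mcal(n)$. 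Since $\zeta \in L^1(P^z)$ by \cref{prop: intro law duration}, $(\Mcal(n))$ is a genuine Doob martingale, automatically uniformly integrable and convergent $P^z$-a.s.\ and in $L^1$ to $\Eb[\zeta\mid \Fcal_\infty]$. Finally, the excursion $e$ is measurable with respect to $\Fcal_\infty$ -- being recoverable, up to a Lebesgue-null set of times contained in no $I_u$ at any generation, from its restrictions to the $I_u$'s as $n\to\infty$ -- so $\zeta$ is $\Fcal_\infty$-measurable and the limit equals $\zeta$ itself.
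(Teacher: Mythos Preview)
Your strategy is exactly the paper's: recognise $\Mcal(n)$ as the Doob martingale $\Eb^{P^z}[\zeta\mid\cdot]$ via the additive decomposition $\zeta=\sum_{|u|=n}\zeta_u$, the conditional independence of the sub-excursions, and the moment formula $\Eb^{P^{w}}[\zeta]=\lVert w\rVert_1^2/\sqrt3$; then invoke L\'evy's theorem and the measurability of $\zeta$ with respect to the terminal $\sigma$-field. Two points need repair, though.

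First, your filtration is too large as written. Since $I_\varnothing=(0,\zeta)$, your $\Fcal_1=\sigma(e|_{I_\varnothing})$ is already the full $\sigma$-field generated by $e$, which would force $\Eb[\zeta\mid\Fcal_1]=\zeta$ and trivialise the statement. What you need (and what the paper uses) is $\mathcal G_n:=\sigma(\mathcal Z_v:|v|\le n-1)$, the $\sigma$-field generated by the cell \emph{size processes} of the first $n$ generations. This makes the initial sizes $\mathcal Z_u(0)$, $|u|=n$, measurable (they are the negative jumps of their parents) while leaving the internal structure of the sub-excursions $e|_{I_u}$, $|u|=n$, unconditioned---precisely what the branching step requires.

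Second, \cref{prop: intro law duration} only covers boundary starting points $w\in\partial\R_+^2\setminus\{0\}$, whereas the sub-excursions at generation $\ge1$ are a mixture of forward and \emph{backward} cone excursions, the latter starting in the interior $(\R_+^*)^2$. The identity you actually need, $\Eb^{P^{w}}[\zeta]=\lVert w\rVert_1^2/\sqrt3$ for all $w\in\R_+^2\setminus\{0\}$, comes from \cref{prop: joint law displacement/duration}. Relatedly, write $\lVert w\rVert_1$ rather than $|w|$: for interior $w$ the two norms differ, and it is the $\ell^1$ norm that matches $\mathcal Z_u(0)$ and the moment formula.
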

\noindent The above martingale already appears in \cite{BCK} (or \cite{BBCK}) for the process $\mathbf{X}^{3/2}$. We will prove this result in \cref{sec: few prop of Zbf} using purely Brownian arguments once more, see \cref{thm: Mcal convergence}.

%-------------------------------------------------------------------%
%             RELATED WORK
%-------------------------------------------------------------------%

\subsection{Related work and open questions}
\label{sec: related work}

\paragraph{Related work.}
{The process $\Xbf^{3/2}$ of \cref{thm: main} belongs to a} family $\Xbf^{\alpha}$, $\alpha\in(\frac12, \frac32]$, of growth-fragmentation processes that was first introduced by Bertoin, Budd, Curien and Kortchemski \cite{BBCK}, who proved that they arise in the scaling limit of a branching peeling exloration of Boltzmann planar maps (the case $\alpha=3/2$ was actually considered earlier \cite{BCK}). This family is described more precisely in \cref{sec: GF process}. Since then, it has been retrieved in the continuum in a variety of contexts. Le Gall and Riera \cite{LGR} first proved that a time-change of $\Xbf^{3/2}$ shows up when slicing the Brownian disc at heights. This is particularly relevant to our work since it concerns $\alpha=\frac32$. In fact in this case there is a beautiful correspondence, due to a breakthrough of Miller and Sheffield  \cite{MS-BM1,MS-BM2,MS-BM3}, between the $\sqrt{8/3}$--quantum disc (endowed with the QLE metric) and the Brownian disc.  Nonetheless, we stress that our exploration is different to that of \cite{LGR} since it does not involve the metric. A similar distinction actually already appears in \cite{BBCK} (see Section 6.5 there), {where our $\sle_6$ exploration rather relates to the peeling exploration.} 

On the other hand, Aïdékon and Da Silva \cite{AD} proved that the growth-fragmentation process $\Xbf^{1}$ appears when cutting a half-plane Brownian excursion at heights. Through the ``mating-of-trees encoding'' of critical Liouville quantum gravity and CLE$_4$ by Aru, Holden, Powell and Sun \cite{AHPS}, this translates into a very similar picture to the present work, for $\gamma=2$ and $\kappa'=4$. Observe from \eqref{eq: intro relation gamma theta} that $\theta = \pi$ in this case, which is consistent with half-plane excursions. The first-named author also recovered the processes $\Xbf^{\alpha}$ for $\alpha\in(\frac12,1)$ by studying variants of the previous half-plane Brownian excursions, where the imaginary part is replaced with a stable process \cite{DS}.

Finally, Miller, Sheffield and Werner \cite{MSW} contructed the processes $\Xbf^{\alpha}$ for $\alpha\in(1,\frac32)$ directly in the quantum gravity setting. Let us now  describe some of their results, since their viewpoint is very relevant to the present work. Let $\gamma\in(\sqrt{8/3},2)$ and, as usual, set $\kappa:=\gamma^2$ and $\kappa':=16/\gamma^2$. Consider a singly-marked unit boundary length $\gamma$--quantum disc $(\D,h,i)$ together with an independent \emph{conformal loop ensemble} $\cle_{\kappa}$ on $\D$ and an associated  \emph{conformal percolation interface} (CPI) in the $\cle_{\kappa}$ carpet. Roughly speaking, the CPI is an $\sle_{\kappa'}$--type curve that stays in the $\cle_{\kappa}$ carpet. %in other words, their setup is a quantum analogue of that of \cite{BBCK}.
Consider a CPI branch (parametrised by quantum length) towards some point $z$ in the $\cle_{\kappa}$ carpet, and record the quantum boundary length of the connected component containing $z$ as the CPI evolves. This exploration has positive and negative jumps (see \cref{fig: CLE on LQG MSW}). Moreover, if $x$ and $y$ are any two points in the $\cle_{\kappa}$ carpet, the branches targeting $x$ and $y$ respectively will coincide up to some time when they will get disconnected by the CPI. \cite[Theorem 1.1]{MSW} shows that this branching structure is described by $\Xbf^{\alpha}$ with $\alpha=\frac{\kappa'}{4}$. This is a quantum analogue of the set-up in \cite{BBCK}.

We stress that our result in \cref{thm: mainLQG} corresponds informally to the boundary case $\gamma\to\sqrt{8/3}$ in the latter work \cite{MSW}. Indeed, the conformal loop ensemble {degenerates} %vanishes 
when $\kappa=\frac83$, leading to our model.

\begin{figure}[ht]
  \medskip
  \centering
  \subfigure[]{\includegraphics[scale=0.5,page=2]{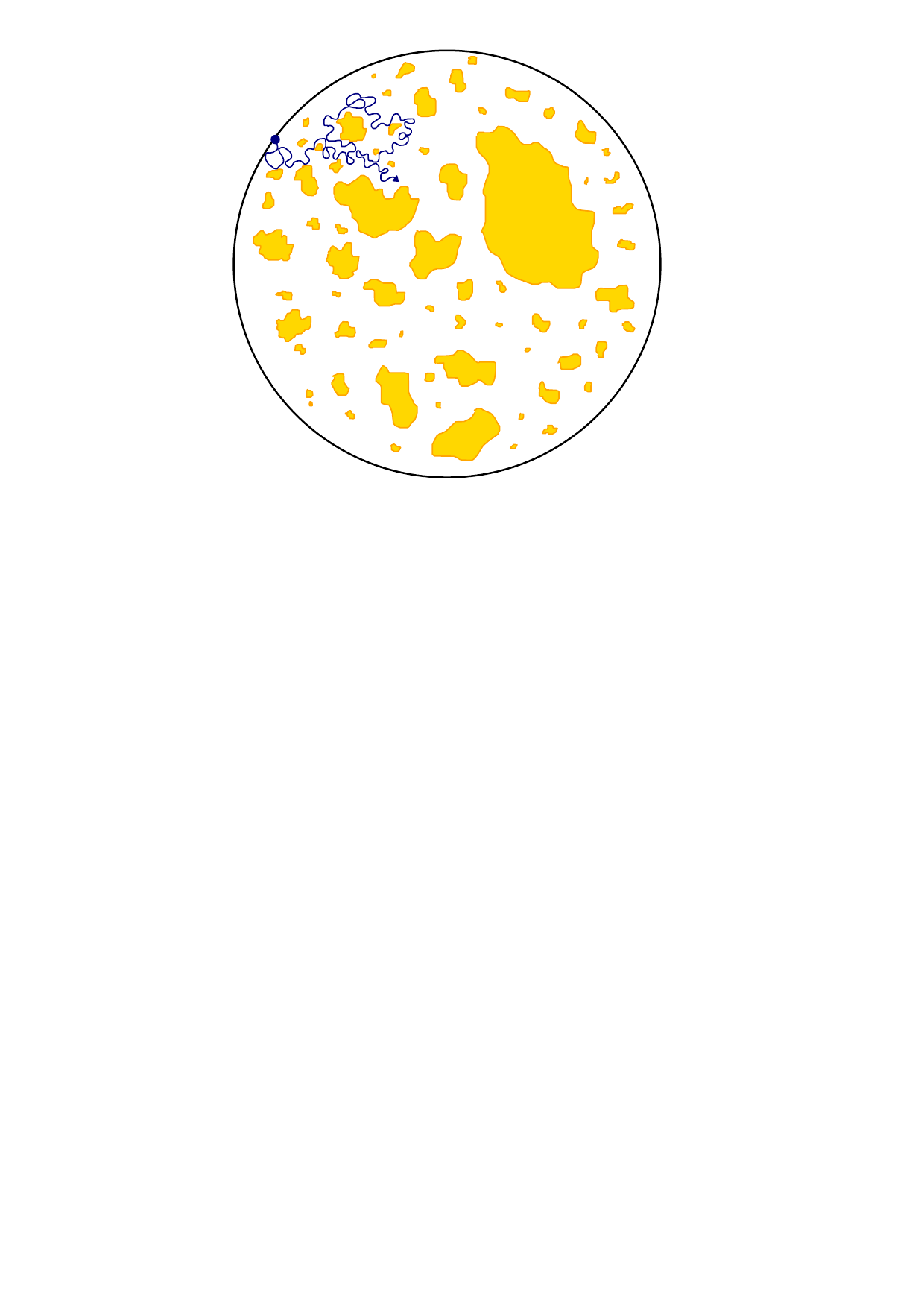}} 
  \hspace{0.5cm}
  \subfigure[]{\includegraphics[scale=0.5,page=3]{Figures/CLE-CPI.pdf}} 
  \hspace{0.5cm}
  \subfigure[]{\includegraphics[scale=0.5,page=5]{Figures/CLE-CPI.pdf}} 
  \caption{Different situations when the branch towards $z$ has a jump in \cite{MSW}: (a) the CPI discovers a new $\cle_{\kappa}$ loop; (b) the CPI hits the boundary {of $\D$} (or itself); (c) the CPI hits a previously visited $\cle_{\kappa}$ loop. The first case corresponds to positive jumps, (b) and (c) to negative jumps.}
  \label{fig: CLE on LQG MSW}
\end{figure}

%\vspace{-2em}

\paragraph{Open questions.} 
It is interesting to note that our setup also makes sense for general $\gamma$, as presented in \cref{sec: intro LQG}.  
It would be interesting to describe the branching structures obtained in that case. We emphasise that in order to hope for a Markovian exploration for general $\gamma$, one needs to record not only the total boundary length, but the pair of left/right boundary lengths. We conjecture that the process obtained by recording the left and right boundary lengths of the branches toward all the points of the $\gamma$--quantum disc is some two-dimensional version of a growth-fragmentation process {or self-similar Markov tree \cite{bertoin2024self}}.
{We leave this to future work.}

The present paper also raises many questions of independent interest for self-similar Markov processes. For example, it is not clear whether one can construct other (spectrally positive) $\alpha$--stable processes conditioned to stay positive using a variant of the two-dimensional construction presented in \cref{prop: intro pathwise 3/2 stable cond}.

\subsection{Outline of the article}

In \cref{sec: prelims} we provide some necessary background on positive self-similar Markov processes and L\'{e}vy processes, Poisson point processes, growth fragmentations, and the connection between Brownian motion and SLE-decorated Liouville quantum gravity (the so-called mating of trees encoding). In \cref{sec: general cones} we define and study forward and backward Brownian cone exursions with general parameter, which involves formulating and proving a \emph{Bismut} description for a weighted version of the backward excursion law. We then move on in \cref{sec: special cones} to focus on the case $\gamma=\sqrt{8/3}$ and prove several special properties including an explicit joint law for the displacement and duration of an excursion, as well as a target invariance property. In particular, 
\cref{prop: intro law duration} and \cref{cor: intro law area} are proved in \cref{sec: joint law}, while \cref{prop: intro target inv cone} and \cref{cor: intro target inv lqg} are proved in \cref{sec: target invariance}. \cref{prop: intro pathwise 3/2 stable cond} is also proved in \cref{sec: BM construction conditioning}. In the final section, we move on to questions concerning the growth fragmentation process and the proof of our main theorem. We start by describing a special branch of the growth fragmentation $\mathbf{Z}$ defined in \eqref{eq: GF cones}, which is targeted towards a uniformly chosen point in the excursion. Namely, \cref{prop: law unif intro} and \cref{cor: law unif intro} are proved in \cref{sec: law unif}. Transforming to the law of the locally largest fragment in \cref{sec: law loc largest}, \cref{thm: main} is then proved in {\cref{sec:proof of main thm}}. \cref{thm: mainLQG} follows directly from \cref{thm: main} as explained above. Finally \cref{thm: martingale} is proved in \cref{sec: few prop of Zbf}.

%-------------------------------------------------------------------%
%             ACKNOWLEDGEMENT
%-------------------------------------------------------------------%
\paragraph{Acknowledgements.} 

{We thank Élie Aïdékon, Andreas Kyprianou and Juan Carlos Pardo for helpful discussions, and Jean-François Le Gall and Armand Riera for their interest.} 
The research of E.P.\,is supported by UKRI Future Leader's Fellowship MR/W008513. W.D.S acknowledges the support of the Austrian Science Fund (FWF) grant on “Emergent branching structures in random geometry” (DOI: 10.55776/ESP534).

%-------------------------------------------------------------------%
%             INDEX OF NOTATION
%-------------------------------------------------------------------%
%\newpage
\section*{Index of notation}
\begin{flushleft}
  \begin{tabular}{|r|l|}
    \hline 
    $\theta$& parameter in $(\pi/2,\pi)$ \\
    $\gamma=\sqrt{4\theta/\pi}$ & parameter in $(\sqrt{2},2)$\\
    $\kappa=\gamma^2$ & parameter in $(2,4)$ \\
    $\kappa'=16/\kappa$ & parameter in $(4,8)$ \\
    ${\mathbbm{a}^2=2/\sin(\theta)}$ & mating of trees variance \\
    $\alpha=\pi/\theta=\kappa'/4$ & parameter in $(1,2)$ \\
    $\R_+$ & $[0,\infty)$ \\
    $\R_+^*$ & $(0,\infty)$ \\
    $\mathcal{C}_\theta=\{z\in \mathbb{C}: \arg(z)\in (0,\theta)\}$ &  cone of angle $\theta$ \\
    $W$ & correlated Brownian motion in $\C$,  as in \eqref{eq: cov MoT} \\
    %forward cone time & $t>0$ s.t.\,$W_s\in W_t+\mathcal{C}_{\pi/2}$ $\forall s\in (t,t+\varepsilon)$ \\
    %& for some $\varepsilon>0$ \\
    (forward) cone-free time {$s$} for $W$ & $\nexists t\in [0,s)$ s.t.\, $W_r\in W_t+(\R_+^*)^2$ for $r\in (t,s]$ \\
    backward cone time {$t$} for $W$ &  $t>0$ s.t.\,$W_{{r}}\in W_t+(\R_+^*)^2$ for all ${r}\in [0,t)$ \\
    $\ell_\theta$ & local time on the set of {(forward)} cone-free times\\
    $\uptau_\theta$ & inverse of $\ell_\theta$ \\
    $\mathfrak{l}_\theta$ & local time on the set of backward cone times\\
    $\mathfrak{t}_\theta$ & inverse of $\mathfrak{l}_\theta$ \\
      $\varsigma_\theta^t$ & total local time towards $t\in(0,\zeta)$, see \cref{sec: intro main result cones} \\
      $\lozenge$ & cemetery state \\
      $E=\{e:[0,\zeta(e)]\to \C ; e(\zeta(e))=0\}\cup \{\lozenge\}$ & functions $e$ with finite duration $\zeta(e)$ vanishing at $\zeta(e)$ \\
%      $E_{\lozenge}$ & $E \cup \{\lozenge\}$ \\
  %    $\mathscr{E}_{\lozenge}$ & $\sigma (\mathscr{E}, \{\lozenge\})$ \\
      $(\mathbbm{e}_\theta(s); s>0)$ & PPP of forward cone excursions for $W$ \\
      $\mathbbm{n}_\theta$ & intensity measure of $(\mathbbm{e}_\theta(s); s>0)$ \\
      $(\mathfrak{e}_\theta(s); s>0)$ & PPP of backward cone excursions for $W$ \\
      $\mathfrak{n}_\theta$ & intensity measure of $(\mathfrak{e}_\theta(s); s>0)$ \\
      $\overline{\mathfrak{n}}_\theta$ & measure on $\R_+ \times E$, see \eqref{thm: Bismut} \\
    {$P^z_\theta$}, $z\in(\R_+^{*})^2$ & law $\nlg_{\theta}$ conditioned on ${e(0)}=z$  \\
    {$P^z_\theta$}, $z\in \partial \R_+^{2} \setminus \{0\}$ & law $\ndms_{\theta}$ conditioned on {$e(0)=z$}  \\
    %${P}^z_\theta$ & time reversal of $P_\theta^z$ \\
    $\mathbf{Z}$ & growth-fragmentation  in a cone excursion, see \eqref{eq: GF cones} \\
   $Z^t$ & size of fragment towards $t$, see \eqref{eq: cal Z and Z^t} \\
    $(L,R)$ & left/right quantum boundary length process in LQG \\
    $\xi^{\uparrow}$ & Lévy process $\xi$ conditioned to stay positive \\
    $\xi^{\searrow}$ &$\xi$ conditioned to be absorbed continuously at $0$ \\
    $\mathcal{U}$ & Ulam tree \\
    $\mathbf{X}^{\alpha}$ & growth-fragmentation process defined in \cref{sec: GF process} \\
    
    $\mathbb{P}_{x,y}$ & law 
     $(W,W')$ independent, $W_0=(0,0)$ $W'_0=(x,y)$  \\
    $Q_L$ & averaged version of $\mathbb{P}_{x,y}$ when $x+y=L$, see \eqref{eq: Q_L def} \\
    %$\sfrak$ & time-change between forward and \\
    %& backward cone points, see \eqref{eq: sfrak def} \\
    %$Y$ &  vector in $\R_+^2$, see \eqref{eq: def Y(a) and S(a)} \\
    %$S$ & sum of co-ordinates of $Y$ \\
    %$(Z^*(a), 0\le a < \varsigma^*)$ & locally largest fragment under $\nlg$ or $P^z$ \\
    %$(\Mcal(n), n\in \N)$ & genealogical martingale, see \cref{sec: few prop of Zbf} \\
    \hline
  \end{tabular}
\end{flushleft}
\newpage

%-------------------------------------------------------------------%
%             PRELIMINARIES
%-------------------------------------------------------------------%

\section{Preliminaries}\label{sec: prelims}

Throughout the article, $\theta\in (\pi/2,\pi)$ is a parameter, and we define several other quantities which will always implicitly be associated with $\theta$, namely:
\begin{equation}
  \label{eq: notation}
  \alpha := \frac{\pi}{\theta}\in (1,2) \quad ; \quad \gamma:=\sqrt{\frac{4\theta}{\pi}}\in(\sqrt{2},2) \quad ; \quad \kappa=\gamma^2\in (2,4)\quad \text{and} \quad \kappa'=\frac{16}{\kappa}=\frac{4\pi}{\theta}\in (4,8).
\end{equation}

%\subsection{Correlated planar Brownian motion}

%-------------------------------------------------------------------%
%               pssMP
%-------------------------------------------------------------------%

\subsection{Positive self-similar Markov processes and Lévy processes} 
\label{sec: pssMp and Levy}
We set some notation and conventions for the self-similar Markov processes and Lévy processes showing up in this work.

\paragraph{Lévy processes.} A (killed) Lévy process is a càdlàg process $\xi = (\xi(t),t\ge 0)$ with independent and stationary increments, starting from $\xi(0)=0$ and possibly killed after an independent exponential time. Its (potentially infinite) Laplace exponent is defined as
\[
  \Psi(q) :=  \log \Eb [\mathrm{e}^{q\xi(1)}], \quad q\in \R,
\]
and satisfies $\Eb [\mathrm{e}^{q\xi(t)}] = \mathrm{e}^{t\Psi(q)}$ for all $t\ge 0$ and $q\in \R$. We henceforth assume that $\Psi(q)<\infty$ on an open interval of $q\in \R$. By a version of the Lévy-Khintchine theorem, there exists a \emph{unique} triplet $(a,\sigma,\Lambda)$ with $a\in \R$, $\sigma\ge 0$ and $\Lambda$ a measure satisfying $\int_{\R} (1\wedge y^2) \Lambda(\mathrm{d}y)$, such that $\Psi$ can be written 
\begin{equation} \label{eq: Levy-Khintchine def}
  \Psi(q) 
  =
  -\mathrm{k} + aq + \frac12 \sigma q^2 + \int_{-\infty}^\infty (\mathrm{e}^{qy}-1-q(\mathrm{e}^y-1)\mathds{1}_{y\le 1}) \Lambda(\mathrm{d}y),
\end{equation}
whenever this makes sense. The measure $\Lambda$ is called the \emph{Lévy measure} of $\xi$. In the sequel, we shall sometimes omit the cutoff in the indicator appearing in \eqref{eq: Levy-Khintchine def} when the Lévy measure satisfies $\int_{-\infty}^{\infty} \mathrm{e}^{y} \Lambda(\mathrm{d}y)<\infty$ (this has the effect of changing $a$). Some of the Lévy processes considered in this paper are \textbf{spectrally positive} (\textit{resp.} \textbf{negative}), meaning that they almost surely have no negative (\textit{resp.} positive) jumps -- equivalently $\Lambda(-\infty,0)=0$ (\textit{resp.} $\Lambda(0,\infty)=0$). A \textbf{subordinator} is a non-decreasing Lévy process.

A particularly important class of Lévy processes is that of \textbf{stable} Lévy processes. For $\alpha\in (0,2)$, a Lévy process is called $\alpha$--stable if for all $c>0$, 
\[
  (c\xi(c^{-\alpha}t), t\ge 0)  
  \overset{\text{d}}{=}
  \xi.
\]
The corresponding $\alpha$ is called the \emph{index} of $\xi$. The Lévy measure of a spectrally positive $\alpha$--stable process is of the form $\Lambda(\mathrm{d}x) = c_{\Lambda} x^{-(1+\alpha)} \mathds{1}_{x>0} \mathrm{d}x$ for some constant $c_\Lambda>0$. Its Laplace exponent is then given by the formula (see \cite[Example 46.7]{Sato}):
\begin{equation} \label{eq: Psi stable >0}
  \Psi_\alpha(-q) =   c_{\Lambda} \Gamma(-\alpha) q^\alpha, \quad q\ge 0.
\end{equation}
We refer to \cite{Ber,kyprianou2014fluctuations,KP} for more on Lévy processes.

\paragraph{Positive self-similar Markov processes.} Let $X$ be a regular Feller process with values in $\R_+$, and denote by $\Pb_x$ its law starting from $x$. Then $X$ is said to be a positive self-similar Markov process with index $\alpha>0$ if for all $c,x>0$, under $\Pb_x$, $(cX(c^{-\alpha}t), t\ge 0)$ has law $\Pb_{cx}$. 

An important property of this class of processes is that they are in bijection with Lévy processes up to killing. More precisely, let $T_0$ the hitting time of $0$ by $X$. Then any such process $X$ can be represented under $\Pb_x$ through the Lamperti representation \cite{Lam} as
\begin{equation} \label{eq: lamperti}
  X(t) 
  =
  x\mathrm{e}^{\xi(\tau(x^{-\alpha}t))},  \quad t<T_0,
\end{equation}
where $\xi$ is a Lévy process and 
\begin{equation} \label{eq: Lamperti time change}
  \tau(t) 
  :=
  \inf \{s\ge 0, \; \int_0^s \mathrm{e}^{\alpha \xi(u)} \mathrm{d}u > t\}.
\end{equation}

\paragraph{Conditioned stable processes.} It will be useful to define a couple
of conditionings of the stable process $\xi$. We start with the process $\xi$
\textbf{conditioned to stay positive}, denoted $\xi^{\uparrow}$. We focus on
the case when $\xi$ is a spectrally positive $\alpha$--stable process with
$\alpha\in(1,2)$ (although only $\alpha=3/2$ is relevant to our work), and
write $\xi^\dagger$ for the process $\xi$ killed upon entering $(-\infty,0)$.
We refer to \cite{chaumont1996conditionings,CC} for a more complete exposition.
One way to make sense of $\xi^{\uparrow}$ is as the Doob
$h^{\uparrow}$--transform of $\xi^\dagger$ with $h^{\uparrow}(x):=x$. Another
way is to write down the generator of $\xi^{\uparrow}$, which is {according to \cite[Equation (3.8)]{CC}, after simplification,}
\begin{equation} \label{eq: generator conditioned stable}
  \mathscr{G}_\alpha f(y)
  :=
  \int_0^\infty (f(y+z)-f(y)-zf'(y)) \Lambda(\mathrm{d}z) 
  + \frac{1}{y} \int_0^\infty (f(y+z)-f(y)) z\Lambda(\mathrm{d}z),
  \quad f\in \text{Dom}(\mathscr{G}_\alpha),
\end{equation}
where $\text{Dom}(\mathscr{G}_\alpha)$ {is the domain of the generator $\mathscr{G}_\alpha$ and contains}  
\[\{f:[0,\infty] \to \R, \; f, xf' \text{ and } x^2f'' \text{ are continuous on } [0,\infty]\}.\]
Finally, one can describe $\xi^{\uparrow}$ as a positive self-similar Markov process with index $\alpha$. Its Lamperti exponent is then given by 
\begin{equation} \label{eq: Lamperti xi uparrow}
  \xi^\uparrow(t) 
  =
  x\mathrm{e}^{\widetilde{\xi}(\widetilde{\tau}(x^{-\alpha}t))},  
\end{equation}
where the Lévy measure of $\widetilde{\xi}$ is
\[
  \widetilde{\Lambda}(\mathrm{d}x)
  :=
  c_{\Lambda} \frac{\mathrm{e}^{2y}}{(\mathrm{e}^y-1)^{\alpha+1}} \mathrm{d}y,
\]
and its Laplace exponent can be expressed \cite[Theorem 1]{kuznetsov2013fluctuations} as
\[
  \widetilde{\Psi}(z)  
  :=
  \frac{\pi c_{\Lambda}}{\Gamma(1+\alpha)\sin(\pi\alpha)} \cdot \frac{(1+z) \Gamma(\alpha-1-z)}{\Gamma(-z)}.
\]

Another conditioning will appear in this work, in the context of the spectrally negative $\alpha$--stable process $\xi$ with $\alpha\in(1,2)$, namely the process \textbf{conditioned to be absorbed continuously} at $0$. This  process $\xi^{\searrow}$ can be constructed as a Doob $h^{\searrow}$--transform of the killed process $\xi^{\dagger}$, with $h^{\searrow}(x) := x^{\alpha-2}$. As for the above process $\xi^{\uparrow}$, there are alternative descriptions of $\xi^{\searrow}$. For future reference, we mention that the Lamperti representation of $\xi^{\searrow}$ is given as in \eqref{eq: Lamperti xi uparrow}, where $\widetilde{\xi}$ has Laplace exponent 
\begin{equation} \label{eq: Laplace xi searrow}
  \widetilde{\Psi}(q) 
  :=
  a^{\searrow}q + c_{\Lambda} \int_{-\infty}^0 (\mathrm{e}^{qy}-1-q(\mathrm{e}^y-1)) \frac{\mathrm{e}^{(\alpha-1)y}\mathrm{d}y}{(1-\mathrm{e}^{y})^{5/2}},
\end{equation}
with\footnote{We emphasise that \cite[Equation (23)]{CC} contains a typo: the constant $c_-$ should read $(-c_-)$.} $a^{\searrow} := \frac{c_{\Lambda}}{\alpha-1} - c_{\Lambda} \int_0^1 \frac{(1-x)^{\alpha-2}-1}{x^{\alpha}} \mathrm{d}x$. The above expression can be given a closed form using \cite[Theorem 1]{kuznetsov2013fluctuations}, but we will not need this.

%-------------------------------------------------------------------%
%               PPP
%-------------------------------------------------------------------%

\subsection{Poisson point processes and the compensation formula}
We provide a minimal toolbox on Poisson point processes for completeness. A few measurability issues are swept under the carpet, we refer to \cite{RY} for details.

\paragraph{Poisson point processes.}
We begin by recalling some basic definitions. Let $(\Omega,\mathscr{F}, (\mathscr{F}_t),\Pb)$ be a probability space and $(E, \mathscr{E})$ be a measurable space. By convention, we add  a cemetery point $\lozenge$ to $E$ without changing notation.
%, and define $E_{\lozenge} := E \cup \{\lozenge\}$ and $\mathscr{E}_{\lozenge} := \sigma (\mathscr{E}, \{\lozenge\})$.
\begin{Def}[Point processes]
  A \textbf{point process} is a process $e = (e_t,t>0)$ with values in $(E_{},\mathscr{E}_{})$ such that:
  \begin{itemize}
    \item[\emph{(i)}] The map $(t,\omega) \mapsto e_t(\omega)$ is $\mathscr{B}((0,\infty)) \otimes \mathscr{F}$--measurable.
    \item[\emph{(ii)}] Almost surely, the set $D := \{t>0, \; e_t(\omega) \neq \lozenge\}$ is countable.
  \end{itemize} 
\end{Def}
Property (ii) above enables us in particular to consider sums over $s\in D$ of positive elements $f(s)$; it will be convenient to use the slightly abusive notation $\sum_{s>0} f(s)$ for such sums. An important quantity for point processes is the counting function
\[
  N^{X}_{s,t} 
  :=
  \sum_{s<r\le t} \mathds{1}_{\{e_s\in X\}},
  \quad 0\le s<t \text{ and } X\in \mathscr{E}.  
\]
The point process $e$ is called \textbf{$\sigma$--discrete} if there exists an exhaustion $(E_n)$ of $E$ such that almost surely, for all $n$, $N^{E_n}_{0,t}$ is finite for all $t$.

\begin{Def}[Poisson point processes]
  \label{def: ppp}
  An $(\mathscr{F}_t$)--\textbf{Poisson point process} is a $\sigma$--discrete point process $e$ such that:
  \begin{itemize}
    \item[\emph{(i)}] $e$ is $\mathscr{F}_t$--adapted.
    \item[\emph{(ii)}] For $s,t>0$ and $X\in \mathscr{E}$, the conditional law of $N^X_{s,s+t}$ given $\mathscr{F}_s$ is that of $N^X_{0,t}$.
  \end{itemize}  
\end{Def}
The examples of Poisson point processes we have in mind for application are the (cone) excursion processes that will be defined in \cref{sec: general cones}. We conclude with another definition.

\begin{Def}[Intensity measure]
  The quantity 
  \[
    n(X)
    :=
    \frac{1}{t} \Eb[N_{0,t}^X], 
    \quad X\in \mathscr{E},   
  \]
  is independent of $t$ and defines a $\sigma$--finite measure on $\mathscr{E}$, called the \textbf{intensity measure} of the Poisson point process $e$.
\end{Def}

\paragraph{The compensation formula.} We devote a paragraph to the following claim, which expresses a key formula for Poisson point processes allowing to compute sums over the Poisson point process. This formula goes by different names in the literature, such as the Master formula, the compensation formula or Campbell's formula. We stick to the former terminology in the sequel.
\begin{Prop}[Compensation formula]
  Let $H: \R_+ \times \Omega \times E_{} \to \R_+$ an $(\mathscr{F}_t)$--predictable process, with $H(t,\omega,\lozenge) = 0$ for all $t,\omega$. Then
  \[
    \Eb\bigg[ \sum_{s>0} H(s,\omega,e_s(\omega))\bigg]  
    =
    \Eb\bigg[ \int_0^\infty \mathrm{d}s \int_E H(s,\omega, e) n(\mathrm{d}e)\bigg].
  \]
\end{Prop}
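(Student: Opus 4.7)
The compensation formula is a classical identity of ``Palm/Campbell'' type for Poisson point processes, and the cleanest route is to verify it on a generating class of predictable processes and then extend by the functional monotone class theorem. The key input is property (ii) of \cref{def: ppp}, which says that conditionally on $\mathscr{F}_s$, the count $N^X_{s,s+t}$ has the same (Poisson) law as $N^X_{0,t}$; together with the independence of increments of a point process built from a PPP, this pins down the expectation on simple predictable ``rectangles''.

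\textbf{Generating class.} First I would check the identity on processes of the form
\[
  H(s,\omega,e) = \mathbf{1}_{(a,b]}(s)\,\mathbf{1}_F(\omega)\,\mathbf{1}_X(e),
  \quad a<b,\; F\in \mathscr{F}_a,\; X\in \mathscr{E},\; \lozenge\notin X,\; X\subset E_n,
\]
with $(E_n)$ the exhaustion from $\sigma$-discreteness (so $n(X)<\infty$). Such indicators generate the predictable $\sigma$-algebra on $\R_+\times \Omega$ tensored with $\mathscr{E}$, and vanish on the cemetery. For this $H$ we have $\sum_{s>0} H(s,\omega,e_s(\omega)) = \mathbf{1}_F\, N^X_{a,b}$, so that taking expectations and conditioning on $\mathscr{F}_a$,
\[
  \Eb\Big[\sum_{s>0} H(s,\omega,e_s)\Big]
  = \Eb\big[\mathbf{1}_F\,\Eb[N^X_{a,b}\mid \mathscr{F}_a]\big]
  = \Pb(F)\,\Eb[N^X_{0,b-a}]
  = \Pb(F)\,(b-a)\,n(X),
\]
by the definition of the intensity measure $n$. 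Fubini applied to the right-hand side of the claimed identity yields the same value, so the formula holds on this class.

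\textbf{Extension.} With the identity in hand on rectangles, I would fix $X\subset E_n$ with $\lozenge\notin X$ and run a Dynkin/$\pi$-$\lambda$ argument on the predictable $\sigma$-algebra: the collection of predictable sets $B\subset \R_+\times \Omega$ for which the identity holds with $H=\mathbf{1}_B\mathbf{1}_X$ is a $d$-system containing the $\pi$-system of rectangles $(a,b]\times F$ with $F\in \mathscr{F}_a$, hence equals the whole predictable $\sigma$-algebra. Next, fixing a predictable $B$, monotone convergence allows me to remove the restriction $X\subset E_n$ by taking $X\cap E_n \uparrow X$, reaching general $X\in \mathscr{E}$ with $\lozenge\notin X$ (both sides may equal $+\infty$ when $n(X)=\infty$). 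Finally, a standard functional monotone class argument writes any nonnegative predictable $H$ with $H(\cdot,\cdot,\lozenge)\equiv 0$ as an increasing limit of nonnegative simple combinations of the indicators above, and monotone convergence together with Tonelli yield the formula for all such $H$.

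\textbf{Main obstacle.} The argument is technically routine once the framework is set up, and the only real bookkeeping issue is the $\sigma$-finiteness of $n$: one cannot directly compute on sets $X$ with $n(X)=\infty$, so the generating class must be restricted to $X\subset E_n$ and the identity transported to general $X$ by monotone convergence. The measurability of $(s,\omega)\mapsto e_s(\omega)$ required to make $\sum_{s>0}H(s,\omega,e_s(\omega))$ a genuine random variable, and the fact that predictable rectangles of the form $(a,b]\times F$ with $F\in \mathscr{F}_a$ generate the predictable $\sigma$-algebra, are the standard measure-theoretic ingredients swept under the carpet in the style of \cite{RY}.
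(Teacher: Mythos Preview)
Your sketch is correct and follows the standard monotone-class route to the compensation formula. The paper does not actually prove this proposition: it is stated as part of a ``minimal toolbox'' with measurability issues explicitly deferred to \cite{RY}, so there is no in-paper argument to compare against. Your approach is precisely the textbook one (verify on predictable rectangles $(a,b]\times F\times X$ with $F\in\mathscr{F}_a$ and $X\subset E_n$, then extend by Dynkin and monotone convergence), and your handling of the $\sigma$-finiteness issue via the exhaustion $(E_n)$ is the right bookkeeping point.
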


%-------------------------------------------------------------------%
%               GF
%-------------------------------------------------------------------%
\subsection{Growth-fragmentation processes}
\label{sec: GF process}

\paragraph{Construction.}
We recall from \cite{Ber-GF} the definition of growth-fragmentation processes. The building block is a positive self-similar process $X$, in the sense of \cref{sec: pssMp and Levy}. We assume that $X$ is either absorbed at a cemetery point $\partial$ after a finite time $\zeta$ or converges to $0$ as $t\rightarrow\infty$ under $\Pb_x$ for all $x$. We further write $\Delta X(r) = X(r)-X(r^-)$ for the jump of $X$ at time $r$.

One can define the \emph{cell system} driven by $Z$ as follows. We use the Ulam tree $\mathcal{U} = \bigcup_{i=0}^{\infty} \N^i$, where $\N=\{1,2,\ldots\}$, to encode the genealogy of the cells (we write $\N^0 = \{\varnothing\}$, and $\varnothing$ is called the common ancestor). A node $u\in \mathcal{U}$ is a list $(u_1,\ldots, u_i)$ of positive integers where $|u|=i$ is the \emph{generation} of $u$. The children of $u$ are the lists in $\N^{i+1}$ of the form $(u_1,\ldots,u_i,k)$, with $k\in \N$. 
To define the cell system $\mathcal{X} = (\mathcal{X}_u, u\in \mathcal{U})$ driven by $X$, we first define a copy $\mathcal{X}_{\varnothing}$ of $X$, started from some initial mass $x>0$, and set $b_{\varnothing}=0$. Now record all the \emph{negative} jumps of $\mathcal{X}_{\varnothing}$. By our assumptions on the asymptotic behaviour of $X$, we may rank the sequence of these jump sizes and times $(x_1,\beta_1), (x_2,\beta_2),\ldots$ of $-\mathcal{X}_{\varnothing}$ by descending order of the $x_i$'s. Conditionally on this sequence, we define independent copies $\mathcal{X}_i, i\in\N,$ of $X$, where each $\mathcal{X}_i$ starts from $x_i$. We also set $b_i = b_{\varnothing}+\beta_i$ for the birth time of particle $i\in\N$. This constructs the first generation of the cell system. By recursion, one defines the $n$-th generation given generations $1,\ldots,n-1$ in the same way. In short, the cell labelled by $u=(u_1,\ldots, u_n)\in\N^n$ is born from $u'=(u_1,\ldots, u_{n-1})\in\N^{n-1}$ at time $b_u=b_{u'}+\beta_{u_n}$, where $\beta_{u_n}$ is the time of the $u_n$-th largest jump of $\mathcal{X}_{u'}$, and conditionally on $\Delta \mathcal{X}_{u'}(\beta_{u_n})=-y$, $\mathcal{X}_u$ is a copy of $X$ under $\Pb_y$ and is independent of the other daughter cells at generation $n$. We write $\zeta_u$ for the lifetime of the particle $u$. 

This uniquely defines the law $\mathcal{P}_x$ of the cell system $(\mathcal{X}_u(t),u\in \mathcal{U})$ driven by $X$ and started from $x>0$. The cell system is meant to describe the evolution of a population of cells $u$ with trait $\mathcal{X}_u(t)$ evolving in time $t$ and dividing in a binary way. 

The \textbf{growth-fragmentation process} $\Xbf$ is then defined as
\[
  \Xbf(t) := \left\{\left\{ \mathcal{X}_u(t-b_u), \; u\in\mathcal{U} \; \text{and} \; b_u\le t<b_u+\zeta_u \right\}\right\},
  \quad t\ge 0,
\]
where the double brackets denote multisets. In other words, at time $t\ge 0$, $\Xbf(t)$ is the collection of the sizes of all the cells alive in the system at time $t$.
Growth-fragmentation processes have been studied in general in \cite{BBCK}. They have been proved to arise in a large variety of contexts, from random planar maps \cite{BCK,BBCK} to Brownian geometry \cite{LGR} and Liouville quantum gravity \cite{MSW}, as well as excursion theory \cite{AD,DS,da2023spatial}. The present paper takes the latter viewpoint and reveals a growth-fragmentation process embedded in the $\frac{2\pi}{3}$--cone excursions of planar Brownian motion (or, alternatively, in $\sle_6$--explorations of the pure gravity quantum disc).

\paragraph{The family of growth-fragmentation processes $\Xbf^{\alpha}$.} In \cite{BBCK}, Bertoin, Budd, Curien and Kortchemski introduced an important family of growth-fragmentation processes, which is related to stable processes and shows up in the scaling limit of the peeling exploration of Boltzmann planar maps. This family is indexed by a self-similarity parameter $\alpha \in (\frac12,\frac32]$. Introduce the Lévy measure
\begin{equation} \label{eq: Pi alpha measure}
  \Pi_\alpha(\mathrm{d}y)
  :=
  \frac{\Gamma(\alpha+1)}{\pi}  \left(\frac{\mathrm{e}^{-\alpha y}}{(1-\mathrm{e}^y)^{\alpha+1}} \mathds{1}_{\{-\log(2)<y<0\}} + \sin(\pi(\alpha-1/2)) \cdot \frac{\mathrm{e}^{-\alpha y}}{(\mathrm{e}^y-1)^{\alpha+1}} \mathds{1}_{\{y>0\}}  \right) \mathrm{d}y,
\end{equation}
and the drift coefficient\footnote{The expression for $d_{\alpha}$ still makes sense at $\alpha=1$ after compensating the pole of $\Gamma$ at $0$ with the zero of the $\sin$ function.}
\[
  d_\alpha :=
  -\frac{\Gamma(2-\alpha)}{4\Gamma(2-2\alpha)\sin(\pi\alpha)} - \int_{-\log(2)}^0 (1-\mathrm{e}^y)^2 \Pi_\alpha(\mathrm{d}y) - \int_{0}^\infty (1-\mathrm{e}^y)^2 \Pi_\alpha(\mathrm{d}y).
\]
Let $X^{\alpha}$ the positive $\alpha$--self-similar Markov process given under $\Pb_x$ as the Lamperti transform \eqref{eq: lamperti}, where $\xi$ is the Lévy process with Laplace transform
\[
  \Phi_{\alpha}(q)  := 
  d_\alpha q + \int_{-\infty}^{\infty} (\mathrm{e}^{qy}-1- q(\mathrm{e}^y-1)) \Pi_\alpha(\mathrm{d}y).
\]
Observe that the Lévy measure \eqref{eq: Pi alpha measure} is carried on $(-\log(2),\infty)$, which in turn ensures that the process $X^{\alpha}$ never more than halves itself during a jump. This property corresponds to a ``canonical'' choice of driving cell process for the growth-fragmentation, called the \emph{locally largest evolution} in \cite{BBCK}. 
The process $\Xbf^{\alpha}$ is then defined to be the growth-fragmentation process driven by $X^{\alpha}$. Our paper is concerned with the process $\Xbf^{3/2}$.

%-------------------------------------------------------------------------------------%
%       Background on curve-decorated quantum surfaces
%-------------------------------------------------------------------------------------%
\subsection{SLE, LQG and the mating-of-trees encoding} \label{sec: mot}
We recall for completeness in this section the definitions of the main objects of interest in this work: the Gaussian free field and quantum discs. 

\paragraph{Neumann Gaussian free field.} Let $\Hb=\{z\in \C: \Im(z)>0\}$ be the upper half-plane of $\C$. 
% $D\in\C$ any proper, simply connected domain of the complex plane. Denote by $\overline{\mathcal{D}}(D)$ the space of $\mathcal{C}^{\infty}$ functions in $\overline{D}$, with finite Dirichlet energy
%\[
%  (f, f)_{\nabla} := \frac{1}{2\pi}\int_D \nabla f \cdot \nabla f <\infty, 
%\]
%and which are defined \emph{modulo constants}. That is, we identify two such functions if their difference is constant. The operation $(\cdot,\cdot)_{\nabla}$ defines an inner product on $\overline{\mathcal{D}}(D)$, and we consider the closure $\overline{H}^1(D)$ of $\overline{\mathcal{D}}(D)$ with respect to this inner product. A \emph{distribution modulo constants} is a continuous linear functional on the space of smooth compactly supported test functions $f$ on $D$ such that $\int_D f  = 0$. The space of distributions modulo constants is then endowed with the natural weak--$\star$ topology.
\begin{Def}(Neumann GFF on $\Hb$ with zero average on the upper unit semicircle).
\label{def: ngff}

\noindent Let $
\{ (h,f)\}_{f\in \mathcal{C}^{\infty}(\Hb)}
$
be the centered Gaussian process indexed by smooth compactly supported test functions $f\in \mathcal{C}_c^{\infty}(\Hb)$ which has covariance 
\[
\Cov((h,f)(h,g))=\int\int G^{\Hb}(x,y) f(x)g(y) \mathrm{d}x \mathrm{d}y, 
\]
for $f,g\in \mathcal{C}^\infty(\Hb)$, where
\[
G^{\Hb}(x,y)=-\log(|x-y|)-\log(|x-\bar{y}|)+2(\log(|x|)\vee 0)+2(\log(|y|) \vee 0).\
\]
  
  %Let $(\varphi_i)_{i\ge 1}$ be an orthonormal basis of $\overline{H}^1(D)$, and $(X_i)_{i\ge 1}$ be independent $\mathcal{N}(0,1)$--distributed random variables. Then, the random sum 
%  \[
%    h_n := \sum_{i=1}^n X_i \varphi_i, \quad n\ge 1,
%  \]
%  converges almost surely in the space of distributions modulo constants. Moreover, the law of the limit is independent of the choice of orthonormal basis. We call this limit the \emph{Neumann GFF}.
\end{Def}

It is well-known (see for example the lectures notes \cite{BP}) that there exists a version of the Neumann GFF on $\Hb$ which almost surely defines a distribution on $\Hb$, and in fact, can be extended to define a distribution on  the boundary $\partial \Hb = \R$ as well. 

If one views the Neumann GFF  as a distribution \emph{modulo additive constants}, (i.e.\, as a continuous linear functional on the space of smooth compactly supported test functions $f$ on $D$ such that $\int_D f  = 0$) then the law of the Neumann GFF (modulo constants) is invariant under conformal automorphisms of $\Hb$. The Neumann GFF  can thus be defined (modulo constants) unambiguously in any simply connected domain $D\subset \C$ by taking the image of a Neumann GFF in $\Hb$ (modulo constants) under a conformal map from $\Hb$ to $D$. One can then define the Neumann GFF in $D$ as a random distribution by fixing the additive constant in some way. %enjoys a conformal invariance property, and the domain Markov property. On the other hand, we stress that a Neumann GFF is only defined up to a constant. When working with a Neumann GFF, one may want to fix the additive constant using various normalisation procedures. 
%Below we will consider the Neumann GFF on the strip $\mathcal{S} := \R \times i(0,\pi) $ with average zero on $[0,i\pi]$, which is then simply the image of $h$ from Definition \ref{def: ngff} under the map $z\mapsto \log(z)$ from $\Hb$ to $\mathcal{S}$. 

Now suppose that $\tilde{h}$ is a Neumann GFF in some domain $D$, with the additive constant  fixed in some way, and consider $h=\tilde{h}+g$, where $g$ is a random continuous function on $D$. Then one can define, for $\gamma\in(0,2)$, the so-called $\gamma$--LQG area measure by the limit in probability
\begin{equation} \label{eq: prelim LQG measure}
  \mu^{\gamma}_h(\mathrm{d}z) := \underset{\varepsilon\rightarrow 0}{\lim} \; \varepsilon^{\gamma^2/2} \mathrm{e}^{\gamma h_{\varepsilon}(z)} \mathrm{d}z,
\end{equation}
%along a dyadic subsequence, 
where $h_{\varepsilon}(z)$ denotes the average of the field $h$ on the circle with radius $\varepsilon$ centred at $z$ \cite{Dup-She}. Likewise, one can define the $\gamma$--LQG boundary length measure $\nu^{\gamma}_h$ of a segment of $\partial D$ where $g$ extends continuously. These constructions can be seen as instances of so-called \emph{Gaussian multiplicative chaos} \cite{Kah}, where one tries to construct measures defined as the exponential of a $\log$--correlated Gaussian field. 
It has also been proved in an important paper of Sheffield \cite{She-zip} that one can construct the \textbf{quantum boundary length measure} $\nu_h^{\gamma}$ of more general curves in $D$, including $\sle_{\kappa}$ or $\sle_{\kappa'}$ type curves that are independent of the field, with $\kappa=\gamma^2$ and $\kappa'=16/\gamma^2$.

A \textbf{$\gamma$--quantum surface} is an equivalence class of pairs $(D,h)$ with $D$ a simply connected domain and $h$ a distribution on $D$, where $(D,h)$ and $(D',h')$ are equivalent if $h$ and $h'$ satisfy the change of co-ordinates formula 
\begin{equation} \label{eq: change co-ordinates}
	h' := h \circ f^{-1} + Q \log |(f^{-1})'|, \quad Q:= \frac{\gamma}{2}+\frac{2}{\gamma},
\end{equation}
for some conformal map $f:D\rightarrow D'$. %Informally, this corresponds to the  pair $(D,h)$ should be viewed as the surface parametrised by $D$, with area measure $\mu_h^{\gamma}$. 
This definition of equivalence is chosen so that if $h$ is of the form $\tilde{h}+g$ (as above {\eqref{eq: prelim LQG measure}}) and $h'=h\circ f^{-1}+Q\log|(f^{-1})'|$, then $\mu^{\gamma}_h \circ f^{-1} = \mu^{\gamma}_{h'}$ and $\nu^{\gamma}_h \circ f^{-1} = \nu^{\gamma}_{h'}$ almost surely; the latter equality holding wherever the measures $\nu^\gamma$ are defined \cite{Dup-She,SW}. %for all domains $D'$ and conformal maps $f$ \cite{SW}  other words, on this surface, the distances and areas have been distorted by the measure \eqref{eq: prelim LQG measure}: an infinitesimal element of area $\mathrm{d}z$ now corresponds to an area $\mathrm{e}^{\gamma h(z)}\mathrm{d}z$, which favours points around which the free field $h$ is large (we stress once again that $h$ is not defined pointwise). Moreover, we want the notion of quantum surface to encode the \emph{conformal structure} of $(D,h)$. Indeed, the measures $\mu_h^{\gamma}$ and $\nu_h^{\gamma}$ enjoy the following conformal covariance property. Let $D'\subset \C$ be another simply connected domain, and $f:D\rightarrow D'$ a conformal map taking $D$ onto $D'$. Then it follows from \cite{Dup-She} that $\mu_h \circ f^{-1} = \mu_{h'}$ almost surely, where
%\begin{equation} \label{eq: change co-ordinates}
 % h' := h \circ f^{-1} + Q \log |(f^{-1})'|,
%\end{equation}
%with $Q:= \frac{\gamma}{2}+\frac{2}{\gamma}$. In fact, the identity $\mu^{\gamma}_h \circ f^{-1} = \mu^{\gamma}_{h'}$ holds almost surely for all domains $D'$ and conformal maps $f$ \cite{SW}. This prompts us to identify $(D,h)$ and $(D',h')$ if $h$ and $h'$ satisfy the change of co-ordinates formula \eqref{eq: change co-ordinates} for some conformal map $f:D\rightarrow D'$. A $\gamma$--quantum surface is then an equivalence class of such pairs $(D,h)$ \cite{Dup-She, She-zip, DMS}. 

In reality, we will often want to consider quantum surfaces with some distinguished points on $D\cup\partial D$ or some extra decoration. In this case we introduce equivalent classes as in \eqref{eq: change co-ordinates}, except that we also require that $f$ maps the decorations of $D$ (\textit{e.g.} the marked points) onto those of $D'$.

Quantum surfaces conjecturally correspond to the scaling limits of {critical} random planar maps. In this setting, the measures $\mu^{\gamma}_h$ and $\nu_h^{\gamma}$ are expected to be the scaling limits of the counting measures on vertices and on boundary vertices respectively. This is already known for a few models of planar maps conformally embedded in the plane via the Tutte embedding \cite{GMS-Tutte} or the Cardy embedding \cite{HS}. For several models of planar maps chosen uniformly at random, this has also been proved in the so-called Gromov-Hausdorff-Prokhorov topology: see \cite{LG-BrownianMap, Mie,BM,GM-quadrangulations,BMR}. The present work focuses on the case $\gamma=\sqrt{8/3}$, sometimes called \emph{pure gravity}, and associated with uniform random planar maps. 
We denote $\mu_h=\mu_h^{\gamma}$ and $\nu_h=\nu_h^{\gamma}$ for $\gamma=\sqrt{8/3}$.

\paragraph{Quantum discs.} 
The \textbf{unit boundary length quantum disc} \cite{DMS} is a specific instance of quantum surface which has 
%the topology of a disc with random (but a.s. finite) quantum area and 
fixed quantum (i.e., measured using $\nu^\gamma$) boundary length equal to $1$. We will define the \emph{doubly marked} unit boundary length quantum disc and the \emph{singly marked} unit boundary length quantum disc, which are $\gamma$--quantum surfaces with two and one marked points respectively (see the discussion below \eqref{eq: change co-ordinates}). %are several ways to define such quantum surfaces: we will here stick to the definition in terms of Bessel excursions. We refer to \cite{MS-mot,DMS} for other constructions, such as a limiting procedure, or the characterisation of the bubbles cut out by an $\sle_{\kappa'}$, or the construction arising from conformal field theory \cite{HRV} (the equivalence of which was proved in \cite{Cer}).
%We first contruct the infinite measure on doubly marked quantum discs. 

The strip $\mathcal{S}=\R\times i(0,\pi)$ turns out to be convenient as a parametrising domain. 
We start with the Neumann GFF on $\Hb$ from Definition \ref{def: ngff} and consider its image $\tilde{h}$ on $\mathcal{S}$ under the map $z\mapsto \log z$, which is a Neumann GFF on $\mathcal{S}$ with average $0$ on $(0,i\pi)$. A direct computation verifies that as a process in $s\in \R$, the average $X_s$ of $\tilde{h}$ on the vertical segment $s+(0,i\pi)$ is an almost surely continuous function. Moreover the difference $h^\dagger=\tilde{h}-X_{\Re(\cdot)}$ (which is a function with average $0$ on each vertical segment) is independent of $X$.
We now define a new field $h$ where we will keep the zero vertical average part $h^\dagger$ the same, but replace $X$ with a different continuous function; note that $h$ therefore has the law of a Neumann GFF on $\mathcal{S}$ plus a continuous function. More precisely, we define the random continuous function $Y$ on $\R$ by 
\begin{equation}\label{eq:Y}
	Y_t =\begin{cases} B_{2t}+(Q-\gamma) t & t\ge 0 \\ \widehat{B}_{-2t}+(Q-\gamma)(-t) & t<0 \end{cases}
\end{equation}
where $B, \widehat{B}$ are independent standard linear Brownian motions defined for $t\ge 0$, started from 0 and conditioned that $B_{2t} + (Q - \gamma)t$ (resp. $\widehat{B}_{2t} + (Q - \gamma)t$) is negative for all $t>0$. Then we set 
\[ h=h^\dagger + Y_{\Re(\cdot)}\]
where $h^\dagger$ has the law of $\tilde{h}-X_{\Re(\cdot)}$ as above and is sampled independently of $Y$.

Then it is possible to show \cite{DMS,HRV} (see also \cite{BP} for a proof), that $\nu_{h}^\gamma(\partial \mathcal{S})$ is almost surely finite and in fact has a finite moment of order $-(2/\gamma)(Q-\gamma)$. Therefore,
 it makes sense to weight {the law of $h$} by 
\[(\nu_{h}^\gamma(\partial S))^{-(2/\gamma)(Q-\gamma)},\]
and setting
\[
\widetilde{h}:={h}-(2/{\gamma}) \log \nu_{h}^\gamma(\partial \mathcal{S})
\]
defines a field with quantum boundary length almost surely equal to 1. 
The law of the field $\widetilde{h}$ under this reweighting, is what we define to be the law of the (doubly marked) \emph{quantum disc with boundary length $1$}. To define the law of the doubly marked quantum disc with boundary length $\ell$ we simply add the constant $(2/\gamma)\log(\ell)$ to the field. These random fields should be considered as the representatives of a random $\gamma$--quantum surface with two marked points in the sense discussed above \eqref{eq: change co-ordinates}, when parametrised by the strip $\mathcal{S}$ with the two marked points at $\pm \infty$. In other words, if $\widetilde{h}$ has the law described above, we want to view the doubly marked  quantum disc with boundary length $\ell$ as the random doubly marked quantum surface given by the equivalence class of $(\mathcal{S}, \widetilde{h}, -\infty, +\infty)$. 

One can define the law of the \emph{doubly marked quantum disc with left and right boundary lengths} $(\ell_L,\ell_R)$ as the regular conditional distribution of the doubly marked quantum disc with boundary length $\ell=\ell_L+\ell_R$ given $\nu_h^{\gamma}(\R\times \{\pi\}) = \ell_L$ and $\nu_h^{\gamma}(\R\times \{0\}) = \ell_R$.
Finally, one defines a \emph{singly marked} quantum disc, which is a $\gamma$--quantum surface with one marked point, by forgetting the second marked point: see \cite[Section 4.5]{DMS}. 

%$\mathcal{M}_{\text{disc}}^{\gamma}$ conditioned on $\nu_h^{\gamma}(\partial \mathcal{S}) = \ell$. 
%Similarly, one can define the law of the \emph{doubly marked quantum disc with left and right boundary lengths} $(\ell_L,\ell_R)$ as $\mathcal{M}_{\text{disc}}^{\gamma}$ conditioned on $\nu_h^{\gamma}(\R\times \{\pi\}) = \ell_L$ and $\nu_h^{\gamma}(\R\times \{0\}) = \ell_R$. Results on Bessel excursions provide the disintegration
%\[
  %\mathcal{M}_{\text{disc}}^{\gamma}(\cdot) = c \int_{\R_+} \mathcal{M}_{\text{disc}}^{\gamma}(\; \cdot \; | \; \nu_h^{\gamma}(\partial \mathcal{S}) = \ell) \ell^{-4/\gamma^2} \mathrm{d}\ell,
%\]
%for some constant $c>0$.

\paragraph{The mating-of-trees encoding.}
We now describe more precisely the connection between the SLE/LQG coupling and Brownian cone excursions. We will be interested in space-filling variants of $\sle$, for which we review the so-called \emph{peanosphere construction} of \cite{DMS}. We stress that although this paper is mostly concerned with the case $\gamma=\sqrt{8/3}$ and $\kappa'=6$, the results of the present section hold for general $\gamma$ and $\kappa'$ as in \eqref{eq: notation}.

When $\kappa'> 4$, the paper \cite{MS-space-filling} introduces a variant of $\sle_{\kappa'}$ \cite{Sch} which is space-filling. %The construction in the case $\kappa'\ge 8$ is easy, since $\sle_{\kappa'}$ is in itself already space-filling. However, it is relevant 
When $\kappa'\in(4,8)$, which is the regime where ordinary $\sle_{\kappa'}$ is \emph{not} space-filling the \emph{space-filling} variant can roughly be obtained by iteratively filling in bubbles that ordinary $\sle_{\kappa'}$ creates with space-filling loops, see \cite{MS-space-filling,GHS}. It can be defined on any simply connected domain $D$ from $x\in \partial D$ to $y\ne x$ on $\partial D$.  Following the mating-of-trees theorem of \cite{MS-mot,AG}, we will also consider a variant of space-filling $\sle_{\kappa'}$ called a \emph{counterclockwise space-filling $\sle_{\kappa'}$ loop} from $x$ to $x$ in $D$, see \cite{BG}. %The setting is as follows. %Consider a simply connected domain $D$, on which is drawn a space-filling $\sle_{\kappa'}$ curve $\eta$ from $x\in \partial D$ to $y\in\partial D$. Then the \textbf{counterclockwise space-filling} $\sle_{\kappa'}$ from $x$ to $x$ is defined as the limit of $\eta$ 
This is defined as the limit of the above space-filling curve when $y\rightarrow x$ in the counterclockwise direction (see \cite{BG}). 
%This is a random space-filling curve that can be defined on a simply connected domain $D$, starting and ending at some point $x\in \partial D$.  
A typical point $z$ on the boundary $\partial D$ is almost surely visited once by this curve, although some exceptional points are visited twice. An important fact is that counterclockwise space-filling $\sle_{\kappa'}$ from $x$ to $x$ in $D$ will visit all points of the first (non-exceptional) type in counterclockwise order starting from $x$. 

The \emph{mating-of-trees} theorem for the singly marked $\gamma$--quantum disc gives the law of the left/right boundary lengths in a counterclockwise space-filling $\sle_{\kappa'}$ exploration of the quantum disc, as follows. Recall the notion of cone excursion $P^z_\theta$ defined in \cref{sec: intro main result cones}.

\begin{Thm}[\!\!{\cite[Theorem 2.1]{MS-mot}},\cite{AG}] \label{thm: MoT complete}
  Let $\gamma\in (0,2)$ and $(\D,h,-i)$ be (an equivalence class representative of) a unit boundary length singly marked {$\gamma$}--quantum disc, with random quantum
  area $\mu_{h}^{\gamma}(\D)$. Consider a counterclockwise space-filling
  $\sle_{\kappa'}$ $\eta:[0,\mu_{h}^{\gamma}(\D)]\rightarrow \overline{\D}$ from
  $-i$ to $-i$, independent of $h$, but {re-parametrised} so that $\mu_h^\gamma(\eta([0,t]))=t$ for all $t$. Denote by
  $L_t$ and $R_t$ the change in quantum boundary lengths of the left and right
  sides of $\eta([0,t])$ relative to time $0$ as in \cref{fig: space-filling
  sle on disc}, normalised so that $(L_0,R_0)=(0,1)$. Then
  \[(L_t,R_t)_{t\in[0,\mu_{h}^{\gamma}(\D)]} \overset{(d)}{=}
P_\theta^{(0,1)}.\]
  % a correlated Brownian motion conditioned to remain in the positive quadrant $\R_+\times \R_+$, started from $(0,1)$ on the boundary, and run until it hits the origin, with covariance structure 
  % \begin{equation} \label{eq: cov MoT complete}
  %   \quad \Var(L_t)=\Var(R_t)=\mathbbm{a}^2t, \quad \Cov(L_t,R_t)=-\cos\left(\frac{4\pi}{\kappa'}\right)\mathbbm{a}^2t.
  % \end{equation}
  Furthermore, the pair $(L_t,R_t)_{t\in[0,\mu_{h}^{\gamma}(\D)]}$ almost surely determines $(\D,h,\eta,-i)$ modulo the conformal change of co-ordinates \eqref{eq: change co-ordinates}. 
\end{Thm}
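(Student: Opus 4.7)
The plan is to deduce this disc statement from the whole-plane mating of trees theorem of Duplantier--Miller--Sheffield \cite{DMS}, which proves the analogous statement for a $\gamma$--quantum cone decorated by an independent whole-plane space-filling $\sle_{\kappa'}$: in that setting the left/right quantum boundary length process is a two-sided correlated Brownian motion with covariance \eqref{eq: cov MoT}, and the decorated surface is almost surely determined by this pair modulo \eqref{eq: change co-ordinates}. To pass from cones to discs, I would exploit the bubble decomposition of whole-plane space-filling $\sle_{\kappa'}$ from \cite{MS-space-filling,DMS}: the curve traces out a countable collection of bubbles, each visited by a counterclockwise space-filling $\sle_{\kappa'}$ loop as in the statement of the theorem, and each bubble, conditionally on its quantum boundary length $\ell$, carries the law of a singly marked $\gamma$--quantum disc of boundary length $\ell$.

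First I would identify, at the level of the whole-plane mating-of-trees Brownian motion $(L,R)$, the time interval during which the space-filling curve explores a given bubble with a forward cone excursion of $(L,R)$ in the sense of \cref{sec: intro main result cones}: this is an interval $(g,d)$ on which $(L,R)$ stays strictly above its value at $g$ and returns to it at $d$, with $(L_d,R_d)=(L_g,R_g)$. The It\^o excursion measure of $(L,R)$ above its running simultaneous infima is by construction a cone excursion measure, and its entrance law at the boundary point $(0,\ell)$ is exactly $P_\theta^{(0,\ell)}$ as introduced in the paper. Combining this with the bubble-is-a-disc identification above, together with the scaling property of quantum discs inherited from \eqref{eq: change co-ordinates} (which rescales quantum lengths by $\ell$), one obtains that for the bubble around $-i$ the pair $(L_t,R_t)_{t\in[0,\mu_h^\gamma(\D)]}$ has law $P_\theta^{(0,1)}$.

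The main obstacle is making the bubble-is-a-disc identification rigorous. One needs to verify that, conditionally on its quantum boundary length, the surface carried by a bubble cut out of a quantum cone by the space-filling $\sle_{\kappa'}$ is exactly a singly marked $\gamma$--quantum disc of that boundary length. This is essentially the content of the conformal welding results of \cite{She-zip}, combined with the explicit description of quantum discs in \cite{DMS}; alternatively, \cite{AG} obtain the disc version by approximating the quantum disc as a scaling limit of quantum wedges with two boundary marked points collapsing together, and passing to the limit in the wedge mating of trees theorem. The second, measurability, part of the statement then follows from the corresponding whole-plane measurability result in \cite{DMS} applied on the time interval associated with the selected bubble, so that $(\D,h,\eta,-i)$ is reconstructed from $(L,R)$ modulo the change of coordinates \eqref{eq: change co-ordinates}.
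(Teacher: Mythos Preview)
This theorem is not proved in the paper: it is stated as a background result with citations to \cite{MS-mot} and \cite{AG}, and the paper uses it as a black box input to translate its Brownian-motion statements into LQG language. There is therefore no proof in the paper to compare your proposal against.

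That said, your sketch is a reasonable outline of how the cited references obtain the disc statement from the whole-plane mating of trees of \cite{DMS}, and in particular your description of the \cite{AG} route---approximating the disc by limits of wedges and passing the mating-of-trees encoding to the limit---is accurate in spirit. The bubble-decomposition argument you describe first is more heuristic as stated; the delicate point (which you flag) is the rigorous identification of the law of a bubble as a quantum disc of given boundary length, and this is precisely what \cite{AG} supplies. For the purposes of this paper no proof is expected here.
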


If $\gamma=\sqrt{8/3}$ and one instead considers a doubly marked quantum disc $(\D,h,-i,i)$ with boundary length $(\ell_L,\ell_R)$, and explores with an independent space-filling SLE$_{\kappa'}$ from $-i$ to $i$, then the  left/right boundary length process will have law $P_\theta^{(\ell_L,\ell_R)}$ (this is a variant of $P_\theta^z$ for $z\in (\R_+^*)^2$ that we introduce in \cref{prop: nlg disintegration}). In fact, if we take general $\gamma$ the same will hold if $(\D,h,-i,i)$ is a variant of the doubly marked quantum disc we have defined, but with $\gamma$ replaced by another parameter $\alpha(\gamma)$ in \eqref{eq:Y} (we will not use this fact).

We refer to \cite{DMS} and \cite{AG} for variants of this result for other types of quantum surfaces. 
\section{General properties of Brownian cone excursions}
\label{sec: general cones}

%-------------------------------------------------------------------------------------%
%       Forward cone excursions of Brownian motion
%-------------------------------------------------------------------------------------%
\subsection{Forward cone excursions of Brownian motion} \label{sec: forward cones}

Two different types of cone excursions will naturally come into play in our setting: \emph{forward} ones and \emph{backward} ones. We will define and review both of them here; in particular, we will be interested in the density of the \emph{end and start points} for these cone excursions. We stress that, although we are ultimately interested in the case when $\theta=\frac{2\pi}{3}$, we describe the results in full generality.   
We will use the following notation.
\begin{itemize}
  \item[$\bullet$] $W$ denotes correlated planar Brownian motion with correlations as in \eqref{eq: cov MoT}, started at $0$, and defined on a filtered probability space $(\Omega,\Fcal, (\Fcal_t,t\ge 0), \Pb)$. We extend the definition of  $\Fcal_T$ to stopping times $T$ in the usual way;
  \item $E$ is the set of functions $e$ defined on a finite interval $[0,\zeta(e)]$, with values in $\C$ and vanishing at $\zeta(e)$ (the $e$'s we will be interested in will actually remain in the quadrant $(\R_+^*)^2$, and start somewhere inside it or on the boundary). By convention, we add a cemetery function $\lozenge$ to $E$. We endow $E$ with the $\sigma$--field $\mathscr{E}$ generated by the co-ordinate mappings.
  \item  $\Ccal_{\theta}=\{z\in\C, \; \arg(z)\in(0,\theta)\}$ is the cone with apex angle $\theta$, so that $\Ccal_{\pi/2}=(\R_+^*)^2$.
\end{itemize}
\begin{Rk}\label{rk: cone transformation}
  Recall that the matrix $\mathbf{\Lambda}$ in \eqref{eq: correlation transformation} sends a pair $W$ of correlated Brownian motions with covariance structure \eqref{eq: cov MoT} onto a standard planar Brownian motion $\mathbf{\Lambda} \cdot W$. Moreover, $\mathbf{\Lambda}$ maps the quadrant $(\R_+^*)^2$ onto  $\Ccal_{\theta}:=\mathbf{\Lambda}((\R_+^*)^2)$ with apex angle $\theta$. This is the reason for the terminology \emph{cone excursions}. Everything that follows in this section could equivalently be phrased in terms of uncorrelated Brownian motions making excursions in cones of angle $\theta$, but the correlated Brownian framework is more convenient for us and is more directly linked to applications in Liouville quantum gravity.
\end{Rk}

\paragraph{Forward cone-free times of Brownian motion.} The first special type of point for Brownian motion that we will be interested in is already of particular importance in \cite{DMS}. %We say that $t\in \R_+$ is a \textbf{forward cone time} if there exists $\varepsilon>0$ such that $W_s \in W_t +\R_+^2$ for $s\in(t,t+\varepsilon)$. It has been proved \cite{Bur-cones, Shi} that such times exist if, and only if, $\theta>\frac{\pi}{2}$. By an approximation procedure, Shimura constructed the law of Brownian motion starting from $0$ and conditioned to remain in the quadrant for at least one unit of time. 
For $u>0$, if there exists {$t< u$ such that $W_s\in W_t +(\R^*_+)^2$} for all $s\in(t,u]$, then (following \cite[Section 10.2]{DMS}) we say that $u$ is a \emph{pinched} time. The set of pinched times almost surely forms an open subset of $[0,\infty)$, and we can therefore express it as a countable disjoint union of open intervals. {Each of these intervals will correspond} to  \textbf{forward cone excursions}, {as we now define them}. 

If $u$ is \emph{not} a pinched time, we say that $u$ is (forward) \emph{cone-free} (ancestor-free in \cite{DMS}).
One can see that cone-free times form a regenerative set in the sense of \cite{Mai}, so that one can define a local time $(\ell_{\theta}(t), t\ge 0)$ supported on the set of cone-free times.

{For a fixed choice of local time}, its inverse $\uptau_{\theta}$,
\[
  \uptau_{\theta}(t) := \inf\{s\ge 0, \; \ell_{\theta}(s)>t\}, \quad t>0,
\]
naturally gives a way of labelling the forward cone excursions by $(\eb_{\theta}(s),s>0)$. 
{
  More precisely, we introduce
  \begin{itemize}
    \item[(i)] if $\uptau_{\theta}(s)>\uptau_{\theta}(s^-)$, then 
      \[
        \eb_{\theta}(s) : r \mapsto W(\uptau_{\theta}(s)-r)-W(\uptau_{\theta}(s^-)), \quad 0\le r\le \uptau_{\theta}(s)-\uptau_{\theta}(s^-),
      \]
    \item[(ii)] if $\uptau_{\theta}(s)=\uptau_{\theta}(s^-)$ then $\eb_{\theta}(s):=\lozenge$.
  \end{itemize}
  See \cref{fig: forward cone points}. We stress that the definition of $\eb_{\theta}(s)$ involves a time-reversal compared to the original time direction of $W$. This is because, for later purposes, we prefer to have excursions end at the apex. Note indeed that with these definitions, $\eb_\theta(s)\in E$ for all $s>0$, and any non-degenerate excursion $\eb_\theta(s)$ starts somewhere on the boundary $\partial \R_+^2 \setminus \{0\}$ of the quadrant.
}
The following proposition describes the structure of these forward cone excursions as a Poisson point process. We will not give the details, as this is done in \cite[Section 10.2]{DMS} (using a classical Brownian motion argument) and since we will further discuss the analogue for \emph{backward} cone excursions. 
\begin{Prop}
  The forward cone excursions $(\eb_{\theta}(s),s>0)$ form an $(\Fcal_{\uptau_{\theta}(s)}, s>0)$--Poisson point process in $(E,\mathscr{E})$. We denote its intensity measure by $\ndms_{\theta}$, {which is defined up to a multiplicative constant that depends on the choice of local time $\ell_\theta$.} 
\end{Prop}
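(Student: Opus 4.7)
The plan is to apply Itô--Maisonneuve excursion theory. Concretely, the first step is to verify that the random set $\mathscr{R}_\theta:=\{u\ge 0 : u \text{ is cone-free}\}$ is a closed regenerative set in the sense of \cite{Mai}, adapted to the filtration $(\Fcal_t)$. The second step is then automatic: invoke the general excursion theorem attached to any such regenerative set.

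For the regenerative property, the key observation is the following reformulation of the cone-free condition. If $u\in \mathscr{R}_\theta$ and $v>u$, then $v\in \mathscr{R}_\theta$ if and only if there is no $t\in[u,v)$ with $W_r\in W_t+(\R_+^*)^2$ for all $r\in(t,v]$. Indeed, any candidate witness $t<u$ of a pinch at $v$ would, by restriction to $(t,u]$, automatically witness a pinch at $u$, contradicting $u\in \mathscr{R}_\theta$. Hence the event $\{v\in \mathscr{R}_\theta\}$ is a measurable function of the shifted path $(W_{u+r}-W_u, r\ge 0)$ alone. Combined with the strong Markov property of $W$ at $(\Fcal_t)$--stopping times taking values in $\mathscr{R}_\theta\cup\{\infty\}$, this yields the regeneration identity. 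Closure of $\mathscr{R}_\theta$ follows because its complement, the set of pinched times, is open: if $u$ is pinched with witness $t$ and strict cone inequality, continuity of $W$ makes nearby times pinched with the same witness.

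Given closure and regeneration, Maisonneuve's theorem provides a continuous additive functional $\ell_\theta$ supported on $\mathscr{R}_\theta$, unique up to a multiplicative constant, whose right-continuous inverse $\uptau_\theta$ is a $(\Fcal_{\uptau_\theta(s)})$--adapted (possibly killed) subordinator. The excursion intervals $(\uptau_\theta(s^-), \uptau_\theta(s))$ for $s$ in the countable jump set of $\uptau_\theta$ then exhaust the maximal open intervals of pinched times, and defining $\eb_\theta(s)$ as in item (i) above, i.e.\ by restricting $W$ to such an interval, re-centring at $W_{\uptau_\theta(s^-)}$ and reversing time so that the excursion ends at the apex, gives a measurable map into $(E,\mathscr{E})$. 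The Itô--Maisonneuve excursion theorem then asserts precisely that $(\eb_\theta(s), s>0)$ is an $(\Fcal_{\uptau_\theta(s)})$--Poisson point process, with a $\sigma$--finite intensity $\ndms_\theta$ on $(E,\mathscr{E})$; the deterministic time-reversal and recentring do not affect this structure, and the overall multiplicative constant in $\ndms_\theta$ reflects the choice of normalisation of $\ell_\theta$.

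The only step requiring genuine care, and the main obstacle to an entirely routine proof, is the one above: verifying that the pinched/cone-free status of a time $v>u$ can be read off from the increments of $W$ after time $u$, despite the definition of \emph{pinched} being a priori backward-looking in $u$. The reformulation above resolves this cleanly, but without it regeneration would be far from obvious. Once this is settled, everything reduces to the classical machinery of excursion theory for closed regenerative sets and matches exactly the argument sketched in \cite[Section 10.2]{DMS}.
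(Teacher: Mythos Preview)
Your proposal is correct and follows the approach the paper alludes to: the paper itself does not give a proof here, referring instead to \cite[Section 10.2]{DMS} and noting that the analogous backward case will be treated in detail. Your regeneration argument (that a witness $t<u$ for a pinch at $v>u$ would also witness a pinch at $u$) is exactly the point needed, and the rest is standard It\^o--Maisonneuve machinery.

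One minor remark on style: when the paper does spell out the analogous result for \emph{backward} cone excursions (Proposition~\ref{prop: excursion PPP}), it proceeds not by invoking Maisonneuve's general theorem but by directly verifying the axioms of Definition~\ref{def: ppp} (point process, $\sigma$--discreteness, adaptedness, and the conditional-distribution property via the strong Markov property of $W$). Both routes are standard and essentially equivalent; yours is perhaps more conceptual, while the paper's hands-on verification for the backward case makes the dependence on the strong Markov property of $W$ at the inverse-local-time stopping times slightly more explicit.
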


\begin{figure}[ht]
  \bigskip
  \begin{center}
    \includegraphics[scale=1.1]{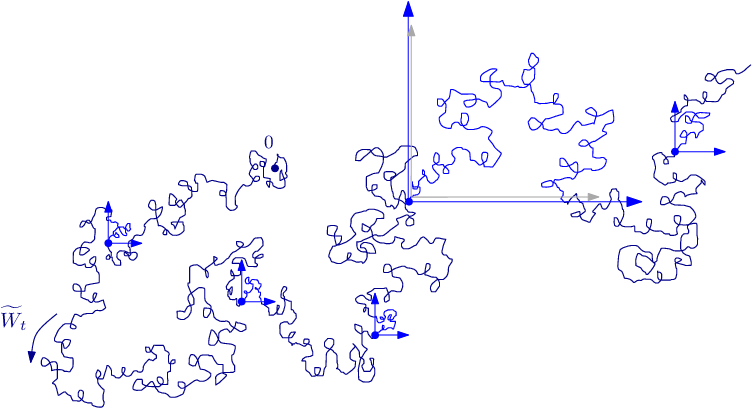}
  \end{center}
  \caption{Forward {cone-free} times of planar {(correlated)} Brownian motion. The reader should imagine accumulation of {cone-free} times, as suggested by the grey excursion in the middle. The forward cone excursions are shown in blue.}
  \label{fig: forward cone points}
\end{figure}

\noindent Notice that excursions under the measure $\ndms_{\theta}$ remain in $(\R_+^*)^2$, except for {when they start, \emph{on the boundary} of $\R_+^2$, and end, at the apex}.  It will be important for our purposes to establish the density of the endpoint under $\ndms_{\theta}$. This density was already described in the proof of \cite[Proposition 10.3]{DMS}, {again using purely Brownian techniques}. Let $\alpha=\frac{\pi}{\theta} \in (1,2).$

\begin{Prop} \label{prop: DMS disintegration}
  We have the following disintegration formula for $\ndms_{\theta}$:
  \begin{equation} \label{eq: disint ndms}
    \ndms_{\theta} = c_{\theta} \int_{\partial \R_+^2 \setminus \{0\}} \frac{\mathrm{d}z}{ |z|^{1+\alpha}} {P_{\theta}^z},
  \end{equation}
  where $\mathrm{d}z$ is the Lebesgue measure on $\partial \R_+^2 \setminus \{0\}$, the {$P_{\theta}^z$} are probability measures supported on  excursions ending at $z\in\partial\R_+^2$ 
  and $c_{\theta}$ is a constant depending on the choice of local time $\ell_\theta$. 
\end{Prop}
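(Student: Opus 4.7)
My plan is to disintegrate $\ndms_\theta$ against the excursion starting point via the strong Markov property, and then to identify the density on the starting points by Brownian scaling together with the classical exit estimate for two-dimensional Brownian motion in a cone. First I would observe that by construction any non-degenerate forward cone excursion $\eb_\theta(s)$ starts (after the time-reversal in its definition) at $z(s):=W(\uptau_\theta(s))-W(\uptau_\theta(s^-))\in \partial \R_+^2\setminus\{0\}$ and ends at the origin. Applying the strong Markov property of $W$ at the stopping time $\uptau_\theta(s^-)$ and reversing time, the conditional law of $\eb_\theta(s)$ given $z(s)=z$ should be that of $W$ started from $z$ and confined to $(\R_+^*)^2$ until it exits at the origin, i.e.\ $P_\theta^z$ (the singular conditioning being handled by the usual $h$-transform construction the paper promises to make rigorous later). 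This would give the disintegration
\[
\ndms_\theta=\int_{\partial \R_+^2\setminus\{0\}} g(z)\, P_\theta^z\, \mathrm{d}z
\]
for some non-negative density $g$ on $\partial \R_+^2\setminus\{0\}$.

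Next I would use Brownian scaling to reduce $g$ to a power law. For $\lambda>0$, the rescaled process $(\lambda W_{t/\lambda^2})_{t\ge 0}$ has the same law as $W$, which intertwines the excursion Poisson point process with the map $T_\lambda:e\mapsto\lambda e(\cdot/\lambda^2)$; by construction $(T_\lambda)_*P_\theta^z=P_\theta^{\lambda z}$. By uniqueness of the local time $\ell_\theta$ up to a positive multiplicative constant, there exists $c(\lambda)>0$ with $(T_\lambda)_*\ndms_\theta=c(\lambda)\ndms_\theta$, and the semigroup property $c(\lambda_1\lambda_2)=c(\lambda_1)c(\lambda_2)$ forces $c(\lambda)=\lambda^\beta$ for some $\beta\in \R$. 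Substituting into the disintegration and changing variables $w=\lambda z$ gives $g(\lambda z)=\lambda^{-\beta-1}g(z)$, and the coordinate-swap symmetry of $W$'s covariance matrix equates the resulting prefactors on the two axes of $\partial \R_+^2$, giving $g(z)=c_\theta|z|^{-\beta-1}$.

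The last step, and where the hard work lies, is to identify $\beta=\alpha$. For this I would apply $\mathbf{\Lambda}$ to reduce to a standard planar Brownian motion $B$ in the cone $\Ccal_\theta$ and invoke the classical cone estimate: the function $h(z)=\mathrm{Im}(z^\alpha)$ is positive and harmonic on $\Ccal_\theta$ with zero boundary values, and the corresponding eigenfunction expansion gives $\Pb(B\text{ stays in }\Ccal_\theta\text{ for time }t\mid B_0=z)\asymp |z|^\alpha t^{-\alpha/2}$ for $|z|\ll\sqrt t$. This identifies the inverse local time $\uptau_\theta$ as an $\alpha/2$-stable subordinator, so $\ell_\theta(\mu t)\stackrel{d}{=}\mu^{\alpha/2}\ell_\theta(t)$, and substituting $\mu=1/\lambda^2$ into the scaling relation above yields $c(\lambda)=\lambda^\alpha$, i.e.\ $\beta=\alpha$. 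The main obstacle is precisely this last step: Brownian scaling on its own only gives a power law with an unknown exponent, and pinning down the exponent requires the sharp cone exit estimate that brings in the cone angle $\theta$.
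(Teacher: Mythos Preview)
Your scaling-plus-cone-estimate route is sound and lands on the right density. The paper itself does not give a proof but defers to \cite{DMS}, and the argument there is organised differently: one first proves Theorem~3.4, namely that $W\circ\uptau_\theta$ is a pair of \emph{independent} spectrally positive $\alpha$--stable processes, and then simply reads off $|z|^{-1-\alpha}\,\mathrm{d}z$ on $\partial\R_+^2$ as the L\'evy measure of this process (the jumps of $W\circ\uptau_\theta$ are exactly the starting points $\eb_\theta(s)(0)$). Your approach bypasses the independence of the coordinates and uses only the scaling index of the subordinator $\uptau_\theta$; this is a little more economical but both arguments rest on the same cone exit exponent. In the DMS route the symmetry between the two axes comes for free from independence, whereas you invoke the coordinate-swap symmetry of the covariance \eqref{eq: cov MoT} explicitly.

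Two small caveats on your Step~1. First, $\uptau_\theta(s^-)$ is not an $(\Fcal_t)$--stopping time, so the phrase ``strong Markov at $\uptau_\theta(s^-)$'' needs the usual detour through the Poisson point process structure (as in the proof of \cref{prop: markov nlg}). Second, the identification of the conditional law with ``Brownian motion from $z$ conditioned to stay in the quadrant until exiting at the origin'' is not needed for the density statement: in the paper $P_\theta^z$ is \emph{defined} as the disintegrated measure, and the conditioned-Brownian-motion description is a separate characterisation. Your real task is just the density of $e(0)$, and for that your scaling argument (power law from $(T_\lambda)_*\ndms_\theta=c(\lambda)\ndms_\theta$) together with $\ndms_\theta(\zeta>t)\asymp t^{-\alpha/2}$ pinning $c(\lambda)=\lambda^\alpha$ is clean and correct.
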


\begin{Rk} \label{rk: choice ell}
  In what follows, we fix a choice of local time $\ell_\theta$ so that $c_{\theta}=1$. We note that $\ell_\theta$ is a measurable function of the Brownian path $W$ (for example, it can be constructed as an almost sure limit of approximate local times, \cite[Section 10.2]{DMS}). 
\end{Rk}

\noindent The interpretation of the {law $P_{\theta}^z$ is that it corresponds} (\textit{via} \eqref{eq: disint ndms}) to the measure {$\ndms_{\theta}$} \emph{conditional} on the {start point} being $z$. As in the introduction, when $\theta=2\pi/3$ we simply write $P^z$.

% \begin{Def}\label{def: reverse cone excursion}
%   We write $P_\theta^z$ for the law of the time-reversal of an excursion with law $\hat{P}_\theta^z$. So an excursion with law $P_\theta^z$ remains in $(\R_+^*)^2$, except when it starts at $z$ on the boundary $\partial \R_+^2$, and ends at the apex (the origin). As in the introduction, when $\theta=2\pi/3$ we simply write $P^z$. 
% \end{Def}

Finally, it is natural to wonder what the law of $W$ is when it is time-changed by the inverse local time. 
%In the $\sle$ on quantum cone setting, this is the notion of \emph{quantum natural time} that we discussed in \cref{sec: SLE on disc}, and which roughly corresponds to describing the left/right boundary lengths of the branch towards infinity, where we do not explore the bubbles disconnected by the space-filling $\sle$ along the way. 
%It is actually easier to phrase it in the quadrant, in terms of \emph{correlated} Brownian motion. %Write $W=(\Xtilde,\Ytilde)$. 
The following result is from \cite[Proposition 1.13]{DMS}. Recall that $\alpha = \frac{\pi}{\theta} \in (1,2)$.

\begin{Thm} \label{thm: DMS stable}
  The time-changed process $\big(W_{\uptau_{\theta}(t)}, t\ge 0 \big) =
  \big(W^1_{\uptau_{\theta}(t)},W^2_{\uptau_{\theta}(t)}, t\ge 0 \big)$ evolves
  as a pair of independent spectrally positive $\alpha$--stable Lévy processes. 
  More precisely, the Laplace exponent of each co-ordinate is given by
  $\Psi(q) = \frac{4\sqrt{\pi}}{3} q^{\alpha}$ and their Lévy measure is
  $x^{-(1+\alpha)}\mathds{1}_{x>0}\mathrm{d}x$.
\end{Thm}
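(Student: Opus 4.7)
The plan is to combine the strong Markov property of $W$ with the Poisson point process description of forward cone excursions from \cref{prop: DMS disintegration}, and then pin down the law via Brownian scaling.

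First, I would show that $W \circ \uptau_\theta$ is a two-dimensional Lévy process. Each $\uptau_\theta(t)$ is an $(\Fcal_s)$--stopping time, and the set of forward cone-free times is regenerative in the sense of \cite{Mai}. Applying the strong Markov property of $W$ at $\uptau_\theta(t)$ to the shifted path $(W_{\uptau_\theta(t)+r}-W_{\uptau_\theta(t)})_{r\ge 0}$ then yields independent and stationary increments; right-continuity is inherited from $W$ and $\uptau_\theta$.

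Second, I would identify the jump structure. At every plateau of the local time $\ell_\theta$ the process $W\circ \uptau_\theta$ jumps by $W(\uptau_\theta(s))-W(\uptau_\theta(s^-)) = \eb_\theta(s)(0)$, a point of $\partial \R_+^2\setminus\{0\}$. By \cref{prop: DMS disintegration} (with $\ell_\theta$ normalised as in \cref{rk: choice ell}) these jumps form a Poisson point process with intensity $|z|^{-(1+\alpha)}\mathrm{d}z$ on $\partial \R_+^2\setminus\{0\}$. Since this measure is supported on the disjoint positive $x$- and $y$-axes, Poisson thinning implies that the jumps of the two coordinate processes are independent, and each coordinate has Lévy measure $x^{-(1+\alpha)}\mathrm{d}x$ on $(0,\infty)$, hence is spectrally positive.

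Third, I would rule out any drift or Gaussian component. For this I invoke Brownian scaling: the map $W_t \mapsto cW_{t/c^2}$ preserves the covariance \eqref{eq: cov MoT} and rescales the set of cone-free times by $c^2$. Matching the jump intensity under this transformation forces the chosen normalisation of $\ell_\theta$ to satisfy $\ell_\theta^{cW_{\cdot/c^2}}(t)=c^{\alpha}\ell_\theta(t/c^2)$, whence $(W\circ \uptau_\theta)(c^{\alpha} t)\overset{d}{=} c\,(W\circ \uptau_\theta)(t)$, i.e.\ $(1/\alpha)$--self-similarity. Together with the Lévy property, this identifies $W\circ \uptau_\theta$ as a (strictly) $\alpha$--stable Lévy process; for $\alpha\in(1,2)$, such a process has no Gaussian or drift component, so its law is determined entirely by the Lévy measure computed above.

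Combining these steps and invoking \eqref{eq: Psi stable >0} with $c_\Lambda = 1$ gives the Laplace exponent $\Gamma(-\alpha)q^{\alpha}$, which for $\alpha=3/2$ specialises to $\frac{4\sqrt{\pi}}{3}q^{3/2}$. The main obstacle is the third step, namely verifying the scaling of $\ell_\theta$ and thereby the absence of a drift/Gaussian part; an alternative route would be to show directly that the set of cone-free times has zero Lebesgue measure, so that $\uptau_\theta$ is a pure-jump subordinator and $W\circ \uptau_\theta$ has no continuous part.
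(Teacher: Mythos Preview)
The paper does not give its own proof of this theorem: it is simply quoted as \cite[Proposition 1.13]{DMS}, with the remark afterwards that the DMS arguments use only classical Brownian and L\'evy process techniques. So there is no in-paper proof to compare against.

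Your outline is a faithful reconstruction of the standard argument. The three steps --- L\'evy property from regeneration and strong Markov, identification of the jump measure via the excursion PPP of \cref{prop: DMS disintegration}, and elimination of Gaussian/drift parts by Brownian scaling --- are exactly the ingredients one expects, and each is sound. Your observation that the excursion start-point measure is supported on the two positive half-axes, so that Poisson thinning gives independence of the coordinate jump processes, is the key structural point. The scaling step is correct as stated: once the normalisation of $\ell_\theta$ is fixed by $c_\theta=1$ in \cref{rk: choice ell}, consistency under $W\mapsto cW_{\cdot/c^2}$ forces $\ell_\theta$ to scale by $c^\alpha$, hence $W\circ\uptau_\theta$ is strictly $\alpha$--self-similar, which for $\alpha\in(1,2)$ rules out both Gaussian and additive drift components. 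Your alternative route via the Lebesgue-nullity of the cone-free set is also valid and is closer to how DMS actually argue.

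One minor remark: you correctly compute the Laplace exponent as $\Gamma(-\alpha)q^\alpha$ with $c_\Lambda=1$, and note that $\Gamma(-3/2)=\tfrac{4\sqrt{\pi}}{3}$. The constant $\tfrac{4\sqrt{\pi}}{3}$ appearing in the theorem statement is indeed specific to $\alpha=3/2$; for general $\alpha$ the coefficient is $\Gamma(-\alpha)$, so the theorem as written is implicitly specialised to the paper's main case $\theta=2\pi/3$.
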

%\noindent We stress once more that $\uptau_{\theta}$ was rather defined in terms of $W$, but it is not hard to translate it in the correlated framework by looking at sub-excursions in the quadrant rather than the cone. Besides, note that the reason why this setting is more convenient here is because $\Xtilde,\Ytilde$ never jump at the same time, hence the independence statement. 
\begin{Rk} \label{rk: tau subordinator}
  Using the same arguments as in \cite[Proposition 1.13]{DMS}, one can prove
  that the process $\uptau_\theta$ is an $\frac{\alpha}{2}$--stable
  subordinator, hence has Lévy measure $\frac{\mathrm{d}t}{t^{1+\alpha/2}}$
  up to a multiplicative constant. This entails that $\ndms_{\theta}(\zeta>t)
  = c't^{-\alpha/2}$ for some $c'$.
  The value of $c'$ can be worked out from \cite{AG}, and in fact, the latter
  work gives
  a more precise result, since
  it describes the law of $\zeta$ under the conditioning $P_\theta^z$, for
  all $z\in \partial\R_+^2$. We will not need this (and we will actually
  provide an alternative derivation of an even stronger statement in \cref{sec: joint law}).
\end{Rk}

\cref{thm: DMS stable} is also related to \cref{prop: DMS disintegration}, since the density {$|z|^{-\alpha-1} \mathrm{d}z\mathds{1}_{\R_+^2}(z)$}  of the endpoint corresponds to the Lévy measure of the time-changed process.
{We emphasise that the results stated here from \cite{DMS} are proved using only classical arguments concerning Brownian motion and L\'{e}vy processes.}

%-------------------------------------------------------------------%
%               Backward cone excursions of Brownian motion
%-------------------------------------------------------------------%

\subsection{Backward cone excursions of Brownian motion} \label{sec: backward cones}
\paragraph{Backward cone times of Brownian motion.} The other type of cone times we want to discuss were introduced by Le Gall \cite{LG-cones} (actually, the cone times we describe here are obtained from those in \cite{LG-cones} after applying $\mathbf{\Lambda}^{-1}$ and a rotation of the plane). Call $t\in\R_+$ a \textbf{backward cone time} if $W_s\in 
W_t+(\R_+^*)^2$ for all $s\in [0,t)$ (\cref{fig: backward cone points}). In other words, $t$ is a backwards cone time if both co-ordinates reach a simultaneous running infimum at time $t$. 
We focus on the case $\theta\in(\frac{\pi}{2},\pi)$,
since the results of \cite{Bur-cones, Shi} prove that such times exist if, and only if, $\theta>\frac{\pi}{2}$. The set $H_{\theta}$ of backward cone times is regenerative, so that one can again define a local time $(\lfrak_{\theta}(s),\; s\ge 0)$ supported on $H_{\theta}$ \cite[Proposition~5.1]{LG-cones}.

Le Gall constructs {a choice of such local time}, see \cite[Sections 3 \& 5]{LG-cones}, by defining a measure 
\begin{equation} \label{eq: def LG local time}
  M_\theta := \frac12\lim_{\varepsilon\to 0}  \varepsilon^{-\alpha}\mathrm{Leb}(\cdot \cap \{w\in \mathbb{C}:  W_s\in w+(\R_+^*)^2 \; \forall s\le \inf\{r: W_r\in w+\mathbf{\Lambda}^{-1}(B(0,\varepsilon))\}\}), 
\end{equation}
on $\C$, and then setting $\lfrak_\theta(s)=M_\theta(W([0,s]))$ for $s>0$\footnote{In fact, Le Gall constructs the measure $\widetilde{M}_\theta  :=\lim_{\varepsilon\to 0} \varepsilon^{-\alpha}\mathrm{Leb}(\cdot \cap \{z\in \mathbb{C}:  \mathbf{\Lambda} W_s\in z+\Ccal_{\theta} \; \forall s\le \inf\{r:|\mathbf{\Lambda} W_r-z|\le \varepsilon\}\})$ and then defines $\lfrak_\theta(s)=\widetilde{M}_\theta(\mathbf{\Lambda} W([0,s]))$. This is the same as setting $\lfrak_\theta(s)=M_\theta(W([0,s]))$ if   $M_\theta$ is the pushforward of $\widetilde{M}_\theta$ by $\mathbf{\Lambda}$, which is how we have reached the precise definition of $M_\theta$ above.}. We will use this choice of local time in what follows.

Let $\tfrak_{\theta}$ be the inverse local time
\[
  \tfrak_{\theta}(t) := \inf\{s\ge 0, \; \lfrak_{\theta}(s)>t\}.
\]
In the same vein as for forward cone excursions, we can use $\tfrak_\theta$ to define the \emph{backward cone excursion} process. Recall that $E$ is the set of functions $e$ defined on a finite interval $[0,\zeta]$, with values in $\C$ and vanishing at $\zeta$ (with a cemetery function denoted by $\lozenge$).

\begin{figure}[ht]
  \bigskip
  \begin{center}
    \includegraphics[scale=0.9]{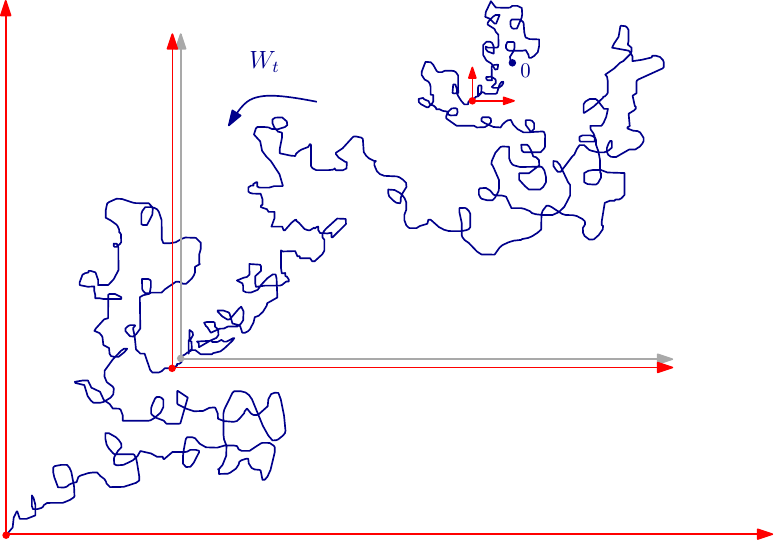}
  \end{center}
  \caption{Backward cone times of planar (correlated) Brownian motion. The reader should imagine accumulation of cone times, as suggested by the grey quadrant in the middle of the picture.}
  \label{fig: backward cone points}
\end{figure}

\begin{Def}\label{def: backward cone excursion process}
  The backward cone excursion process is the process $\efrak_{\theta} = (\efrak_{\theta}(s), s>0)$ on $(\Omega,\Fcal,\Pb)$ with values in $(E,\mathscr{E})$, defined as follows: 
  \begin{itemize}
    \item[(i)] if $\tfrak_{\theta}(s)>\tfrak_{\theta}(s^-)$, then 
      \[
        \efrak_{\theta}(s) : r \mapsto W(\tfrak_{\theta}(s^-)+r)-W(\tfrak_{\theta}(s)), \quad 0\le r\le \tfrak_{\theta}(s)-\tfrak_{\theta}(s^-),
      \]
    \item[(ii)] if $\tfrak_{\theta}(s)=\tfrak_{\theta}(s^-)$ then $\efrak_{\theta}(s):=\lozenge$.
  \end{itemize}
\end{Def}
\noindent This definition is made so that the cone excursions $\efrak_{\theta}(s)$
are paths in $\R_+^2$ which stay in $(\R_+^*)^2$ until ending at the apex (the origin),
and there is a non-degenerate cone excursion $\efrak_{\theta}(s)$ whenever $\lfrak_{\theta}$ has a constant stretch at time $s$. We claim that this defines a Poisson point process.
\begin{Prop} \label{prop: excursion PPP}
  The process $(\efrak_{\theta}(s),s>0)$ is an $(\Fcal_{\tfrak_{\theta}(s)}, s>0)$--Poisson point process {in $(E,\mathscr{E})$} with intensity measure {denoted by} $\nlg_{\theta}$.
\end{Prop}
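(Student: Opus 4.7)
The plan is to deduce this from the standard Itô–Maisonneuve excursion theory for regenerative sets, applied to the random closed set $H_\theta$ of backward cone times, together with the strong Markov property of the planar Brownian motion $W$. All the hard technical work (existence of a continuous, additive local time $\lfrak_\theta$ carried by $H_\theta$, and the fact that $H_\theta$ is a regenerative set) has already been done in \cite[Sections~3 \& 5]{LG-cones}, so the proposition is essentially a translation of these facts into the language of Poisson point processes.

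The first step is to check that for every $s\ge 0$, $\tfrak_\theta(s)$ is an $(\Fcal_t)$--stopping time. This is immediate: because $\lfrak_\theta$ is $(\Fcal_t)$--adapted and continuous (by Le Gall's construction), $\{\tfrak_\theta(s)\le t\}=\{\lfrak_\theta(t)\ge s\}\in\Fcal_t$. Consequently $(\Fcal_{\tfrak_\theta(s)}, s\ge 0)$ is a right-continuous filtration and each $\efrak_\theta(s)$ is $\Fcal_{\tfrak_\theta(s)}$--measurable (the map $s\mapsto\efrak_\theta(s)$ being built from measurable functionals of $W$ on the random interval $[\tfrak_\theta(s^-),\tfrak_\theta(s)]$).

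The second step is to establish independent and stationary increments for the counting process $N^X_{s,s+t}$, for measurable $X\subset E$ bounded away from the cemetery. Fix $s>0$. By the strong Markov property of $W$ applied at the stopping time $\tfrak_\theta(s)$, the shifted process $W'_u:=W(\tfrak_\theta(s)+u)-W(\tfrak_\theta(s))$ is again a correlated Brownian motion with law $\Pb$, independent of $\Fcal_{\tfrak_\theta(s)}$. The construction \eqref{eq: def LG local time} of $M_\theta$ (and hence of $\lfrak_\theta$) is translation-invariant and depends only on the path increments; therefore the set of backward cone times of $W'$, its local time, its inverse and its excursion process all coincide in law with those of $W$, and moreover they are measurable functionals of $W'$. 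It follows that $(\efrak_\theta(s+u))_{u>0}$ has the same law as $(\efrak_\theta(u))_{u>0}$ and is independent of $\Fcal_{\tfrak_\theta(s)}$. In particular $N^X_{s,s+t}\overset{d}{=}N^X_{0,t}$ conditionally on $\Fcal_{\tfrak_\theta(s)}$, which is exactly property (ii) of \cref{def: ppp}.

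Finally, $\sigma$--discreteness is the only point requiring a small argument. Take the exhaustion $E_n:=\{e\in E:\zeta(e)\ge 1/n\}$. The number of excursions of length at least $1/n$ completed before a given time $t$ is at most $\lceil t/(1/n)\rceil=nt<\infty$, so $N^{E_n}_{0,t}$ is a.s.\ finite; since a.s.\ every excursion belongs to some $E_n$, this gives the $\sigma$--discreteness. The existence and $\sigma$--finiteness of the intensity measure $\nlg_\theta$ then follow from the general theory (see \cite{RY}). The main potential obstacle is making rigorous the claim that Le Gall's local time $\lfrak_\theta$ depends on $W$ only through its increments after any fixed stopping time $T$, but this is transparent from \eqref{eq: def LG local time} because $M_\theta$ is defined through the geometry of the range of $W$ and the ``north–east cone'' condition is invariant under spatial translation. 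This is the direct analogue of the argument sketched for forward cone excursions in \cite[Section 10.2]{DMS}.
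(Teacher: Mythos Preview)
Your proof is correct and follows essentially the same route as the paper: both verify adaptedness, $\sigma$--discreteness via the exhaustion $E_n=\{\zeta\ge 1/n\}$, and the renewal property by applying the strong Markov property of $W$ at the stopping time $\tfrak_\theta(s)$ together with the translation-invariance of the excursion construction. One small slip: in your $\sigma$--discreteness step, $t$ is a local-time parameter, so the correct bound is $N^{E_n}_{0,t}\le n\,\tfrak_\theta(t)$ (the excursions are disjoint real-time intervals inside $[0,\tfrak_\theta(t)]$), not $nt$; the conclusion is unaffected since $\tfrak_\theta(t)<\infty$ almost surely.
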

\begin{proof}
  We check the properties listed in \cref{def: ppp}. 
  \begin{enumerate}
    \item Plainly, $(\efrak_{\theta}(s), s>0)$ is a point process in the sense of \cite[Definition XII.1.1]{RY} (the fact that there are at most countably many non-degenerate excursions can be seen as a consequence of the fact that the jump times of $\tfrak_{\theta}$ are at most countable).
    \item We check that $(\efrak_{\theta}(s), s>0)$ is $\sigma$--discrete \cite[Definition XII.1.2]{RY}. Let $E_n := \{e\in E, \, \zeta(e) > 1/n\}$, $n\ge 1$. Then $E=\bigcup_{n\ge 1}E_n$, and the $E_n$ are measurable. 
    {
      For a measurable subset $X$ of $E$, we introduce
  \begin{equation} \label{eq:N^X_t}
    N^{X}_t := \sum_{s\le t} \mathds{1}_{\efrak_{\theta}(s)\in X}, \quad t>0.
  \end{equation}
    }
    The counting functions $N^{E_n}_t$, $t>0$,
      %\[
        %N^{E_n}_t := \sum_{s\le t} \mathds{1}_{\efrak_{\theta}(s)\in E_n}, \quad t>0,
      %\]
      are \textit{a.s.} finite random variables. Indeed, set $T_0:=0$ and $T_{k+1}:=\inf\{s>T_k, \; \tfrak_{\theta}(s)-\tfrak_{\theta}(s^-)>1/n\}$, $k\ge 0$. By definition, 
      \[
        N^{E_n}_t := \sum_{k\ge 1} \mathds{1}_{T_k\le t}, \quad t>0,
      \]
      and $N^{E_n}_t \le n\tfrak_{\theta}(t)$.
    \item The process $\efrak_{\theta}$ is clearly $(\Fcal_{\tfrak_{\theta}(s)})$--adapted.
    \item Finally, for any {measurable} subset $X$ of $E$, and $t,r> 0$, write 
      \[
        N^{X}_{(t,t+r]} := \sum_{t< s\le t+r} \mathds{1}_{\efrak_{\theta}(s)\in {X}}.
      \]
      Denote by $\Theta=(\Theta_u,u\ge 0)$ the shift operator defined on $E$ by $\Theta_u \circ e := e(u + \cdot)$ (by convention we extend $e$ to be $0$ outside $[0,\zeta]$).
      Since almost surely, for all $s$, $\Theta_u \circ \efrak_{\theta}(s) = \efrak_{\theta}(s+u)$ (see \cite[Section X.1]{RY}, and in particular the remark following Proposition X.1.3 on finite continuous additive functionals), by shifting the excursion process we get {for all $A\subset \N$,}
      \[
        \Pb\left(N^{X}_{(t,t+r]}\in A \, | \, \Fcal_{\tfrak_{\theta}(t)} \right) = \Pb\left(N^{X}_r \circ \Theta_{\tfrak_{\theta}(t)}\in A \, | \, \Fcal_{\tfrak_{\theta}(t)} \right),
      \]
      where $\Theta$ denotes the shift operator {and $N^X_r$ is as in \eqref{eq:N^X_t}}. Now by the strong Markov property of $W$, this is
      \[
        \Pb\left(N^{X}_{(t,t+r]}\in A \, | \, \Fcal_{\tfrak_{\theta}(t)} \right) = \Pb_{W(\tfrak_{\theta}(t))}\left(N^{X}_r \in A \right).
      \]
      But the excursion process $\efrak_{\theta}$ is by definition independent of the start point of $W$, hence finally
      \[
        \Pb\left(N^{X}_{(t,t+r]}\in A \, | \, \Fcal_{\tfrak_{\theta}(t)} \right) = \Pb\left(N^{X}_r \in A \right).
      \]
      This concludes the proof of the Poisson point process property.
  \end{enumerate}
\end{proof}

Finally, \cite[Theorem 5.2]{LG-cones} determines the law of $W$ time-changed by $\tfrak_{\theta}$ as follows.
The case $\theta=\pi$, which we do not consider in this paper, corresponds to
Spitzer's construction of the Cauchy process \cite{Spi}.
Recall again that $\alpha=\frac{\pi}{\theta} \in (1,2)$.
\begin{Thm} \label{thm: LG stable}
  The process $(W_{\tfrak_{\theta}(t)}, t\ge 0)$ is a stable Lévy process in the plane, with index $2-\alpha$. 
\end{Thm}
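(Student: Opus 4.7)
The plan is to establish two properties of $X := (W_{\tfrak_\theta(t)})_{t\ge 0}$: it is a L\'{e}vy process in $\C$ (independent, stationary increments), and it is $(2-\alpha)^{-1}$--self-similar. A L\'{e}vy process that is self-similar with index $1/\beta$ is, by definition, $\beta$--stable, which yields the claim with $\beta = 2-\alpha$.

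\smallskip
\noindent\textbf{Step 1: L\'{e}vy property.} Fix $t\ge 0$ and recall that $\tfrak_\theta(t)$ is an $(\Fcal_s)$--stopping time because $\lfrak_\theta$ is an additive functional adapted to $(\Fcal_s)$. The strong Markov property for $W$ gives that $W^{(t)} := (W(\tfrak_\theta(t)+u) - W(\tfrak_\theta(t)))_{u\ge 0}$ is a correlated Brownian motion (with the same covariance \eqref{eq: cov MoT}) independent of $\Fcal_{\tfrak_\theta(t)}$. The key observation is that, since $\tfrak_\theta(t)$ is itself a backward cone time, the transitivity of the partial order ``$\preceq$'' induced by $\R_+^2$ forces a time $u > \tfrak_\theta(t)$ to be a backward cone time for $W$ \emph{if and only if} $u - \tfrak_\theta(t)$ is a backward cone time for $W^{(t)}$; more precisely, one checks directly from the limit definition of $M_\theta$ in \eqref{eq: def LG local time} that $\lfrak_\theta(\tfrak_\theta(t)+u) - t$ coincides with the analogously constructed local time of $W^{(t)}$ evaluated at $u$. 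Inverting, $\tfrak_\theta(t+s) - \tfrak_\theta(t)$ equals the inverse local time of $W^{(t)}$ at $s$, so $(X_{t+s}-X_t)_{s\ge 0}$ has the same law as $X$ and is independent of $\Fcal_{\tfrak_\theta(t)} \supset \sigma(X_u, u\le t)$.

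\smallskip
\noindent\textbf{Step 2: Self-similarity.} For $\lambda>0$ set $\widetilde{W}_t := \lambda^{-1}W(\lambda^2 t)$, so $\widetilde{W}\stackrel{d}{=}W$. Writing $\lfrak^W_\theta$ for the local time built from $W$ via \eqref{eq: def LG local time}, a change of variables in the limit (rescaling $\varepsilon\to \lambda\varepsilon$ in the neighbourhood $\mathbf{\Lambda}^{-1}(B(0,\varepsilon))$ and using $\mathrm{Leb}(\lambda^{-1}A)=\lambda^{-2}\mathrm{Leb}(A)$) gives $M_\theta^{\widetilde{W}}(\cdot) = \lambda^{\alpha-2} M_\theta^W(\lambda \cdot)$, and therefore
\[
  \lfrak^{\widetilde{W}}_\theta(s) \;=\; \lambda^{\alpha-2}\,\lfrak^W_\theta(\lambda^2 s), \qquad \tfrak^{\widetilde{W}}_\theta(u) \;=\; \lambda^{-2}\,\tfrak^W_\theta(\lambda^{2-\alpha}u).
\]
Consequently $\widetilde{W}(\tfrak^{\widetilde{W}}_\theta(u)) = \lambda^{-1} W(\tfrak^W_\theta(\lambda^{2-\alpha}u))$, and equality in law $\widetilde{W}\stackrel{d}{=}W$ becomes
\[
  \bigl(W(\tfrak_\theta(\lambda^{2-\alpha} u))\bigr)_{u\ge 0} \;\stackrel{d}{=}\; \bigl(\lambda\, W(\tfrak_\theta(u))\bigr)_{u\ge 0}.
\]
Setting $c = \lambda^{2-\alpha}$, this says $X_{cu} \stackrel{d}{=} c^{1/(2-\alpha)} X_u$, which is the desired scaling.

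\smallskip
\noindent\textbf{Step 3: Conclusion.} By Step 1, $X$ is a L\'{e}vy process started at $0$; by Step 2, it is self-similar of index $1/(2-\alpha)$. A L\'{e}vy process with this property is $(2-\alpha)$--stable, which finishes the proof.

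\smallskip
\noindent The main subtle point is Step 1, where one must verify that the \emph{backward} cone condition, which involves the entire past, is in fact ``regenerated'' at a backward cone time; this is ultimately a consequence of the transitivity of the partial order on $\R_+^2$. The second mildly delicate point is the scaling computation for $M_\theta$: one has to keep track of both the $\varepsilon^{-\alpha}$ prefactor and the Lebesgue rescaling, which combine to produce the exponent $\alpha-2$ in the relation between $\lfrak^{\widetilde W}_\theta$ and $\lfrak^W_\theta$.
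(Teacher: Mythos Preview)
The paper does not give its own proof of this theorem; it is quoted directly from Le~Gall \cite[Theorem~5.2]{LG-cones}. Your sketch is correct and is in fact the natural argument (and essentially the one in \cite{LG-cones}): the regenerative structure at backward cone times yields the L\'evy property via strong Markov, and Brownian scaling combined with the explicit scaling of $M_\theta$ (hence of $\lfrak_\theta$) gives self-similarity with index $1/(2-\alpha)$, which together force $(2-\alpha)$--stability.

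One small point worth making explicit in Step~1: to pass from ``$u-\tfrak_\theta(t)$ is a backward cone time for $W^{(t)}$'' to ``$u$ is a backward cone time for $W$'' you need to cover the range $s\in[0,\tfrak_\theta(t)]$. Transitivity handles $s<\tfrak_\theta(t)$ since $W_s\in W_{\tfrak_\theta(t)}+(\R_+^*)^2$ and $W_{\tfrak_\theta(t)}\in W_u+\R_+^2$ (the latter by continuity, taking $s\downarrow\tfrak_\theta(t)$); the endpoint $s=\tfrak_\theta(t)$ itself is harmless because on an event of full probability backward cone times are not isolated on the right, so $W_{\tfrak_\theta(t)}\in W_u+(\R_+^*)^2$ as well. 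Similarly, the claim that $\lfrak_\theta$ restarts exactly after $\tfrak_\theta(t)$ is best justified not by appealing to the limit \eqref{eq: def LG local time} directly, but by the fact that $\lfrak_\theta$ is an additive functional supported on the regenerative set $H_\theta$ and $\tfrak_\theta(t)\in H_\theta$; this is how Le~Gall sets things up. Your scaling computation in Step~2 is accurate.
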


\noindent Note that $2-\alpha \in(0,1)$, so that backward cones define stable processes with indices in $(0,1)$, whereas forward cones are associated to stable processes with index $\alpha\in(1,2)$.

\begin{Rk}
  We note that \cite[Theorem 5.2]{LG-cones} also gives the law of $\tfrak_{\theta}$ as a stable subordinator with index $1-\frac{\alpha}{2}$.
\end{Rk}

%{By applying the transformation $\Lambda^{-1}$ we can associate backward cone times of $W$ (with angle $\theta$) to backward cone times (with angle $\pi/2$) of the correlated Brownian motion $W$. As in the forward case, for ease of notation, we will mix up the quantities (local times, inverse local times etc.) for $W$ and $W$. Note that backward cone times of angle $\pi/2$ simply correspond to simultaneous running infima (i.e.,\,in both co-ordinates) of the path.}

At this point, we should emphasise that the structure of $W
\circ\tfrak_{\theta}$ is much more involved than {that of}
$W\circ\uptau_{\theta}$ appearing in \cref{thm: DMS stable}. The issue is that
there is \emph{no independence} between co-ordinate processes in the backward
cone times framework. Indeed, the backward cone excursions that are cut out in
$W$ go from the \emph{interior} of a quadrant to its apex, and therefore at a
jump time of $W \circ\tfrak_{\theta}$, both components jump simultaneously. In
particular, the Lévy measure of $-(W \circ\tfrak_{\theta})$ can be
written in polar co-ordinates as
\begin{equation} \label{eq: LG Lévy measure}
  \mathfrak{L}_{\theta}(\mathrm{d}r,\mathrm{d}\phi) = \frac{\mathrm{d}r}{r^{3-\alpha}} \cdot\mfrak_\theta(\mathrm{d}\phi),
\end{equation}
but the angular part $\mfrak_\theta$ of the measure does not seem to be known in general. {This is actually left as an open problem in \cite[Remark (ii), p613]{LG-cones}.} Although $\mfrak_\theta$ is not explicit, it is characterised (after applying $\mathbf{\Lambda}$ and a rotation) by formula (5.j) in \cite{LG-cones}. %However, in the case $\theta=\frac{2\pi}{3}$, a closed formula has been found in the Liouville quantum gravity framework by \cite{AHS}, namely
%\begin{equation} \label{eq: angular part 2pi/3}
%\mathfrak{L}_{\frac{2\pi}{3}}(\mathrm{d}x,\mathrm{d}y) =  \frac{C\mathrm{d}x\mathrm{d}y}{(x+y)^{5/2}} \mathds{1}_{x,y\ge 0},
%\end{equation}
%where $C$ is an explicit constant. See Proposition 5.2 there.

%-------------------------------------------------------------------------------------%
%       Basic properties of the backward cone excursion measure 
%-------------------------------------------------------------------------------------%
\subsection{Basic properties of the backward cone excursion measure \texorpdfstring{$\nlg_{\theta}$}{n\textunderscore theta}}
\label{sec: backward cone excursion}

We study  $\nlg_{\theta}$, in the general case when $\theta\in(\frac{\pi}{2},\pi]$, by establishing a sort of Markov property under $\nlg_{\theta}$, deriving the density of the {starting} point, and proving the convergence of the normalised backward cone excursion measure to the normalised forward one when the point is sent to the boundary. We will denote  a generic cone excursion by $e$, and write $\zeta$ for its duration.

\paragraph{The Markov property of $\nlg_{\theta}$.} One of the core properties of the classical one-dimensional Itô measure is its Markov property, see \cite[Theorem XII.4.1]{RY}. In this case, it roughly states that for $t>0$, on the event $t<\zeta$ and conditioned on $(e(s), s\le t)$, the law of the trajectory of $e$ from time $t$ onwards is that of an independent standard Brownian motion starting at $e(t)$, and killed upon reaching $0$. Of course, under $\nlg_{\theta}$, the statement is less straightforward, as there is some dependence on the past. Indeed, backward cone times are defined so that the \emph{whole} past trajectory is contained in a quadrant, hence ending the excursion should depend on the past even before $t$. The next result states that, loosely speaking, this is the only dependence.
\begin{Prop}[The Markov property under $\nlg_{\theta}$] \label{prop: markov nlg}
  Let $t>0$. On the event that $t<\zeta$, and conditioned on $(e(s)-e(0), 0\le s\le t)$, the law of $(e(t+s)-e(t), 0\le s \le \zeta-t)$ is that of an independent correlated Brownian motion $W$ as in \eqref{eq: cov MoT}, started at $0$, and stopped at the first simultaneous running infimum $I$ of $W$ such that $W^1_I\le \inf\{e^1(s)-e^1(t), 0 \le s \le t\}=\inf\{(e^1(s)-e^1(0)), 0 \le s \le t\}-(e^1(t)-e^1(0))$ and $W^2_I\le \inf\{e^2(s)-e^2(0), 0 \le s \le t\}-(e^2(t)-e^2(0))$.
\end{Prop}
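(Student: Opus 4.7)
The plan is to combine the Master formula for the Poisson point process $(\efrak_\theta(s))_{s > 0}$ (which was proved in \cref{prop: excursion PPP}) with the strong Markov property of the correlated Brownian motion $W$, applied at carefully chosen stopping times.

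First, I would reformulate the Markov property as an identity against $\nlg_\theta$. For bounded measurable test functionals $F_1$ on paths of length $t$ and $F_2$ on paths of variable length, it suffices to show
\[
  \int_E F_1\big((e(s)-e(0))_{s\le t}\big) F_2\big((e(t+s)-e(t))_{s\le \zeta-t}\big) \mathbf{1}_{\{\zeta>t\}} \, \nlg_\theta(\mathrm{d}e)
  = \int_E F_1 \cdot \widehat{F}_2 \cdot \mathbf{1}_{\{\zeta>t\}} \, \nlg_\theta(\mathrm{d}e),
\]
where $\widehat{F}_2(e) := \Eb[F_2(\widetilde{W}_{|[0,I]})]$ for an independent correlated Brownian motion $\widetilde{W}$ and $I$ the hitting time described in the statement (measurable with respect to the increments $(e(s)-e(0))_{s\le t}$). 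By the Master formula applied to the predictable functional $H(s,\omega,e)=\mathbf{1}_{\{s\le T\}} F_1 G \mathbf{1}_{\{\zeta(e)>t\}}$, both sides rescaled by $T$ can be realised as $\Eb\big[\sum_{s\le T} F_1 \cdot G \cdot \mathbf{1}_{\{\zeta(\efrak_\theta(s))>t\}}\big]$ with $G=F_2$ (resp.\ $G=\widehat{F}_2$), so the identity reduces to equating these two expectations for every $T>0$.

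Second, I would enumerate the excursions contributing to these sums in chronological order. Since the total duration of excursions corresponding to local times $s\le T$ equals $\tfrak_\theta(T)<\infty$ and each relevant excursion has duration strictly greater than $t$, there are $\Pb$--a.s.\ only finitely many such excursions $[\tau^{(i)}_-,\tau^{(i)}]$. Set $\sigma^{(i)}:=\tau^{(i)}_- + t$. A routine verification shows that each $\sigma^{(i)}$ is an $(\Fcal_u)$--stopping time: $\sigma^{(i)}$ is the $i$-th $v\ge t$ satisfying (i) $v-t$ is a backward cone time (measurable in $\Fcal_{v-t}$ since the set $H_\theta$ of backward cone times is closed and adapted), (ii) $\lfrak_\theta(v-t)\le T$ (measurable in $\Fcal_{v-t}$ by construction of $\lfrak_\theta$), and (iii) no backward cone time lies in $(v-t,v]$ (measurable in $\Fcal_v$).

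Third, and this is the crux, I would apply the strong Markov property of $W$ at each $\sigma^{(i)}$. Writing $\widetilde{W}^{(i)}_r := W_{\sigma^{(i)}+r}-W_{\sigma^{(i)}}$, strong Markov gives that $\widetilde{W}^{(i)}$ is a correlated Brownian motion as in \eqref{eq: cov MoT}, independent of $\Fcal_{\sigma^{(i)}}$. Using that $\tau^{(i)}_-$ is itself a backward cone time, so that $\inf_{u\le \tau^{(i)}_-} W^j_u = W^j_{\tau^{(i)}_-}$ for $j=1,2$, a direct computation shows that $\tau^{(i)}-\sigma^{(i)}$ is the first time $I$ at which both coordinates of $\widetilde{W}^{(i)}$ reach a simultaneous running infimum with $\widetilde{W}^{(i),j}_I \le \inf_{0\le r\le t}(e^j(r)-e^j(0)) - (e^j(t)-e^j(0))$ for $j=1,2$, where $e$ denotes the excursion on $[\tau^{(i)}_-,\tau^{(i)}]$. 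These thresholds are $\Fcal_{\sigma^{(i)}}$--measurable, so $\Eb[F_2(\widetilde{W}^{(i)}_{|[0,I]}) \mid \Fcal_{\sigma^{(i)}}] = \widehat{F}_2(\efrak_\theta(s))$ on the event that $\efrak_\theta(s)$ sits on $[\tau^{(i)}_-,\tau^{(i)}]$. Summing over $i$ and taking expectations yields the required identity.

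The main obstacle is the careful bookkeeping between the local-time parametrisation of the excursion Poisson point process and the chronological-time parametrisation of $W$, together with the verification that the $\sigma^{(i)}$ are stopping times in the Brownian filtration. A related technical nuisance is that $e(0) = W_{\tau^{(i)}_-}-W_{\tau^{(i)}}$ itself depends on the \emph{future} of $W$ at $\sigma^{(i)}$ and so is not $\Fcal_{\sigma^{(i)}}$--measurable; this is precisely why the proposition is phrased in terms of the increments $(e(s)-e(0))_{s\le t}$, which \emph{are} $\Fcal_{\sigma^{(i)}}$--measurable and line up cleanly with the strong Markov application above.
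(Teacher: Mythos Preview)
Your proposal is correct and shares the essential mechanism with the paper's proof: both reduce the statement to the strong Markov property of the ambient Brownian motion $W$ at the time $\tau^{(i)}_{-}+t$ that sits $t$ units into a backward cone excursion of duration greater than $t$. The packaging, however, is different. The paper restricts the Poisson point process to those excursions for which the event $A(t)$ holds, invokes the classical identity $\nlg_\theta(\,\cdot\mid A(t)) = \Pb(\efrak_\theta^{A(t)}(S_1)\in\,\cdot\,)$ for the \emph{first} jump $S_1$ of the restricted process (as in \cite[Lemma~XII.1.13]{RY}), and then applies strong Markov once, at $T=\tfrak_\theta(S_1^-)+t$; the stopping-time property of $T$ is inherited automatically from that of $S_1$ in the local-time filtration via the time change. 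Your route instead realises the $\nlg_\theta$-integral through the compensation formula as a sum over \emph{all} excursions with local time in $[0,T]$, then conditions term-by-term at $\sigma^{(i)}$ and sums; the needed interchange of sum and expectation is harmless for non-negative $F_1,F_2$ by Tonelli. What the paper's route buys is economy: a single strong Markov application and no summation bookkeeping, and the stopping-time verification comes for free from standard time-change facts rather than requiring the explicit check (i)--(iii) you outline for $\sigma^{(i)}$. What your route buys is that it avoids the first-jump lemma entirely and stays closer to the compensation formula, which may feel more transparent to some readers. Your final paragraph, identifying that $e(0)$ itself is not $\Fcal_{\sigma^{(i)}}$-measurable and that this is why the statement is phrased in terms of increments, is exactly the right observation and matches the paper's implicit reasoning.
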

In other words, the behaviour of $e$ after time $t$ is that of an independent correlated Brownian motion stopped at the first simultaneous running infimum ``below the past trajectory''.

\begin{Rk} \label{rk: strong markov nlg}
  By standard arguments, one can extend this description to the case of stopping times, thus proving a strong Markov property under $\nlg_\theta$.
\end{Rk}

%\begin{Rk}
%We could have stated \cref{prop: markov nlg} in the positive quadrant instead of the cone (using again the mapping $\Lambda^{-1}$ {of \eqref{eq: correlation transformation}}). In this setup, the statement reads a bit more nicely, because backward cone times correspond to \emph{simultaneous running infima} for planar Brownian motion.
%\end{Rk}

\begin{proof}[Proof of \cref{prop: markov nlg}]
  The proof follows the lines of \cite[Theorem XII.4.1]{RY}. Denote by $\Theta^0=(\Theta^0_t,t\ge 0)$ the shift operator {defined by $\Theta^0_t \circ e := e(t + \cdot) - e(t)$ for $e\in E$ (by convention we extend $e$ to be $0$ outside $[0,\zeta]$)}. We want to prove that for all $A(t)\in \sigma(e(s)-e(0), 0\le s\le t)$ and $X\in \mathscr{E}$, 
  \begin{equation} \label{eq: markov prop}
    \nlg_{\theta}\left(A(t) \cap \{ \Theta^0_t \circ e \in  X\}\right)
    =
    \nlg_{\theta}\left(\mathds{1}_{A(t)} \cdot \Pb((W_s,\, s\le I)\in  X)\right),
  \end{equation}
  where $I$ is defined as in \cref{prop: markov nlg}. In particular, we stress that $I$ is averaged under both $\Pb$ and $\nlg_\theta$ in \eqref{eq: markov prop}.

  We will derive identity \eqref{eq: markov prop} from the Poisson point process structure of the excursion process in \cref{prop: excursion PPP}. Denote by $\efrak_{\theta}^{A(t)}$ the Poisson point process obtained by restriction of $\efrak_{\theta}$ to those excursions $e$ for which $A(t)$ occurs. Note that the intensity measure of this point process is finite (since such excursions must satisfy $\zeta>t$), and therefore we can consider its first jump time $S_1$. Now recall (for instance from \cite[Lemma XII.1.13]{RY}) the following classical identity:
  \begin{equation} \label{eq: PPP first time}
    \frac{\nlg_{\theta}\left(A(t)  \cap \{ \Theta_t^0 \circ e \in  X\}\right)}{\nlg_{\theta}(A(t))}
    =
    \Pb\big(\Theta_t^0 \circ \efrak_{\theta}^{A(t)}(S_1) \in X\big).
  \end{equation}
  We can write 
  \[
    \efrak_{\theta}^{A(t)}(S_1) = \left( W_{\tfrak_{\theta}(S_1^-)+r}-W_{\tfrak_{\theta}(S_1)}, \; 0\le r \le \tfrak_{\theta}(S_1)-\tfrak_{\theta}(S_1^-) \right).
  \]
  Since $S_1$ is a $(\Fcal_{\tfrak_{\theta}(s)}, s\ge 0)$--stopping time, $\tfrak_{\theta}(S_1^-)$ and $\tfrak_{\theta}(S_1)$ are $(\Fcal_s,s\ge 0)$--stopping times. For clarity, set $T=\tfrak_{\theta}(S_1^-)+t$. Then
  \[
    \Pb\big(\Theta_t^0 \circ \efrak_{\theta}^{A(t)}(S_1) \in X\big)
    =
    \Pb\big( (W_{T+r}-W_T, \, r\le J-T)\in X \big),
  \]
  where $J$ is the first simultaneous running infimum of $W$ after $T$ such that the corresponding quadrant also contains $(W_s, \, \tfrak_{\theta}(S_1^-) \le s\le T)$. By the strong Markov property of $W$ at time $T$, we can rewrite this as 
  \[
    \Pb\big(\Theta_t^0 \circ \efrak_{\theta}^{A(t)}(S_1) \in X\big)
    =
    \Eb\big[\Pb\big( (\widetilde{W}_{r}, \, r\le \tilde{I})\in X   \mid (W_s-W_{\tfrak_{\theta}(S_1^-)}, \tfrak_{\theta}(S_1^-) \le s \le T)\big)  \big],
  \]
  where $\widetilde{W}$ is an independent correlated Brownian motion started from $0$, and $\tilde{I}$ is the first simultaneous running infimum of $\widetilde{W}$ after $T$ which falls below the path $(W_s-W_T, \tfrak_{\theta}(S_1^-) \le s \le T)$.

  Coming back to \eqref{eq: PPP first time}, we proved that 
  \[
    \nlg_{\theta}\left(A(t)  \cap \{ \Theta_t^0 \circ e \in  X\}\right)
    =
    \nlg_{\theta}(A(t)) \cdot \Eb\big[\Pb\big( (\widetilde{W}_{r}, \, r\le \tilde{I})\in X   \mid (W_s-W_{\tfrak_{\theta}(S_1^-)}, \tfrak_{\theta}(S_1^-) \le s \le T)\big)  \big].
  \]
  The same argument entails that the law of $(e(s)-e(0), 0\le s\le t)$ under $\nlg_{\theta}(\;\cdot \mid A(t))$ is that of $(W_s-W_{\tfrak_{\theta}(S_1^-)}, \tfrak_{\theta}(S_1^-) \le s \le T)$. Therefore, we conclude that
  \[
    \nlg_{\theta}\left(A(t) \cap \{ \Theta^0_t \circ e \in  X\}\right)
    =
    \nlg_{\theta}\left(\mathds{1}_{A(t)} \cdot \Pb((W_s,\, s\le I)\in  X)\right),
  \]
  which is the desired Markov property \eqref{eq: markov prop}. 
\end{proof}

\paragraph{Density of the {start point} under $\nlg_{\theta}$, and the normalised backward cone excursion measure.}
We will want to relate the two types of cone excursions (forward and backward) in the limit when the start point is taken to the boundary. A straightforward consequence of the results of \cite{LG-cones} concerning the $(2-\alpha)$--stable process is that we can disintegrate the backward measure $\nlg_{\theta}$ over the start point. 

\begin{Prop} \label{prop: nlg disintegration}
  We have the following disintegration formula for $\nlg_{\theta}$ in polar co-ordinates:
  \begin{equation} \label{eq: disint nlg}
    \nlg_{\theta} = \int_0^{\infty} \frac{\mathrm{d}r}{r^{3-\alpha}} \int_{0}^{\pi/2}  \mfrak_\theta(\mathrm{d}\phi) P_{\theta}^{r\mathrm{e}^{i\phi}},
  \end{equation}
  where $\mfrak_\theta$ is the finite positive measure on $(0,\pi/2)$ which appears in \eqref{eq: LG Lévy measure}, and the $P^{r\mathrm{e}^{i\phi}}_{\theta}$ are probability measures supported on excursions $e\in E$ such that $e((0,\zeta)) \subset (\R_+^*)^2$ and $e(0) = r\mathrm{e}^{i\phi}$.
\end{Prop}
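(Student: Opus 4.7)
The plan is to disintegrate the excursion intensity $\nlg_\theta$ over the start point $e(0)$, using \cref{thm: LG stable} to identify the pushforward with the Lévy measure $\mathfrak{L}_\theta$ from \eqref{eq: LG Lévy measure}.

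First I would observe that from \cref{def: backward cone excursion process}, the start point of the $s$-th non-degenerate backward cone excursion satisfies
\[
\efrak_\theta(s)(0) \;=\; W(\tfrak_\theta(s^-)) - W(\tfrak_\theta(s)) \;=\; -\Delta(W\circ\tfrak_\theta)(s),
\]
i.e.\ it is minus the jump of the planar $(2-\alpha)$--stable Lévy process $W\circ\tfrak_\theta$ at time $s$. The standard correspondence between a Lévy process and the Poisson point process of its jumps then identifies the pushforward of $\nlg_\theta$ under $\pi : e \mapsto e(0)$ with the image under $z\mapsto -z$ of the Lévy measure of $W\circ\tfrak_\theta$, which is by definition $\mathfrak{L}_\theta$. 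Its support is contained in $(\R_+^*)^2$ because the defining property of backward cone times forces $W(\tfrak_\theta(s)) < W(\tfrak_\theta(s^-))$ strictly componentwise at each jump.

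Next, since $\nlg_\theta(\{|e(0)|\ge\varepsilon\}) = \mathfrak{L}_\theta(\{|z|\ge\varepsilon\})<\infty$ for each $\varepsilon>0$, the measure $\nlg_\theta$ is $\sigma$--finite, and $(E,\mathscr{E})$ is a standard Borel space, so a regular disintegration theorem produces a measurable family $(P_\theta^z)_{z\in(\R_+^*)^2}$ of probability measures, each concentrated on $\{e : e(0) = z\}$, satisfying $\nlg_\theta = \int P_\theta^z \, \mathfrak{L}_\theta(\mathrm{d}z)$. Writing $z = r\mathrm{e}^{i\phi}$ and inserting \eqref{eq: LG Lévy measure} gives \eqref{eq: disint nlg}. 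The verification that $P_\theta^z$ charges only excursions with $e((0,\zeta))\subset(\R_+^*)^2$ and $e(\zeta)=0$ is immediate from \cref{def: backward cone excursion process}: the endpoint is $0$ by construction, while for any intermediate time $\tfrak_\theta(s^-)+r$ with $r\in(0,\tfrak_\theta(s)-\tfrak_\theta(s^-))$ the backward cone property at $\tfrak_\theta(s)$ forces $W(\tfrak_\theta(s^-)+r) > W(\tfrak_\theta(s))$ componentwise. The only (minor) obstacle is the measurability in $z$ of the kernel $z \mapsto P_\theta^z$, which is however automatic from the disintegration theorem on Polish spaces; no calculations beyond this are needed.
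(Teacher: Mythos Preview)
Your proposal is correct and follows essentially the same approach as the paper: both identify $\efrak_\theta(s)(0)$ with the jump $-\Delta(W\circ\tfrak_\theta)(s)$ of the $(2-\alpha)$--stable process from \cref{thm: LG stable}, so that the pushforward of $\nlg_\theta$ under $e\mapsto e(0)$ is the Lévy measure $\mathfrak{L}_\theta$ of \eqref{eq: LG Lévy measure}, and then disintegrate. The paper phrases the last step via the compensation formula rather than an abstract disintegration theorem, but this is a cosmetic difference.
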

Note that  in \cref{sec: forward cones} we already defined the law $P_{\theta}^z$ for $z\in\partial \R_+^2\setminus \{0\}$ on the \emph{boundary}. Here we define  laws $P_{\theta}^z$ for $z\in(\R_+^*)^{2}$ in the \emph{interior} of the quadrant. This slight abuse of notation will be justified by \cref{prop: convergence measures} below.
%\begin{Rk} \label{rk: disintegration}
%Recall from equation \eqref{eq: angular part 2pi/3} that in the case when $\theta=\frac{2\pi}{3}$, a formula for $\mu$ is known. In this case, the disintegration reads more conveniently
%\begin{equation} \label{eq: disint nlg 2pi/3}
%    \nlg_{\theta} = \int_{x,y\ge 0} \frac{C\mathrm{d}x\mathrm{d}y}{(x+y)^{5/2}}  Q^{x+iy}.
%\end{equation}
%\end{Rk}

\begin{proof}[Proof of \cref{prop: nlg disintegration}]
  By definition of $\nlg_{\theta}$ as the intensity measure of the Poisson point process of excursions, we know that for any measurable $A\in \C$ {and non-negative measurable function $f$ on $E$ such that $f(\lozenge)=0$},
  \[
    \nlg_{\theta}(\mathds{1}_{\{e(0)\in A\}} \cdot f(e)) 
    =
    \Eb\bigg[ \sum_{0<s\le 1} \mathds{1}_{\{\efrak_{\theta}(s)(0) \in A\}} \cdot f(\efrak_{\theta}(s))\bigg].
  \]
  It remains to notice that the quantities ${\efrak_{\theta}(s)(0)}$ correspond exactly to the jumps of the process $-W\circ \tfrak_{\theta}$. We now use \cref{thm: LG stable} borrowed from \cite{LG-cones} to express the above expectation. Indeed, we know that $-W\circ \tfrak_{\theta}$ is a $(2-\alpha)$--stable Lévy process in the plane, hence its Lévy measure has the form given by \eqref{eq: LG Lévy measure}. An application of the compensation formula then yields
  \begin{equation} \label{eq: proof disintegration}
    \nlg_{\theta}(\mathds{1}_{e(0)\in A} \cdot f(e)) 
    =
    \int_{(0,\infty)\times (0,\pi/2)}\mathfrak{L}_{\theta}(\mathrm{d}r,\mathrm{d}\phi) P_{\theta}^{r\mathrm{e}^{i\phi}}(f).
  \end{equation}
  Plugging \eqref{eq: LG Lévy measure} into \eqref{eq: proof disintegration}, we obtain the desired identity.
\end{proof}

\begin{Rk}
  The disintegration in \cref{prop: nlg disintegration} is not completely explicit since the measure $\mfrak_\theta$ is unknown. {Determining a closed-form expression for it was left as an open problem in Le Gall's work \cite[Remark (ii), p613]{LG-cones}.} In \cref{sec: joint law} we will describe it explicitly in the case when $\theta= \frac{2\pi}{3}$, {thereby solving this open problem in that case.}
\end{Rk}

We now want to relate the backward excursion measure $\nlg_{\theta}$ and the forward one $\ndms_{\theta}$ when the start point is taken to the boundary. 
For {$z\in \R_+^2$ and} $r>0$, let $B_+(z,r)$ be the intersection of the ball with radius $r$ around $z$ and the quadrant $\R_+^2$. Write $T_z(r)$ for the hitting time of $\partial B_+(z,r)$.

\begin{Prop} \label{prop: convergence measures}
  {Let $z\in\partial\R_+^2 \setminus \{0\}$}. Then {the probability measures $Q_\varepsilon := \nlg_{\theta}(\,\cdot\mid e(0)\in B_+(z,\varepsilon))$, $\varepsilon>0$, converge weakly as $\varepsilon \rightarrow 0$ to $P_{\theta}^z$.}
  %in the following sense: for all $r\in(0,|z|)$ and $X\in \sigma(e(s+T_z(r)), s\ge 0)$, 
  % \[
  %   \nlg_{\theta}(X \mid e(0)\in B_+(z,\varepsilon))
  %   \to
  %   P_{\theta}^z(X),
  %   \quad 
  %   \text{as } \varepsilon\to 0.
  % \]
\end{Prop}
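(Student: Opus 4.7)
The plan is to reduce the convergence, via the disintegration \eqref{eq: disint nlg} of \cref{prop: nlg disintegration}, to a continuity statement for the family $z'\mapsto P_\theta^{z'}$ as $z'$ approaches a boundary point, and then establish this continuity by decomposing each excursion at the first exit time of a small half-ball around $z$ and applying the strong Markov property.

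\medskip

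Concretely, from \eqref{eq: disint nlg}, for any measurable $A\subset E$,
\[
  Q_\varepsilon(A) \;=\; \frac{1}{C_\varepsilon}\int_0^\infty\!\!\int_0^{\pi/2} \mathds{1}_{\{r\mathrm{e}^{i\phi}\in B_+(z,\varepsilon)\}}\,P_\theta^{r\mathrm{e}^{i\phi}}(A)\,\frac{\mathrm{d}r}{r^{3-\alpha}}\,\mfrak_\theta(\mathrm{d}\phi),
\]
where $C_\varepsilon$ is the normalising constant. Since $|z|>0$, the mixing measure in the numerator is supported in a shrinking neighbourhood of $z$ as $\varepsilon\to 0$, so (irrespective of the unknown angular measure $\mfrak_\theta$) it suffices to show that $P_\theta^{z_n}\Rightarrow P_\theta^z$ weakly for any sequence $z_n\in(\R_+^*)^2$ with $z_n\to z\in\partial\R_+^2\setminus\{0\}$.

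\medskip

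To obtain this continuity, I would fix $\delta>0$ small (with $\delta\ll|z|$) and let $T_\delta$ be the first exit time of $B_+(z,\delta)$ by $e$. The strong Markov property of $\nlg_\theta$ at $T_\delta$ (\cref{rk: strong markov nlg}), together with its analogue for $\ndms_\theta$ (which follows by repeating the Poisson-point-process argument of the proof of \cref{prop: markov nlg} for forward cone excursions), shows that under both $P_\theta^{z_n}$ and $P_\theta^z$ the conditional law of the post-$T_\delta$ path given $\mathcal{F}_{T_\delta}$ is that of a correlated Brownian motion stopped at the first simultaneous running infimum lying strictly below the co-ordinatewise past infima, and therefore depends continuously on $(e(T_\delta),\,\inf_{s\le T_\delta}e^1(s),\,\inf_{s\le T_\delta}e^2(s))$. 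The weak convergence of the whole excursion thus reduces to joint weak convergence of the initial segment $(e(s))_{0\le s\le T_\delta}$ with $e(T_\delta)$ and the past co-ordinatewise infima. To handle this initial segment, I would realise $P_\theta^{z'}|_{\mathcal{F}_{T_\delta}}$ as a Doob $h$-transform of the correlated Brownian motion started at $z'$ and killed on exit from $(\R_+^*)^2$, with harmonic function $h$ corresponding (in cone co-ordinates via $\mathbf{\Lambda}$) to $r^{-\alpha}\sin(\alpha\phi)$; standard continuity of Brownian motion in the starting point, combined with the known boundary behaviour of $h$ on $\partial\R_+^2$, then yields the convergence. Letting $\delta\to 0$ concludes.

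\medskip

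The main obstacle is the singular nature of the conditioning: since $h$ vanishes on $\partial\R_+^2$, the normalising factor $h(z_n)$ degenerates as $z_n\to z$, so one must either work with un-normalised densities on $\mathcal{F}_{T_\delta}$ and renormalise at the end, or interpret $P_\theta^z|_{\mathcal{F}_{T_\delta}}$ directly as an entrance law at $z$. A possibly cleaner alternative is to exploit time reversal: both $P_\theta^{z_n}$ and $P_\theta^z$, time-reversed, become laws of correlated Brownian excursions started at the apex and conditioned to exit $(\R_+^*)^2$ at $z_n$, resp.\ $z$, reducing the continuity statement to a classical continuity statement for harmonic measure on $\partial\R_+^2$ as a function of the target point.
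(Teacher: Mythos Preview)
Your overall strategy---apply the strong Markov property at the first exit time $T_\delta$ from a small half-ball around $z$, and show that the post-$T_\delta$ law depends continuously on the data of the initial segment---is essentially the same as the paper's. There are, however, two points where your execution diverges from the paper and where your sketch leaves a genuine gap.

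\medskip

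\textbf{Where the paper differs.} The paper does \emph{not} first reduce to pointwise continuity $P_\theta^{z_n}\Rightarrow P_\theta^z$ and then attack the initial segment via an $h$-transform. Instead it works directly with $Q_\varepsilon$, applies the strong Markov property of $\nlg_\theta$ at $T_z(r)$, and observes that the post-$T_z(r)$ law involves two events: $A_t$ (the future Brownian motion stays in the shifted quadrant) and $B_t$ (it has no simultaneous running infimum below the past co-ordinatewise infima). The crucial step is to show that $\Pb(B_t^c\mid A_t,\,\mathcal F_{T_z(r)})\to 0$ as $\varepsilon\to 0$: when $e(0)$ is close to the boundary, the past infima are close to the boundary, so any simultaneous infimum below them forces an exit from the quadrant. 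Once $B_t$ is removed, what remains is Brownian motion conditioned on $A_t$ only, which is exactly the transition mechanism characterising $P_\theta^z$ (the paper cites \cite[Theorem~3.1]{MS-mot}). Tightness of $e(T_z(r))$ gives subsequential limits, the characterisation pins them down uniquely, and Prokhorov upgrades to full convergence.

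\medskip

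\textbf{The gap in your sketch.} Your claim that ``the conditional law of the post-$T_\delta$ path under both $P_\theta^{z_n}$ and $P_\theta^z$ is Brownian motion stopped at the first simultaneous running infimum below the past infima'' glosses over the very point that needs proving. For $P_\theta^{z'}$ with $z'$ interior this is \cref{prop: markov nlg}, but for $P_\theta^z$ with $z$ on the boundary (defined via $\ndms_\theta$, not $\nlg_\theta$) the Markov description is instead that of Brownian motion conditioned to stay in the quadrant. These descriptions agree only in the limit, precisely because the past infima degenerate to the boundary---and establishing that this limit behaves correctly \emph{is} the content of the proposition. The paper isolates this as the ``remove $B_t$'' step; your sketch instead pushes the difficulty into the $h$-transform of the initial segment, where you correctly note that $h(z_n)\to 0$. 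Your suggested workarounds (un-normalised densities, or time reversal to an exit-measure statement) are both viable in principle, but neither is carried out, and either one requires essentially the same boundary analysis that the paper performs via $B_t$. In short: your plan is sound and close to the paper's, but the obstacle you flag is the heart of the proof, and the paper's ``$B_t$ vanishes'' argument is a clean way to dispatch it that your proposal does not yet supply.
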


\begin{proof}
  Fix $z\in\partial \R_+^2\setminus \{0\}$ and write $Q_\varepsilon := \nlg_{\theta}( \cdot \mid e(0)\in B_+(z,\varepsilon))$. Let $\overline{E}$ be defined as $E$, but without the requirement that $e(\zeta(e))=0$. {We first claim that it} is enough to prove that for all bounded continuous function $f$ on $\overline{E}$, for all $r\in(0,|z|)$ and $t>0$,
  \begin{multline} \label{eq:cvg_Q_eps}
    \Eb^{Q_\varepsilon}\big[f(e(s+T_z(r))-e(T_z(r)), 0\le s\le t) \mathds{1}_{\{\zeta>t+T_z(r)\}}\big] \\
    \longrightarrow
    \Eb^{P_{\theta}^z}[f(e(s+T_z(r))-e(T_z(r)), 0\le s\le t)\mathds{1}_{\{\zeta>t+T_z(r)\}}], \quad \text{as } \varepsilon \to 0.
  \end{multline}
  {Indeed, assume that this convergence holds. By the Portmanteau theorem, in order to prove the week convergence of $Q_\varepsilon$ to $P_{\theta}^z$ as $\varepsilon \rightarrow 0$, we only need to consider bounded Lipschitz functions. But in that case, the left-hand side above converges to $\Eb^{Q_{\varepsilon}}[f(e(\cdot)-z)]$ as $r\to 0$ uniformly in $\varepsilon$, by Lipschitz continuity. On the other hand the right-hand side converges to $\Eb^{P_{\theta}^z}[f(e(\cdot)- z)]$ as $r\to 0$. This would conclude the proof of \cref{prop: convergence measures}.}

{It remains to prove the convergence in \eqref{eq:cvg_Q_eps}.}
  In what follows, $W$ denotes a correlated Brownian motion with covariances \eqref{eq: cov MoT}, started at $0$. Conditional on $(e(s), 0\le s\le T_z(r))$, we let $B_t$ the event that $(W_s, 0\le s\le t)$ does not have any backward cone point below the whole trajectory $(e(s)-e(T_z(r)), 0\le s\le T_z(r))$, and $A_t$ the event that $(W_s, 0\le s\le t)$ stays in the quadrant $-e(T_z(r))+\R_+^2$ rooted at $-e(T_z(r))$. Then by the strong Markov property at time $T_z(r)$ under $\nlg_{\theta}$ (\cref{prop: markov nlg} and \cref{rk: strong markov nlg}), 
  \begin{multline} \label{eq: strong Markov T_z(r)}
    \Eb^{Q_\varepsilon}\big[f(e(s+T_z(r))-e(T_z(r)), 0\le s\le t) \mathds{1}_{\{\zeta>t+T_z(r)\}}\big]  \\
    =
    \Eb^{Q_\varepsilon}\big[ \Eb[f(W_s, 0\le s\le t) \mathds{1}_{B_{t}} \mid A_{t}, (e(s), 0\le s\le T_z(r)) ]\big].
  \end{multline}
  Under $Q_{\varepsilon}$, the random variable $e(T_z(r))$ is bounded, hence the law of $e(T_z(r))$ is tight as $\varepsilon \to 0$. Therefore, we may consider a subsequential limit $X_r$, under $\Pb$. Now fix $\delta>0$.

  We claim that we can also remove the event $B_t$ in \eqref{eq: strong Markov T_z(r)} as $\varepsilon\to 0$. Indeed, $Q_\varepsilon$--almost surely,
  \[
    \Pb(B_t^{\text{c}}\mid A_{t}, (e(s), 0\le s\le T_z(r))) \to 0, \quad \text{as } \varepsilon\to 0.
  \]
  Hence by dominated convergence, we see that for $\varepsilon>0$ small enough,
  \begin{multline} \label{eq: removing B_t}
    \big|\Eb^{Q_{\varepsilon}}\big[f(e(s+T_z(r))-e(T_z(r)), 0\le s\le t) \mathds{1}_{\{\zeta>t+T_z(r)\}}\big]  \\
    -\Eb^{Q_{\varepsilon}}\big[ \Eb[f(W_s, 0\le s\le t) \mid A_{t}, (e(s), 0\le s\le T_z(r)) ]\big] \big|
    \le 
    \delta.
  \end{multline}
  Note that $\Eb[f(W_s, 0\le s\le t) \mid A_{t}, (e(s), 0\le s\le T_z(r)) ]=\Eb[f(W_s, 0\le s\le t) \mid A_{t}, e(T_z(r)) ]$ is now a (measurable and bounded) function of $e(T_z(r))$. Thus we can take a subsequential limit, yielding 
  \[
    \big|\Eb^{Q_{\varepsilon}}\big[ \Eb[f(W_s, 0\le s\le t) \mid A_{t}, (e(s), 0\le s\le T_z(r)) ]\big] \\
    -  \Eb\big[ \Eb[f(W_s, 0\le s\le t) \mid A^{X_r}_{t}, X_{r} ]\big] \big| \le \delta,
  \]
  for $\varepsilon> 0$ small enough (along a subsequence), where $A_t^x$ is the event that $(W_s, 0\le s\le t)$ stays in the quadrant $-x+\R_+^2$. Thus \eqref{eq: removing B_t} ensures that
  \[
    \Eb^{Q_{\varepsilon}}\big[f(e(s+T_z(r))-e(T_z(r)), 0\le s\le t) \mathds{1}_{\{\zeta>t+T_z(r)\}}\big]  \\
    \longrightarrow
    \Eb\big[ \Eb[f(W_s, 0\le s\le t) \mid A^{X_r}_{t}, X_{r} ]\big],
  \]
  along a subsequence, as $\varepsilon\to 0$. Note that the latter subsequence depends \textit{a priori} on $r$. We argue that one can find a subsequence for which the above convergence holds, regardless of $r$: indeed, if $r'<r$, we may run the same argument by stopping the path at time $T_z(r')$ instead of $T_z(r)$. Taking a sequence $r_n\to 0$, we can then use a diagonal extraction procedure to produce a subsequence that is valid for all $r$. Hence there exists a subsequence $\varepsilon_n\to 0$ such that, for all $t>0$ and $r\in(0,|z|)$,
  \begin{multline} \label{eq: cvg Q_epsn}
    \Eb^{Q_{\varepsilon_n}}\big[f(e(s+T_z(r))-e(T_z(r)), 0\le s\le t) \mathds{1}_{\{\zeta>t+T_z(r)\}}\big]  \\
    \longrightarrow
    \Eb\big[  \Eb[f(W_s, 0\le s\le t) \mid A^{X_r}_{t}, X_{r} ]\big],
    \quad \text{as } n\to \infty.
  \end{multline}
  The measures on the right-hand side of \eqref{eq: cvg Q_epsn} define consistent laws on paths, started at $X_r$ and with the transitions of Brownian motion conditioned to stay in the quadrant. By the Kolmogorov extension theorem, this defines a unique probability measure $Q$ on the space $E$. Under this probability measure $Q$, the path $e$ starts at $z$, satisfies $Q(e(t) \in \R_+^{*2} \mid \zeta>t)=1$ for all $t>0$, and has the transition probabilities of Brownian motion conditioned to stay in the quadrant. These properties characterise $P^z_{\theta}$ (see \cite[Theorem 3.1]{MS-mot}), and therefore $Q=P_{\theta}^z$. Hence 
  \begin{multline} \label{eq: Q_eps conclusion}
    \Eb^{Q_{\varepsilon_n}}\big[f(e(s+T_z(r))-e(T_z(r)), 0\le s\le t) \mathds{1}_{\{\zeta>t+T_z(r)\}}\big] \\
    \longrightarrow
    \Eb^{P_{\theta}^z}[f(e(s+T_z(r))-e(T_z(r)), 0\le s\le t)\mathds{1}_{\{\zeta>t+T_z(r)\}}], \quad \text{as } n \to \infty.
  \end{multline}
  Now the above argument shows that this is the only possible limit law of
  \[
    \Eb^{Q_{\varepsilon_n}}\big[f(e(s+T_z(r))-e(T_z(r)), 0\le s\le t) \mathds{1}_{\{\zeta>t+T_z(r)\}}\big].
  \]
  By Prokhorov's theorem, we conclude that the convergence \eqref{eq: Q_eps conclusion} holds not only along a subsequence, but as $\varepsilon\to 0$, {which proves \eqref{eq:cvg_Q_eps}}.
\end{proof}

%-------------------------------------------------------------------------------------%
%       A Bismut description of the cone excursion
%-------------------------------------------------------------------------------------%
\subsection{A Bismut description of the backward cone excursion measure \texorpdfstring{$\nlg_{\theta}$}{n\textunderscore theta}}
The classical Bismut description deals with the  Itô measure for one-dimensional Brownian motion, and roughly describes the infinite excursion measure seen from a {Lebesgue-distributed time $t$ in the excursion} (see \cite[Theorem XII.4.7]{RY}). One straightforward consequence is a Bismut description of Brownian excursions in the half-plane \cite[Proposition 2.6]{AD}. This states that under the infinite half-plane Brownian excursion measure, for a Lebesgue-distributed time $t$ from the excursion, the height of the excursion at time $t$ is distributed according to the Lebesgue measure $\mathrm{d}a$. Moreover, it describes the left and right parts of the trajectory from time $t$ onwards as two independent standard Brownian motions stopped when reaching the horizontal line $\{z\in\C, \; \Im(z)=-a\}$. This, intuitively, corresponds to a Bismut description for $\nlg_\theta$ in the case $\theta=\pi$. The nature of the cone excursions for general $\theta$ makes the Bismut description of $\nlg_{\theta}$ more involved, but it remains similar in spirit. Let us now explain the result.

%Recall the defintion of $\varsigma_t^t$ from the last paragraph of \cref{sec: cone points BM} (the construction there is done for excursions starting from the boundary, but one can also consider excursions starting from an interior point). 
For $e\in E$ and $t\in (0,\zeta)$ we let
\begin{equation}\label{eq: gamma plus minus}
  e^{t,-}:=(e(t-s)-e(t), 0\le s\le t) \quad \text{and} \quad e^{t,+}:=(e(t+s)-e(t), 0\le s\le \zeta-t),
\end{equation}
{and we recall from \cref{sec: intro main result cones} the notation $\varsigma_\theta^t$ for the total cone-free local time of $e^{t,-}$ (with the same normalisation as in \cref{rk: choice ell}).}

\begin{figure}[ht]
  \begin{center}
    \includegraphics[scale=0.87]{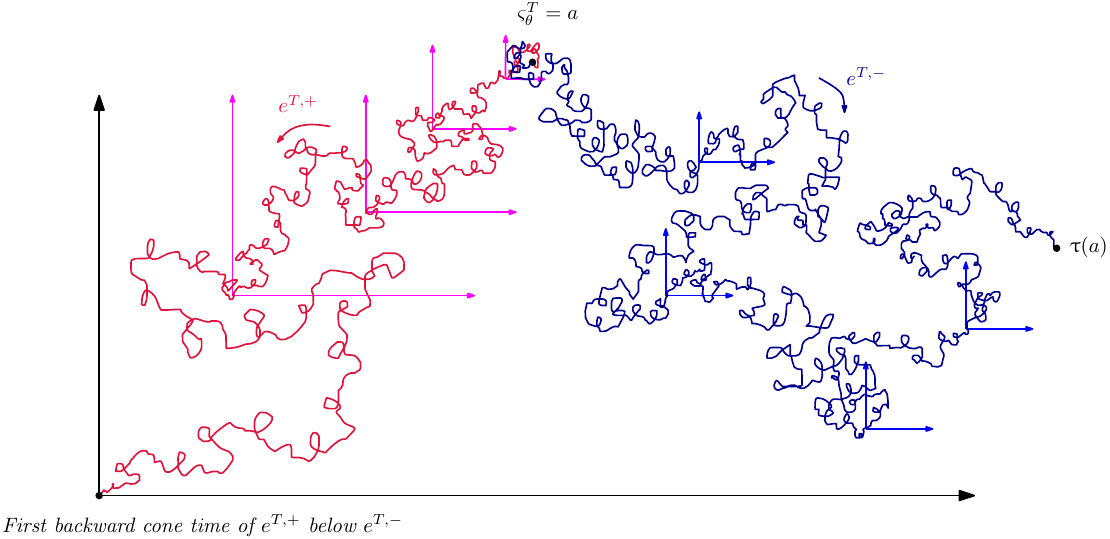}
  \end{center}
  \caption{The Bismut description of $\nlg_{\theta}$.} %when the path is transformed to a measure in the quadrant using $\Lambda$ from \eqref{eq: correlation transformation}.}
  \label{fig: Bismut}
\end{figure}

%{We here state the result for uncorrelated Brownian excursions in the cone $\Ccal_\theta$, noting that a similar statement holds in the correlated framework after transformation using $\Lambda$ from \eqref{eq: correlation transformation}.}

\begin{Thm}(Bismut description of $\nlg_{\theta}$) \label{thm: Bismut}
  \noindent Let $\overline{\nlg}_{\theta}$ be the measure on $\R_+\times E$ defined by
  \[
    \overline{\nlg}_{\theta}(\mathrm{d}T,\mathrm{d}e) = \mathds{1}_{0\le T\le \zeta} \, \mathrm{d}T \cdot \nlg_{\theta}(\mathrm{d}e).
  \]
  Then %under $\overline{\nlg}_{\theta}$, 
  for any non-negative functional $F$ on $E$ and $g$ on $(0,\infty)$:
  \[
    \overline{\nlg}_\theta\left( F(e^{T,-}) {g(\varsigma^T_\theta)}  \right)
    =
    \overline{c}_\theta \cdot \Eb\left[ \int_0^{\infty} \mathrm{d}a \cdot g(a) F(B^{\uptau_{\theta}(a)})  \right]
  \] where %{$\ell_\theta^t(t)$} is distributed according to the Lebesgue measure ${c}\, \mathrm{d}a$, where 
  $\overline{c}_\theta$ is a constant depending only on $\theta$ (that we will not make explicit) and $B^{\uptau_{\theta}(a)}$ is a correlated Brownian motion as in \eqref{eq: cov MoT}, stopped at the first time %Moreover, conditionally on {$\ell_\theta^t(t)=a$}, the law of $e^{t,-}:=(e(t-s)-e(t), 0\le s\le t)$ and $e^{t,+}:=(e(t+s)-e(t), 0\le s\le \zeta-t)$ can be described as follows. First, $e^{t,-}$ is a planar Brownian motion run until time $\uptau_{\theta}(a)$ when 
  that its cone-free local time (as in \cref{sec: forward cones} with the same choice of multiplicative constant, see \cref{rk: choice ell}) equals $a$. Moreover, under $\overline{\nlg}_{\theta}$ and conditionally on $e^{T,-}$, the process $e^{T,+}$ has the law of an independent planar correlated Brownian motion, as in \eqref{eq: cov MoT}, stopped at its first simultaneous running infima that lies below the whole path $e^{T,-}$.
\end{Thm}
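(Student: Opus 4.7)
The statement has two separate parts. Part (a): the conditional law of $e^{T,+}$ given $e^{T,-}$. Part (b): the marginal law of the pair $(e^{T,-}, \varsigma^T_\theta)$ under $\overline{\nlg}_\theta$.

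Part (a) will follow immediately from the strong Markov property under $\nlg_\theta$ in \cref{prop: markov nlg}, extended to stopping times by \cref{rk: strong markov nlg}: conditional on $(e(s), 0\le s\le T)$ and on the event $\{T<\zeta\}$, $e^{T,+}$ is an independent correlated Brownian motion stopped at its first simultaneous running infimum falling below the componentwise infima of $(e(s)-e(T), 0\le s\le T)$. Since $e^{T,-}$ generates the same $\sigma$-algebra as $(e(s)-e(T), 0\le s\le T)$, and the stopping condition depends only on its componentwise infima, conditioning further down to $e^{T,-}$ preserves the stated law; the integration against $\mathrm{d}T$ in the definition of $\overline{\nlg}_\theta$ leaves conditional laws unaffected.

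For part (b), the plan is to apply the compensation formula to the Poisson point process of backward cone excursions (\cref{prop: excursion PPP}): for any $S>0$,
$$S\cdot \overline{\nlg}_\theta\!\big(F(e^{T,-})\,g(\varsigma^T_\theta)\big) = \Eb\Big[\sum_{s\le S}\int_0^{\zeta(\efrak_\theta(s))} F(\efrak_\theta(s)^{T,-})\,g(\varsigma^T_\theta(\efrak_\theta(s)))\,\mathrm{d}T\Big].$$
Reindexing over real time via $t=\tfrak_\theta(s^-)+T$ with $s=\lfrak_\theta(t)$, and setting $\hat{W}^{(t)}_r:=W_{t-r}-W_t$, $\rho_t:=t-\tfrak_\theta(\lfrak_\theta(t)^-)$, the right-hand side rewrites as $\Eb\big[\int_0^{\tfrak_\theta(S)} F(\hat{W}^{(t)}|_{[0,\rho_t]})\,g(\sigma_t)\,\mathrm{d}t\big]$, where $\sigma_t$ denotes the total forward cone-free local time of $\hat{W}^{(t)}|_{[0,\rho_t]}$. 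The core step is to identify the joint law of $(\hat{W}^{(t)}|_{[0,\rho_t]},\sigma_t)$ as that of $(B^{\uptau_\theta(a)}, a)$ for $a$ distributed according to $\overline{c}_\theta\,\mathrm{d}a$. To do this, localize inside a single backward cone excursion: after disintegrating $\nlg_\theta$ over its starting point via \cref{prop: nlg disintegration}, $P^z_\theta$ is recognized as a Doob $h$-transform of correlated Brownian motion killed on $\partial\R_+^2$, where $h$ is the harmonic function of the quadrant vanishing on the boundary. Time-reversibility of such $h$-transforms — which swaps entrance and exit data — identifies the reversed chunk $\efrak_\theta(s)^{T,-}$ with correlated Brownian motion started at $0$ and run until its forward cone-free local time equals $\sigma_t$, i.e., $B^{\uptau_\theta(\sigma_t)}$. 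Finally, a change of variable from $t$ to $a=\sigma_t$ — using that $\mathrm{d}t$ on a pinched interval at level $a$ contributes the jump size $\Delta\uptau_\theta(a)$, and averaging through the compensation formula for the stable subordinator $\uptau_\theta$ whose Lévy measure is explicit by \cref{thm: DMS stable} and \cref{rk: tau subordinator} — produces a Lebesgue density $\overline{c}_\theta$ for $a$ and yields the claimed identity.

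The main obstacle will be the time-reversal step. Backward cone times and forward cone-free times sit on different regenerative subsets of $W$, so the global time-reversal symmetry $\hat{W}^{(t)}\stackrel{d}{=}W|_{[0,t]}$ does not on its own relate the two local times $\lfrak_\theta$ and $\ell_\theta$. Working excursion by excursion, where $P^z_\theta$ is a Doob $h$-transform of a killed diffusion whose reversal is again a Doob transform (with the entrance density playing the role of the new $h$), will bypass this obstruction. The normalization $\overline{c}_\theta$ will then need to be assembled from the boundary density in \cref{prop: DMS disintegration}, the angular measure $\mfrak_\theta$ in \cref{prop: nlg disintegration}, and the Lévy measure of $\uptau_\theta$.
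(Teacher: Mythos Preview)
Part (a) is correct and matches the paper's treatment.

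For part (b), your setup through the reindexing to $\Eb\big[\int_0^{\tfrak_\theta(S)} F(\hat{W}^{(t)}|_{[0,\rho_t]})\,g(\sigma_t)\,\mathrm{d}t\big]$ is fine and coincides with the paper's starting point. The gap is in the step you flag as the main obstacle. Your proposed resolution --- disintegrate $\nlg_\theta$ over the starting point, recognise $P^z_\theta$ as a Doob $h$-transform of killed Brownian motion, and invoke time-reversal of $h$-processes --- does not deliver what is needed. Time-reversing an $h$-transform of Brownian motion in the quadrant produces another $h$-transform in the quadrant, not an \emph{unrestricted} correlated Brownian motion; yet the forward cone-free local time $\ell_\theta$ and the stopped path $B^{\uptau_\theta(a)}$ are defined for unrestricted Brownian motion, which leaves the quadrant freely. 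There is no mechanism in your argument that converts the reversed in-quadrant $h$-process into a free Brownian motion whose regenerative set is the forward cone-free set, nor that identifies the natural clock of the reversal with $\ell_\theta$.

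The paper avoids this by never localising to a single excursion. It introduces a Laplace weight and studies $\mathcal{E}_\lambda(F):=\Eb\big[\int_0^\infty \mathrm{e}^{-\lambda\tfrak_\theta(s_t^-)} F(W^{t,-})\,\mathrm{d}t\big]$. One evaluation, via the backward-excursion compensation formula, yields $(\hat c\,\lambda^{1-\alpha/2})^{-1}\,\overline{\nlg}_\theta(F(e^{T,-}))$. For the second, the key observation is purely global: define $B$ by time-reversing $W$ from $t$ back to $0$ and then continuing with an \emph{independent} correlated Brownian motion. Then $B$ is itself an unrestricted correlated Brownian motion, and $W^{t,-}$ is exactly $B$ stopped at the first forward cone-free time whose subsequent forward cone excursion has duration exceeding $t-\uptau_\theta(s^-)$. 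This recasts the backward structure as a first-passage event for the forward excursion process of $B$, and the forward compensation formula (together with $\ndms_\theta(\zeta>t)=c' t^{-\alpha/2}$) produces $c'\Gamma(1-\alpha/2)\lambda^{\alpha/2-1}\,\Eb\big[\int_0^\infty F(B^{\uptau_\theta(a)})\,\mathrm{d}a\big]$. Equating the two evaluations gives the identity; the matching powers of $\lambda$ cancel and the constants combine into $\overline{c}_\theta$. The extension to include $g(\varsigma^T_\theta)$ and the description of $e^{T,+}$ follow by the same computation with an extra factor.

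In short: the missing idea is to perform the time-reversal on the \emph{ambient} Brownian motion (extended past $0$ by an independent copy), not on the conditioned in-quadrant process. That is what makes the forward cone-free structure appear naturally.
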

\begin{Rk}
  This means that under $\overline{\nlg}_{\theta}$, %we can associate a cone-free local time $\ell_\theta^T$ to $e^{T,-}$ and 
  the marginal ``law'' of the total local time $\varsigma^T_\theta$ is a constant times Lebesgue. Then, conditionally on {$\varsigma^T_\theta=a$}, the law of $e^{T,-}:=(e(T-s)-e(T), 0\le s\le T)$  is a correlated Brownian motion run until the first time $\uptau_{\theta}(a)$ 
  that its cone-free local time is equal to $a$, and conditionally on $e^{T,-}$, $e^{T,+}$ has law as described above. See \cref{fig: Bismut}.

  It is important to point out that, unlike in the one-dimensional or in the half-plane case, the paths $e^{T,-}$ and $e^{T,+}$ are no longer independent conditionally on $\varsigma^T_\theta$. This dependence makes the Bismut description of $\nlg_{\theta}$ much more involved, although we stress that the only dependence concerns the stopping time for $e^{T,+}$.
\end{Rk}

\begin{proof}[Proof of \cref{thm: Bismut}]

  Under $\Pb$, let $W$ be a correlated Brownian motion as in \eqref{eq: cov MoT}, started at the origin. {Let $(\efrak_\theta(s), s>0)$ be its associated backwards cone excursion process, as in 
  \cref{def: backward cone excursion process}. {We use the shorthand} $s_t {:= \lfrak_\theta(t)}$ {and note that for Lebesgue--almost every $t$}, $t\in (\tfrak_\theta(s_t^-),\tfrak_\theta(s_t))$, so that $\efrak_\theta(s_t): r \mapsto W(\tfrak_{\theta}(s_t^-)+r)-W(\tfrak_{\theta}(s_t))$ (for $0\le r\le \tfrak_{\theta}(s_t)-\tfrak_{\theta}(s_t^-)$) is the backward cone excursion straddling $t$.} 
  %For $t>0$, denote by $(g_t,d_t)$ 
  %the backward cone excursion interval of $W$ straddling $t$. 
  Let $W^{t,-}:=(W(t-r)-W(t), 0 \le r\le t-\tfrak_\theta(s_t^-))$ 
  be the (time-reversed) part of the trajectory of $W$ between {$\tfrak_\theta(s_t^-)$} and $t$. For any non-negative measurable functional $F$, we will express the following quantity in two different ways:
  \[
    \mathcal{E}_\lambda(F):= \Eb\left[ \int_0^{\infty} \mathrm{e}^{-\lambda {\tfrak_\theta(s_t^-)}} F(W^{t,-}) \mathrm{d}t \right], \quad \lambda>0.
  \]
  We will see that this implies our claims on $\varsigma^T_\theta$ and $e^{T,-}$ in \cref{thm: Bismut}. The last claim on $e^{T,+}$ also follows from the same calculation by taking another function $G$ of the future $W^{t,+}$ after $t$ up to time {$\tfrak_\theta(s_t)$}, but we omit this part for simplicity.

  First, since for all $t\in (\tfrak_\theta(s^-),\tfrak_\theta(s))$ %since $g_t=g_s$ for all $t\in(g_s,d_s)$, we can write
  we have  $\tfrak_\theta(s^-)=\tfrak_\theta(s_t^-)$ and $W^{t,-} = (\efrak_\theta(s))^{t-\tfrak_\theta(s^-),{-}}$ (where $e^{t,-}$ for $e\in E$ is as defined in \eqref{eq: gamma plus minus}), we can write
  \[
    \mathcal{E}_\lambda(F) 
    =
    \Eb\left[ \sum_{s>0} \mathrm{e}^{-\lambda \tfrak_\theta(s^-)} \int_{\tfrak_{\theta}(s^-)}^{\tfrak_\theta(s)} F((\efrak_\theta(s))^{t-\tfrak_\theta(s^-),{-}}) \mathrm{d}t \right].
  \]
  We now use the compensation formula for backward cone excursions and a change of variables to deduce that
  \[
    \mathcal{E}_\lambda(F) 
    =
    \Eb\left[ \int_{0}^{\infty} \mathrm{e}^{-\lambda \tfrak_{\theta}(s)} \mathrm{d}s\right] \cdot \nlg_{\theta}\left( \int_0^{\zeta} F(e^{t,-}) \mathrm{d}t \right).
  \]
  Furthermore, the first term on the right above is explicit. Indeed, recall from \cref{sec: backward cone excursion} that $\tfrak_{\theta}$ is a $(1-\frac{\alpha}{2})$--stable subordinator, hence $\Eb\left[\mathrm{e}^{-\lambda \tfrak_{\theta}(s)}\right] = \exp(-\hat c \lambda^{1-\frac{\alpha}{2}}t)$ where $\hat{c}$ is a constant depending only on $\theta$ (and in principle could be calculated from the formulae in \cite{LG-cones}, but we will not do this). Thus
  \begin{equation}  \label{eq: bismut expression 1}
    \mathcal{E}_\lambda(F) 
    =
    ({\hat c}\lambda^{1-\frac{\alpha}{2}})^{-1} \cdot \nlg_{\theta}\left( \int_0^{\zeta} F(e^{t,-}) \mathrm{d}t \right).
  \end{equation}

  \bigskip
  \begin{figure}[ht]
    \begin{center}
      \includegraphics[scale=0.95]{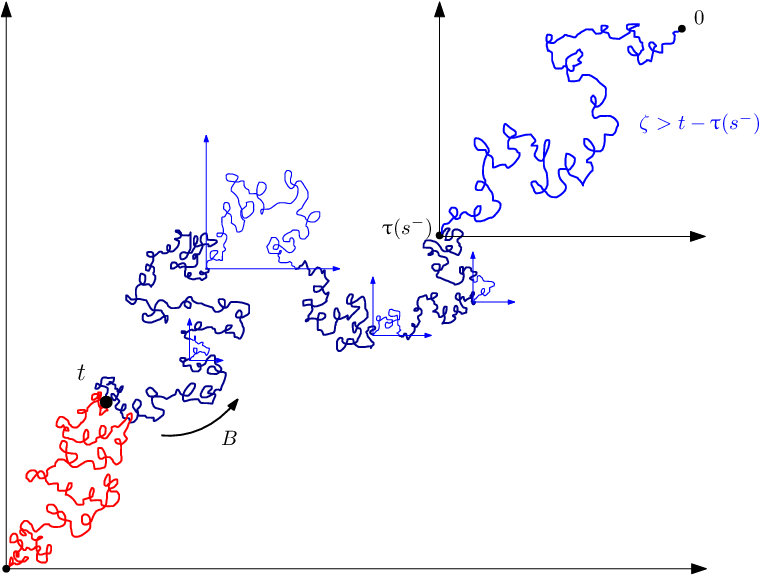}
    \end{center}
    \caption{The backward cone excursion straddling $t$. We start a correlated Brownian motion from $0$, and look at the backward cone excursion straddling time $t$ ({delimited} by the two black cones). Looking back from time $t$ (blue trajectory), we record all the forward cone excursions (bright blue). The excursion process is stopped at time $s$ when reaching an excursion such that $\zeta(\mathbbm{e}_{\theta}(s))>t-{\uptau_\theta}(s^-)$ (the last excursion in bold blue).}
    \label{fig: Bismut proof}
  \end{figure}

  On the other hand, we can consider the correlated Brownian motion $B$, defined to be the time reversal of $W$ from time $t$ to time $0$, then concatenated with an independent correlated Brownian motion. That is, $B(s)=W(t-s)-W(t)$ for $0\le s\le t$, and then $B(s)=W(0)-W(t)+W'(s-t) {= -W(t)+W'(s-t)}$ for $s\ge t$, where $W'$ is a further independent correlated planar Brownian motion. Then if $\ell_\theta, \uptau_\theta, (\mathbbm{e}_\theta(s), s>0)$ is the cone-free local time, inverse local time and \emph{forward cone excursion process} associated to $B$, $W^{t,-}$ is simply $B$ stopped at the first time $\uptau_\theta(s^-)$ such that $\zeta(\mathbbm{e}_{\theta}(s))>t-\uptau_\theta(s^-)$ (the first time that the forward cone excursion process records an excursion of duration large enough so that it straddles the original time $0$). See \cref{fig: Bismut proof}. We denote $B^{\uptau_{\theta}(s^-)} := (B(u),  u\le \uptau_{\theta}(s^-))$ for simplicity. The previous discussion amounts to
  \[
    \mathcal{E}_\lambda(F) 
    =
    \int_0^{\infty} \mathrm{d}t \cdot \Eb\left[ \sum_{s>0} \mathrm{e}^{-\lambda (t-\uptau_{\theta}(s^-))} F(B^{\uptau_{\theta}(s^-)}) \mathds{1}_{\zeta(\mathbbm{e}_{\theta}(s))>t-\uptau_{\theta}(s^-)} \mathds{1}_{t>\uptau_{\theta}(s^-)} \right].
  \]
  We can now use again the compensation formula, this time for \emph{forward} cone excursions to obtain that
  \[
    \mathcal{E}_\lambda(F) 
    =
    \Eb\left[ \int_0^{\infty} \mathrm{d}s \cdot F(B^{\uptau_{\theta}(s)}) \int_{\uptau_{\theta}(s)}^{\infty} \mathrm{d}t  \cdot \mathrm{e}^{-\lambda (t-\uptau_{\theta}(s))}  \ndms_{\theta}(\zeta>t-\uptau_{\theta}(s)) \right].
  \]
  A simple change of variables brings the above display to
  \[
    \mathcal{E}_\lambda(F) 
    =
    \Eb\left[ \int_0^{\infty} \mathrm{d}s \cdot F(B^{\uptau_{\theta}(s)})  \right] \cdot \int_{0}^{\infty} \mathrm{d}t  \mathrm{e}^{-\lambda t}  \ndms_{\theta}(\zeta>t).
  \]
  Recall from \cref{rk: tau subordinator} that for $t>0$,
  $\ndms_{\theta}(\zeta>t) = c't^{-\alpha/2}$ for some given $c'$.
  %and equal to $2^{-\alpha}\Gamma(1+\alpha)^{-1}\int_{\partial \Ccal_\theta} dz (|z|^{\alpha-1}e^{-|z|^2/2}+2^\alpha \int_0^{|z|^2/2} y^\alpha e^{-y} dy)$. {[here]}} 
  Therefore we conclude that
  \begin{equation} \label{eq: bismut expression 2}
    \mathcal{E}_\lambda(F) 
    =
    c' \Gamma(1-\alpha/2) \lambda^{\alpha/2-1} \cdot \Eb\left[ \int_0^{\infty} \mathrm{d}s \cdot F(B^{\uptau_{\theta}(s)})  \right].
  \end{equation}

  We finally combine \eqref{eq: bismut expression 1} and \eqref{eq: bismut expression 2} into 
  \[
    \nlg\left(\int_0^{\zeta} F(e^{t,-}) \mathrm{d}t \right)
    =
    \hat{c} c' \Gamma(1-\alpha/2) \cdot \Eb\left[ \int_0^{\infty} \mathrm{d}s \cdot F(B^{\uptau_{\theta}(s)})  \right].
  \]
  Since $B^{\uptau_\theta(s)}$ has the law of a planar correlated Brownian motion run until the first time that its cone-free local time equals $s$, this proves the first claim of \cref{thm: Bismut}.

\end{proof}

%-------------------------------------------------------------------------------------%
%       PROPERTIES OF SQRT(8/3) CONE EXCURSIONS
%-------------------------------------------------------------------------------------%
\section{Special properties of \texorpdfstring{$\frac{2\pi}{3}$}{2pi/3}--cone excursions}
\label{sec: special cones}

In this section, we take $\theta=\tfrac{2\pi}{3}$ and derive some special features of $\tfrac{2\pi}{3}$--cone excursions. From now on, we drop the subscript $\theta$ for ease of notation. From the LQG perspective, the case $\theta=\tfrac{2\pi}{3}$ corresponds to $\gamma=\sqrt{8/3}$ and $\kappa'=6$, as explained in \cref{sec: mot}. As we shall see, $\nlg$ enjoys many nice properties such as an explicit joint law for the displacement and the duration, and a re-sampling property. We use this last property to give a Brownian motion proof of a target-invariance property for $\sle_6$ in the $\sqrt{8/3}$--quantum disc, \textit{cf.} \cref{cor: intro target inv lqg}.

%-------------------------------------------------------------------------------------%
%       Precise formula when $\theta=2\pi/3$
%-------------------------------------------------------------------------------------%
\subsection{Joint law of the start point and duration under \texorpdfstring{$\nlg$}{n\textunderscore 2pi/3}}
\label{sec: joint law}

Our first result describes the joint law of the start point and duration under $\nlg$. 

In the LQG framework, this describes the joint ``law'' of the left/right boundary lengths and the area of a $\sqrt{8/3}$--quantum disc: see \cref{sec: mot}. For general $\theta$, we stress that even the law of the start point itself is not explicit under $\nlg_{\theta}$ (see \cref{prop: nlg disintegration}). Remarkably, in the $\theta=\frac{2\pi}{3}$ case, one can work out not only the start point, but also the joint law with the duration. %It turns out to be more convenient to express it in the quadrant $\R_+^2$ rather than in the cone $\Ccal_{\frac{2\pi}{3}}$.
{Recall from \cref{prop: nlg disintegration} the laws $P^z$, $z\in (\R_+^*)^2$, disintegrating the measure $\nlg$ over the start point.}

\begin{Prop} \label{prop: joint law displacement/duration}
  The joint {``law''} of the start point $e(0)$ and duration $\zeta(e)$ under $\nlg$ is given by
  \begin{equation} \label{eq: joint density nlg}
    \nlg(e(0)\in (\mathrm{d}l,\mathrm{d}r), \zeta \in \mathrm{d}t) 
    =
    \frac{3^{-5/8}}{8\sqrt{2\pi}}(l+r)^{1/2} \mathrm{e}^{-\frac{(l+r)^2}{2\sqrt{3} t}} t^{-5/2}  \mathrm{d}l \mathrm{d}r \mathrm{d}t.
  \end{equation}
  In particular, the {``law''} of the start point under $\nlg$ is 
  \begin{equation} \label{eq: sqrt(8/3) law displacement}
    \nlg(e(0)\in (\mathrm{d}l,\mathrm{d}r)) 
    = \frac{3^{1/8}}{8}
    \frac{\mathrm{d}l \mathrm{d}r}{(l+r)^{5/2}},
  \end{equation}
  and {for all $(l,r) \in (\R_+^*)^2$}, the law of $\zeta$ under {$P^{(l,r)}$}
  % $\nlg( \; \cdot \; \mid e(0) = (l,r))$ 
  is that of $(l+r)^2 \zeta$ under {$P^{(l',r')}$}% $\nlg( \; \cdot  \; \mid e(0) = (l',r'))$ 
  for any $l',r'>0$ such that $l'+r'=1$.
\end{Prop}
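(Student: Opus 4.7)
The plan is to prove (i) first, then deduce (ii) and (iii).

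For (iii), note that the proposed density in~\eqref{eq: joint density nlg} depends on $(l,r)$ only through $l+r$. Combined with Brownian scaling—the process $(cW_{s/c^2})$ is also a correlated BM, sending $(l,r,\zeta) \mapsto (cl,cr,c^2\zeta)$—this gives the identification of $P^{(l,r)}(\zeta \in \cdot)$ in terms of $P^{(l',r')}(\zeta \in \cdot)$ for any $l'+r'>0$. For (ii), integrate~\eqref{eq: joint density nlg} over $t \in (0,\infty)$ using $\int_0^\infty t^{-5/2}\mathrm{e}^{-s/t}\,\mathrm{d}t = \tfrac{\sqrt{\pi}}{2} s^{-3/2}$ applied with $s = (l+r)^2/(2\sqrt{3})$.

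The main task is therefore (i), the joint density formula. My plan is to apply the Bismut description (\cref{thm: Bismut}) to compute the joint Laplace transform of $(e(0), \zeta)$ under $\overline{\nlg}$, then recover $\nlg$ by dividing by $\zeta$. Under $\overline{\nlg}$, the Bismut decomposition gives
\[
e(0) = B(\uptau(a)) - W'(J), \qquad \zeta = \uptau(a) + J,
\]
where $a$ is Lebesgue-distributed, $B$ is a correlated BM whose forward cone-free local time reaches $a$ at time $\uptau(a)$, and $W'$ is an independent correlated BM stopped at the first simultaneous running infimum $J$ falling componentwise below the running infima of $B^{\uptau(a)}$. By \cref{thm: DMS stable}, $(B^1\circ \uptau, B^2\circ \uptau)$ is a pair of independent spectrally positive $\tfrac{3}{2}$--stable Lévy processes, which gives the joint law of $(B(\uptau(a)), \uptau(a))$ explicitly. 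Conditionally on this data, the law of $(W'(J), J)$ is a first-passage functional of correlated BM against a prescribed displacement, which can be expressed as an integral against the (unknown) measure $\nlg$ itself via the strong Markov property of \cref{prop: markov nlg} (see also \cref{rk: strong markov nlg}).

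Putting the pieces together yields a self-consistent integral equation for the joint density of $\nlg$, which I would solve by a Mellin transform in $(l+r)$ and a Laplace transform in $t$, using $\tfrac{3}{2}$--stable scaling to reduce to an algebraic problem in one variable. The main obstacle will be closing the self-referential equation and fixing the multiplicative constant: I anticipate that the self-referential aspect resolves by first establishing direction-independence—that is, proving that $P^{(l,r)}(\zeta\in\cdot)$ depends on $(l,r)$ only through $l+r$—via a symmetry argument based on \cref{prop: markov nlg} and the structure~\eqref{eq: LG Lévy measure} of the Lévy measure of $W\circ\mathfrak{t}$. Once direction-independence is in hand, the equation reduces to a one-dimensional integral equation for the conditional density of $\zeta$ given $l+r$, whose unique solution with the required decay as $t\to 0$ is the claimed formula; the global constant is then fixed by matching against the $\tfrac{1}{4}$--stable subordinator law of $\mathfrak{t}_{2\pi/3}$ (\cref{rk: tau subordinator}).
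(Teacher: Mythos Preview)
Your proposal has a genuine gap at the direction-independence step, and the overall route differs substantially from the paper's.

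The paper does not use the Bismut description here at all. It starts from Le~Gall's explicit formula~(5.f) in \cite{LG-cones}, which expresses $H(a,b,\lambda)^{-1} := \nlg(1-\mathrm{e}^{-ae^1(0)-be^2(0)-\lambda\zeta})^{-1}$ as a double integral involving the modified Bessel function $K_{\nu}$ with $\nu=\pi/(2\alpha)=3/2$. The miracle at $\theta=2\pi/3$ is that $3/2$ is a half-integer, so $K_{3/2}(x)=\sqrt{\pi/2}\,\mathrm{e}^{-x}(x^{-1/2}+x^{-3/2})$ has a closed form; the paper evaluates the integral explicitly (\cref{lem: expression G}), then computes the same Laplace functional directly from the proposed density and matches the two (\cref{prop: nlg precise}). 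Uniqueness in the L\'evy--Khintchine representation finishes. No self-referential equation arises, and direction-independence is an \emph{output} of the computation, not an input.

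Your approach, by contrast, needs direction-independence as an \emph{input} to break the circularity, and the tools you cite do not deliver it. \cref{prop: markov nlg} and the polar form~\eqref{eq: LG Lévy measure} give only the radial scaling; the angular measure $\mfrak_\theta$ is precisely the unknown that Le~Gall left open, and nothing in those statements forces $P^{(l,r)}(\zeta\in\cdot)$ to depend on $(l,r)$ only through $l+r$. That independence is a special feature of $\theta=2\pi/3$ (see \cref{rk: independence duration/position}) and, as far as I can see, has no soft proof---one must compute. There is also a secondary issue: in the Bismut picture the stopping time $J$ for $W'$ depends on the full running-infimum process of $B^{\uptau(a)}$, not just on the endpoint $B(\uptau(a))$, so the conditional law you need is more entangled than ``first passage below a prescribed displacement'' suggests.
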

\begin{Rks} \label{rk: independence duration/position}
  \begin{enumerate}
    \item We see from \cref{prop: joint law displacement/duration} that conditionally on $||e(0)||_1:=e^1(0)+e^2(0)$, the duration $\zeta$ is independent of $\frac{e(0)}{||e(0)||_1}$.
    \item {\cref{prop: intro law duration} and \cref{cor: intro law area} follow directly from \cref{prop: joint law displacement/duration} and the mating-of-trees correspondence \cref{thm: MoT complete}, thereby reproving \cite[Theorem 1.2]{AG} for $\gamma=\sqrt{8/3}$. More precisely, it comes from taking a limit as the start point is sent to the boundary, applying \cref{prop: convergence measures}.}
    \item {Formula \eqref{eq: sqrt(8/3) law displacement} gives an expression of the Lévy measure of the process $W\circ \tfrak$ in \cref{thm: LG stable}, which answers a question of Le Gall \cite{LG-cones}, in the case when $\theta=2\pi/3$.}
  \end{enumerate}
\end{Rks}

Define 
\[
  H(a,b,\lambda):=\nlg (1-\mathrm{e}^{-ae^1(0)-be^2(0)-\lambda \zeta}).
\]
For the purposes of comparing with Le Gall~\cite{LG-cones}, it is convenient to change co-ordinates.
Let $(a,b)$ be such that
$(a \quad b) M = (-\cos(\theta_0) \quad -\sin(\theta_0))$ where 
\[
  M=\Lambda^{-1}R=
  \mathbbm{a}\begin{pmatrix}
    \sin(\theta/2) & -\cos(\theta/2) \\
    \sin(\theta/2) & \cos(\theta/2)
  \end{pmatrix}
  =
  \begin{pmatrix}
    3^{1/4} & -3^{-1/4} \\
    3^{1/4} & 3^{-1/4}
  \end{pmatrix},
\]
for $R$ an anticlockwise rotation of $\theta/2=\pi/3$; equivalently 
\begin{equation}\label{eq: a b} 
  3^{1/4}(a+b)=-r_0 \cos(\theta_0) \quad 3^{-1/4}(b-a)=-r_0 \sin(\theta_0).
\end{equation}

Then, \cite[(5.f)]{LG-cones} shows
(taking $\alpha=\pi/3$ and $\nu=\pi/(2\alpha)=3/2$ in the notation of that work) 
that for $(a,b)$ satisfying \eqref{eq: a b} with $\theta_0 \in (-\pi,\pi]$
and $\lambda > 0$,
\begin{equation}\label{eq: LG formula G}
  H(a,b,\lambda)^{-1}
  =
  \frac{2^{3/2}(2\lambda)^{3/4}}{\pi\Gamma(3/2)}
  \int_{-\pi/3}^{\pi/3} \mathrm{d}\theta
  \int_0^\infty \mathrm{d}r
  \mathrm{e}^{r r_0 \cos(\theta-\theta_0)}r K_{3/2}(\sqrt{2\lambda} r)\cos(\tfrac32\theta).
\end{equation}

\begin{Lem}\label{lem: expression G}
  Let $c = (2\sqrt{3})^{-1}$.
  For $(a,b)$ satisfying \eqref{eq: a b} with $r_0=1$, $\theta_0 \in (-\pi,\pi]$
  and for $\lambda>1/2$,
  \[
    H(a,b,\lambda)
    =
    \frac{\lambda^{1/4}\sqrt{\pi}}{2^{5/4}\cdot 3}
    \frac
    {(\tfrac{a}{\sqrt{c\lambda}})^2+(\tfrac{b}{\sqrt{c\lambda}})^2+(\tfrac{a}{\sqrt{c\lambda}})(\tfrac{b}{\sqrt{c\lambda}})-3}
    {\sqrt{2+\tfrac{b}{\sqrt{c\lambda}}}(\tfrac{b}{\sqrt{c\lambda}}-1)
      +
    \sqrt{2+\tfrac{a}{\sqrt{c\lambda}}}(\tfrac{a}{\sqrt{c\lambda}}-1)}.
  \]
  \end{Lem}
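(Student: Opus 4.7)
Throughout, I will write $\mu := \sqrt{2\lambda}$ and $\phi := \theta - \theta_0$. My first step is to perform the inner $r$-integral in Le Gall's formula \eqref{eq: LG formula G}. Using the closed form $K_{3/2}(x) = \sqrt{\pi/(2x)}\,e^{-x}(1 + x^{-1})$ together with the elementary Gamma integrals $\int_0^\infty r^{\pm 1/2} e^{-\beta r}\,dr$ applied with $\beta := \mu - \cos\phi$ (which converge because the hypothesis $\lambda > 1/2$ forces $\mu > 1 \geq \cos\phi$), this integral evaluates in closed form, and after collecting prefactors I obtain
\[
H(a,b,\lambda)^{-1} \;=\; \frac{2}{\sqrt{\pi}}\int_{-\pi/3}^{\pi/3}\cos(3\theta/2)\,\frac{3\mu - 2\cos\phi}{(\mu - \cos\phi)^{3/2}}\,d\theta \;=:\; \frac{2}{\sqrt{\pi}}\,I.
\]

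\textbf{Explicit antiderivative.} Next I claim that the integrand of $I$ admits the antiderivative
\[
F(\theta) \;:=\; \frac{2\sin(3\theta/2)(\mu + 2\cos\phi)\sqrt{\mu - \cos\phi}}{\mu^2 - 1} \;-\; \frac{2\cos(3\theta/2)(\mu - 2\cos\phi)\sin\phi}{(\mu^2 - 1)\sqrt{\mu - \cos\phi}}.
\]
The form of $F$ can essentially be read off from the boundary data: since $\cos(3\theta/2)$ vanishes at $\theta = \pm\pi/3$, only the first summand contributes at the endpoints, and its shape is forced by matching the square roots $\sqrt{2+A}$ and $\sqrt{2+B}$ appearing in the claim. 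To verify that $F'(\theta)$ equals the integrand, I would multiply both sides by $(\mu - \cos\phi)^{3/2}(\mu^2 - 1)$. The $\sin(3\theta/2)$-coefficient then cancels outright (the four contributing terms combine to $0$), while the $\cos(3\theta/2)$-coefficient reduces, with $c := \cos\phi$, to the polynomial identity
\[
3(\mu + 2c)(\mu - c)^2 - 2(\mu - c)(c\mu + 2 - 4c^2) + (\mu - 2c)(1 - c^2) \;=\; (\mu^2 - 1)(3\mu - 2c),
\]
which I check by direct expansion.

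\textbf{Boundary values.} I then use \eqref{eq: a b} (with $r_0 = 1$) to derive the trigonometric relations $\sin(\theta_0 + \pi/6) = \cos(\pi/3 - \theta_0)$ and $\sin(\theta_0 - \pi/6) = -\cos(\pi/3 + \theta_0)$, which yield the boundary identities $\mu - \cos(\pi/3 - \theta_0) = \tfrac{\mu}{2}(2+B)$, $\mu + 2\cos(\pi/3 - \theta_0) = \mu(1-B)$, and symmetric identities (with $B$ replaced by $A$) at the other endpoint $\theta = -\pi/3$. Substituting into $F$ produces
\[
I \;=\; F(\pi/3) - F(-\pi/3) \;=\; \frac{\sqrt{2}\,\mu^{3/2}}{\mu^2 - 1}\bigl[(1 - A)\sqrt{2 + A} + (1 - B)\sqrt{2 + B}\bigr],
\]
and translating back into the $\lambda$-parametrisation gives $H = -\sqrt{\pi}(2\lambda - 1)\big/\bigl[2^{9/4}\lambda^{3/4}\bigl(\sqrt{2+B}(B-1) + \sqrt{2+A}(A-1)\bigr)\bigr]$.

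\textbf{Matching the stated form.} The last step is to establish the trigonometric identity
\[
A^2 + AB + B^2 \;=\; \frac{3}{\mu^2} \;=\; \frac{3}{2\lambda},
\]
which I obtain from $A = 2\sin(\theta_0 - \pi/6)/\mu$, $B = -2\sin(\theta_0 + \pi/6)/\mu$ by applying the product-to-sum formulae to $\sin^2(\theta_0 \pm \pi/6)$ and $\sin(\theta_0 - \pi/6)\sin(\theta_0 + \pi/6)$; all the $\cos(2\theta_0)$--terms conveniently cancel. This gives $A^2 + AB + B^2 - 3 = -3(2\lambda - 1)/(2\lambda)$, and substitution produces the stated closed form with the prefactor $\lambda^{1/4}\sqrt{\pi}/(2^{5/4}\cdot 3)$. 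The main obstacle will be \emph{guessing} the antiderivative $F$: although its verification is mechanical once written down, deriving it blindly by partial fractions (e.g.\ via a Weierstrass substitution $u = \tan(\phi/2)$, which reduces the integrand to a rational function of $u$ times $((\mu+1)u^2 + (\mu-1))^{-3/2}$) is tedious and generates transcendental pieces that must all cancel at the endpoints. The strategy above—reading off the leading piece of $F$ from the boundary requirement and adding the minimal correction that vanishes at $\theta = \pm\pi/3$—is the essential trick, and depends crucially on the special vanishing $\cos(3\theta/2)|_{\theta = \pm\pi/3} = 0$, itself a manifestation of the distinguished role of the angle $2\pi/3$.
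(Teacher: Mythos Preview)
Your proposal is correct and follows essentially the same route as the paper: evaluate the inner $r$-integral via the closed form of $K_{3/2}$ and Gamma integrals, exhibit an explicit antiderivative for the resulting $\theta$-integral (verified by differentiation and product-to-sum identities), evaluate at the endpoints $\theta=\pm\pi/3$, and use the quadratic identity in $A,B$ to cast the answer in the stated form. The only differences are cosmetic: the paper writes its antiderivative in the form $\bigl[\tfrac{1}{2\lambda}\sin(2\theta_0-\tfrac{\theta}{2})-\tfrac{1}{\sqrt{2\lambda}}\sin(\theta_0+\tfrac{\theta}{2})+(\tfrac{1}{2\lambda}-1)\sin\tfrac{3\theta}{2}\bigr]\big/\bigl[(\tfrac{1}{2\lambda}-1)\sqrt{1-\tfrac{1}{\sqrt{2\lambda}}\cos\phi}\bigr]$ rather than your $\sin(3\theta/2)$--$\cos(3\theta/2)$ decomposition, and it normalises $A=2\cdot 3^{1/4}a$, $B=2\cdot 3^{1/4}b$ (so that $A^2+AB+B^2=3$ exactly) instead of your $A=a/\sqrt{c\lambda}$, $B=b/\sqrt{c\lambda}$ (giving $A^2+AB+B^2=3/(2\lambda)$).
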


\begin{proof}
  We begin from \eqref{eq: LG formula G}. Noting that 
  $K_{3/2}(x) = \sqrt{\pi/2} \mathrm{e}{-x} \bigl(x^{-1/2}+x^{-3/2}\bigr)$
  and changing variables $s=\sqrt{2\lambda}r$, we can obtain
  \[
    H(a,b,\lambda)^{-1}
    =
    \frac{2^{7/4}\lambda^{-1/4}}{\pi} \int_{-\pi/3}^{\pi/3} \mathrm{d}\theta \cos(\tfrac32\theta) I(\theta),
  \]
  where
  \begin{align*}
    I(\theta) 
    & = \int_0^\infty \mathrm{d} s \mathrm{e}^{-s(1-\tfrac{1}{\sqrt{2\lambda}}\cos(\theta-\theta_0))}(\frac{1}{\sqrt{s}}+\sqrt{s}) \\ 
    & = \sqrt{\pi}
    \left(
      \frac{1}{(1-\tfrac{1}{\sqrt{2\lambda}}\cos(\theta-\theta_0))^{1/2}}
      + \frac{(1/2)}{(1-\tfrac{1}{\sqrt{2\lambda}}\cos(\theta-\theta_0))^{3/2}}
    \right) \\
    & = \sqrt{\pi}
    \frac{\tfrac{3}{2}-\tfrac{1}{\sqrt{2\lambda}}\cos(\theta-\theta_0)}{(1-\tfrac{1}{\sqrt{2\lambda}}\cos(\theta-\theta_0))^{3/2}},
  \end{align*}
  and the integral converges provided that $\frac{1}{\sqrt{2\lambda}}\cos(\theta-\theta_0)<1$,
  which holds for the range of $\lambda$ in the statement.

  We can then compute
  \begin{align*}
    H(a,b,\lambda)^{-1}
    &= \frac{2^{7/4}\lambda^{-1/4}}{\sqrt{\pi}}
    \int_{-\pi/3}^{\pi/3}\cos(\tfrac32\theta)
    \frac{\tfrac{3}{2}-\tfrac{1}{\sqrt{2\lambda}}\cos(\theta-\theta_0)}{(1-\tfrac{1}{\sqrt{2\lambda}}\cos(\theta-\theta_0))^{3/2}}
    \mathrm{d}\theta
    \\
    &= \frac{2^{7/4}\lambda^{-1/4}}{\sqrt{\pi}}
    \left. 
      \frac{\tfrac{1}{2\lambda}\sin(2\theta_0-\tfrac{\theta}{2})-\tfrac{1}{\sqrt{2\lambda}}\sin(\theta_0+\tfrac{\theta}{2})+(\tfrac{1}{2\lambda}-1)\sin(\tfrac{3\theta}{2})}
      {(\tfrac{1}{2\lambda}-1)\sqrt{1-\tfrac{1}{\sqrt{2\lambda}}\cos(\theta-\theta_0)}}
    \right\rvert_{-\pi/3}^{\pi/3}.
  \end{align*}
  (This integral can be checked by differentiating the preceding expression and using
  product-sum formulae for trigonometric functions.)
  This evaluates to the following expression:
  \begin{align}
    H(a,b,\lambda)^{-1} 
    &=
    \frac{2^{3/4}}{\lambda^{1/4}\sqrt{\pi}}
    \frac{\sqrt{3}\sin(2\theta_0)-\cos(2\theta_0)+\sqrt{2\lambda}(-\sqrt{3}\sin(\theta_0)-\cos(\theta_0))+(2-4\lambda)}{(1-2\lambda)\sqrt{1-\tfrac{1}{2\sqrt{2\lambda}}(\cos(\theta_0)+\sqrt{3}\sin(\theta_0))}}
    \nonumber \\
    & \quad {} +
    \frac{2^{3/4}}{\lambda^{1/4}\sqrt{\pi}}
    \frac{-\sqrt{3}\sin(2\theta_0)-\cos(2\theta_0)+\sqrt{2\lambda}(\sqrt{3}\sin(\theta_0)-\cos(\theta_0))+(2-4\lambda)}{(1-2\lambda)\sqrt{1-\tfrac{1}{2\sqrt{2\lambda}}(\cos(\theta_0)-\sqrt{3}\sin(\theta_0))}}.
    \label{e:H4.3}
  \end{align}

  Set 
  \[
    A:=2\cdot 3^{1/4}a=(-\cos(\theta_0)+\sqrt{3}\sin(\theta_0))
    \text{ and }
    B:=2\cdot 3^{1/4}b=(-\cos(\theta_0)-\sqrt{3}\sin(\theta_0)).
  \]
  It is straightforward to check that
  %\[\sqrt{A+2}=\sqrt{\frac{a}{\sqrt{c\lambda}}+2}=|\sin(\frac{1}{6}(3\theta_0+\pi))|\quad ; \quad \sqrt{B+2}=\sqrt{\frac{b}{\sqrt{c\lambda}}+2}=|\sin(\frac{1}{6}(3\theta_0-\pi))| \]
  \[ 
    A^2-2 = -\cos(2\theta_0)-\sqrt{3}\sin(2\theta_0) 
    \quad ; \quad 
    B^2-2= -\cos(2\theta_0)+\sqrt{3}\sin(2\theta_0).
  \]
  %\[ A-B=\frac{a}{\sqrt{c\lambda}}-\frac{b}{\sqrt{c\lambda}} = 2\sqrt{3}\sin(\theta_0)\]
  In these variables, the expression for $H(a,b,\lambda)^{-1}$ from \eqref{e:H4.3} then becomes (reordering the summands)
  \begin{equation*}
    H(a,b,\lambda)^{-1} 
    =
    \frac{2^{3/4}}{\lambda^{1/4}\sqrt{\pi}}
    \left(
      \frac{A^2+\sqrt{2\lambda}A-4\lambda}{(1-2\lambda)\sqrt{1+\tfrac{A}{2\sqrt{2\lambda}}}}
      +
      \frac{B^2+\sqrt{2\lambda}B-4\lambda}{(1-2\lambda)\sqrt{1+\tfrac{B}{2\sqrt{2\lambda}}}}
    \right),
  \end{equation*}
  which is equal to 
  \begin{equation*}
    H(a,b,\lambda)^{-1} 
    =
    \frac{2^{5/4}}{\lambda^{1/4}\sqrt{\pi}}
    \left(
      \frac{4\sqrt{3}a^2+23^{1/4}\sqrt{2\lambda}a-4\lambda}{(1-2\lambda)\sqrt{2+\tfrac{a}{\sqrt{c\lambda}}}}
      +
      \frac{4\sqrt{3}b^2+23^{1/4}\sqrt{2\lambda}b-4\lambda}{(1-2\lambda)\sqrt{2+\tfrac{b}{\sqrt{c\lambda}}}}
    \right),
  \end{equation*}
  recalling that $c=(2\sqrt{3})^{-1}$.
  This in turn can be rewritten
  \[
    H(a,b,\lambda)^{-1} 
    =
    \frac{4\cdot 2^{1/4}\lambda^{3/4}}{(1-2\lambda)\sqrt{\pi}}
    \left(
      \frac{(\tfrac{a}{\sqrt{c\lambda}})^2+\tfrac{a}{\sqrt{c\lambda}}-2}{\sqrt{2+\tfrac{a}{\sqrt{c\lambda}}}}
      +
      \frac{(\tfrac{b}{\sqrt{c\lambda}})^2+\tfrac{b}{\sqrt{c\lambda}}-2}{\sqrt{2+\tfrac{b}{\sqrt{c\lambda}}}}
    \right),
  \]
  which, writing $x^2+x-2=(x+2)(x-1)$, is simply 
  \[
    H(a,b,\lambda)^{-1} 
    =
    \frac{4\cdot 2^{1/4}\lambda^{3/4}}{(1-2\lambda)\sqrt{\pi}}
    \left(\sqrt{2+\tfrac{b}{\sqrt{c\lambda}}}(\tfrac{b}{\sqrt{c\lambda}}-1)
      +
    \sqrt{2+\tfrac{a}{\sqrt{c\lambda}}}(\tfrac{a}{\sqrt{c\lambda}}-1)\right).
  \]
  Notice that $A^2+B^2+AB=3$ so that 
  \[
    \bigl(\tfrac{a}{\sqrt{c\lambda}}\bigr)^2
    +\bigl(\tfrac{b}{\sqrt{c\lambda}}\bigr)^2
    +\tfrac{a}{\sqrt{c\lambda}}\tfrac{b}{\sqrt{c\lambda}}
    -3 
    = \frac{3(1-2\lambda)}{2\lambda},
  \]
  which, upon taking reciprocals (and noting that the expression we have for $H(a,b,\lambda)^{-1}$
  is never zero for $\lambda>1/2$), gives the expression in the statement.
\end{proof}

Next let $\mu$ be the infinite measure on the right-hand side of \eqref{eq: joint density nlg}; that is,
\[
  \mu(\mathrm{d}l, \mathrm{d}r, \mathrm{d}t) 
  =
  \frac{3^{-5/8}}{8\sqrt{2\pi}}
  (l+r)^{1/2} \mathrm{e}^{-\frac{(l+r)^2}{2\sqrt{3} t}} t^{-5/2}  \mathrm{d}l \mathrm{d}r \mathrm{d}t,
\]
and set
\[
  \hat{H}(a,b,\lambda)
  :=\int_{(0,\infty)^3} (1-\mathrm{e}^{-al-br-\lambda t})\mu(\mathrm{d}l, \mathrm{d}r,\mathrm{d}t).
\]

\begin{Prop}\label{prop: nlg precise}
  For $a,b$ satisfying \eqref{eq: a b} with $r_0 = 1$, $\theta_0 \in (-\pi,\pi)\setminus \{0\}$
  and $\lambda>1/2$,
  we have
  \[
    H(a,b,\lambda)
    =
    \hat{H}(a,b,\lambda).
  \]
\end{Prop}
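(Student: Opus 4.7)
The plan is to prove $H = \hat H$ by bringing both expressions to the same closed form, via a factorisation on one side and a direct integral evaluation on the other.

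First I would simplify the formula for $H$ given in Lemma~\ref{lem: expression G}. Introduce the shorthand $X = \sqrt{2 + a/\sqrt{c\lambda}}$ and $Y = \sqrt{2 + b/\sqrt{c\lambda}}$, and write $P,Q$ for the numerator and denominator of the fraction appearing in $H$. Using $u-1 = X^2-3$ and $v-1 = Y^2-3$, the denominator becomes $Q = X^3+Y^3 - 3(X+Y) = (X+Y)(X^2-XY+Y^2-3)$. For the numerator, setting $s = X+Y$, $p = XY$, one has $u^2+uv+v^2 = (X^2-2)^2+(Y^2-2)^2 + (X^2-2)(Y^2-2)$, which expands via $X^4+X^2Y^2+Y^4 = (X^2+XY+Y^2)(X^2-XY+Y^2) = (s^2-p)(s^2-3p)$ to give
\[ P = (s^2-p)(s^2-3p) - 6(s^2-2p) + 9 = (s^2-3p-3)(s^2-p-3), \]
where the factorisation is checked by comparing coefficients (in $s^2$) as a quadratic with discriminant $(4p+6)^2 - 12(p+1)(p+3) = (2p)^2$. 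Cancelling the common factor $(s^2-3p-3)$ with $Q$, this yields
\[ H(a,b,\lambda) = \frac{\lambda^{1/4}\sqrt{\pi}}{2^{5/4}\cdot 3}\cdot \frac{X^2+XY+Y^2-3}{X+Y}. \]

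Next I would compute $\hat H$ by Fubini, performing the $t$-integral first. Using the standard identity $\int_0^\infty t^{-\nu-1}e^{-c/t-\lambda t}\mathrm{d}t = 2(\lambda/c)^{\nu/2}K_\nu(2\sqrt{\lambda c})$ with $\nu=3/2$ (and the closed form $K_{3/2}(x)=\sqrt{\pi/(2x)}(1+1/x)\mathrm{e}^{-x}$), together with the case $\lambda=0$ handling the constant term in $(1-\mathrm{e}^{-\lambda t})$, one obtains, with $\beta := \sqrt{2\lambda/\sqrt{3}}$, $A := a+\beta$ and $B := b+\beta$,
\[ \hat H(a,b,\lambda) = \frac{3^{1/8}}{8}\int_0^\infty\!\!\int_0^\infty \frac{1-\mathrm{e}^{-Al-Br}}{(l+r)^{5/2}}\mathrm{d}l\,\mathrm{d}r - \frac{3^{-1/8}\sqrt{\lambda}}{4\sqrt{2}}\int_0^\infty\!\!\int_0^\infty \frac{\mathrm{e}^{-Al-Br}}{(l+r)^{3/2}}\mathrm{d}l\,\mathrm{d}r. \]
Each of the two planar integrals is evaluated by the change of variables $l = u\alpha$, $r = u(1-\alpha)$ with $\alpha\in(0,1)$, $u>0$, which turns the $u$-integral into a Gamma function and leaves an elementary $\alpha$-integral of the form $\int_0^1(B+(A-B)\alpha)^{-s}\mathrm{d}\alpha$. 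This gives
\[ \int_0^\infty\!\!\int_0^\infty \frac{\mathrm{e}^{-Al-Br}}{(l+r)^{3/2}}\mathrm{d}l\,\mathrm{d}r = \frac{2\sqrt{\pi}}{\sqrt A+\sqrt B}, \qquad \int_0^\infty\!\!\int_0^\infty \frac{1-\mathrm{e}^{-Al-Br}}{(l+r)^{5/2}}\mathrm{d}l\,\mathrm{d}r = \frac{4\sqrt{\pi}}{3}\cdot\frac{A+\sqrt{AB}+B}{\sqrt A+\sqrt B}. \]

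Finally I would match the two expressions. Using the identities $\sqrt A = \sqrt{\beta/2}\,X$, $\sqrt B = \sqrt{\beta/2}\,Y$ and $\sqrt{\beta/2} = (c\lambda)^{1/4} = 2^{-1/4}\cdot 3^{-1/8}\lambda^{1/4}$, both planar integrals rewrite in the variables $X,Y$ as multiples of $(X^2+XY+Y^2)/(X+Y)$ and $1/(X+Y)$ respectively; careful collection of powers of $2,3,\pi,\lambda$ shows that the $(X^2+XY+Y^2)$ term contributes a factor $\tfrac{1}{3}$ and the $-1$ term contributes exactly $-1$, giving
\[ \hat H(a,b,\lambda) = \frac{\sqrt{\pi}\,\lambda^{1/4}}{2^{5/4}\cdot 3}\cdot \frac{X^2+XY+Y^2-3}{X+Y}, \]
which coincides with the formula for $H$ from Step~1, proving the proposition.

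The main obstacle is genuinely just the bookkeeping: the factorisation $P = (s^2-3p-3)(s^2-p-3)$ is the only non-routine algebraic step, and after it both sides collapse to the same elementary rational function in $X,Y$. Tracking powers of $2,3,\pi,\lambda$ through the two integral evaluations is the other point where care is needed.
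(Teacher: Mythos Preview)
Your proof is correct. It follows the same overall strategy as the paper---compute $\hat H$ by direct integration and match it against the closed form of Lemma~\ref{lem: expression G}---but the execution differs in two respects. First, you open with an algebraic simplification of $H$: the factorisation $P=(s^2-3p-3)(s^2-p-3)$ cancels against the denominator to give the compact form $H=\tfrac{\lambda^{1/4}\sqrt\pi}{2^{5/4}\cdot 3}\cdot\tfrac{X^2+XY+Y^2-3}{X+Y}$, whereas the paper keeps $H$ as in the lemma and only rationalises at the very end (arriving at the equivalent expression $\tfrac{(u-1)\sqrt{u+2}-(v-1)\sqrt{v+2}}{u-v}$). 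Second, and more substantively, you perform the $t$-integral first via the explicit $K_{3/2}$ formula, which absorbs $\lambda$ into the shifted exponents $A=a+\beta$, $B=b+\beta$ and leaves two elementary planar integrals handled by the polar-type substitution $l=u\alpha$, $r=u(1-\alpha)$; the paper instead integrates over $r$ first (through $u=l+r$), then over $t$, then over $u$, which forces a case split on $a=b$ versus $a\ne b$. Your order has the side benefit that the convergence condition after the $t$-step is simply $A,B>0$, which holds for all $(a,b)$ in the stated range once $\lambda>1/2$ (since $\beta=2\sqrt{c\lambda}>3^{-1/4}\ge |a|,|b|$), whereas the paper's computation explicitly assumes $a,b>0$.
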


We will prove the proposition by deriving an expression for $\hat{H}(a,b,\lambda)$ and equating it with the expression from \cref{lem: expression G}. But first, let us see how it implies \cref{prop: joint law displacement/duration}.

\begin{proof}[Proof of \cref{prop: joint law displacement/duration}]
  Making an integral substitution in \eqref{eq: LG formula G} gives the scaling relation
  \[
    H(r_1a,r_1b,\lambda) = r_1^{1/2} H(a,b,r_1^{-2}\lambda),
  \]
  for $a,b$ satisfying \eqref{eq: a b} with $r_0=1$, $r_1>0$, and $\lambda>0$
  (allowing for the possibility that one, and then both, sides may be infinite).
  Using the expression for $\hat{H}$ in terms of $\mu$ shows that the same
  scaling relation holds for $\hat{H}$.

  With this in mind, \cref{prop: nlg precise} implies that 
  $H(a,b,\lambda) = \hat{H}(a,b,\lambda)$ whenever $(a,b)$ satisfies
  \eqref{eq: a b} with $\theta_0\in(-\pi,\pi)\setminus\{0\}$,
  $r_0>0$ and $\lambda > r_0^2/2$. Since this
  range of arguments contains an open ball in $\R^3$, the uniqueness
  of the Lévy--Khintchine formula implies that 
  $\nlg(e(0)\in (\mathrm{d}l,\mathrm{d}r), \zeta \in \mathrm{d}t) 
  = \mu(\mathrm{d}l,\mathrm{d}r,\mathrm{d}t)$.
\end{proof}

\begin{proof}[Proof of \cref{prop: nlg precise}]
  We will compute $\hat{H}(a,b,\lambda)$ and show that it matches the expression
  we have already derived for $H$.
  Set $c=(2\sqrt{3})^{-1}$ once again, and assume that $a,b>0$.

  First, let $u=l+r$. A change of variables then gives 
  \[
    \hat{H}(a,b,\lambda)
    =
    \frac{3^{-7/8}}{8\sqrt{2\pi}} \lambda^{3/2}
    \int_{(0,\infty)^2}\int_0^u (1-\mathrm{e}^{-au-(b-a)r-t}) \frac{\sqrt{u}}{t^{5/2}} \mathrm{e}^{-c\lambda u^2/t} \mathrm{d}r \mathrm{d}t \mathrm{d}u.
  \]
  Integrating over $r$ we get
  \begin{equation*}
    \hat{H}(a,b,\lambda)
    =
    \begin{cases}
      \frac{3^{-5/8}}{8\sqrt{2\pi}} \lambda^{3/2}
      \int_{(0,\infty)^2} (u-u\mathrm{e}^{-au-t}) \frac{\sqrt{u}}{t^{5/2}} \mathrm{e}^{-c\lambda u^2/t} \mathrm{d}t \mathrm{d}u &  \quad a=b \\[2ex]
      \frac{3^{-5/8}}{8\sqrt{2\pi}} \lambda^{3/2}
      \int_{(0,\infty)^2} (u-\mathrm{e}^{-t}\frac{\mathrm{e}^{-bu}-\mathrm{e}^{-au}}{a-b}) \frac{\sqrt{u}}{t^{5/2}} \mathrm{e}^{-c\lambda u^2/t} \mathrm{d}t \mathrm{d}u & \quad  a\ne b.
    \end{cases}
  \end{equation*}

  Now we use that for $k>0$,
  \[ \int_0^\infty t^{-5/2}\mathrm{e}^{-k/t} \mathrm{d}t = k^{-3/2} \int_0^\infty t^{-5/2}\mathrm{e}^{-1/t} \mathrm{d}t = \frac{\sqrt{\pi}}{2}k^{-3/2}, \]
  \[ \text{ and } \quad \int_0^\infty t^{-5/2}\mathrm{e}^{-k/t-t} \mathrm{d}t = k^{-3/2} \int_0^\infty t^{-5/2}\mathrm{e}^{-1/t-kt} \mathrm{d}t = \frac{\sqrt{\pi}}{2}k^{-3/2}(1+2\sqrt{k})\mathrm{e}^{-2\sqrt{k}}, \]
  where the last equality can be obtained \textit{e.g.} from  \cite[Formula 7.12.23]{erdelyi1953higher}, using the exact expression for the modified Bessel function $K_{3/2}$.
  Integrating over $t$ (and setting $k=c\lambda u^2$) we get
  \begin{equation*}
    \hat{H}(a,b,\lambda):= 
    \begin{cases}
      \frac{3^{1/8}}{8}
      \int_{(0,\infty)} u^{-3/2}(1-(1+2\sqrt{c \lambda }u)\mathrm{e}^{-2\sqrt{c\lambda }u}\mathrm{e}^{-au})du &  \quad a=b \\
      \frac{3^{1/8}}{8}
      \int_{(0,\infty)}u^{-5/2} (u-(1+2\sqrt{c\lambda}u)\mathrm{e}^{-2\sqrt{c\lambda}u}\frac{\mathrm{e}^{-bu}-\mathrm{e}^{-au}}{a-b})  du & \quad  a\ne b.
    \end{cases}
  \end{equation*}
  Since we know how to integrate $u^{-3/2}\mathrm{e}^{- k u}$ and $u^{-1/2}\mathrm{e}^{-ku}$ in terms
  of gamma functions, it only remains to evaluate the integrals above, to obtain that
  \begin{equation*}
    \hat{H}(a,b,\lambda)
    =
    \begin{cases}
      \frac{\sqrt{\pi}}{4\cdot 2^{1/4}}
      \lambda^{1/4}
      \frac{\frac{a}{\sqrt{c\lambda}} + 1}{\sqrt{\frac{a}{\sqrt{c\lambda}}+2}}
      &  \quad a=b \\
      \frac{\sqrt{\pi}}{6\cdot 2^{1/4}}
      \lambda^{1/4}\frac{(\frac{a}{\sqrt{c\lambda}}-1)\sqrt{\frac{a}{\sqrt{c\lambda}}+2}-(\frac{b}{\sqrt{c\lambda}}-1)\sqrt{\frac{b}{\sqrt{c\lambda}}+2}}{\frac{a}{\sqrt{c\lambda}}-\frac{b}{\sqrt{c\lambda}}} 
      & \quad  a\ne b.
    \end{cases}
  \end{equation*}
  Moreover, if \eqref{eq: a b} is satisfied with $r_0=1$ and $\lambda>1/2$ (which ensures that 
  $(\frac{a}{\sqrt{c\lambda}}-1)\sqrt{\frac{a}{\sqrt{c\lambda}}+2}
  +(\frac{b}{\sqrt{c\lambda}}-1)\sqrt{\frac{b}{\sqrt{c\lambda}}+2}\ne 0$), we can
  simplify the case $a\ne b$ to the following expression:
  \[ 
    \hat{H}(a,b,\lambda)
    =
    \frac{\sqrt{\pi}}{6\cdot 2^{1/4}}
    \lambda^{1/4}
    \frac{(\tfrac{a}{\sqrt{c\lambda}})^2+(\tfrac{b}{\sqrt{c\lambda}})^2+(\tfrac{a}{\sqrt{c\lambda}})(\tfrac{b}{\sqrt{c\lambda}})-3}{((\tfrac{a}{\sqrt{c\lambda}})-1)\sqrt{(\tfrac{a}{\sqrt{c\lambda}})+2}+((\tfrac{b}{\sqrt{c\lambda}})-1)\sqrt{(\tfrac{b}{\sqrt{c\lambda}})+2}}.
  \]
  This matches our computation for $H(a,b,\lambda)$ in \cref{lem: expression G}.
\end{proof}

%-------------------------------------------------------------------------------------%
%       Target-invariance property
%-------------------------------------------------------------------------------------%
\subsection{A target-invariance property} \label{sec: target invariance}

We now prove a version of the target-invariance property of $\sle_6$ on the $\sqrt{8/3}$--quantum disc using only Brownian motion arguments. For $z\in \R_+^2$, let $\Pb_z$ denote the law of two independent Brownian motions $W$ and $W'$, starting at $0$ and $z$ respectively. Our Brownian motion results will hold under the law 
\begin{equation} \label{eq: Q_L def}
  Q_L := \int_{0}^1  \mathrm{d}x \cdot 	\Pb_{xL,(1-x)L}(\, \cdot \,) , \quad L\ge 0,
\end{equation}
where we average over a uniform \emph{angle for the $1$--norm}.
The law $Q_L$ should be understood as follows: we first sample a uniform variable $U\in(0,1)$, and then sample two independent Brownian motions $W$ and $W'$ where $W$ starts at $0$ and $W'$ at $L \cdot (U, 1-U)$. Importantly, under $Q_L$ the initial displacement $W'(0)-W(0)$ is random, but $||W'(0)-W(0)||_1 = L$ is deterministic. 

We now fix $L\ge 0$. The main observable in this section is the process $S$ defined as follows. 
For $a\ge 0$, consider the process $(W'(\uptau(t)), \;t\le a)$ corresponding to
re-parametrising $W'$ by inverse local time on the set of forward $\frac{2\pi}{3}$ cone-free times. Introduce the first passage time 
\begin{equation} \label{eq: sfrak def}
  \sfrak(a) := \inf\{s\ge 0, \; W'(\uptau(t)) \in W(\tfrak(s))+\R_+^2 \; \text{for all} \; t\le a \},
\end{equation}
of the (backward cone) process $(W(\tfrak(t)), \; t\ge 0)$ below the path $(W'(\uptau(t)), \;t\le a)$. Finally, define 
\begin{equation} \label{eq: def Y(a) and S(a)}
  Y(a):= W'\circ\uptau(a)-W\circ \tfrak(\sfrak(a)), \quad \text{and} \quad S(a):= ||Y(a)||_1.
\end{equation}
In particular, note that $S(0)=L$, and that $Y$ is a (two-dimensional) Markov process. On the other hand, it is not clear (and will in fact take quite some work to show) that $S$ is Markov. 
{Determining whether a function of a Markov process is still a Markov process is a problem known as \emph{Markov functions} \cite{rogers1981markov}. We will not use the general theory since in our case it would not simplify the proof.}

{The process $Y$ has a clear LQG interpretation as it describes the left/right boundary length process when exploring an $\sle_6$--decorated $\sqrt{8/3}$--\emph{quantum cone} outwards from a typical point (see the paragraph following \cref{prop: intro pathwise 3/2 stable cond}). Likewise the process $S$ describes the total quantum boundary length in the same exploration. We will not use this fact, but we stress that the processes $Y$ and $S$ will arise in our context from the Bismut description (\cref{thm: Bismut}), which connects cone excursions to whole plane Brownian motion in a similar way to how quantum discs relate to quantum cones.}
We start with a basic scaling property for $Y$.
\begin{Prop} \label{prop: Z self-similar}
  $Y$ is self-similar with index $\frac32$.
\end{Prop}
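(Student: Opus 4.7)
The plan is to establish self-similarity via Brownian scaling. Fix $c>0$ and set $\widetilde{W}(t):=cW(c^{-2}t)$ and $\widetilde{W}'(t):=cW'(c^{-2}t)$. By Brownian scaling, $(\widetilde{W},\widetilde{W}')$ is again a pair of independent correlated Brownian motions with covariance \eqref{eq: cov MoT}; moreover, if $(W,W')$ has law $Q_L$ then $(\widetilde{W},\widetilde{W}')$ has law $Q_{cL}$, since $\widetilde{W}'(0)=cL(U,1-U)$ when $W'(0)=L(U,1-U)$. Denoting by $\widetilde{Y}$ the process constructed from $(\widetilde{W},\widetilde{W}')$ exactly as in \eqref{eq: def Y(a) and S(a)}, it therefore suffices to prove the pathwise identity $\widetilde{Y}(a)=cY(c^{-3/2}a)$ under this coupling.

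The main content is identifying how $\uptau$ and $\tfrak$ scale. Since the local time $\ell$ is fixed by the normalisation $c_\theta=1$ of \cref{rk: choice ell}, and $\lfrak$ by the explicit limit \eqref{eq: def LG local time}, each is a canonical measurable functional of the underlying Brownian path. Matching the intensity measures of the forward cone-excursion Poisson point processes of $\widetilde{W}'$ and $W'$ via \cref{prop: DMS disintegration}, the pushforward of $\ndms$ under $e\mapsto ce(c^{-2}\cdot)$ picks up a factor $c^{\alpha}$ from rescaling one-dimensional Lebesgue measure on $\partial\R_+^2\setminus\{0\}$ against the density $|z|^{-(1+\alpha)}$, and this must be compensated by $\widetilde{\ell}(t)=c^{\alpha}\ell(c^{-2}t)$. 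Inverting, with $\alpha=\pi/\theta=3/2$, gives $\widetilde{\uptau}(a)=c^{2}\uptau(c^{-3/2}a)$ almost surely. An analogous computation, either directly from \eqref{eq: def LG local time} or via \cref{prop: nlg disintegration} together with \eqref{eq: LG Lévy measure}, yields $\widetilde{\lfrak}(t)=c^{2-\alpha}\lfrak(c^{-2}t)$ and hence $\widetilde{\tfrak}(s)=c^{2}\tfrak(c^{-1/2}s)$.

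It follows that $\widetilde{W}'(\widetilde{\uptau}(a))=c(W'\circ\uptau)(c^{-3/2}a)$ and $\widetilde{W}(\widetilde{\tfrak}(s))=c(W\circ\tfrak)(c^{-1/2}s)$. Inserting these into the definition \eqref{eq: sfrak def} of $\sfrak$, the common factor $c$ drops out of the inclusion $\cdot\in\cdot+\R_+^2$; making the substitutions $t'=c^{-3/2}t$ and $s'=c^{-1/2}s$ then yields $\widetilde{\sfrak}(a)=c^{1/2}\sfrak(c^{-3/2}a)$. Assembling these identities in the definition of $\widetilde{Y}$,
\[
  \widetilde{Y}(a)=c(W'\circ\uptau)(c^{-3/2}a)-c(W\circ\tfrak)\bigl(\sfrak(c^{-3/2}a)\bigr)=cY(c^{-3/2}a),
\]
which is precisely $\tfrac{3}{2}$--self-similarity.

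The only step requiring any genuine work is pinning down the scaling exponents for $\ell$ and $\lfrak$; everything else is a direct substitution once these are in hand. The exponents are forced by the specific normalisations in \cref{rk: choice ell} and \eqref{eq: def LG local time}, and can be extracted either by direct computation from the defining limit for $\lfrak$ or by the uniqueness of the local time given the fixed intensity of the cone-excursion PPP for $\ell$.
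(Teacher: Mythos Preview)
Your proof is correct. Both you and the paper argue via scaling, but the packaging differs. The paper works at the level of the subordinated processes: it invokes the self-similarity of $W'\circ\uptau$ with index $\tfrac32$ (from \cref{thm: DMS stable}) and of $W\circ\tfrak$ with index $\tfrac12$ (from \cref{thm: LG stable}), and then tracks the scaling of $\sfrak$ purely in distribution. You instead go one level down, applying Brownian scaling to $(W,W')$ directly and determining pathwise how the normalised local times $\ell$ and $\lfrak$ transform; the exponents $\alpha$ and $2-\alpha$ emerge from matching the intensities of the forward and backward excursion point processes against the fixed normalisations in \cref{rk: choice ell} and \eqref{eq: def LG local time}. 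This yields the pathwise identity $\widetilde{Y}(a)=cY(c^{-3/2}a)$ under the natural coupling, which is slightly stronger than the distributional statement the paper proves. The trade-off is that your route is more self-contained (it does not cite the stable-process results as black boxes) but needs the extra step of pinning down the local-time scaling, whereas the paper's route is shorter because that work has already been absorbed into \cref{thm: DMS stable} and \cref{thm: LG stable}.
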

\begin{proof}
  This follows from direct calculations. First, recall that $W'\circ \uptau$ is self-similar with index $\frac32$ (\cref{thm: DMS stable}), whereas $W\circ \tfrak$ is self-similar with index $\frac12$ (\cref{thm: LG stable}). We now need to deal with the time-change $\sfrak$. Let $x>0$.
  Scaling $W'\circ\uptau$ we obtain:
  \begin{align*}
    \text{Under $\Pb_z$: } \sfrak(x^{-3/2}a) 
    &= \inf\{s\ge 0, \; W'(\uptau(t)) \in W(\tfrak(s))+\R_+^2 \; \text{for all} \; t\le x^{-3/2}a \} \\
    &= \inf\{s\ge 0, \; W'(\uptau(x^{-3/2}t)) \in W(\tfrak(s))+\R_+^2 \; \text{for all} \; t\le a \} \\
    &\overset{\mathrm{d}}{=} \inf\{s\ge 0, \; W'(\uptau(t)) \in xW(\tfrak(s))+\R_+^2 \; \text{for all} \; t\le a \} 
    =: \mathfrak{s}'(a) \text{ under } \Pb_{xz}
  \end{align*}
  Now, using the scaling of $W\circ\mathfrak{t}$, we observe that (under measure $\Pb_{xz}$ on both sides)
  $xW(\mathfrak{t}(\mathfrak{s}'(a))) \overset{\mathrm{d}}{=} W(x^{1/2}\mathfrak{t}(\mathfrak{s}''(a)))$,
  where
  \begin{align*}
    \mathfrak{s}''(a) 
    &= \inf\{s\ge 0, \; W'(\uptau(t)) \in W(\tfrak(x^{1/2}s))+\R_+^2 \; \text{for all} \; t\le a \} \\
    &= x^{-1/2} \inf\{s\ge 0, \; W'(\uptau(t)) \in W(\tfrak(s))+\R_+^2 \; \text{for all} \; t\le a \} \\
    &= x^{-1/2} \sfrak(a).
  \end{align*}
  Combining this last observation with the two equalities in distribution, we obtain that
  the law of $(xW(\mathfrak{t}(\mathfrak{s}(x^{-3/2}a))),xW'(\uptau(x^{-3/2}a)))$ under $\Pb_z$
  is equal to that of $(W(\mathfrak{t}(\mathfrak{s}(a))),W'(\uptau(a)))$ under $\Pb_{xz}$,
  which is the self-similarity property of the two dimensional process. The claim
  follows.
\end{proof}

The next result is a simple consequence of Bismut's description of $\nlg$ (\cref{thm: Bismut}) and the scaling property of $Y$. 
\begin{Cor} \label{cor: idp local time}
  Under the measure $\overline{\nlg}$ introduced in \cref{thm: Bismut}, the total cone-free local time $\varsigma^T$ of $T$ is independent of $\frac{e(0)}{||e(0)||_1}$. 
\end{Cor}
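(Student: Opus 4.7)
The plan is to use the Bismut description of \cref{thm: Bismut} to write down the conditional law of $e$ given $\varsigma^T=a$ under $\overline{\nlg}$ explicitly, and then apply a Brownian scaling argument — in the same spirit as \cref{prop: Z self-similar} — to show that the conditional law of $e(0)/\|e(0)\|_1$ does not depend on $a>0$. Combined with the Lebesgue ``marginal'' of $\varsigma^T$ under $\overline{\nlg}$, this gives the desired independence.

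First I would apply \cref{thm: Bismut} to realise $\overline{\nlg}(\,\cdot \mid \varsigma^T=a)$ as follows: sample a correlated Brownian motion $B$ as in \eqref{eq: cov MoT} and take $e^{T,-}$ to be $B$ run up to the first time $\uptau^B(a)$ its forward cone-free local time (normalised as in \cref{rk: choice ell}) reaches $a$; conditionally on this path, take $e^{T,+}$ to be an independent correlated Brownian motion $W$ stopped at the first simultaneous running infimum $I$ of $W$ lying coordinate-wise below the whole trajectory $e^{T,-}$. Using $e^{T,-}(T)=e(0)-e(T)$ and $e^{T,+}(\zeta-T)=-e(T)$, this realisation gives the explicit identity
\[
  e(0) \;=\; B(\uptau^B(a))\;-\;W(I),
\]
so $e(0)$ becomes an explicit functional of the triple $(B,W,a)$.

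The scaling step is the heart of the argument. For $\lambda>0$, set $(B_\lambda, W_\lambda):=(\lambda^{2/3}B(\lambda^{-4/3}\,\cdot),\,\lambda^{2/3}W(\lambda^{-4/3}\,\cdot))$, which has the same joint law as $(B,W)$ by Brownian scaling. Since $\uptau$ is a $\tfrac34=\alpha/2$-stable subordinator (\cref{thm: DMS stable} and \cref{rk: tau subordinator}), a short computation gives $\uptau^{B_\lambda}(\lambda a)=\lambda^{4/3}\uptau^B(a)$ and hence $B_\lambda(\uptau^{B_\lambda}(\lambda a))=\lambda^{2/3}B(\uptau^B(a))$. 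The defining condition for $I$ involves only coordinate-wise inequalities between the paths of $W$ and $e^{T,-}$, so the analogous stopping time $I_\lambda$ for $(B_\lambda, W_\lambda, \lambda a)$ equals $\lambda^{4/3}I$ and $W_\lambda(I_\lambda)=\lambda^{2/3}W(I)$. Putting these together, the conditional law of $e(0)$ under $\overline{\nlg}(\,\cdot \mid \varsigma^T=\lambda a)$ coincides with the conditional law of $\lambda^{2/3}e(0)$ under $\overline{\nlg}(\,\cdot \mid \varsigma^T=a)$. Since positive scaling leaves the angle $e(0)/\|e(0)\|_1$ invariant, its conditional distribution does not depend on $a>0$, which is exactly the claim.

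The only mildly delicate point is verifying the rescaling rule $\ell^{B_\lambda}(t)=\lambda\,\ell^{B}(\lambda^{-4/3}t)$ (from which $\uptau^{B_\lambda}(\lambda a)=\lambda^{4/3}\uptau^B(a)$ follows). This is exactly the Brownian-scaling computation that is carried out inside the proof of \cref{prop: Z self-similar}, and relies only on the measurability of $\ell$ as a functional of the path (\cref{rk: choice ell}) together with the stable scaling of $\uptau$; I do not anticipate any further obstacle.
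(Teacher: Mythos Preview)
Your approach is essentially the same as the paper's: via Bismut you recognise that, conditional on $\varsigma^T=a$, $e(0)$ is distributed as $Y(a)=B(\uptau(a))-W(I)$ under $Q_0$, and then scale in $a$; the paper writes this directly as $\overline{\nlg}(h(\varsigma^T)f(e(0)/\|e(0)\|_1))=\overline{c}\int_0^\infty h(a)\,\Eb[f(Y(a)/S(a))]\,\mathrm{d}a$ and invokes \cref{prop: Z self-similar} for the scaling step. One small imprecision: the proof of \cref{prop: Z self-similar} does \emph{not} carry out the pathwise identity $\ell^{B_\lambda}(t)=\lambda\,\ell^{B}(\lambda^{-4/3}t)$ that you cite, but instead uses the distributional self-similarity of $W'\circ\uptau$ and $W\circ\tfrak$ from \cref{thm: DMS stable} and \cref{thm: LG stable}; your argument goes through unchanged once recast in those distributional terms.
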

\begin{proof}
  By \cref{thm: Bismut}, for all non-negative measurable functions $h$ and $f$,
  \[
    \overline{\nlg} \left(h(\varsigma^T)f\left(\frac{e(0)}{||e(0)||_1} \right)\right)
    =
    \overline{c} \int_0^\infty \mathrm{d}a h(a) \Eb\left[f\left(\frac{Y(a)}{S(a)} \right)\right].
  \]
  By the scaling in \cref{prop: Z self-similar}, we obtain
  \[
    \overline{\nlg} \left(h(\varsigma^T)f\left(\frac{e(0)}{||e(0)||_1} \right)\right)
    =
    \overline{c} \int_0^\infty \mathrm{d}a h(a) \Eb\left[f\left(\frac{Y(1)}{S(1)} \right)\right]
    =
    \overline{\nlg} (h(\varsigma^T)) \Eb\left[f\left(\frac{Y(1)}{S(1)} \right)\right],
  \]
  which is the claim in \cref{cor: idp local time}.
\end{proof}

Our main goal in this subsection is to prove that the process $S$ defined in \eqref{eq: def Y(a) and S(a)} is Markov. We will then describe explicitly the law of $S$ later in \cref{sec: BM construction conditioning}. We emphasise that the laws of $(W'(\uptau(t), \;t\le a))$ and $(W(\tfrak(t)), \; t\ge 0)$ are known from \cref{thm: DMS stable} and \cref{thm: LG stable}. However, the definition of $Y$ and $S$ involves an intricate time-change $\sfrak$ which breaks the independence of $W$ and $W'$. 
We start with a technical lemma giving a rough bound on the distribution function of $S$.
\begin{Lem} \label{lem: bound S<c}
  There exists a constant $M>0$ such that, for all $c>0$ and $a>0$, 
  \[
    Q_0(S(a)\le c) \le M \frac{\sqrt{c}}{a^{1/3}}.
  \]
\end{Lem}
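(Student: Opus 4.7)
My plan is to first use the self-similarity of $Y$ with index $3/2$ established in \cref{prop: Z self-similar}: under $Q_0$ it gives $S(a) \overset{\mathrm{d}}{=} a^{2/3} S(1)$, so that $Q_0(S(a) \le c) = Q_0(S(1) \le c a^{-2/3})$. Observing that $\sqrt{c}/a^{1/3} = \sqrt{c a^{-2/3}}$, the claim reduces to proving $Q_0(S(1) \le x) \le M\sqrt{x}$ for all $x > 0$.

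I would then exploit the geometric meaning of $\sfrak(1)$. By its definition \eqref{eq: sfrak def}, $W(\tfrak(\sfrak(1)))$ lies componentwise below the whole trajectory $(W'(\uptau(t)))_{t \le 1}$, so setting $I_j := \inf_{t \le 1} W'^j(\uptau(t))$ for $j = 1, 2$, we have $W^j(\tfrak(\sfrak(1))) \le I_j \le W'^j(\uptau(1))$. Each coordinate of $Y(1) \in \R_+^2$ then splits as
\[
  Y(1)^j = \bigl( W'^j(\uptau(1)) - I_j \bigr) + \bigl( I_j - W^j(\tfrak(\sfrak(1))) \bigr),
\]
with both summands non-negative, so $\{S(1) \le x\} \subset \bigcap_{j = 1, 2} \{W'^j(\uptau(1)) - I_j \le x\}$. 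Since by \cref{thm: DMS stable} the coordinate processes $(W'^1 \circ \uptau)$ and $(W'^2 \circ \uptau)$ are independent spectrally positive $\tfrac{3}{2}$-stable L\'evy processes, writing $R := X(1) - \inf_{s \le 1} X(s)$ for a generic such process $X$, the independence and the above inclusion yield
\[
  Q_0(S(1) \le x) \le \Pb(R \le x)^2.
\]
The classical time-reversal identity for L\'evy processes, $(X(u))_{u \le 1} \overset{\mathrm{d}}{=} (X(1) - X(1 - u))_{u \le 1}$, further gives $R \overset{\mathrm{d}}{=} \bar X(1) := \sup_{s \le 1} X(s)$.

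The hard part will be to estimate $\Pb(\bar X(1) \le x)$ for small $x$. For this I would appeal to standard fluctuation theory for spectrally positive $\alpha$-stable L\'evy processes. At $\alpha = \tfrac{3}{2}$, the positivity parameter is $\rho = 1 - 1/\alpha = 1/3$, so the ascending ladder time subordinator is $\rho$-stable and the ascending ladder height subordinator is $\alpha\rho = 1/2$-stable. Combining the decomposition $\bar X(t) = H(L(t))$ with the scaling identity $\Pb(\bar X(1) \le x) = \Pb(T_1 > x^{-3/2})$, where $T_1 := \inf\{s : \bar X(s) > 1\}$, and the heavy-tail estimate $\Pb(T_1 > t) \lesssim t^{-1/3}$ coming from the tails of the $1/3$-stable ladder time subordinator, one obtains $\Pb(\bar X(1) \le x) \le C\sqrt{x}$ for small $x$. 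Plugging this into the squared bound gives $Q_0(S(1) \le x) \le C^2 x \le C^2 \sqrt{x}$ for $x \in (0, 1]$, while the bound is trivial for $x \ge 1$; choosing $M$ accordingly completes the argument.
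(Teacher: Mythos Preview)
Your proof is correct and takes a genuinely different route from the paper's.

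The paper works with the \emph{summed} coordinates $\Sigma'(b)=(W'\circ\uptau)_1(b)+(W'\circ\uptau)_2(b)$ and $\Sigma(b)=(W\circ\tfrak)_1(b)+(W\circ\tfrak)_2(b)$, splits according to the sign of $\Sigma'(a)$, and in the nontrivial case $\Sigma'(a)<0$ bounds $S(a)$ from below by the overshoot of the $\tfrac12$--stable subordinator $-\Sigma$ past the level $-\Sigma'(a)$; the explicit overshoot law $\Pb(\widehat\Sigma(\eta_x)-x\le c)=\tfrac{2}{\pi}\arctan\sqrt{c/x}$ then gives the bound after integrating against the density of $\Sigma'(a)$ and scaling. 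Your argument instead stays with the two coordinates separately: from the definition of $\sfrak$ you obtain the clean pointwise bound $Y(1)^j\ge W'^j(\uptau(1))-\inf_{t\le 1}W'^j(\uptau(t))$, then exploit the independence of the two coordinate $\tfrac32$--stable processes (\cref{thm: DMS stable}) and the time-reversal identity to reduce to the small-ball estimate $\Pb(\bar X(1)\le x)\le C x^{1/2}$. This is more elegant and actually yields the stronger estimate $Q_0(S(1)\le x)\le C'x$; the paper's approach, by contrast, is more self-contained, since the subordinator overshoot law is completely explicit while your small-ball bound needs an external input.

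One remark on that input: the sketch ``$\Pb(T_1>t)\lesssim t^{-1/3}$ from the tails of the $1/3$--stable ladder time subordinator'' is heuristically right, but the ascending ladder time $L^{-1}$ and height $H$ are \emph{not} independent for stable processes, so the passage from $T_1=L^{-1}(\tau_1^H)$ to the tail bound needs a little more care. The cleanest fix is simply to cite the small-ball behaviour of $\bar X(1)$ directly: for a stable process with index $\alpha$ and positivity parameter $\rho$ one has $\Pb(\bar X(1)\le x)\asymp x^{\alpha\rho}$ as $x\to 0$ (see e.g.\ the density asymptotics in \cite{Kuz-extrema}, already used in the paper), and here $\alpha\rho=\tfrac32\cdot\tfrac13=\tfrac12$. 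With that reference in place your argument is complete.
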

\begin{proof}
  For $b\ge 0$, let $\Sigma'(b)$ (\textit{resp.} $\Sigma(b)$) be the sum of the co-ordinates of $W'\circ \uptau(b)$ (\textit{resp.} $W\circ\tfrak(b)$). The definition of $\sfrak$ implies that
  \[
    S(a) 
    =
    \Sigma'(a) - \Sigma(\sfrak(a)).
  \]
  Since $\Sigma'(a)=0$ happens with probability $0$, we may split the probability as follows:
  \begin{equation} \label{eq: S<c split}
    Q_0(S(a)\le c) = Q_0\big(\Sigma'(a) - \Sigma(\sfrak(a))\le c, \Sigma'(a)>0\big) + Q_0\big(\Sigma'(a) - \Sigma(\sfrak(a))\le c, \Sigma'(a)<0\big).
  \end{equation}

  Let us start with the first term. Noting that $\Sigma(\sfrak(a)) \le 0$ by definition of backward cone points, we bound this term as
  \[
    Q_0\big(\Sigma'(a) - \Sigma(\sfrak(a))\le c, \Sigma'(a)>0\big)
    \le
    Q_0\big(\Sigma'(a) \le c, \Sigma'(a)>0\big)
    \le
    \Eb^{Q_0}\left[ \sqrt{\frac{c}{\Sigma'(a)}} \cdot \mathds{1}_{\Sigma'(a)>0} \right].
  \]
  By scaling, we therefore get
  \[
    Q_0\big(\Sigma'(a) - \Sigma(\sfrak(a))\le c, \Sigma'(a)>0\big)
    \le
    \frac{\sqrt{c}}{a^{1/3}} \Eb^{Q_0}\left[ \Sigma'(1)^{-1/2} \cdot \mathds{1}_{\Sigma'(1)>0} \right].
  \]
  The latter moment is finite by an application of \cite[Theorem 1.13]{KP}, and this
  yields the bound in the statement.

  We now deal with the second term of \eqref{eq: S<c split}. First, we bring the question to a one-dimensional problem by defining
  \[
    \sigma(a) 
    :=
    \inf\{b>0, \; \Sigma(b) < \Sigma'(a)\}.	
  \]
  Since $\sfrak(a)\ge \sigma(a)$ and $b\mapsto \Sigma(b)$ is decreasing, we observe that 
  \[
    \Sigma'(a) - \Sigma(\sfrak(a)) \ge \Sigma'(a) - \Sigma(\sigma(a)).
  \]
  Therefore,
  \begin{equation} \label{eq: second term overshoot}
    Q_0\big(\Sigma'(a) - \Sigma(\sfrak(a))\le c, \Sigma'(a)<0\big)
    \le
    Q_0\big(\Sigma'(a) - \Sigma(\sigma(a))\le c, \Sigma'(a)<0\big).
  \end{equation}
  Conditioned on $\Sigma'(a)$, the random variable $\Sigma'(a) - \Sigma(\sigma(a))$ is nothing but the (downward) overshoot of $\Sigma$ at level $\Sigma'(a)$. The process $\widehat{\Sigma} := -\Sigma$ is a $\frac12$--stable subordinator; write $\eta_x := \inf\{b>0, \; \widehat{\Sigma}(b) > x \}$ for its first passage time above $x>0$. By \cite[Corollary 3.5]{KP}, the law of the overshoot at $x>0$ is explicitly given by
  \[
    Q_0\big(\widehat{\Sigma}(\eta_x)-x \le c\big) 
    =
    \frac{1}{2\pi} \int_0^c \mathrm{d}u \int_0^x \mathrm{d}y (x-y)^{-1/2}(y+u)^{-3/2}.	
  \]
  An elementary calculation yields the expression
  \[
    Q_0\big(\widehat{\Sigma}(\eta_x)-x \le c\big) 
    =
    \frac{2}{\pi} \arctan{\sqrt{\frac{c}{x}}}
    \le
    \frac{2}{\pi} \sqrt{\frac{c}{x}}.	
  \] 
  Coming back to our expression \eqref{eq: second term overshoot}, we deduce that 
  \[
    Q_0\big(\Sigma'(a) - \Sigma(\sfrak(a))\le c, \Sigma'(a)<0\big)
    \le
    \frac{2}{\pi} \sqrt{c} \cdot \Eb^{Q_0}\big[ (-\Sigma'(a))^{-1/2} \mathds{1}_{\Sigma'(a)<0} \big].
  \]
  By scaling, we end up with the bound 
  \[
    Q_0\big(\Sigma'(a) - \Sigma(\sfrak(a))\le c, \Sigma'(a)<0\big)
    \le
    \frac{2}{\pi} \frac{\sqrt{c}}{a^{1/3}} \cdot \Eb^{Q_0}\big[ (-\Sigma'(1))^{-1/2} \mathds{1}_{\Sigma'(1)<0} \big],
  \]
  where we claim that the moment is finite by another application of \cite[Theorem 1.13]{KP}. This concludes the proof of \cref{lem: bound S<c}.
\end{proof}

Our main result in this section is the following.

\begin{Prop} \label{prop: target invariance Q_L}
  Let $a\ge 0$. Under $Q_L$, the law of $\frac{Y(a)}{S(a)}$ is that of $(U, 1-U)$  where $U$ is uniform in $(0,1)$ and independent of $S(a)$.
\end{Prop}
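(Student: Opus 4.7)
The plan is to combine the Bismut description (\cref{thm: Bismut}) with \cref{cor: idp local time}, the disintegration of $\nlg$ (\cref{prop: nlg disintegration}), and the explicit joint law (\cref{prop: joint law displacement/duration}). The approach proceeds in two stages: first establish the claim under $\Pb_0 = Q_0$, then extend to $Q_L$ for $L > 0$ via the self-similarity of $Y$ (\cref{prop: Z self-similar}) and a strong Markov argument. The starting identity uses the Bismut identification $e(0) = Y(\varsigma^T)$ (under $\Pb_0$, with both $W$ and $W'$ starting at the origin) to obtain, for all non-negative measurable $f, g, h$,
\[
\overline{c} \int_0^\infty h(a) \Eb^{\Pb_0}\bigl[f(Y(a)/S(a)) g(S(a))\bigr] \mathrm{d}a = \overline{\nlg}\bigl(h(\varsigma^T) f(e(0)/\|e(0)\|_1) g(\|e(0)\|_1)\bigr).
\]
This identification comes from matching $e^{T,-}(T) = W'(\uptau(a))$ and $e^{T,+}(\zeta - T) = W(\tfrak(\sfrak(a)))$ in the Bismut construction, so that $e(0) = e^{T,-}(T) - e^{T,+}(\zeta-T) = Y(a)$ when $\varsigma^T = a$.

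For the $\Pb_0$ case, it suffices to show that the right-hand side factorises in $f$, that is, under $\overline{\nlg}$ the angle $e(0)/\|e(0)\|_1$ is independent of the pair $(\varsigma^T, \|e(0)\|_1)$ with uniform marginal. From \cref{prop: joint law displacement/duration} and \cref{rk: independence duration/position}(1) (direction-independence of $\zeta$ conditional on $\|e(0)\|_1$), one obtains $\|e(0)\|_1 \perp e(0)/\|e(0)\|_1$ under $\overline{\nlg}$ with uniform angle; \cref{cor: idp local time} gives $\varsigma^T \perp e(0)/\|e(0)\|_1$. To upgrade these pairwise independences to the joint statement, use the Brownian scaling $e = L \tilde{e}(\cdot/L^2)$ with $\tilde{e} \sim P^{(u, 1-u)}$ together with the corresponding scaling $\varsigma^T_e = L^{3/2} \varsigma^{T/L^2}_{\tilde{e}}$; this reduces the task to showing that $J_h(L, u) := \Eb^{P^{(u, 1-u)}}\bigl[\int_0^{\tilde{\zeta}} h(L^{3/2} \varsigma^{T'}) \mathrm{d}T'\bigr]$ does not depend on $u$. \cref{cor: idp local time} provides the averaged version $\int L^{1/2} J_h(L, u)\mathrm{d}L$ is $u$-independent, and varying $g$ in the master identity above yields the pointwise statement via a Mellin-uniqueness argument, with \cref{lem: bound S<c} supplying the required tail control. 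Once the factorisation is in place, one deduces $\Eb^{\Pb_0}[f(Y(a)/S(a)) g(S(a))] = (\int_0^1 f(u, 1-u)\mathrm{d}u) \cdot \Eb^{\Pb_0}[g(S(a))]$ for Lebesgue-a.e. $a$, and then for every $a > 0$ by \cref{prop: Z self-similar}.

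To extend to $Q_L$ with $L > 0$, by self-similarity of $Y$ of index $3/2$ one may assume $L = 1$. Under $\Pb_0$, let $\sigma := \inf\{s \geq 0 : S(s) = 1\}$. The absence of positive jumps of $S$ (which can be extracted directly here from the fact that jumps of $Y$ come from either jumps of $W' \circ \uptau$ or of $W \circ \tfrak$, the latter only making $S$ decrease at $\sfrak$-updates) implies $S(\sigma) = 1$ almost surely, and the $\Pb_0$ result applied at the stopping time $\sigma$ (by conditioning on $\{\sigma = t\}$ for each $t$) yields that $Y(\sigma)$ is uniformly distributed on the simplex $\{y \in \R_+^2 : \|y\|_1 = 1\}$. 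The strong Markov property at $\sigma$ identifies the law of $(Y(\sigma + s))_{s \geq 0}$ with that of $Y$ under $Q_1$, and the claim at fixed time $a$ under $Q_1$ follows from the $\Pb_0$ statement applied at the random time $\sigma + a$.

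The main obstacle is the pointwise $u$-independence of $J_h(L, u)$ in the second paragraph: this is strictly stronger than \cref{cor: idp local time}, which only gives one Mellin moment in $L$ to be $u$-independent. Extracting the pointwise statement requires exploiting the freedom in $g$ to sweep out multiple Mellin coefficients, supported by the integrability from \cref{lem: bound S<c}. A secondary subtlety is the transfer of the $\Pb_0$ conclusion to the stopping time $\sigma$ in the last paragraph, which needs the independence to hold conditionally on the trajectory of $S$ up to $\sigma$ rather than merely on $S(\sigma)$.
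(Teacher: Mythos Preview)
Your approach has two genuine gaps.

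\textbf{The $Q_0$ case.} The heart of your argument is to show that under $\overline{\nlg}$ the direction $e(0)/\lVert e(0)\rVert_1$ is independent of the \emph{pair} $(\varsigma^T,\lVert e(0)\rVert_1)$. You correctly observe that \cref{cor: idp local time} and \cref{prop: joint law displacement/duration} only give pairwise independence, and you propose to close the gap by ``varying $g$ in the master identity'' together with Mellin uniqueness. This does not work: the Bismut identity is a tautology connecting two descriptions of the same object, so factorisation of the left-hand side in $f$ is \emph{equivalent} to factorisation of the right-hand side, and varying $g$ (or $h$) gives no new constraint. Concretely, by the self-similarity of $Y$ under $\Pb_0$ one has $\Eb_0[f(Y(a)/S(a))g(S(a))]=\Eb_0[f(Y(1)/S(1))g(a^{2/3}S(1))]$, so the only independent input you can extract from the $h\equiv 1$ case (the one where \cref{prop: joint law displacement/duration} does make the right-hand side factor) is the single relation $\Eb_0[(f(Y(1)/S(1))-\Eb f(U,1-U))\,S(1)^{-3/2}]=0$, far short of full independence. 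The paper avoids this entirely: it first proves the statement at the \emph{random} time $\ell(t)$ by gluing $W$ onto the time-reversal of $W'$ to form a single Brownian motion $B$, so that $Y(\ell(t)^-)$ becomes the start point of the backward cone excursion of $B$ straddling $t$; the compensation formula for $\efrak$ together with \cref{rk: independence duration/position}(i) then gives the factorisation directly. A second compensation-formula argument (now for forward excursions) transfers this to deterministic $a$.

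\textbf{The extension to $Q_L$.} Your first-passage argument rests on the claim that $S$ has no positive jumps, but this is false: both sources of jumps of $Y$ point into the positive quadrant. Jumps of $W'\circ\uptau$ are spectrally positive in each coordinate (\cref{thm: DMS stable}), and at $\sfrak$-updates the term $-W(\tfrak(\sfrak(\cdot)))$ \emph{increases} in each coordinate since backward cone times are simultaneous running infima. Hence $S$ has only positive jumps (consistent with \cref{thm: main cone end}), the hitting time $\sigma$ of level $1$ may overshoot, and the identification of the post-$\sigma$ law with $Q_1$ fails. The paper instead reduces from $Q_L$ to $Q_0$ by applying the Markov property of $Y$ at a time $b>0$ under $Q_0$, using the already-established $Q_0$ result to see that the post-$b$ process has law $Q_{S(b)}$, then integrating against $b^\alpha$ and invoking Mellin uniqueness in $L$ (with the integrability supplied by \cref{lem: bound S<c}).
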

\begin{proof}
  \newcounter{targetinv}
  \stepcounter{targetinv}
  We divide the proof into four steps, the crucial step being the third one.

  \medskip
  $\triangleright$ \textit{Step \thetargetinv: Absolute continuity of $S(a)$.}
  For technical reasons, we must first prove that $S(a)$ has a density (with respect to Lebesgue measure) under $Q_0$.
  Prefiguring notation appearing later, let $\Xi(a) = W\circ \mathfrak{t}(a)$,
  $\Xi'(a) = W'\circ\uptau(a)$, and
  $V_i(a) = \inf_{b\le a} \Xi'_i(b)$ for $i=1,2$ and $a\ge 0$.
  As has already been noted, $\Xi'$ and $\Xi$ are stable processes.

  Let $g_1$ and $g_2$ be non-negative, bounded measurable functions, and define
  \[h(a) = \Eb^{Q_0}[ g_1(V_1(a)) g_2(\Xi'_1(a) - V_1(a))].\] 
  Denote by
  $\mathcal{L}h(q) = \int_0^\infty \mathrm{e}^{-qa} h(a) \, \mathrm{d}a$ the Laplace transform.
  Then, using the Wiener-Hopf factorisation \cite[Theorem~VI.5]{Ber}
  of $\Xi'_1$       at the independent exponential time $\mathrm{e}_q$ with rate $q$, and the duality principle for Lévy processes \cite[Lemmas~3.4 and~3.5]{kyprianou2014fluctuations}, we have
  \[
    q\mathcal{L}h(q)
    = \Eb^{Q_0}[ g_1(V_1(\mathrm{e}_q)) g_2(\Xi'_1(\mathrm{e}_q) - V_1(\mathrm{e}_q)) ]
    = \Eb^{Q_0}[ g_1(V_1(\mathrm{e}_q))] \Eb^{\hat{Q}_0} [ g_2(-V_1(\mathrm{e}_q))],
  \]
  where $\Xi'$ under $\hat{Q}_0$ has the law of $-\Xi'$ under $Q_0$.
  We denote by $f_a$ and $\hat{f}_a$ the densities of
  $V_1(a)$ under $Q_0$ and $\hat{Q}_0$, respectively, which
  exist and are smooth, as noted in
  the remark at the start of the proof of Theorem~9 in \cite{Kuz-extrema}.
  Proceeding from the above equality gives
  \[
    q \mathcal{L}h(q)
    = \int_0^\infty q\mathrm{e}^{-qa} \int_{-\infty}^0 f_a(x) g_1(x) \, \mathrm{d}x \, \mathrm{d}a
    \cdot \int_0^\infty q\mathrm{e}^{-qa} \int_{-\infty}^0 \hat{f}_a(x) g_2(-x) \, \mathrm{d}x \, \mathrm{d}a
    = q^2 \mathcal{L} G_1(q) \mathcal{L} G_2(q),
  \]
  where
  $G_1(a) = \int_{-\infty}^0 f_a(x) g_1(x) \, \mathrm{d}x$
  and $G_2(a) = \int_{-\infty}^0 \hat{f}_a(x) g_2(-x) \, \mathrm{d}x$.
  To summarise, writing $G_1 * G_2$ for convolution, we have shown
  \[
    \mathcal{L} h(q) = q \mathcal{L}(G_1*G_2)(q),
    \qquad q > 0.
  \]
  Provided $G_1$ is differentiable (which we will show shortly),
  standard properties of Laplace transforms and convolutions give us
  \begin{align*}
    h(a)
    = (G_1*G_2)'(a)
    &= (G_1'*G_2)(a)  \\
    &= \int_0^a \int_{-\infty}^0 \frac{\partial f_b(x)}{\partial b} g_1(x) \, \mathrm{d} x
    \int_{-\infty}^0 \hat{f}_{a-b}(y) g_2(-y) \, \mathrm{d}y \, \mathrm{d}b
    \\
    &= \int_{-\infty}^0 \int_0^\infty g_1(x) g_2(y) f(x,y) \, \mathrm{d} x\, \mathrm{d}y,
  \end{align*}
  with the result that we have shown
  \[
    f(x,y)
    =
    \int_0^a \frac{\partial f_b(x)}{\partial b} \hat{f}_{a-b}(-y) \, \mathrm{d}b,
  \]
  to be the density of $(V_1(a), \Xi'_1(a) - V_1(a))$.
  The scaling property of $\Xi'$ implies that $f_a(x) = a^{-2/3} f_1(a^{-2/3}x)$,
  and likewise for $\hat{f}_a$, which ensures that the integrand above is indeed
  measurable, as well as proving that $G_1$ is differentiable, which justifies the argument.

  Transforming with a linear map, it follows immediately that $(\Xi'_1(a), V_1(a))$ is absolutely
  continuous. Moreover, since the components of $\Xi'$ are independent, the same is true of
  the $\R^2\times\R^2$--valued random variable
  $(\Xi'(a), V(a))$. Let us write $F\colon \R^2\times\R^2 \to\R$ for its density.

  Let $T_x = \inf\{ a \ge 0 : \Xi_i(a) < x_i, i=1,2 \}$. If we take any
  non-negative, bounded measurable $G$ and $x\in \R^2$, standard arguments
  based around the Poisson random measure of jumps of $\Xi$ produce
  this calculation:
  \begin{align*}
    \Eb^{Q_0}[ G(x - \Xi(T_x)) ]
    &= \Eb^{Q_0} \biggl[
      \sum_{a>0} G(x-\Xi(a)) \mathds{1}_{\{\forall i: \Xi_i(a) < x_i\}}
      \mathds{1}_{\{\exists i: \Xi_i(a-) \ge x_i \}}
    \biggr] \\
    &= \int_{\R^2} \int_{\R^2}
    u(w) \pi(z)
    G(x-w-z) \mathds{1}_{\{\forall i: w_i+z_i<x_i\}}
    \mathds{1}_{\{\exists i: w_i \ge x_i \}} \, \mathrm{d} z \, \mathrm{d}w \\
    &=
    \int_{\R^2} \int_{\R^2}
    u(w) \pi(x-w-y)
    G(y) \mathds{1}_{\{\forall i: y_i > 0 \}}
    \mathds{1}_{\{\exists i: w_i \ge x_i\}}
    \, \mathrm{d}y \, \mathrm{d}w,
  \end{align*}
  where $u$ and $\pi$ are, respectively, the densities of the potential measure and Lévy
  measure of $\Xi$; the absolute continuity of the potential measure is given as part
  of \cite[Theorem~3.11]{KP}.
  It follows that $x - \Xi(T_x)$ is absolutely continuous, and its density,
  say $y\mapsto h^x(y)$, is jointly measurable in $x$ and $y$.

  Combining the results we have so far,
  \begin{align*}
    \Eb^{Q_0}\bigl[ G(Y(a))\bigr]
    &= \Eb^{Q_0}\bigl[ G(\Xi'(a) - \Xi(T_{V(a)})) \bigr] \\
    &= \int_{\R^2\times \R^2} F(z,x) \Eb^{Q_0} \bigl[G(z - \Xi(T_x))\bigr]
    \, \mathrm{d}z\, \mathrm{d} x \\
    &= \int_{\R^2\times\R^2\times \R^2}
    F(z,x) h^x(y) G(z-x+y)
    \, \mathrm{d}z\, \mathrm{d} x \, \mathrm{d}y .
  \end{align*}
  It follows directly that $Y(a)$ is absolutely continuous.

  \medskip \stepcounter{targetinv}
  $\triangleright$ \textit{Step \thetargetinv: Reduction to $L=0$.}
  We claim that it is enough to prove \cref{prop: target invariance Q_L} for $L=0$. To this end, assume
  that \cref{prop: target invariance Q_L} is true under $Q_0$.

  Then for $b\ge 0$, let $\mu_b(L)$ be the density
  of $S(b)$ under $Q_0$.
  Since $S$ is self-similar with index $\frac32$ (by \cref{prop: Z self-similar}), we first note that 
  \begin{equation} \label{eq: mu scaling}
    \mu_b(L)= b^{-2/3} \mu_1(b^{-2/3} L).	
  \end{equation}
  Let $f$ be any non-negative bounded measurable function defined on $\R_+^2$ and $g:x\in\R_+\mapsto \mathds{1}_{x\le c}$ where $c>0$ is arbitrary.
  Notice that by our bound in \cref{lem: bound S<c}, we have
  \begin{equation} \label{eq: assumption finite moment}
    \int_0^\infty \mathrm{d}b \, b^\alpha \Eb^{Q_0} \left[ f\left( \frac{Y(a+b)}{S(a+b)}\right) g(S(a+b))\right] < \infty,
  \end{equation}
  at least for $\alpha\in(-1,-2/3)$, regardless of $c$.

  On the one hand, by the claim under $Q_0$, for all $a\ge 0$, we have 
  \begin{equation} \label{eq: extension Q_L from Q_0}
    \int_0^\infty \mathrm{d}b \, b^\alpha \Eb^{Q_0} \left[ f\left( \frac{Y(a+b)}{S(a+b)}\right) g(S(a+b))\right] 
    =
    \Eb[f((U,1-U))] \cdot \int_0^\infty \mathrm{d}b \, b^\alpha \Eb^{Q_0} [ g(S(a+b))]. 		
  \end{equation}
  On the other hand, by the Markov property of $Y$ at time $b$ and \cref{prop: target invariance Q_L} under $Q_0$ again, 
  \begin{align*}
    \int_0^\infty \mathrm{d}b \, b^\alpha \Eb^{Q_0} \left[ f\left( \frac{Y(a+b)}{S(a+b)}\right) g(S(a+b))\right] 
    &=
    \int_0^\infty \mathrm{d}b \, b^\alpha \Eb^{Q_0}\left[\Eb^{Q_{Z(b)}} \left[ f\left( \frac{Y(a)}{S(a)}\right) g(S(a))\right] \right] \\
    &=
    \int_0^\infty \mathrm{d}b \, b^\alpha \int_0^{\infty} \mathrm{d}L \, \mu_b(L) \Eb^{Q_L}\left[ f\left( \frac{Y(a)}{S(a)}\right) g(S(a)) \right].
  \end{align*}
  Then by \eqref{eq: mu scaling}, 
  \begin{multline*}
    \int_0^\infty \mathrm{d}b \, b^\alpha \Eb^{Q_0} \left[ f\left( \frac{Y(a+b)}{S(a+b)}\right) g(S(a+b))\right] \\
    =
    \int_0^\infty \mathrm{d}b \, b^{\alpha-\frac23} \int_0^{\infty} \mathrm{d}L \, \mu_1(b^{-2/3}L) \Eb^{Q_L}\left[ f\left( \frac{Y(a)}{S(a)}\right) g(S(a))\right].
  \end{multline*}
  Finally, using the change of variables $b\mapsto B = b^{-2/3}L$, the above display becomes
  \begin{multline} \label{eq: extension Q_L markov}
    \int_0^\infty \mathrm{d}b \, b^\alpha \Eb^{Q_0} \left[ f\left( \frac{Y(a+b)}{S(a+b)}\right) g(S(a+b))\right] \\
    =
    \int_0^\infty   \mathrm{d}B \, \mu_1(B) \, \frac{3}{2} B^{-\frac32(1+\alpha)} \cdot  \int_0^{\infty} \mathrm{d}L \, L^{\frac12(1+3\alpha)} \Eb^{Q_L}\left[ f\left( \frac{Y(a)}{S(a)}\right) g(S(a)) \right].
  \end{multline}
  Writing $I(\alpha) := \int_0^\infty   \mathrm{d}B \, \mu_1(B) \, B^{-\frac32(1+\alpha)}$, we notice from our previous calculations that $I(\alpha)<\infty$ as soon as \eqref{eq: assumption finite moment} holds. Equating \eqref{eq: extension Q_L from Q_0} and \eqref{eq: extension Q_L markov} thus yields
  \begin{equation} \label{eq: equality mellin}
    \int_0^{\infty} \mathrm{d}L \, L^{\frac12(1+3\alpha)} \Eb^{Q_L}\left[ f\left( \frac{Y(a)}{S(a)}\right) g(S(a))\right]  
    =
    \Eb[f((U,1-U))] \cdot \int_0^{\infty} \mathrm{d}L \, L^{\frac12(1+3\alpha)} \Eb^{Q_L}[ g(S(a))].
  \end{equation}
  Equation \eqref{eq: equality mellin} proves that the two functions $L\mapsto \Eb^{Q_L} \left[ f\left( \frac{Y(a)}{S(a)}\right) g(S(a))\right]$ and $L\mapsto \Eb[f((U,1-U))] \Eb^{Q_L} [ g(S(a))]$ have the same Mellin transform at $s=\frac32(1+\alpha)$. Since this is satisfied on an open interval of $\alpha$, namely $\alpha\in(-1,-2/3)$, we conclude that, for almost every $L\ge 0$,
  \[
    \Eb^{Q_L}\left[ f\left( \frac{Y(a)}{S(a)}\right) g(S(a))\right]
    =
    \Eb[f((U,1-U))] \Eb^{Q_L}[ g(S(a))].
  \] 
  By a standard continuity argument, the latter equality extends to all $L\ge 0$. This is exactly the claim of \cref{prop: target invariance Q_L} under $Q_L$.

  \medskip\refstepcounter{targetinv}\label{step:random-time}
  $\triangleright$ \textit{Step \thetargetinv: Proof for a special random time.}
  We now restrict to $L=0$. It turns out to be more convenient to prove a
  variant of \cref{prop: target invariance Q_L} for some specific random times.
  More precisely, let $t>0$ be any (deterministic) time (we view $t$ as a time
  on the Brownian motion $W'$). Almost surely, $t$ is a \emph{pinched} time of
  $W'$ in the sense of \cref{sec: forward cones}. Consider the first forward
  cone-free time in $W'$ such that the corresponding excursion straddles $t$, that
  is $\uptau(\ell(t))$ where we recall from \cref{sec: forward cones} that
  $\ell$ denotes the (forward) cone-free local time. We then prove \cref{prop:
  target invariance Q_L} for the random time $a=\ell(t)$.
  We rewrite $Y(\ell(t)^-)$ using the Brownian motion seen from $t$: s
  ince we assumed $L=0$, we can glue $W$ onto the whole past of
  $W'$ seen from $t$ (shifted by $W'(t)$). Let
  $B(s) := W'(t-s)-W'(t)$ for $0\le s\le
  t$ and $B(s) := W(s)-W'(t)$ for $s>t$; by time-reversal, $B$ is a Brownian motion.
  Notice that the above $Y(\ell(t)^-)$
  now simply translates into the displacement of the backward cone excursion
  straddling time $t$ in $B$ (see \cref{fig: law S proof}). We can therefore
  make use of the structure of backward excursions of $B$. Write $\Eb^B$ for
  the expectation with respect to the new Brownian motion $B$, and recall from
  \cref{sec: backward cones} the notation for the backward cone excursion
  process $(\efrak(s), s> 0)$. For any non-negative measurable functions $f,g$, 
  \[
    \Eb\bigg[f\bigg(\frac{Y(\ell(t)^-)}{S(\ell(t)^-)}\bigg) g(S(\ell(t)^-)) \bigg]
    =
    \Eb^B \bigg[ \sum_{s>0} f\bigg(\frac{\efrak(s)(0)}{||\efrak(s)(0)||_1}\bigg) g(||\efrak(s)(0)||_1) \mathds{1}_{\zeta(\efrak(s))>t-\tfrak(s^-)>0}\bigg].
  \]
  By the compensation formula for the excursion process $\efrak$, this is
  \begin{multline*}
    \Eb\bigg[f\bigg(\frac{Y(\ell(t)^-)}{S(\ell(t)^-)}\bigg) g(S(\ell(t)^-)) \bigg] \\
    =
    \Eb^B \bigg[ \int_0^{\infty} \mathrm{d}\lfrak(s) \mathds{1}_{\tfrak(s^-)<t} \cdot \nlg \bigg(f\bigg(\frac{e(0)}{||e(0)||_1}\bigg) g(||e(0)||_1) \mathds{1}_{\zeta(e)>t-\tfrak(s^-)} \, \Big| \, \tfrak(s^-)\bigg)\bigg].
  \end{multline*}
  We now use the joint law of the duration and displacement under $\nlg$ (see \cref{prop: joint law displacement/duration} and \cref{rk: independence duration/position} (i)). We end up with 
  \begin{multline*}
    \Eb\bigg[f\bigg(\frac{Y(\ell(t)^-)}{S(\ell(t)^-)}\bigg) g(S(\ell(t)^-)) \bigg] \\
    =
    \Eb[f((U,1-U))] \cdot \Eb^B \bigg[ \int_0^{\infty} \mathrm{d}\lfrak(s) \mathds{1}_{\tfrak(s^-)<t} \cdot \nlg \bigg(g(||e(0)||_1) \mathds{1}_{\zeta(e)>t-\tfrak(s^-)} \, \Big| \, \tfrak(s^-)\bigg)\bigg].
  \end{multline*}
  Taking $f=1$ and comparing expressions, this forces
  \[
    \Eb\bigg[f\bigg(\frac{Y(\ell(t)^-)}{S(\ell(t)^-)}\bigg) g(S(\ell(t)^-)) \bigg]
    =
    \Eb[f((U,1-U))] \cdot \Eb[g(S(\ell(t)^-))],	
  \] 
  which was our claim.

  \begin{figure}%[ht]
    \bigskip
    \begin{center}
      \includegraphics[scale=0.85]{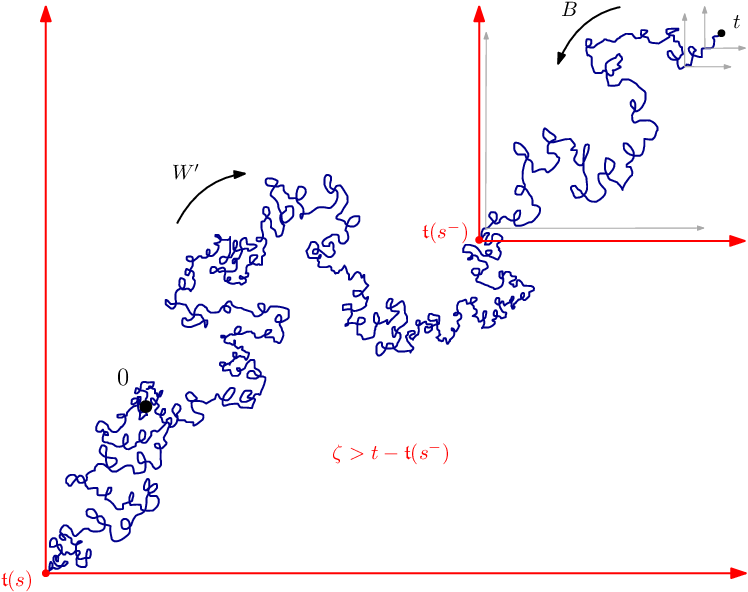}
    \end{center}
    \caption{The backward cone time process, seen backwards from time $t$. The backward cone excursions of $B$ are represented in grey. The excursion process is stopped when an excursion straddles $0$ (red), \textit{i.e.} when $\zeta >t-\tfrak(s^-)$.}
    \label{fig: law S proof}
  \end{figure}

  \medskip \stepcounter{targetinv}
  $\triangleright$ \textit{Step \thetargetinv: Concluding the proof for a deterministic time $a$.} 
  Let $q>0$ and $\mathrm{e}_q$ an independent exponential random variable with
  parameter $q$. 
  The result in Step~\ref{step:random-time} can be extended to include a function
  of the local time $\ell(t)$ using \cref{cor: idp local time}. This yields, for
  non-negative measurable
  functions $f,g,h$,
  \begin{equation} \label{eq: idp general time exponential}
    \Eb\bigg[ f\bigg(\frac{Y(\ell(\mathrm{e}_q)^-)}{S(\ell(\mathrm{e}_q)^-)}\bigg) g(S(\ell(\mathrm{e}_q)^-)) h(\ell(\mathrm{e}_q))\bigg]	
    =
    \Eb[f((U,1-U))] \cdot \Eb\big[ g(S(\ell(\mathrm{e}_q)^-)) h(\ell(\mathrm{e}_q))\big].
  \end{equation}
  On the other hand, the left-hand side may be rewritten by summing over forward {cone-free} times $a$:
  \begin{align*}
    \Eb\bigg[ f\bigg(\frac{Y(\ell(\mathrm{e}_q)^-)}{S(\ell(\mathrm{e}_q)^-)}\bigg) g(S(\ell(\mathrm{e}_q)^-)) & h(\ell(\mathrm{e}_q))\bigg] \\
    &=
    \Eb\bigg[ \int_0^{\infty} q\mathrm{e}^{-qt} f\bigg(\frac{Y(\ell(t)^-)}{S(\ell(t)^-)}\bigg) g(S(\ell(t)^-)) h(\ell(t)) \mathrm{d}t\bigg]	\\
    &=
    \Eb\bigg[ \sum_{a>0} h(a) f\bigg(\frac{Y(a^-)}{S(a^-)}\bigg) g(S(a^-)) \int_{\uptau(a^-)}^{\uptau(a)} q\mathrm{e}^{-qt} \mathrm{d}t\bigg] \\
    &=
    \Eb\bigg[ \sum_{a>0} h(a) f\bigg(\frac{Y(a^-)}{S(a^-)}\bigg) g(S(a^-)) \mathrm{e}^{-q\uptau(a^-)} \int_{0}^{\uptau(a)-\uptau(a^-)} q\mathrm{e}^{-qt} \mathrm{d}t\bigg],
  \end{align*}
  since for all $t\in (\uptau(a^-),\uptau(a))$, $\ell(t)=a$. By the compensation formula, this is
  \begin{multline} \label{eq: idp general time excursion}
    \Eb\bigg[ f\bigg(\frac{Y(\ell(\mathrm{e}_q)^-)}{S(\ell(\mathrm{e}_q)^-)}\bigg) g(S(\ell(\mathrm{e}_q)^-)) h(\ell(\mathrm{e}_q))\bigg] \\
    =
    \Eb\bigg[ \int_0^\infty \mathrm{d}a \, h(a) f\bigg(\frac{Y(a)}{S(a)}\bigg) g(S(a)) \mathrm{e}^{-q\uptau(a)}\bigg] \ndms\bigg(\int_{0}^{\zeta} q\mathrm{e}^{-qt} \mathrm{d}t\bigg).
  \end{multline}
  Taking $f=1$ and equating expressions, \eqref{eq: idp general time exponential} and \eqref{eq: idp general time excursion} imply 
  \[
    \int_0^\infty \mathrm{d}a \, h(a)\Eb\bigg[ f\bigg(\frac{Y(a)}{S(a)}\bigg) g(S(a)) \mathrm{e}^{-q\uptau(a)}\bigg]
    =
    \Eb[f((U,1-U))] \int_0^\infty \mathrm{d}a \, h(a) \Eb\big[  g(S(a)) \mathrm{e}^{-q\uptau(a)}\big]. 
  \]
  Since this holds for arbitrary $h$, it must hold for almost every $a\ge 0$ that  
  \[
    \Eb\bigg[ f\bigg(\frac{Y(a)}{S(a)}\bigg) g(S(a)) \mathrm{e}^{-q\uptau(a)}\bigg]
    =
    \Eb[f((U,1-U))] \cdot \Eb\big[ g(S(a)) \mathrm{e}^{-q\uptau(a)}\big]. 
  \]
  A continuity argument extends the previous identity to all $a\ge 0$. This proves our claim for any deterministic time $a$.
\end{proof}

The following result is a straightforward corollary of \cref{prop: target invariance Q_L}.
\begin{Cor} \label{cor: Markov S}
  The process $S$ defined in \eqref{eq: def Y(a) and S(a)} is a Markov process under $Q_L$.
\end{Cor}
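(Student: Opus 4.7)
The overall strategy will be to leverage the Markov property of the two-dimensional process $Y$ in order to reduce the Markov property of $S$ to a \emph{conditional} version of \cref{prop: target invariance Q_L}. Since $Y$ is Markov, for any bounded measurable $\phi$ and $t \geq 0$ there exists a measurable function $\Psi$ (depending on $\phi$ and $t$) such that $\Eb^{Q_L}[\phi(S(a+t)) \,|\, \Fcal_a^Y] = \Psi(Y(a))$. Writing $Y(a) = S(a)(U_a, 1-U_a)$ with $U_a \in (0,1)$ and projecting onto $\Fcal_a^S \subset \Fcal_a^Y$, one obtains
\[
\Eb^{Q_L}[\phi(S(a+t)) \,|\, \Fcal_a^S] = \Eb^{Q_L}\bigl[\Psi(S(a)(U_a, 1-U_a)) \,\big|\, \Fcal_a^S\bigr].
\]
Thus the Markov property of $S$ is equivalent to showing that $U_a$ is uniform on $(0,1)$ and independent of the whole $\sigma$-algebra $\Fcal_a^S$, not merely of $S(a)$. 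Granting this, the right-hand side above becomes $\int_0^1 \Psi(S(a)(u,1-u))\,\mathrm{d}u$, a function of $S(a)$ alone.

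By standard monotone-class arguments, it is enough to prove this strengthened independence against every finite marginal $(S(a_1), \ldots, S(a_n))$ with $0 < a_1 < \cdots < a_n = a$. I would proceed by induction on $n$: the base case $n=1$ is exactly \cref{prop: target invariance Q_L}. For the inductive step, condition on $\Fcal_{a_{n-1}}^Y$ and apply the Markov property of $Y$ at time $a_{n-1}$: given $Y(a_{n-1}) = y$, the process $(Y(a_{n-1}+s))_{s\geq 0}$ restarts from $y$ with the law of $Y$ under $\Pb_{0,y}$. By the inductive hypothesis applied at time $a_{n-1}$, conditionally on $(S(a_1), \ldots, S(a_{n-1}))$ the direction $Y(a_{n-1})/S(a_{n-1})$ is uniformly distributed on $\{(u, 1-u) : u \in (0,1)\}$. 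Averaging the restart laws $\Pb_{0,y}$ over this uniform angle recovers \emph{precisely} the law $Q_{S(a_{n-1})}$ for the restarted process. Applying \cref{prop: target invariance Q_L} to this restarted process at time $a - a_{n-1}$ then yields that $U_a$ is uniform on $(0,1)$ and independent of $S(a)$, conditionally on $(S(a_1), \ldots, S(a_{n-1}))$. Combining, $U_a$ is uniform and independent of $(S(a_1), \ldots, S(a_n))$, closing the induction.

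The main obstacle is the averaging step in the inductive argument. \cref{prop: target invariance Q_L} is formulated under $Q_L$, whose definition crucially involves an average over a uniform starting angle. In order to apply it after restarting $Y$ at an intermediate time $a_{n-1}$, one must carefully verify that the conditional law of the starting point $Y(a_{n-1})$ given $(S(a_1), \ldots, S(a_{n-1}))$ has the matching uniform-angle structure---this is exactly what the inductive hypothesis delivers. Handling this interplay between the initial averaging encoded in $Q_L$ and the conditional invariance given past history of $S$ is the heart of the argument; once it is in place, the Markov property of $S$ follows from the reduction above and the standard extension from finite-dimensional marginals to $\Fcal_a^S$.
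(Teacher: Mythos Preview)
Your proposal is correct and follows essentially the same route as the paper: use the Markov property of the two-dimensional process $Y$ together with the target-invariance \cref{prop: target invariance Q_L} to restart the process under $Q_{S(a)}$. The paper's proof is terser---it writes out only the two-time case
\[
\Eb^{Q_L}[F(S(a))G(S(b))]=\Eb^{Q_L}\bigl[F(S(a))\,\Eb^{Q_{S(a)}}[G(S(b-a))]\bigr]
\]
and then says the extension to finitely many times is immediate. Your inductive argument is exactly what fills in that ``extends easily'', and in fact coincides with the proof of the stronger \cref{prop: target invariance Q_L strong} that the paper gives immediately afterwards. So you have effectively merged the paper's \cref{cor: Markov S} and \cref{prop: target invariance Q_L strong} into a single argument; this is a perfectly valid ordering, just slightly more work than the paper does at this particular spot.
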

\begin{proof}
  We prove the Markov property for two times $a$ and $b$ with $b>a$ {(the proof extends easily from there)}. Let $F, G$ two arbitrary test functions, which are measurable and non-negative. Then by the Markov property of $Y$,
  \[
    \Eb^{Q_L}[F(S(a)) G(S(b))] 
    = 
    \Eb^{Q_L}\big[F(S(a)) \Eb_{Y(a)} [G(S(b))]\big]. 
  \]
  We now use \cref{prop: target invariance Q_L} to get 
  \begin{align*}
    \Eb^{Q_L}[F(S(a)) G(S(b))] 
    &= 
    \Eb^{Q_L}\big[F(S(a)) \Eb_{US(a), (1-U)S(a)} [G(S(b))]\big] \\
    &=
    \Eb^{Q_L}\big[F(S(a)) \Eb^{Q_{S(a)}}[G(S(b))]\big]. 
  \end{align*}
  This proves the Markov property at times $a$ and $b$.
\end{proof}

We also have the following stronger version of \cref{prop: target invariance Q_L}.
\begin{Prop} \label{prop: target invariance Q_L strong}
  Let $a\ge 0$ and $L\ge 0$. Under $Q_L$, the law of $\frac{Y(a)}{S(a)}$ is that of $(U, 1-U)$ where $U$ is uniform in $(0,1)$ and independent of the whole process $(S(b),b\ge 0)$.
\end{Prop}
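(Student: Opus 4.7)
My strategy is to combine Prop 4.7 with Cor 4.8 and the Markov property of $Y$ to upgrade from pointwise to full independence. By a standard monotone class argument using the right-continuity of $S$, it is enough to show that for every $n\ge 0$ and every collection $0\le b_1<\cdots<b_n$, the ratio $\Theta_a:=Y(a)/S(a)$ is $(U,1-U)$ with $U$ uniform on $(0,1)$, independent of $(S(a),S(b_1),\ldots,S(b_n))$.

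The main tool will be the following \emph{conditioning identity}: under $Q_L$, conditionally on $S(b)=s$ for any fixed $b\ge 0$, the process $(Y(b+t))_{t\ge 0}$ has the law of $Y$ under $Q_s$, shifted by $b$. Indeed, by Prop 4.7 applied at $b$, the vector $Y(b)$ has the conditional law $s\cdot(U,1-U)$ with $U$ uniform; combining the Markov property of $Y$ at $b$ with averaging over $U$ uniform identifies the conditional law of the shifted future with $Q_s$. For past times $b_1<\cdots<b_n<a$, I iterate this identity to successively condition on $S(b_1),\ldots,S(b_n)$: after the $n$-th step, the process starting from $b_n$ has law $Q_{S(b_n)}$ shifted, and Prop 4.7 applied at time $a-b_n$ under $Q_{S(b_n)}$ yields $\Theta_a$ uniform and independent of $S(a)$ conditionally on $(S(b_1),\ldots,S(b_n))$. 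This settles the past-times case.

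For future times $a<b_1<\cdots<b_n$, I first apply the conditioning identity at $b=a$ to reduce to the $Q_{S(a)}$-picture shifted by $a$, in which $\Theta_a$ is the initial (uniform-by-definition) angle. The statement then becomes that under $Q_s$, the initial angle $\Theta_0$ is independent of $(S(t_1),\ldots,S(t_n))$ for later times $t_i=b_i-a$. This is equivalent to a \emph{target-invariance} property: for $v$ on the unit simplex and fixed $s>0$, the law of $(S(t))_{t\ge 0}$ under $\Pb_{sv}$ does not depend on $v$. To prove this, I will apply the past-times result under $Q_L$ at a time $c$ larger than all $b_i$, deducing that $\Theta_c$ is uniform and independent of $(S(a),S(b_1),\ldots,S(b_n),S(c))$, and then transfer this independence back to $\Theta_a$ via the Markov property of $Y$ at $a$ and the self-similarity of $Y$ from Prop 4.4. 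The mixed case (some $b_i<a$, some $b_i>a$) reduces to the previous two via Cor 4.8, which gives the conditional independence of the past and future of $S$ given $S(a)$, together with Prop 4.7 at $a$.

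The main obstacle is the target-invariance input needed in the future-times case: from Prop 4.7 and the Markov property of $Y$ alone one only obtains the \emph{averaged} identity $\int_0^1 \Pb_{s(u,1-u)}(S(\cdot)\in\cdot)\,du = Q_s(S(\cdot)\in\cdot)$, whereas what is required is that each term in the integrand be equal to $Q_s(S(\cdot)\in\cdot)$ separately. Overcoming this requires exploiting the pointwise statement of Prop 4.7 at arbitrarily large future times $c$, combined with the Markov structure of $Y$ propagating angle-independence backwards in time to $a$; this is where the special features of $\theta=2\pi/3$ (in particular, the availability of Prop 4.7 as a pointwise and not merely averaged statement) enter essentially.
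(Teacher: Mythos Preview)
Your argument for the past-times case (iterating the ``conditioning identity'' obtained from the Markov property of $Y$ together with Prop.~4.7) is exactly the paper's proof, and this part is fine. Your reduction of the future-times case to the \emph{target-invariance} statement --- that the law of $(S(t))_{t\ge 0}$ under $\Pb_{s(u,1-u)}$ does not depend on $u$, equivalently that under $Q_s$ the initial angle $\Theta_0$ is independent of the whole process $S$ --- is also correct and is more explicit than the paper, which simply asserts that ``the same arguments'' handle the whole process.

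The gap is in your proposed proof of target-invariance. You suggest applying the past-times result at some $c>b_n$ to obtain that $\Theta_c$ is uniform and independent of $(S(a),S(b_1),\dots,S(b_n),S(c))$, and then ``transferring this independence back to $\Theta_a$ via the Markov property of $Y$ at $a$ and self-similarity''. There is no such transfer. The Markov property of $Y$ at time $a$ says that the future $(Y(a+t))_{t\ge 0}$ --- which contains $\Theta_c$ and all the $S(b_i)$ --- depends on the past only through $Y(a)=(S(a),\Theta_a)$; this puts $\Theta_a$ on the \emph{conditioning} side, not on the side whose conditional law you control. Self-similarity only rescales time and space and cannot convert a statement about $\Theta_c$ into one about $\Theta_a$. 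Abstractly, the information you have is: $C\perp(A,B)$ with $C$ uniform (past-times for $\Theta_a$), and $D\perp A \mid (B,C)$ (Markov of $Y$), where $C=\Theta_a$, $B=S(a)$, $A$ is the past of $S$, and $D$ is the future of $S$. These do \emph{not} imply $C\perp D$: take $A,B$ trivial and $D=C$. Your last paragraph in effect acknowledges this obstacle but then asserts it can be overcome by ``propagating angle-independence backwards in time''; that assertion is precisely what would need to be proved, and nothing in Prop.~4.7, Cor.~4.8, or Prop.~4.4 supplies it.
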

\begin{proof}
  The claim is actually a consequence of \cref{prop: target invariance Q_L} and the Markov property of $Y$. We only prove the independence of $\frac{Y(a)}{S(a)}$ and $(S(b),b\le a)$ to avoid notational clutter. Take a sequence of times $0\le a_1 < \ldots < a_n := a$, and non-negative measurable functions $F_1, \ldots, F_n$ and $G$. Then by repeated applications of the Markov property of $Y$ and \cref{prop: target invariance Q_L}, we obtain
  \begin{align*}
    &\Eb^{Q_L}\Big[F_1(S(a_1))\cdots F_n(S(a_n)) G\Big(\frac{Y(a_n)}{S(a_n)}\Big)\Big] \\
    &=
    \Eb^{Q_L}\Big[F_1(S(a_1))\Eb^{Q_{S(a_1)}}\Big[F_2(S(a_2-a_1))\cdots F_n(S(a_n-a_1)) G\Big(\frac{Y(a_n-a_1)}{S(a_n-a_1)}\Big)\Big]\Big] \\
    &=
    \Eb^{Q_L}\Big[F_1(S(a_1))\Eb^{Q_{S(a_1)}}\Big[F_2(S(a_2-a_1))\cdots \Eb^{Q_{S(a_n-a_{n-1})}}\Big[ F_n(S(a_n-a_{n-1})) G\Big(\frac{Y(a_n-a_{n-1})}{S(a_n-a_{n-1})}\Big)\Big]\cdots\Big]\Big].
  \end{align*}
  Applying again \cref{prop: target invariance Q_L} and unfolding the expectations, we get our claim:
  \[
    \Eb^{Q_L}\Big[F_1(S(a_1))\cdots F_n(S(a_n)) G\Big(\frac{Y(a_n)}{S(a_n)}\Big)\Big]
    =
    \Eb^{Q_L}\Big[F_1(S(a_1))\cdots F_n(S(a_n))\Big] \Eb^{Q_L}\Big[G\Big(\frac{Y(a_n)}{S(a_n)}\Big)\Big].	
  \]
  The same arguments prove the independence with the whole process $S$. 
\end{proof}

We apply \cref{prop: target invariance Q_L} to proving the target-invariance property of $\sle_6$ in the $\sqrt{8/3}$--quantum disc, \textit{i.e.} \cref{prop: intro target inv cone} and \cref{cor: intro target inv lqg} mentioned in the introduction. We first state the result in terms of Brownian excursions. {It will actually hold more generally under $P^z$ for $z \in \R_+^2\setminus \{0\}$. Define, as in \eqref{eq: intro def P bar}, for all $z\in\R_+^2 \setminus \{0\}$,
\begin{equation} \label{eq: def P bar}
  \overline{P}^z (\mathrm{d}T,\mathrm{d}e)
  :=
  \sqrt{3} \lVert z\rVert_1^{-2} \mathds{1}_{\{ 0\le T\le \zeta(e)\}} \mathrm{d}T P^z(\mathrm{d}e).
\end{equation}
It can be checked from \cref{prop: joint law displacement/duration} that $\Eb^{P^z}[\zeta] = {\lVert z \rVert_1^2}/{\sqrt{3}}$, which ensures that $\overline{P}^z$ is a probability measure on $\R_+ \times E$.
%The claim in \cref{prop: intro target inv cone} then follows from that case by the convergence result in \cref{prop: convergence measures}.
} 
Recall the notation in Sections \ref{sec: intro main result cones} and \ref{sec: further}: in particular, we recall that $\varsigma^t$ is the local time for forward cones towards time $t$, and $Z^t$ is the branch towards time $t$, as in \eqref{eq: cal Z and Z^t} {(note that all these definitions extend naturally to the case when $z\in \R_+^2 \setminus \{0\}$)}.
\begin{Prop} \label{prop: target invariance}
  {Let $z\in \R_+^2 \setminus \{0\}$.} Under the law $\overline{P}^z$ defined in \eqref{eq: def P bar}, the following property holds: for all $a\ge 0$, on the event that $\varsigma^{T}>a$, $\frac{\mathcal{Z}^T(a)}{Z^T(a)}$ is independent of $(Z^T(b), b\ge 0)$ and distributed as $(U,1-U)$ with $U$ uniform in $(0,1)$. 
\end{Prop}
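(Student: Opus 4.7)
The plan is to leverage the Bismut description (Theorem~\ref{thm: Bismut}) to identify the pair $(\mathcal{Z}^T, Z^T)$ with a time-reversal of the two-dimensional process $(Y, S)$ from Section~\ref{sec: target invariance}; target invariance of $(Y, S)$ under $Q_L$ (Proposition~\ref{prop: target invariance Q_L strong}) will then translate into a target-invariance statement under the infinite measure $\overline{\nlg}$, which I disintegrate over $e(0)$ to obtain the claim under each $\overline{P}^z$. Specifically, under the Bismut description and conditional on $\varsigma^T = a^*$, I write $W' := e^{T,-}$ and $W := e^{T,+}$ as two independent correlated Brownian motions starting at $0$. Setting $b = \varsigma^T - a$, I check that $(\mathcal{Z}^T(a), Z^T(a)) = (Y(b), S(b))$; the only nontrivial point here is that the stopping time $d^T(b^-) - T$ entering the definition of $\mathcal{Z}^T(a)$ coincides with $\mathfrak{t}(\mathfrak{s}(b))$ in the notation of Section~\ref{sec: target invariance}. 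This holds because forward cone excursions of $e^{T,-}$ stay strictly above their starting cone-free points in both coordinates, so that the componentwise infimum of $e^{T,-}$ over the continuous range $[0, \uptau^T(b)]$ reduces to that over the cone-free time set $\{\uptau(t) : 0 \leq t \leq b\}$, which matches exactly the condition defining $\mathfrak{s}(b)$ in \eqref{eq: sfrak def}.

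The crucial step, needed for the disintegration, is a joint-independence strengthening of target invariance: for $a > 0$, the ratio $\mathcal{Z}^T(a)/Z^T(a)$ is jointly independent of the pair $(e(0)/\|e(0)\|_1, Z^T)$ under $\overline{\nlg}$. Via the identification, this amounts to joint independence under $Q_0$ of $Y(b')/S(b')$ with the pair $(Y(\varsigma^T)/S(\varsigma^T), S)$ at $b' = \varsigma^T - a$. Working conditional on $\varsigma^T = a^*$ with $b' = a^* - a$ fixed, I apply the strong Markov property of $Y$ at time $b'$: the shifted process $(Y(s+b'), s \geq 0)$ has the law of $Y$ under $Q_{S(b')}$ with initial angle $Y(b')/S(b')$. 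Proposition~\ref{prop: target invariance Q_L strong} applied to this shifted process at time $a$ then gives that $Y(a^*)/S(a^*)$ is uniform $(U, 1-U)$ independent of $\mathcal{F}_{b'}$ (in particular of $Y(b')/S(b')$) and of $(S(c), c \geq b')$. Together with target invariance for $Y(b')/S(b')$ (uniform, independent of $S$) applied directly at time $b'$, this yields the desired joint independence.

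With joint independence in hand, I disintegrate over $e(0)$: the conditioning $e(0) = z$ fixes both the angular part $e(0)/\|e(0)\|_1 = z/\|z\|_1$ and $Z^T(0) = \|z\|_1$, the latter being already measurable with respect to $Z^T$. By joint independence, this conditioning does not alter the uniform distribution of $\mathcal{Z}^T(a)/Z^T(a)$ nor its independence from $Z^T$, yielding the proposition for $z \in (\R_+^*)^2$. For boundary points $z \in \partial \R_+^2 \setminus \{0\}$, I conclude by taking $z_\varepsilon \to z$ from the interior and invoking the weak convergence $\overline{P}^{z_\varepsilon} \to \overline{P}^z$ built from Proposition~\ref{prop: convergence measures}. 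The main obstacle is the joint-independence argument, which crucially leverages target invariance under every starting mass $Q_L$ (not merely $Q_0$), combined with the strong Markov structure of $Y$; this is what allows the unconditional statement under $\overline{\nlg}$ to survive the conditioning on $e(0)$.
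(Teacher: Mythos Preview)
Your overall strategy mirrors the paper's: Bismut identification $(\mathcal{Z}^T(a), Z^T(a)) \leftrightarrow (Y(\varsigma^T - a), S(\varsigma^T - a))$, application of Proposition~\ref{prop: target invariance Q_L strong} to factor out $G$ under $\overline{\nlg}$, disintegration over the start point, and extension to the boundary via Proposition~\ref{prop: convergence measures}. You are also right to flag that disintegrating over the \emph{full} $z$ (not merely $\|z\|_1 = Z^T(0)$) requires control of the angular part $e(0)/\|e(0)\|_1$, a point the paper's proof treats tersely.

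However, your joint-independence argument has a genuine gap. After applying the Markov property of $Y$ at time $b'$, the shifted process has law $\Pb_{Y(b')}$, i.e.\ the law of $Y$ started from the \emph{specific} point $Y(b')$. Proposition~\ref{prop: target invariance Q_L strong} is established only under $Q_L$ (uniform initial angle), not under $\Pb_z$ for fixed $z$, so it cannot be invoked for this shifted process. In fact the result fails under $\Pb_z$: if $z$ is close to a coordinate axis and $a$ is small, then with high probability the componentwise infima of $W'\circ\uptau$ on $[0,a]$ remain positive, so $\sfrak(a)=0$, $Y(a)\approx z$, and $Y(a)/S(a)\approx z/\|z\|_1$ --- certainly not uniform. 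It follows that, conditional on $Y(b')$, the law of $Y(A)/S(A)$ genuinely depends on the angle $Y(b')/S(b')$; the angles $\theta_{A-a}$ and $\theta_A$ are correlated under $Q_0$, and the joint independence you assert is false. The paper does not attempt this route: it only multiplies $F$ by a function of $Z^T(0)=\|e(0)\|_1$, which (strictly read) yields the factorisation for almost every $\|z\|_1$ after averaging over the angular part.
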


\noindent {\cref{prop: intro target inv cone} implies the result of \cref{cor: intro target inv lqg} on} $\sle_6$ explorations of a $\sqrt{8/3}$--quantum disc $(\D,h,-i)$. The proof follows directly from the mating-of-trees theorem (\cref{thm: MoT complete}).
In the setting of \cref{sec: intro LQG}, consider a unit boundary length $\gamma$--quantum disc $(\D,h,-i)$ with law weighted by its total quantum area $\mu_h^\gamma(\D)$, and given $h$, we sample a point $z^{\bullet}$ in $\D$ according to the quantum area measure $\mu_h^\gamma$.
We then look at the branch $\eta^{z^\bullet}$ of the space-filling SLE$_6$ exploration targeted at $z^\bullet$ and define $(L^\bullet, R^\bullet)$ as the left and right quantum boundary length process of the component containing this point, when $\eta^{z^\bullet}$ is parametrised by quantum natural time. We write $Z^\bullet := L^\bullet + R^\bullet$ for the total boundary length process, and $\varsigma^\bullet$ for the duration of the branch $\eta^{z^\bullet}$.

\begin{Cor}\label{cor: target invariance} 
  Let $z^{\bullet}$ a point sampled in $\D$ according to the Liouville measure, biased by the quantum area of $\D$, and denote by $\varsigma^{\bullet}$ the quantum natural time towards $z^{\bullet}$.
  For all $a\ge 0$, on the event that $\varsigma^{\bullet}>a$,  $\left( \frac{L^\bullet(a)}{Z^\bullet(a)},  \frac{R^\bullet(a)}{Z^\bullet(a)}\right)$ is independent of $(Z^\bullet(b), b\ge 0)$ and distributed as $(U,1-U)$ with $U$ uniform in $(0,1)$.
\end{Cor}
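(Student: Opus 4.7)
My plan is to transfer the statement directly from the Brownian side, where it has already been established as \cref{prop: target invariance}, via the mating-of-trees correspondence \cref{thm: MoT complete}. The main work is to verify that the area-weighted, Liouville-sampled setup on the LQG side corresponds exactly to the measure $\overline{P}^{(0,1)}$ on the Brownian side; once this is done, the conclusion is immediate.

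First, I would invoke \cref{thm: MoT complete} for $\gamma=\sqrt{8/3}$: under the unit boundary length singly marked $\sqrt{8/3}$--quantum disc $(\D,h,-i)$ decorated with an independent counterclockwise space-filling $\sle_6$ $\eta$ from $-i$ to $-i$ parametrised by quantum area, the associated left/right boundary length process $(L_t,R_t)$ with $(L_0,R_0)=(0,1)$ has law $P^{(0,1)}$, and the pair $(L,R)$ almost surely determines the decorated surface modulo conformal change of coordinates. In particular, $\mu_h^\gamma(\D)$ corresponds to the excursion duration $\zeta$, and, given the decorated surface, sampling $z^\bullet$ according to $\mu_h^\gamma/\mu_h^\gamma(\D)$ corresponds to sampling $T\in[0,\zeta]$ uniformly, since $\eta$ is parametrised so that $\mu_h^\gamma$ is the pushforward of Lebesgue on $[0,\zeta]$ under $\eta$.

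Next, I would check that reweighting the law of the disc by $\mu_h^\gamma(\D)$ and then sampling $z^\bullet$ as above produces, on the Brownian side, the joint law
\[
  c\,\mathds{1}_{\{0\le T\le \zeta(e)\}}\,\mathrm{d}T\,P^{(0,1)}(\mathrm{d}e),
\]
with normalising constant $c=1/\Eb^{P^{(0,1)}}[\zeta]=\sqrt{3}$ by \cref{prop: joint law displacement/duration}. This is exactly the probability measure $\overline{P}^{(0,1)}$ from \eqref{eq: def P bar}.

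Finally, I would identify the dynamical quantities through the correspondence described in \cref{sec: intro LQG}: the component $D^{z^\bullet}_a$ corresponds to the interval $(g^T((\varsigma^T-a)^-),d^T((\varsigma^T-a)^-))$, and its left and right quantum boundary lengths correspond (up to a possible swap of the two sides) to the two coordinates of $\mathcal{Z}^T(a)$; in particular $Z^\bullet(a)=Z^T(a)$ as processes and $\varsigma^\bullet=\varsigma^T$. The corollary then follows immediately from \cref{prop: target invariance} applied at $z=(0,1)$, using that $(U,1-U)$ is exchangeable so the precise side labelling is immaterial. The only step requiring care in a complete write-up is this identification of the left/right labels with the two coordinates of $\mathcal{Z}^T$, which is the main (and rather minor) obstacle; everything else is a direct consequence of the mating-of-trees bijection and the already established Brownian statement.
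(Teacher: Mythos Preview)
Your proposal is correct and follows exactly the paper's approach: the paper states that \cref{cor: target invariance} ``follows directly from the mating-of-trees theorem (\cref{thm: MoT complete})'' applied to the Brownian statement \cref{prop: target invariance}, and you spell out precisely this transfer, including the identification of the area-biased Liouville-sampled setup with $\overline{P}^{(0,1)}$. Your write-up is in fact more detailed than the paper's, which leaves the correspondence implicit.
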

\noindent In other words, given the total boundary length, we can resample the target point uniformly on the boundary of the domain at any time $a$.

\begin{proof}[Proof of \cref{prop: target invariance}]
  {We prove the result when $z\in (\R_+^*)^2$. The case when $z$ is at the boundary is then obtained through the convergence statement of \cref{prop: convergence measures}.}
  The proof is a typical application of \cref{prop: target invariance Q_L strong} 
  and Bismut's description of $\nlg$. 
  Fix $a\ge 0$. Let $F$ and $G$ two non-negative
  measurable functions. By \cref{thm: Bismut},
  \[
    \overline{\nlg}\Big(F(Z^T(b),b\ge 0)G\Big(\frac{\mathcal{Z}^T(a)}{Z^T(a)}\Big) \mathds{1}_{\varsigma^{T} >a} \Big)	
    =
    \overline{c} \int_a^\infty \mathrm{d}A \cdot \Eb\Big[ F(S(A-b), b\le A) G\Big(\frac{Y(A-a)}{S(A-a)}\Big)\Big],
  \]
  with $S$ and $Y$ defined in \eqref{eq: def Y(a) and S(a)}. The independence given in \cref{prop: target invariance Q_L strong} implies that
  \[
    \overline{\nlg}\Big(F(Z^T(b),b\ge 0)G\Big(\frac{\mathcal{Z}^T(a)}{Z^T(a)}\Big) \mathds{1}_{\varsigma^{T} >a} \Big)	
    =
    \overline{c} \Eb[G((U,1-U))] \cdot \int_a^\infty \mathrm{d}A \cdot \Eb\big[F(S(A-b), b\le A)\big],
  \]
  where $U$ is uniform in $(0,1)$.
  On the other hand, applying the above to $G=1$ and substituting, we obtain
  \[
    \overline{\nlg}\Big(F(Z^T(b),b\ge 0)G\Big(\frac{\mathcal{Z}^T(a)}{Z^T(a)}\Big) \mathds{1}_{\varsigma^{T} >a} \Big)	
    =
    \Eb[G((U,1-U))] \cdot \overline{\nlg}\Big(F(Z^T(b),b\ge 0)\mathds{1}_{\varsigma^{T} >a} \Big). 
  \]
  We obtain the claim in the statement by disintegrating over the start point
  under $\nlg$: we {multiply} $F$ by an extra function of $Z^T(0)$, apply the equation
  above, and obtain that,
  for almost every $z\in (\R_+^*)^2$,
  \[
    \Eb^{P^z} \Big[F(Z^T(b),b\ge 0)G\Big(\frac{\mathcal{Z}^T(a)}{Z^T(a)}\Big) \mathds{1}_{\varsigma^{T} >a} \Big]
    =
    \Eb[G((U,1-U))] \cdot \Eb^{P^z} \Big[F(Z^T(b),b\ge 0) \mathds{1}_{\varsigma^{T} >a} \Big].
  \]
  A continuity argument shows that the above equality is valid for all $z\in (\R_+^*)^2$, thus concluding the proof.
\end{proof}

\begin{Rk}
  We can find the law of $Z^T$ explicitly. This will be done in
  \cref{prop: law uniform exploration} as we first need to describe the
  distribution of $S$.
\end{Rk}

%\noindent \cref{prop: target invariance} and \cref{cor: target invariance} directly imply  \cref{prop: intro target inv cone} and \cref{cor: intro target inv lqg}.

%-------------------------------------------------------------------------------------%
%       The $3/2$--stable process conditioned to remain positive
%------------------------------------------------------------------------------------%
\subsection{A Brownian motion construction of the spectrally positive \texorpdfstring{$3/2$}{3/2}--stable process conditioned to remain positive} \label{sec: BM construction conditioning}
In the previous section, we defined a process $S$ which we proved to be Markov.
We now take one step further and determine the law of the process $S$, leading to \cref{prop: intro pathwise 3/2 stable cond}. This
will be done by determining the generator of $S$, and we first need some
technical results.

The following pair of lemmas is valid when $-\Xi$ is a two-dimensional
$\beta$--stable Lévy process with Lévy measure 
$c_3 (x+y)^{-(\beta+2)}\mathds{1}_{\{x,y>0\}}\mathrm{d}x\mathrm{d}y$
and $\Xi'$ is a two-dimensional $\alpha$--stable Lévy process with independent
components, such that for $i=1,2$, the Lévy measure of the $i$-th component is
$c_i x^{-(\alpha+1)}\mathds{1}_{\{x>0\}}\mathrm{d}x$.
We also need to assume that $\beta\in(0,1)$ and $\alpha = \beta+1$.
We will apply these results to $\Xi := W\circ \tfrak$ and $\Xi':= W'\circ \uptau$, so that $\alpha = 3/2$, $\beta = 1/2$,
$c_1=c_2=1$ and $c_3=\frac{3^{1/8}}{8}$,
but it is interesting to observe that they are valid more generally.

For $z\in\R^2$, we write $z=(z_1,z_2)$ the co-ordinates of $z$, and
introduce,
\[
  V_i(t)
  :=
  \underset{s\le t}{\inf} \; \Xi'_i(s), \quad i=1,2, \ t\ge 0,
\]
the running infimum of the $i$-th component of $\Xi'$, as well as
\[
  \sigma_i(z)
  \coloneqq \inf\{s\ge 0: \Xi_i(s) \le -z \},
  \quad z\ge 0,
\]
the first passage time of the $i$-th component of $\Xi$ below a level.
We take $\Xi$ and $\Xi'$ (and thence also $V$) to start from $0$ under $\Pb$.
With these conventions and definitions in place, we can state our two preparatory lemmas.

\begin{Lem}
  \label{l:stable-moments}
  For all $p>0$,
  \[
    \Eb[ \sigma_1(1)^p ]
    =
    \frac{\Gamma(p+1)}
    {\Gamma(\beta p+1)} \biggl( - \frac{c_3}{\beta+1} \Gamma(-\beta) \biggr)^{-p}.
  \]
  and
  \[
    \Eb[(-V_i(1))^p]
    =
    \frac{\Gamma(p+1)}{\Gamma\bigl(\frac{p}{\alpha}+1\bigr)}
    \bigl( c_i \Gamma(-\alpha) \bigr)^{\frac{p}{\alpha}}.
  \]
  In particular,
  \[
    \Eb[\sigma_1(1)]\Eb[(-V_i(1))^{1+\beta}]
    = \frac{c_i}{c_3}(\beta+1).
  \]
\end{Lem}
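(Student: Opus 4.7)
The plan is to reduce each of the two moment formulas to a standard Mellin-type computation for a one-dimensional stable subordinator, and then verify the product identity by specialising and applying the reflection formula for the Gamma function. The two key probabilistic inputs are: (a) the first coordinate of $-\Xi$ is itself a $\beta$--stable subordinator (since the joint Lévy measure is supported on $(\R_+^*)^2$); (b) by the classical spectrally-positive theory, the downward passage times $\tau_z^- := \inf\{t\ge 0 : \Xi'_i(t) \le -z\}$ of $\Xi'_i$ form a $(1/\alpha)$--stable subordinator in the level $z$. Everything else is algebraic manipulation of Gamma functions, so I do not expect any real difficulty.

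For the first formula, integrating out the second coordinate of the Lévy measure of $-\Xi$ gives
\[
\int_0^\infty c_3 (x+y)^{-(\beta+2)} \mathrm{d}y = \frac{c_3}{\beta+1}\, x^{-(\beta+1)}, \quad x>0,
\]
so that $\xi := -\Xi_1$ is a $\beta$--stable subordinator whose Laplace exponent $\phi(q) = Kq^\beta$ can be computed by integration by parts and the definition of $\Gamma$, yielding $K = -c_3 \Gamma(-\beta)/(\beta+1) > 0$. Using the self-similarity $\xi(t) \overset{\mathrm{d}}{=} t^{1/\beta} \xi(1)$ and comparing survival functions one obtains the distributional identity $\sigma_1(1) \overset{\mathrm{d}}{=} \xi(1)^{-\beta}$. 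Applying the standard identity $x^{-s} = \Gamma(s)^{-1}\int_0^\infty \lambda^{s-1} \mathrm{e}^{-\lambda x} \mathrm{d}\lambda$ to $\xi(1)$, using its Laplace transform and substituting $u = K\lambda^\beta$ gives $\Eb[\xi(1)^{-s}] = \Gamma(s/\beta) K^{-s/\beta}/(\beta \Gamma(s))$. Setting $s = p\beta$ and using $\Gamma(p+1) = p \Gamma(p)$, $\Gamma(p\beta+1) = p\beta \Gamma(p\beta)$ rearranges this to the claimed expression.

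For the second formula, the Laplace exponent $\Phi_i$ of $\tau_z^-$ inverts that of $\Xi'_i$, so $\Phi_i(q) = (q/(c_i \Gamma(-\alpha)))^{1/\alpha}$. Since $\Xi'_i$ is spectrally positive there is no downward overshoot, hence $\{-V_i(1) > z\} = \{\tau_z^- < 1\}$, and the scaling $\tau_z^- \overset{\mathrm{d}}{=} z^\alpha \tau_1^-$ gives $-V_i(1) \overset{\mathrm{d}}{=} (\tau_1^-)^{-1/\alpha}$. The same Mellin computation, now applied to $\tau_1^-$ (whose Laplace transform is $\mathrm{e}^{-(q/(c_i\Gamma(-\alpha)))^{1/\alpha}}$), produces $\Eb[(\tau_1^-)^{-p/\alpha}] = \alpha \Gamma(p) (c_i \Gamma(-\alpha))^{p/\alpha}/\Gamma(p/\alpha)$, which by the same Gamma manipulation matches the stated formula.

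The product identity is then purely algebraic. Taking $p = 1$ in the first formula and $p = \alpha = \beta+1$ in the second, the relevant Gamma factors combine into $\Gamma(-\beta) \Gamma(\beta+1) \cdot \Gamma(-\alpha) \Gamma(\alpha+1)$, multiplied by simple constants. The reflection formula $\Gamma(z)\Gamma(1-z) = \pi/\sin(\pi z)$ applied at $z = -\beta$ and $z = -\alpha$, together with $\sin(\pi\alpha) = -\sin(\pi\beta)$, gives $\Gamma(-\beta)\Gamma(\beta+1) = -\pi/\sin(\pi\beta)$ and $\Gamma(-\alpha)\Gamma(\alpha+1) = \pi/\sin(\pi\beta)$; these cancel and leave $c_i(\beta+1)/c_3$. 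The only nuisance in the whole argument is keeping track of the signs and Gamma identities; all the probabilistic content sits in the two identifications of $\sigma_1(1)$ and $-V_i(1)$ as negative powers of a stable variable at time $1$.
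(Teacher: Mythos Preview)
Your proposal is correct and follows essentially the same approach as the paper: both identify $-\Xi_1$ as a $\beta$--stable subordinator and obtain $\sigma_1(1)\overset{\mathrm{d}}{=}(-\Xi_1(1))^{-\beta}$ via scaling, compute the negative moments through the Mellin--Laplace identity $x^{-s}=\Gamma(s)^{-1}\int_0^\infty \lambda^{s-1}\mathrm{e}^{-\lambda x}\,\mathrm{d}\lambda$, and for $-V_i(1)$ use that the downward passage times of a spectrally positive stable process form a $1/\alpha$--stable subordinator (the paper cites Bingham for this, you derive it from the inverse Laplace exponent). The only cosmetic difference is that for the product identity the paper simply invokes ``substituting and simplifying'' (the recursion $\Gamma(z+1)=z\Gamma(z)$ with $\alpha=\beta+1$ does it in one line), whereas you route through the reflection formula; both work.
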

\begin{proof}
  It is shown in \cite[Proposition 1(iii)]{Bin-occ} that the distribution of $\sigma_1(1)$
  is Mittag-Leffler, and its moments are computed by \cite[\S 0.3]{Pit-csp}.
  Since we need to trace the normalisation constants, and the proof is quite short,
  we sketch it here.
  For computations of the constants appearing in the Laplace exponents, we refer to
  \cite[Remark~14.20 and Example~46.7]{Sato}.

  We begin by observing that $-\Xi_1$ is a subordinator with
  L\'evy measure $\frac{c_3}{\beta+1} x^{-(\beta+1)} \mathrm{d}x\mathds{1}_{\{x>0\}}$
  and Laplace transform given by
  $\Eb[\mathrm{e}^{z \Xi_1(1)}] = \mathrm{e}^{\frac{c_3\Gamma(-\beta)}{\beta+1} z^{\beta}}$
  for $z\ge 0$ (note that $\Gamma(-\beta)<0$).
  Observing that $\Pb(\sigma_1(1) > a) = \Pb(-\Xi_1(a) \le 1)$, and using integration
  by parts and the scaling property of $-\Xi_1$, we deduce that
  \[
    \Eb[\sigma_1(1)^p] = \int_0^\infty \mathrm{d}a \cdot p a^{p-1} \Pb(-\Xi_1(a) \le 1)= \int_0^\infty \mathrm{d}a \cdot p a^{p-1} \Pb(-\Xi_1(1) \le a^{-1/\beta})
    = \Eb[(-\Xi_1(1))^{-\beta p}],
  \]
  for any $p$ making either side finite.
  The moment on the right-hand side can then be computed by observing that
  $(-\Xi_1(1))^{-\theta} 
  = \frac{1}{\Gamma(\theta)} \int_0^\infty u^{\theta-1} \mathrm{e}^{u\Xi_1(1)} \, \mathrm{d}u$
  for $\theta>0$,
  and applying Fubini's theorem. Standard calculations give the result in the statement
  of the lemma.

  The study of $V_1(1)$ can in fact be reduced to a very similar calculation, as was
  shown in \cite{Bin-maxima}.
  The process $\Xi'_i$ is a one-dimensional spectrally positive Lévy process
  with Lévy measure $c_i x^{-(\alpha+1)} \, \mathrm{d}x \mathds{1}_{\{x>0\}}$.
  Its Laplace exponent is given by $\Eb[\mathrm{e}{-z\Xi'_i(1)}] = \mathrm{e}{\psi_i(-z)}$,
  with $\psi_i(-z) = c_i\Gamma(-\alpha) z^\alpha$,
  where $z\ge 0$.
  By \cite[Proposition 1]{Bin-maxima}, the process $-V_i$ is the inverse of a $\frac{1}{\alpha}$--stable 
  subordinator, say $(-V_i)^{-1}$,
  whose Laplace exponent is the left-inverse of $\psi_i$, namely $\Eb[\mathrm{e}^{-z(-V_i)^{-1}(1)}]=\exp(-\bigl(c_i\Gamma(-\alpha) \bigr)^{-\frac{1}{\alpha}}z^{\frac{1}{\alpha}})$, $z\ge 0$.
  The same considerations as above lead us back to the relationship
  \[
    \Eb[(-V_i(1))^p] = \Eb[(-V_i)^{-1}(1)^{-\frac{p}{\alpha}}]
  \]
  and thence, taking care with the normalisation constants, to the expression in the
  statement.
  The final part of the statement follows by substituting and simplifying.
\end{proof}

  As in \eqref{eq: sfrak def}, we may define more generally
  \[
    \sfrak(t):=
    \inf\{s\ge 0, \; \Xi_i(s) \le V_i(t), \; i=1,2\}. 
  \]
  We also extend our definition of $Q_L$ in \eqref{eq: Q_L def} to this setting, \textit{i.e.} under $Q_L$, $\Xi$ and $\Xi'$ are independent, $\Xi$ starts at $0$ and $\Xi'$ at $L(U,1-U)$, where $U$ is independent uniform in $(0,1)$. We stress that the assumption $\alpha=\beta+1$ is still in force. 

\begin{Lem} \label{Lem: asympt LG time-change}
  Let $L> 0$. Then,
  \[
    \lim_{\delta\to 0} \Eb^{Q_L}[\sfrak(\delta)] = \frac{c_1+c_2}{c_3 L}. 	
  \]
\end{Lem}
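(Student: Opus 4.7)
The plan is to compute the leading-order behavior of $\Eb^{Q_L}[\sfrak(\delta)]$ as $\delta \to 0$, which by my reading gives $\Eb^{Q_L}[\sfrak(\delta)] \sim \tfrac{c_1+c_2}{c_3 L}\,\delta$ (there seems to be a factor of $\delta$ missing from the stated right-hand side, since $\sfrak(\delta) \to 0$ almost surely under $Q_L$). The starting point is to exploit that $\tilde{\Xi} := -\Xi$ is a bivariate subordinator, so each coordinate is individually non-decreasing. Setting $\sigma_i(y) := \inf\{s \ge 0 : \tilde{\Xi}_i(s) \ge y\}$, with the convention $\sigma_i(y) = 0$ for $y \le 0$, monotonicity of each coordinate yields the identity
\[
\sfrak(\delta) \;=\; \sigma_1((-V_1(\delta))_+) \,\vee\, \sigma_2((-V_2(\delta))_+),
\]
because each $\sigma_i$ gives a necessary lower bound, and past the maximum both level conditions are satisfied simultaneously. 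Decomposing $a\vee b = a + b - a\wedge b$ then gives
\[
\Eb^{Q_L}[\sfrak(\delta)] \;=\; \sum_{i=1}^{2} \Eb^{Q_L}[\sigma_i((-V_i(\delta))_+)] \;-\; \Eb^{Q_L}[\sigma_1((-V_1(\delta))_+) \wedge \sigma_2((-V_2(\delta))_+)].
\]

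Next I would compute each single-coordinate term. Since $\Xi$ and $\Xi'$ are independent, $\sigma_i$ is independent of $V_i$; combined with the self-similarity $\sigma_i(y) \overset{d}{=} y^{\beta} \sigma_i(1)$, this gives $\Eb[\sigma_i((-V_i(\delta))_+)] = \Eb[\sigma_i(1)]\,\Eb^{Q_L}[(-V_i(\delta))_+^{\beta}]$. Writing $V_i(\delta) = LU_i + V^0_i(\delta)$ for $V^0_i$ the running infimum of $\Xi'_i$ starting from $0$, and using the scaling $-V^0_i(\delta) \overset{d}{=} \delta^{1/\alpha}(-V^0_i(1))$, integration over $U$ uniform on $(0,1)$ together with the change of variables $x = LU_i$ yields the exact identity
\[
\Eb^{Q_L}[(-V_i(\delta))_+^{\beta}] \;=\; \frac{\delta}{L(1+\beta)}\,\Eb\Bigl[W_i^{1+\beta} \;-\; \bigl((W_i - L\delta^{-1/\alpha})_+\bigr)^{1+\beta}\Bigr],
\]
with $W_i := -V^0_i(1)$ and using the crucial arithmetic relation $(1+\beta)/\alpha = 1$. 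Dominated convergence gives the asymptotic $\sim \tfrac{\delta}{L(1+\beta)}\,\Eb[(-V^0_i(1))^{1+\beta}]$, and the moment identity from \cref{l:stable-moments} simplifies the constant to exactly $\Eb[\sigma_i((-V_i(\delta))_+)] \sim \tfrac{c_i\,\delta}{c_3 L}$.

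The main obstacle is controlling the cross-term, namely showing that $\Eb^{Q_L}[\sigma_1((-V_1(\delta))_+) \wedge \sigma_2((-V_2(\delta))_+)] = o(\delta)$. This quantity is supported on the event $\{V_1(\delta) < 0,\ V_2(\delta) < 0\}$, where $\Xi'$ must dip below $0$ in both coordinates; since under $Q_L$ the two starting coordinates are $LU$ and $L(1-U)$, at least one of them is $\ge L/2$ and must undergo an unusually large excursion, making this event rare. I would bound $\sigma_1 \wedge \sigma_2 \le \sqrt{\sigma_1\sigma_2}$, apply conditional Cauchy--Schwarz together with the scaling $\sigma_i(y) \overset{d}{=} y^{\beta}\sigma_i(1)$, and reduce matters to estimating $\Eb^{Q_L}[((-V_1(\delta))_+ (-V_2(\delta))_+)^{\beta/2}]$. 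Conditional on $U$, the two factors become independent, and using the tail estimate $\Pb(-V^0_i(1) > w) \sim c\,w^{-\alpha}$ across the three regimes $U$ near $0$, $U$ near $1$, and $U$ bounded away from both endpoints, a careful bookkeeping of contributions yields a bound of order $\delta^{1+\epsilon}$ for some $\epsilon > 0$, completing the proof.
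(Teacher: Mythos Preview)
Your observation about the missing factor of $\delta^{-1}$ is correct: the paper's own proof computes $\frac{1}{\delta}\Eb^{Q_L}[\sfrak(\delta)]\to \frac{c_1+c_2}{c_3 L}$, and this is what is used in the subsequent generator computation. Your overall strategy is sound and close to the paper's, but the decomposition is organised differently. The paper writes $\sfrak(\delta)=T(x+V_1(\delta),L-x+V_2(\delta))$ and partitions the $x$--integral according to the three events $E_1,E_2,E_3$ (one coordinate negative, the other negative, both negative). You instead use the algebraic identity $a\vee b=a+b-a\wedge b$ and handle each $\sigma_i$ directly. Your treatment of the main terms is arguably cleaner: the exact identity for $\Eb^{Q_L}[(-V_i(\delta))_+^{\beta}]$ followed by dominated convergence bypasses the change of variables and limiting argument the paper performs on $I_1,I_2$, and lands on the same constant via \cref{l:stable-moments}.

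Two remarks on the cross-term. First, your stated tail $\Pb(-V^0_i(1)>w)\sim c\,w^{-\alpha}$ is not correct as an asymptotic: by \cref{l:stable-moments} all positive moments of $-V^0_i(1)$ are finite, so the tail is lighter than any polynomial. What you actually need (and what Markov's inequality gives) is the upper bound $\Pb(-V^0_i(1)>w)\le C\,w^{-\alpha}$, and this is enough. Second, the phrase ``careful bookkeeping yields $\delta^{1+\epsilon}$'' is doing real work that should be written out. Concretely: after Cauchy--Schwarz and conditioning on $U$, bound the factor with $U_i\ge 1/2$ by $\Eb[(W_i-\tfrac{L}{2}\delta^{-1/\alpha})_+^{\beta/2}]\le C\,\delta^{1-\beta/(2\alpha)}$ (using the polynomial tail bound), and the other factor crudely by $\Eb[W_j^{\beta/2}]$; with the prefactor $\delta^{\beta/\alpha}$ this gives $O(\delta^{1+\beta/(2\alpha)})=o(\delta)$. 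The paper controls the analogous piece $I_3$ by a different route, bounding $T(V_1(1),V_2(1))\le T(-1,-1)\lVert V(1)\rVert_1^{\beta}$ and using Cauchy--Schwarz plus a Chernoff-type estimate on $\Pb(-(V_1(1)+V_2(1))>\delta^{-1/\alpha}L)$. Both arguments work; yours exploits the conditional independence of $V_1,V_2$ given $U$, which is a nice simplification.
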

\begin{proof}
  Define, for $z\in\R^2$, 
  \[
    T(z)
    :=
    \inf\{ s\ge 0:  \; \Xi_i(s) \le z_i, \;  i=1,2\}
    = \sigma_1(-z_1) \vee \sigma_2(-z_2).
  \]
  Note that the self-similarity of $\Xi$ or $\Xi'$ is inherited by $T$ and by
  $V_i$ ($i=1,2$); namely, for all $c>0$,
  \begin{equation} \label{eq: scaling T and V}
    T(z) 
    \overset{\mathrm{(d)}}{=}	
    c^{\beta} T(z/c), 
    \quad \text{and} \quad
    (V_i(t), i=1,2) 
    \overset{\mathrm{(d)}}{=}	
    (c^{1/\alpha}V_i(t/c), i=1,2).
  \end{equation}
  Let $\delta>0$. We are after 
  \begin{align*}
    \Eb^{Q_L}[\sfrak(\delta)]	
    &=
    \frac{1}{L} \int_0^L \Eb[T(x+V_1(\delta), L-x+V_2(\delta))] \mathrm{d}x \\
    &\overset{\eqref{eq: scaling T and V}}{=}
    \frac{1}{L} \int_0^L \Eb[T(x+\delta^{1/\alpha}V_1(1), L-x+\delta^{1/\alpha}V_2(1))] \mathrm{d}x.
  \end{align*}
  We partition the integral into the following events:
  \begin{align}
    E_1(x) &:= \{x+\delta^{1/\alpha}V_1(1)<0, L-x+\delta^{1/\alpha}V_2(1)\ge 0\}, \notag \\
    E_2(x) &:= \{x+\delta^{1/\alpha}V_1(1)\ge 0, L-x+\delta^{1/\alpha}V_2(1)< 0\}, \notag \\
    E_3(x) &:= \{x+\delta^{1/\alpha}V_1(1)<0, L-x+\delta^{1/\alpha}V_2(1)< 0\}. \label{eq: partition events}
  \end{align}
  The events $E_1(x)$ and $E_2(L-x)$ are symmetric in $(V_1,V_2)$, so we consider
  $E_1(x)$. We will show later that the contribution of $E_3$ can be
  ignored as $\delta$ goes to $0$. 

  On $E_1$, we have $L-x+\delta^{1/\alpha}V_2(1)\ge 0$ and hence 
  \[
    T(x+\delta^{1/\alpha}V_1(1), L-x+\delta^{1/\alpha}V_2(1)) = T(x+\delta^{1/\alpha}V_1(1), 0).
  \]
  Therefore, this term is
  \begin{align*}
    I_1(L,\delta)
    &\coloneqq
    \frac{1}{L} \Eb\bigg[\int_0^L T(x+\delta^{1/\alpha}V_1(1), L-x+\delta^{1/\alpha}V_2(1)) \mathds{1}_{E_1(x)} \mathrm{d}x\bigg] \\
    &=
    \frac{1}{L} \Eb\bigg[\int_0^{L\wedge (-\delta^{1/\alpha}V_1(1)) \wedge (L+\delta^{1/\alpha}V_2(1))} T(x+\delta^{1/\alpha}V_1(1), 0)  \mathrm{d}x\bigg].
  \end{align*}
  By the change of variables $x=\delta^{1/\alpha}y$
  and then the scaling relation \eqref{eq: scaling T and V}, 
  \begin{align*}
    I_1(L,\delta)
    &=
    \frac{1}{L} \delta^{1/\alpha} \Eb\bigg[\int_0^{\delta^{-1/\alpha} L\wedge (-V_1(1)) \wedge (\delta^{-1/\alpha}L+V_2(1))} T(\delta^{1/\alpha}(y+V_1(1)), 0) \mathrm{d}y\bigg].
    \\
    &=
    \frac{1}{L} \delta^{\frac{1+\beta}{\alpha}} \Eb\bigg[\int_0^{\delta^{-1/\alpha} L\wedge (-V_1(1)) \wedge (\delta^{-1/\alpha}L+V_2(1))} T(y+V_1(1), 0)  \mathrm{d}y\bigg].
  \end{align*}
  Notice that $\frac{1+\beta}{\alpha} = 1$. By monotone convergence, we thus get
  \[
    \frac{1}{\delta}I_1(L,\delta)
    \underset{\delta\to 0}{\longrightarrow}
    \frac{1}{L} \Eb\bigg[\int_0^{-V_1(1)} T(y+V_1(1), 0)  \mathrm{d}y\bigg].
  \] 
  Recall that the process $z\mapsto \sigma_1(z) = T(-z, 0)$ is the first passage time process of $\Xi_1$ below $-z$. By the change of variables $y=-V_1(1)z$ and the scaling relation \eqref{eq: scaling T and V} again, the above limit is
  \begin{align}
    \lim_{\delta \to 0} \frac{1}{\delta}I_1(L,\delta)
    =
    \frac{1}{L} \Eb\bigg[\int_0^{-V_1(1)} T(y+V_1(1), 0)  \mathrm{d}y\bigg]	
    &=
    \frac{1}{L} \Eb\bigg[ \sigma_1(1) \int_0^{1}  (-V_1(1)(1-z))^{\beta} (-V_1(1)) \mathrm{d}z\bigg] \notag \\
    &=
    \frac{1}{L}\frac{1}{1+\beta} \Eb\big[ \sigma_1(1) (-V_1(1))^{1+\beta}\big]. \label{eq: T integrable}
  \end{align}
  Hence, by independence of $\Xi$ and $\Xi'$ and Lemma~\ref{l:stable-moments},
  \[
    \lim_{\delta \to 0} \frac{1}{\delta}I_1(L,\delta)
    =
    \frac{1}{L}\frac{1}{1+\beta} \Eb[ \sigma_1(1)]\Eb\big[ (-V_1(1))^{1+\beta}\big]
    = \frac{c_1}{c_3 L}.
  \]
  The analysis of case $E_2$ is identical, and we get for this term
  \[
    \lim_{\delta \to 0} \frac{1}{\delta}I_2(L,\delta)
    = \frac{c_2}{c_3 L}.
  \]

  Finally, we need to show that we can neglect $E_3$ in the limit. 
  For this, we first use the same change of variables $x=\delta^{1/\alpha}y$ as above, turning $I_3(L,\delta)$ into
  \[
    I_3(L,\delta)
    =
    \frac{\delta^{1/\alpha}}{L} 
    \Eb\biggl[
      \int_{0\vee (\delta^{-1/\alpha}L+V_2(1))}^{\delta^{-1/\alpha}L\wedge (-V_1(1))}
      T\big(\delta^{1/\alpha}((y+V_1(1), \delta^{-1/\alpha}L-y+V_2(1)))\big)
      \mathrm{d}y
    \biggr].
  \]
  By scaling, we get
  \[
    I_3(L,\delta)
    =
    \frac{\delta^{\frac{1+\beta}{\alpha}}}{L} 
    \Eb\biggl[
      \int_{0\vee (\delta^{-1/\alpha}L+V_2(1))}^{\delta^{-1/\alpha}L\wedge (-V_1(1))}
      T(y+V_1(1), \delta^{-1/\alpha}L-y+V_2(1))
      \mathrm{d}y
    \biggr].
  \]
  Then we note that the integral vanishes on the event $\{\delta^{-1/\alpha}L + V_2(1) \ge (-V_1(1))\}$. Moreover, we remark that for $x,y,a,b>0$ we have $T(x-a,y-b)\le T(-a,-b)$. Therefore we can bound the above display by
  \begin{multline*}
    I_3(L,\delta)
    \le 
    \delta^{\beta/\alpha} \Eb\bigl[T(V_1(1),V_2(1)) 
    \mathds{1}_{\{-(V_1(1)+V_2(1))>\delta^{-1/\alpha}L\}} \bigr] \\
    \le
    \delta^{\beta/\alpha} \Eb\bigl[T(-1,-1)  \lVert V(1)\rVert_1^\beta
    \mathds{1}_{\{-(V_1(1)+V_2(1))>\delta^{-1/\alpha}L\}} \bigr].
  \end{multline*}
  By independence of $\Xi$ and $\Xi'$ and the Cauchy-Schwarz inequality, we deduce that 
  \[
    I_3(L,\delta)
    \le 
    \delta^{\beta/\alpha}
    \Eb[T(-1,-1)]
    \Pb(-(V_1(1)+V_2(1))>\delta^{-1/\alpha}L)^{1/2}
    \Eb\bigl[ \lVert V(1)\rVert_1^{2\beta}\bigr]^{1/2}.  
  \]

  % Then we note that
  % for $x,y,a,b>0$ we have $T(x-a,y-b)\le T(-a,-b)$, make some elementary estimates
  % and use the scaling \eqref{eq: scaling T and V}
  % to obtain:
  % \begin{align*}
  %   I_3(L,\delta)
  %   &\coloneqq
  %   \frac{1}{L} \Eb\bigg[\int_0^L T\big(\delta^{1/\alpha}((y+V_1(1), \delta^{-1/\alpha}L-y+V_2(1)))\big) \mathds{1}_{E_3(x)} \mathrm{d}x\bigg] \\
  %   &= \frac{1}{L} 
  %   \Eb\biggl[
  %     \int_{0\vee (L+\delta^{1/\alpha}V_2(1))}^{L\wedge -\delta^{1/\alpha}V_1(1)}
  %     T(x+\delta^{1/\alpha}V_1(1), L-x+\delta^{1/\alpha}V_2(1))
  %     \mathrm{d}x
  %   \biggr]
  %   \\
  %   &\le
  %   \frac{1}{L} \Eb\Bigl[ T\bigl(\delta^{1/\alpha}(V_1(1),V_2(1))\bigr)
  %   \Bigl\{\bigl(L\wedge -\delta^{1/\alpha}V_1(1)\bigr) - \bigl(0\vee (L+\delta^{1/\alpha}V_2(1))\bigr) \Bigr\} \Bigr]
  %   \\
  %   &\le \Eb\bigl[T(-1,-1) \delta^{\beta/\alpha} \lVert V(1)\rVert^\beta
  %   \mathds{1}_{\{-(V_1(1)+V_2(1))>\delta^{-1/\alpha}L\}} \bigr] \\
  %   &\le 
  %   \delta^{\beta/\alpha}
  %   \Eb[T(-1,-1)]
  %   \Pb(-(V_1(1)+V_2(1))>\delta^{-1/\alpha}L)^{1/p}
  %   \Eb\bigl[ \lVert V(1)\rVert^{\beta q}\bigr]^{1/q},
  % \end{align*}
  % where $p,q\in(1,\infty)$ are conjugate exponents for the use of Hölder's inequality
  % in the last line.

  We consider the three terms appearing on the right-hand side above.
  The term 
  \[\Eb[T(-1,-1)] = \Eb[\sigma_1(1) \vee \sigma_2(1)] \le \Eb[\sigma_1(1)]+\Eb[\sigma_2(2)],\] 
  is
  finite by Lemma~\ref{l:stable-moments}. For the second term, raising both sides to the power $p\ge 0$ and applying Markov's inequality, we obtain
    \[
      \Pb(-(V_1(1)+V_2(1))>\delta^{-1/\alpha}L) \le \frac{\delta^{p\alpha}}{L^p} \Eb[(-(V_1(1)+V_2(1)))^p].
    \] 
  The above moment is bounded for any $p \ge 1$ 
  by Lemma~\ref{l:stable-moments}, so by taking $p$ large enough we see that this term decays faster than any power in $\delta$ as $\delta\to 0$.
  Finally, Lemma~\ref{l:stable-moments} shows that the third term is finite. This concludes the proof of \cref{Lem: asympt LG time-change}.
\end{proof}

We now come back to the Brownian motion picture. Recall the definition of the process $S$ introduced in \eqref{eq: def Y(a) and S(a)}. The main result of this section is the following explicit description of the law of $S$.

\begin{Thm} \label{thm: main cone end} 
  Under $Q_L$, $S$ is a spectrally positive $\frac{3}{2}$--stable Lévy process conditioned to remain positive. More precisely, $S$ has the law of the process $\xi^{\uparrow}$ described in \cref{sec: pssMp and Levy}, with $\alpha=\frac32$ and $c_{\Lambda}=2$.
\end{Thm}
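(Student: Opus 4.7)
We have already established that, under $Q_L$, $S$ is positive, Markov (\cref{cor: Markov S}) and self-similar of index $\tfrac32$ (\cref{prop: Z self-similar}), so that by Lamperti's representation it is the Lamperti transform of a unique (possibly killed) Lévy process. The task reduces to identifying this Lévy process with the one driving $\xi^{\uparrow}$ in \eqref{eq: Lamperti xi uparrow}, equivalently, to matching the infinitesimal generator $\mathscr{G}$ of $S$ on a suitable core (say $C^2$ functions with compact support in $(0,\infty)$) with the expression \eqref{eq: generator conditioned stable} for the generator of $\xi^{\uparrow}$, specialised to $\alpha = \tfrac32$ and $\Lambda(\mathrm{d}z) = 2z^{-5/2}\mathds{1}_{\{z>0\}}\mathrm{d}z$.

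Starting from the decomposition $S(a) = L + \Sigma'(a) - \Sigma(\mathfrak{s}(a))$ used in the proof of \cref{Lem: asympt LG time-change}, we split $\mathscr{G}f(L) = \lim_{\delta\to 0}\delta^{-1}(\Eb^{Q_L}[f(S(\delta))]-f(L))$ according to the source of infinitesimal motion. The process $\Sigma'$ alone is a centred spectrally positive $\tfrac32$-stable Lévy process with Lévy measure $\Lambda$ (being the sum of two independent such processes with unit normalisation), and its contribution is the Lévy--Khintchine term $\int_0^\infty [f(L+z) - f(L) - zf'(L)]\Lambda(\mathrm{d}z)$, matching the first summand of \eqref{eq: generator conditioned stable}. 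The additional contribution from $-\Sigma(\mathfrak{s}(\cdot))$ records the upward jumps of $S$ of size $\lVert e(0)\rVert_1$ that occur whenever the running infimum $V(a)$ of $\Sigma'$ descends far enough that a fresh backward cone excursion $e$ of $W$ must be incorporated by $\mathfrak{s}$. Combining the rate $\tfrac{2}{c_3 L}$ at which $\mathfrak{s}$ advances at level $L$ (the content of \cref{Lem: asympt LG time-change} with $c_1=c_2=1$) with the one-dimensional Lévy measure $c_3 z^{-3/2}\mathrm{d}z$ of $-\Sigma$ (obtained from \cref{prop: joint law displacement/duration} by integrating $c_3(l+r)^{-5/2}\mathrm{d}l\,\mathrm{d}r$ along the line $l+r = z$, with $c_3 = 3^{1/8}/8$), these extra jumps arrive at intensity $\tfrac{2}{L}z^{-3/2}\mathrm{d}z = \tfrac{1}{L}z\Lambda(\mathrm{d}z)$, producing precisely the second summand of \eqref{eq: generator conditioned stable}. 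Uniqueness of solutions to the associated martingale problem on this core then identifies $S$ with $\xi^{\uparrow}$.

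The main obstacle is the rigorous justification of the time-change contribution. Concretely, one must upgrade \cref{Lem: asympt LG time-change}, which handles only the first moment of $\mathfrak{s}(\delta)$, to
\[
\lim_{\delta \to 0}\frac{1}{\delta}\Eb^{Q_L}\big[h\big({-}\Sigma(\mathfrak{s}(\delta))\big)\mathds{1}_{\{\mathfrak{s}(\delta)>0\}}\big] = \frac{1}{L}\int_0^\infty h(z)\, z\,\Lambda(\mathrm{d}z)
\]
for bounded continuous $h$ vanishing at $0$; the desired contribution to $\mathscr{G}f(L)$ is then recovered by taking $h(z) = f(L+z) - f(L)$. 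The proof will follow the same scheme as \cref{Lem: asympt LG time-change}: partition according to which coordinate of $V(\delta)$ first becomes negative (the events $E_1, E_2, E_3$), apply the scaling relations \eqref{eq: scaling T and V} and the moment bounds of \cref{l:stable-moments}, and dispatch $E_3$ via Cauchy--Schwarz. The essential refinement is to integrate the test function $h$ against the law of the first backward cone excursion $e$ of $W$ whose displacement brings $W\circ\mathfrak{t}$ below the shifted $V(\delta)$, rather than simply evaluating its first-passage time; the extra factor of $z$ in $z\Lambda(\mathrm{d}z)$ emerges naturally from the length $\int_0^z\mathrm{d}l$ of $\{(l,r)\in(\R_+^*)^2 : l+r=z\}$ arising in the one-dimensional projection of the backward cone excursion Lévy measure.
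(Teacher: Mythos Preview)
Your plan is correct and matches the paper's strategy: reduce to computing the generator of $S$ and split into the $\Sigma'$ contribution (giving the first integral in \eqref{eq: generator conditioned stable}) and the $-\Sigma\circ\sfrak$ contribution (giving the second). The constants and the identification $\tfrac{2}{L}z^{-3/2}\mathrm{d}z=\tfrac{1}{L}z\Lambda(\mathrm{d}z)$ are exactly right.

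The technical execution differs slightly from the paper. You propose to upgrade \cref{Lem: asympt LG time-change} to a statement about $\Eb^{Q_L}[h(-\Sigma(\sfrak(\delta)))]$ for a test function $h$, redoing the $E_1/E_2/E_3$ decomposition with $h$ carried through. The paper instead uses a Dynkin-type martingale argument: conditionally on $W'\circ\uptau$, the process $M^{\Sigma'(\delta)}_s(f)=f(\Sigma'(\delta)-\Sigma(s))-f(\Sigma'(\delta))-c_3\int_0^s\int_0^\infty[f(\Sigma'(\delta)-\Sigma(u)+z)-f(\Sigma'(\delta)-\Sigma(u))]z^{-3/2}\mathrm{d}z\,\mathrm{d}u$ is a martingale for $-\Sigma$, and optional stopping at $\sfrak(\delta)$ converts $\Eb^{Q_L}[f(S(\delta))-f(\Sigma'(\delta))]$ into an expression involving $\sfrak(\delta)$ itself, so that the \emph{existing} \cref{Lem: asympt LG time-change} can be applied directly. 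This avoids repeating the $E_1/E_2/E_3$ analysis with a test function; the paper only revisits that decomposition once, for the error term showing that $\Sigma'(\delta)-\Sigma(\sfrak(\delta)v)$ can be replaced by $L$ inside the integrand.

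On that last point, your formulation ``take $h(z)=f(L+z)-f(L)$'' glosses over the approximation $\Sigma'(\delta)\approx L$; the quantity you actually need is $f(\Sigma'(\delta)-\Sigma(\sfrak(\delta)))-f(\Sigma'(\delta))$, not $f(L-\Sigma(\sfrak(\delta)))-f(L)$. The paper handles this by first cutting off on $\{\Sigma'(\delta)\ge L/2\}$ via a Chernoff bound, and then proving a separate $o(\delta)$ estimate (their $A(\delta)$ term) to justify freezing the base point at $L$. Your plan would need the analogous step, which is routine but should be mentioned.
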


\noindent Note that this gives \cref{prop: intro pathwise 3/2 stable cond} from the introduction.

\begin{Rks}
  \begin{enumerate}
    \item
      \cref{thm: main cone end} can be seen as a pathwise construction of the
      $\frac{3}{2}$--stable Lévy process conditioned to remain positive from a
      pair of planar Brownian motions. It forms a two-dimensional analogue to
      the one-dimensional construction of \cite{bertoin1993splitting} (in the
      special case of the $\frac32$--stable process). It would be interesting
      to see whether our construction extends to stable process with other
      indices.
    \item 
      It is known that the growth-fragmentation processes $\Xbf^{\alpha}$ of
      Bertoin, Budd, Curien and Kortchemski (see \cref{sec: GF process}) are
      closely related to $\alpha$--stable processes conditioned to remain
      positive or to be absorbed continuously at $0$, which appear in the
      spinal structure of the growth-fragmentation processes (see \cite{BBCK}).
      These processes also arise, in a scaling limit, from the
      peeling exploration of variants of Boltzmann planar maps
      \cite[Proposition 6.6]{BBCK}. For example, the stable process conditioned
      to be absorbed continuously at $0$ shows up in the case of \emph{pointed}
      planar maps, which are a size-biased version of the planar maps
      considered in \cite{BBCK}. \cref{thm: main cone end} states a result of a
      similar flavour for cone excursions, for $\alpha=3/2$. See also \cref{prop: law uniform exploration} for the process conditioned to be absorbed continuously at $0$.
  \end{enumerate}
\end{Rks}

\begin{proof}
  By \cref{cor: Markov S}, we know that under $Q_L$, the process $S$ is Markov.
  Our claim will follow once we identify the generator of $S$ with
  $\mathscr{G}_{3/2}$ in \eqref{eq: generator conditioned stable}. More
  precisely, {it is enough to} show that for $L>0$ and all function 
  {$f\in \{f:[0,\infty] \to \R, \; f, xf' \text{ and }
  x^2f'' \text{ are continuous on } [0,\infty]\} \subset \text{Dom}(\mathscr{G}_{3/2})$,} 
  \begin{multline} \label{eq: S generator}
    \frac{1}{\delta}\big(\Eb^{Q_L}[f(S(\delta))] - f(L)\big) \\
    \underset{\delta\to 0}{\longrightarrow} 
    (c_1+c_2)\left(\int_{0}^\infty \frac{f(L+z)-f(L)}{L} \frac{\mathrm{d}z}{z^{3/2}} + \int_{0}^\infty (f(L+z)-f(L)-zf'(L)) \frac{\mathrm{d}z}{z^{5/2}} \right).
  \end{multline}
  Note that $c_1+c_2 =2$ in our case, giving the constant
  $c_{\Lambda} = 2$ in the statement.

  For ease of notation it will be convenient to set $\Sigma := (W\circ \tfrak)_1 + (W\circ \tfrak)_2$ and $\Sigma' := (W'\circ \uptau)_1 + (W'\circ \uptau)_2$.
  We also extend the definition of $f$ by declaring that $f(x)=0$ for $x<0$.
  We begin by splitting the expectation as follows:
  \begin{equation}\label{eq: generator split}
    \Eb^{Q_L}[f(S(\delta))] - f(L)
    =
    \Eb^{Q_L}\big[f(S(\delta)) - f(\Sigma'(\delta))\big] 
    + \big(\Eb^{Q_L}[f(\Sigma'(\delta))]- f(L)\big).
  \end{equation}
  We recall that under $Q_L$, $\Sigma'$ starts from $L$.
  The second term is easier to deal with. Indeed, we note that $\Sigma'(\delta)$ is a spectrally positive $\frac32$--stable Lévy process with Lévy measure $(c_1+c_2) \mathds{1}_{x>0} \frac{\mathrm{d}x}{x^{5/2}}$. Therefore its generator is given by \cite[Section 3.1]{CC} as
  \[
    \frac{1}{\delta} \big(\Eb^{Q_L}[f(\Sigma'(\delta))]- {f(L)}\big)
    \underset{\delta\to 0}{\longrightarrow} 
    (c_1+c_2) \int_0^\infty (f(L+z)-f(L)-zf'(L)) \frac{\mathrm{d}z}{z^{5/2}}, 
  \]
  thus explaining the second term in the expression \eqref{eq: S generator}.

  It remains to deal with the first term of \eqref{eq: generator split}. For technical reasons that will appear later on, we further split this term as:
  \begin{multline} \label{eq: Sigma' split cutoff}
    \Eb^{Q_L}\big[f(S(\delta)) - f(\Sigma'(\delta))\big] \\
    =
    \Eb^{Q_L}\big[\mathds{1}_{\{\Sigma'(\delta)\ge L/2\}}(f(S(\delta)) - f(\Sigma'(\delta)))\big]
    +
    \Eb^{Q_L}\big[\mathds{1}_{\{\Sigma'(\delta)< L/2\}}(f(S(\delta)) - f(\Sigma'(\delta)))\big].
  \end{multline}
  The second term of \eqref{eq: Sigma' split cutoff} can be bounded as
  \[
    \Eb^{Q_L}\big[\mathds{1}_{\{\Sigma'(\delta)< L/2\}}(f(S(\delta)) - f(\Sigma'(\delta)))\big]
    \le
    2 \lVert f \rVert_{\infty} \cdot \Pb^{Q_L}(\Sigma'(\delta) < L/2).
  \]
  The latter tail probability is sublinear in $\delta$, as can be seen by a Chernoff bound. Indeed for $q>0$, using the formula for the Laplace exponent $\Psi_\alpha$ of $\Sigma'$ in \eqref{eq: Psi stable >0}, we obtain
  \begin{equation} \label{eq: generator cutoff sublinear}
    \Pb^{Q_L}(\Sigma'(\delta) < L/2)
    \le
    \Pb(\mathrm{e}^{-qL}\mathrm{e}^{-q\Sigma'(\delta)} > \mathrm{e}^{-q L/2})
    \le
    \mathrm{e}^{-q L/2}\mathrm{e}^{\Psi_\alpha(-q)\delta}
    =
    \mathrm{e}^{-q L/2}\mathrm{e}^{(c_1+c_2)\Gamma(-3/2) q^{3/2}\delta}.
  \end{equation}
  The result follows by taking, say, $q=\delta^{-2/3}$.

  We now explain how to deal with the first term of \eqref{eq: Sigma' split cutoff}.
  To do so, we first use that the process $-\Sigma$ is a $\frac12$--stable subordinator with Lévy measure $c_3\frac{\mathrm{d}z}{z^{3/2}}$. As a consequence, its generator is given by the formula
  \[
    \mathscr{H} f(L) :=  c_3\int_0^\infty (f(L+z)-f(L)) \frac{\mathrm{d}z}{z^{3/2}}.
  \]
  By standard arguments (see \textit{e.g.} \cite[Proposition VII.1.6]{RY}), we deduce that for any $x\ge 0$, the process 
  \[
    M_s^x(f):= 
    f(x-\Sigma(s)) - f(x) - c_3\int_0^s  \mathrm{d}u \int_0^\infty \big(f(x-\Sigma(u)+z)-f(x-\Sigma(u))\big) \frac{\mathrm{d}z}{z^{3/2}}, \quad s\ge 0,
  \]
  is a martingale. Under the conditional law given $W'\circ \uptau$, the
  process $M^{\Sigma'(\delta)}(f)$ is therefore a martingale. Furthermore,
  and still conditional on $W'\circ \uptau$, the variable $\sfrak(\delta)$ is a stopping
  time for $W\circ\tfrak$, which is almost surely finite. Moreover,
  observe that our assumptions on $f$ imply that $f$ is bounded and has
  bounded first order derivative on any interval of the form $[a_0,\infty)$,
  $a_0>0$. Thus for any $a_0>0$, there exists $C>0$ such that for $a>a_0$
  and $z\ge 0$,
  \begin{equation} \label{eq: bound derivative f}
    |f(a+z)-f(a)| \le C (z\wedge 1),  
  \end{equation}
  Now on the event $\{\Sigma'(\delta)\ge L/2\}$, since $-\Sigma$ is always positive, we see that $\Sigma'(\delta)-\Sigma(u)>L/2$, and therefore we have the bound
  \[
    M^{\Sigma'(\delta)}_{\sfrak(\delta)}(f)
    \le
    C'(1+\sfrak(\delta)).
  \]
  Since $\sfrak(\delta)$ is integrable 
  (seen in the proof of \cref{Lem: asympt LG time-change})
  we may apply the optional stopping theorem to deduce that 
  \begin{multline*}
    \Eb^{Q_L}\big[\mathds{1}_{\{\Sigma'(\delta)\ge L/2\}} (f(S(\delta)) - f(\Sigma'(\delta)))\big] \\
    = 
    c_3 \Eb^{Q_L}\Big[ \mathds{1}_{\{\Sigma'(\delta)\ge L/2\}} \int_0^{\sfrak(\delta)} \mathrm{d}u \int_0^{\infty} \big( f(\Sigma'(\delta)-\Sigma(u)+z)-f(\Sigma'(\delta)-\Sigma(u))\big)\frac{\mathrm{d}z}{z^{3/2}} \Big].  
  \end{multline*}
  Through the change of variables $u=\sfrak(\delta)v$, the above expression becomes
  \begin{multline*}
    \Eb^{Q_L}\big[\mathds{1}_{\{\Sigma'(\delta)\ge L/2\}} (f(S(\delta)) - f(\Sigma'(\delta)))\big] 
    \\ 
    = c_3 \Eb^{Q_L}\Big[ \sfrak(\delta) \mathds{1}_{\{\Sigma'(\delta)\ge L/2\}}  \int_0^{1} \mathrm{d}v \int_0^{\infty} \big( f(\Sigma'(\delta)-\Sigma(\sfrak(\delta)v)+z)-f(\Sigma'(\delta)-\Sigma(\sfrak(\delta)v))\big)\frac{\mathrm{d}z}{z^{3/2}} \Big].  
  \end{multline*}
  We now claim that, as $\delta\to 0$, this is of order 
  \begin{equation} \label{eq: asympt first term generator}
    \Eb^{Q_L}\big[\mathds{1}_{\{\Sigma'(\delta)\ge L/2\}} (f(S(\delta)) - f(\Sigma'(\delta)))\big]
    \sim
    c_3 \Eb^{Q_L}[\sfrak(\delta)] \int_0^\infty \big(f(L+z)-f(L)\big) \frac{\mathrm{d}z}{z^{3/2}}.  
  \end{equation}
  Assuming this and using \cref{Lem: asympt LG time-change}, we end up with 
  \[
    \frac{1}{\delta} \Eb^{Q_L}\big[\mathds{1}_{\{\Sigma'(\delta)\ge L/2\}}(f(S(\delta)) - f(\Sigma'(\delta)))\big]
    \rightarrow 
    \frac{c_1+c_2}{L} \int_0^\infty \big(f(L+z)-f(L)\big) \frac{\mathrm{d}z}{z^{3/2}},
    \quad \text{as } \delta\to 0, 
  \]
  therefore providing the first term of \eqref{eq: S generator}.

  To conclude, we then need to prove \eqref{eq: asympt first term generator}. The difference between the left and right hand sides of \eqref{eq: asympt first term generator} is $c_3$ times
  \[
    A(\delta) := \Eb^{Q_L}\Big[\sfrak(\delta)\int_0^1 \int_0^\infty L_f(\delta, v,z) \frac{\mathrm{d}z}{z^{3/2}} \mathrm{d}v\Big], 
  \]
  where 
  \[
    L_f(\delta,v,z) 
    :=
    \mathds{1}_{\{\Sigma'(\delta)\ge L/2\}} \big(f(\Sigma'(\delta)-\Sigma(\sfrak(\delta)v)+z)-f(\Sigma'(\delta)-\Sigma(\sfrak(\delta)v))\big)+ f(L)-f(L+z).
  \]
  We want to prove that $A(\delta)= o(\delta)$ as $\delta\to 0$. The first step is to use the same scaling arguments as in the proof of \cref{Lem: asympt LG time-change} for $\Xi =W\circ\tfrak$ and $\Xi' = W'\circ\uptau$. Recalling the notation in \eqref{eq: scaling T and V}, we have
  \[
    A(\delta)
    =
    \frac{1}{L} \int_0^L \mathrm{d}x \cdot \Eb\Big[ T(x+V_1(\delta), L-x+V_2(\delta)) \cdot \int_0^1 \int_0^\infty L_f(\delta,v,z,x)\frac{\mathrm{d}z}{z^{3/2}} \mathrm{d}v \Big],  
  \]
  where $L_f(\delta,v,z,x)$ is
  \begin{multline*}
    L_f(\delta,v,z,x)
    :=
    \mathds{1}_{\{L+\Sigma'(\delta)\ge L/2\}} \Big( f\big(L+\Sigma'(\delta)-\Sigma(T(x+V_1(\delta), L-x+V_2(\delta))v)+z\big) \\
    -f\big(L+\Sigma'(\delta)-\Sigma(T(x+V_1(\delta), L-x+V_2(\delta))v)\big) \Big) + f(L)-f(L+z).
  \end{multline*}
  Note that $\Sigma$ and $\Sigma'$ are now both starting from $0$, under $\Pb$.
  Partition again according to the events $E_1, E_2, E_3$ as in \eqref{eq: partition events} -- we denote by $A_1(\delta), A_2(\delta), A_3(\delta)$ the corresponding restrictions of $A(\delta)$. Let us first consider case $E_1$. In this case, recall that $T(x+V_1(\delta), L-x+V_2(\delta)) = T(x+V_1(\delta), 0)$ Unfolding the scaling relations and performing the change of variables $x=\delta^{1/\alpha} y$ all at once, we arrive after some tedious calculations at
  \[
    A_1(\delta)
    =
    \frac{\delta}{L} \Eb\bigg[ \int_0^{\delta^{-1/\alpha} L\wedge (-V_1(1)) \wedge (\delta^{-1/\alpha}L+V_2(1))} \mathrm{d}y T(y+V_1(1), 0) \int_0^1 \int_0^\infty \tilde{L}_f(\delta,v,z,y)\frac{\mathrm{d}z}{z^{3/2}} \mathrm{d}v \bigg],  
  \]
  with
  \begin{multline*}
    \tilde{L}_f(\delta,v,z,y)
    =
    \mathds{1}_{\{L+\delta^{1/\alpha}\Sigma'(1)\ge L/2\}} \Big(f\big(L+\delta^{1/\alpha}\Sigma'(1)-\delta^{1/\alpha}\Sigma(T(y+V_1(1), 0))+z \big) \\
    -f\big(L+\delta^{1/\alpha}\Sigma'(1)-\delta^{1/\alpha}\Sigma(T(y+V_1(1), 0))\big) \Big)+ f(L)-f(L+z).
  \end{multline*}
  We now show that the above expectation goes to $0$ as $\delta\to 0$. Plainly, $\tilde{L}_f(\delta,v,z,y) \to 0$ as $\delta\to 0$, for fixed $v,z,y$. 
  Moreover, noticing again that $-\Sigma$ remains positive, we see that on the above indicator, $L+\delta^{1/\alpha}\Sigma'(1)-\delta^{1/\alpha}\Sigma(T(y+V_1(1), 0)) > L/2$.  Thus we may leverage the uniform bound \eqref{eq: bound derivative f} to show that
  for all $(\delta,v,z,y)$,
  \[
    |\tilde{L}_f(\delta,v,z,y)| \le 2C (z\wedge 1).
  \]
  This gives the domination assumption since 
  \[
    \Eb\bigg[\int_0^{-V_1(1)} T(y+V_1(1), 0)  \mathrm{d}y \int_0^1 \int_0^\infty \frac{(z\wedge 1)}{z^{3/2}} \mathrm{d}z\mathrm{d}v\bigg]
    <\infty,
  \]
  as a result of the calculation \eqref{eq: T integrable}. We conclude by dominated convergence that $A_1(\delta)$ is sublinear as $\delta \to 0$. For symmetry reasons, so is $A_2(\delta)$.

  It remains to deal with $E_3$. This case is actually easier since the corresponding term $I_3$ is already sublinear in the proof of \cref{Lem: asympt LG time-change}. Therefore, we can afford to use the bound \eqref{eq: bound derivative f} directly, yielding
  \[
    A_3(\delta)
    \le 
    \frac{C}{L} \Eb\bigg[\int_0^L T(x+\delta^{1/\alpha}V_1(1), L-x+\delta^{1/\alpha}V_2(1)) \mathds{1}_{E_3(x)} \mathrm{d}x\bigg],
  \]
  for some (other) constant $C>0$. With the notation in the proof of \cref{Lem: asympt LG time-change}, this is $A_3(\delta) \le C I_3(L,\delta)$ which is sublinear as a consequence of that proof. This establishes \eqref{eq: asympt first term generator} and our claim in \cref{thm: main cone end}.
\end{proof}

%-------------------------------------------------------------------------------------%
%
%       GROWTH-FRAGMENTATIONS IN THE BROWNIAN CONE EXCURSIONS
%
%-------------------------------------------------------------------------------------%

\section{The growth-fragmentation process} \label{sec: proof gf}

We now establish our main theorem (\cref{thm: main}). The general strategy is similar to that of \cite[Theorem 3.3]{AD}, which roughly corresponds to the case $\theta=\pi$ (see also \cite{LGR,DS}). {We actually start by proving \cref{prop: law uniform exploration}, and then deduce \cref{thm: main} from the law of the uniform exploration.} Again we restrict to $\theta=\frac{2\pi}{3}$, and we drop the subscript $\theta$ for ease of notation.

%-------------------------------------------------------------------------------------%
%       Uniform exploration
%------------------------------------------------------------------------------------%

\subsection{Law of the uniform exploration}\label{sec: law unif}
Our first result describes the branch of the growth-fragmentation $\bf Z$ targeting a uniform time in the excursion biased by its duration, {as stated in \cref{prop: law uniform exploration}}. Equivalently, it describes the law of the branch towards a point sampled from the Liouville measure in the unit-boundary quantum disc biased by its area. This is reminiscent of the last item of \cite[Proposition 6.6]{BBCK}, which states a scaling limit result for the exploration towards a uniformly chosen vertex in the size-biased random planar map. {For $z\in\R_+^2 \setminus\{0\}$, we recall that we introduced in \eqref{eq: def P bar} certain probability measures $\overline{P}^z$ sampling a time $T$ together with the excursion $e$.} 
% Thus for all $z\in\R_+^2$ we can define the probability measure $\overline{P}^z$ analogously to \eqref{eq: intro def P bar}, 
% \begin{equation} \label{eq: def P bar}
%   \overline{P}^z (\mathrm{d}T,\mathrm{d}e)
%   :=
%   \sqrt{3} \lVert z\rVert_1^{-2} \mathds{1}_{\{ 0\le T\le \zeta(e)\}} \mathrm{d}T P^z(\mathrm{d}e).
% \end{equation}
Recall also from \eqref{eq: cal Z and Z^t} the definition of the process $(Z^t(a))_{a<\varsigma^t}$, $t\in(0,\zeta)$ associated with $e$. 
For $t\in(0,\zeta)$ such that $\varsigma^t >a$, we also let $e_a^{(t)}$ be the subpath of $e$ between $g_t(a)$ and $d_t(a)$ (recall \eqref{eq: def d_t and g_t}). \medskip

{
  We start with a general lemma providing a key formula for any functional of $(Z^T, \zeta(e_a^{(T)}))$ under the infinite measure $\overline{\nlg}$.
  \begin{Lem} \label{lem:key_joint_nlg}
   Let $H$ be a bounded continuous functional on the space of finite càdlàg paths, and $F$ a non-negative measurable function defined on $\R_+$.
   Then for all $a>0$,
\[
  \overline{\nlg}\big(F(\zeta(e_a^{(T)}))H(Z^T(b),\, b \in [0,a])\mathds{1}_{\{a<\varsigma^{T}\}}\big)
  =
  \overline{\nlg}\big(F(\zeta) \tilde{h}(-a,e(0))\big),
\]
where
\[
    \tilde{h}(-a,(x,y)) := \Eb_{x,y} [  H\left(S(a-b),\, b\in [0,a]\right)  ].
\]
  \end{Lem}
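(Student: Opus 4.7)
The plan is to rewrite both sides of the identity using the Bismut description (\cref{thm: Bismut}) and to check that, after a change of variables together with a strong Markov argument, they reduce to the same integral.

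First, I would apply the Bismut formula to the left-hand side. Under $\overline{\nlg}$, the time-reversed past $e^{T,-}$ can be realised as a correlated Brownian motion $B$ starting at $0$ and stopped when its cone-free local time reaches $A := \varsigma^T$, while, conditionally on $e^{T,-}$, the future $e^{T,+}$ is an independent correlated Brownian motion $W$ starting at $0$ and stopped at its first simultaneous running infimum lying below $e^{T,-}$. A direct analysis (of exactly the same kind as in \cref{sec: BM construction conditioning}) identifies this stopping time as $\tfrak_W(\sfrak(A))$ and yields
\[
  e(g^T(b))-e(T)=B(\uptau_B(b)), \qquad e(d^T(b))-e(T)=W(\tfrak_W(\sfrak(b))).
\]
Consequently $Z^T(b) = R(A-b)$ with $R(c) := \lVert B(\uptau_B(c)) - W(\tfrak_W(\sfrak(c)))\rVert_1$, $\zeta(e_a^{(T)}) = \uptau_B(A-a) + \tfrak_W(\sfrak(A-a))$, $\zeta(e) = \uptau_B(A) + \tfrak_W(\sfrak(A))$, and $e(0) = Y(A)$ (with $Y$ as in \cref{sec: BM construction conditioning}). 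The Bismut formula, combined with the change of variables $A = c+a$ (which turns the indicator $\mathds{1}_{\{a<\varsigma^T\}} = \mathds{1}_{\{A>a\}}$ into the range $c>0$), then converts the left-hand side into
\[
  \overline c\int_0^\infty \mathrm{d}c\, \Eb\bigl[F(\uptau_B(c) + \tfrak_W(\sfrak(c)))\, H(R(c+a-b),\, b \in [0,a])\bigr].
\]

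The central step is a strong Markov argument for $(B,W)$ at the pair of stopping times $(\uptau_B(c), \tfrak_W(\sfrak(c)))$. The shifts $\tilde B(s) := B(\uptau_B(c) + s)-B(\uptau_B(c))$ and $\tilde W(s) := W(\tfrak_W(\sfrak(c)) + s)-W(\tfrak_W(\sfrak(c)))$ are fresh independent correlated Brownian motions, independent of $\mathcal G_c := \sigma(B|_{[0,\uptau_B(c)]}, W|_{[0,\tfrak_W(\sfrak(c))]})$. Since cone-free and backward cone structures depend only on increments, and since $\tfrak_W(\sfrak(c))$ is itself a backward cone time of $W$, the componentwise infimum of $W$ on $[0,\tfrak_W(\sfrak(c))]$ is attained at the endpoint $\tfrak_W(\sfrak(c))$; consequently, every backward cone time $t>\tfrak_W(\sfrak(c))$ of $W$ corresponds exactly to a backward cone time $t-\tfrak_W(\sfrak(c))$ of $\tilde W$. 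A short verification then gives $R(c+s) = \lVert \tilde Y(s)\rVert_1$, where $\tilde Y$ is constructed from $(\tilde W,\, \tilde B + Y(c))$ exactly as $Y$ is built from $(W,W')$ in \cref{sec: BM construction conditioning}. Conditional on $\mathcal G_c$, the process $(R(c+s))_{s\ge 0}$ therefore has the law of $(S(s))_{s\ge 0}$ under $\Pb_{Y(c)}$; since $\uptau_B(c)+\tfrak_W(\sfrak(c))$ is $\mathcal G_c$-measurable, taking conditional expectation yields
\[
  \text{LHS} = \overline c\int_0^\infty \mathrm{d}c\, \Eb\bigl[F(\uptau_B(c) + \tfrak_W(\sfrak(c)))\, \tilde h(-a, Y(c))\bigr].
\]

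To conclude, I would apply the Bismut formula directly to the right-hand side $\overline{\nlg}(F(\zeta)\,\tilde h(-a,e(0)))$, using the identifications $\zeta = \uptau_B(A) + \tfrak_W(\sfrak(A))$ and $e(0) = Y(A)$; this produces exactly the same integral (with $A$ playing the role of $c$). The main obstacle is the strong Markov step: $\tfrak_W(\sfrak(c))$ is \emph{not} a stopping time for $W$ alone, because $\sfrak(c)$ depends on the trajectory of $B$. One must therefore apply strong Markov either to the joint pair $(B,W)$ at this joint stopping time, or to $W$ conditionally on $B$ followed by independence of $B$ and $W$, and then carefully verify that the backward cone structure of $W$ after $\tfrak_W(\sfrak(c))$ indeed reduces to that of the fresh correlated Brownian motion $\tilde W$.
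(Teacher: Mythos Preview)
Your proposal is correct and follows essentially the same route as the paper. The paper's proof applies Bismut to the left-hand side, invokes the Markov property of the two-dimensional process $Y$ at time $A-a$ to replace the $H$-functional by $\tilde h(-a,Y(A-a))$, shifts $A\mapsto A+a$, and then runs Bismut backwards to recover $\overline{\nlg}(F(\zeta)\tilde h(-a,e(0)))$. Your argument is the same up to ordering (you shift first, then apply Markov) and up to presentation: where the paper simply cites ``the Markov property of $Y$'' as a known fact, you spell it out explicitly via strong Markov for the Brownian pair $(B,W)$ at the joint stopping time $(\uptau_B(c),\tfrak_W(\sfrak(c)))$, which is exactly what underlies that Markov property.
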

  \begin{proof}
    With the notation of the lemma, we have by the Bismut description of $\overline{\nlg}$ (\cref{thm: Bismut}) and the Markov property of the process $Y$ defined in \eqref{eq: def Y(a) and S(a)},
    \begin{align*}
      \overline{\nlg}(F(\zeta(e_a^{(T)})) & H(Z^T(b),\, b \in [0,a])\mathds{1}_{\{a<\varsigma^{T}\}}) \\
      &=
      \overline{c}\int_a^{\infty} \mathrm{d}A \cdot \Eb[F(\uptau(A-a)+\tfrak(\sfrak(A-a))) H(S(A-b), b\in[0,a])] \\ 
      &=
      \overline{c}\int_a^{\infty} \mathrm{d}A \cdot \Eb\big[F(\uptau(A-a)+\tfrak(\sfrak(A-a)))\tilde{h}(-a,Y(A-a)) \big] \\
      &=
      \overline{c}\int_0^{\infty} \mathrm{d}A \cdot \Eb\big[F(\uptau(A)+\tfrak(\sfrak(A)))\tilde{h}(-a,Y(A)) \big],
    \end{align*}
    where 
    \[
      \tilde{h}(-a,(x,y)) := \Eb_{x,y} [  H\left(S(a-b),\, b\in [0,a]\right)  ].
    \]
    Using Bismut's description again, we see that
  \[
    \overline{\nlg}(F(\zeta(e_a^{(T)})) H(Z^T(b),\, b \in [0,a])\mathds{1}_{\{a<\varsigma^{T}\}})
    =
    \overline{\nlg}\big(F(\zeta)\tilde{h}(-a,e(0))\big),
  \]
  which is our claim.
  \end{proof}
}

The main result of this section is the following description of the law of the uniform exploration.
\begin{Prop} \label{prop: law uniform exploration}
  Let $z\in\R_+^2 \setminus \{0\}$. Under $\overline{P}^z$, the process $Z^T$ is a spectrally negative $\frac32$--stable process conditioned to be absorbed continuously at $0$ started at $\lVert z\rVert_1$. More precisely, it has the law of the process $\xi^{\searrow}$ described in \cref{sec: pssMp and Levy}, with $\alpha = \frac32$ and $c_{\Lambda}=2$. 
\end{Prop}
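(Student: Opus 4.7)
The plan is to combine \cref{lem:key_joint_nlg} with the identification of $S$ as the spectrally positive $\tfrac32$-stable process conditioned to stay positive (\cref{thm: main cone end}), and then translate the resulting time-reversal statement into the claimed description of $Z^T$. Applying \cref{lem:key_joint_nlg} with $F\equiv 1$ gives the infinite-measure identity
\[
  \overline{\nlg}\bigl(H(Z^T(b),\, b\in[0,a])\mathds{1}_{\{a<\varsigma^T\}}\bigr)
  =
  \overline{\nlg}\bigl(\tilde h(-a,e(0))\bigr),
  \quad \tilde h(-a,(x,y))=\Eb_{x,y}\bigl[H(S(a-b),\, b\in[0,a])\bigr].
\]
To extract a pointwise statement under $\overline{P}^z$, I would regularise by first inserting $F(\zeta)=e^{-\lambda\zeta}$, disintegrate both sides against the explicit joint density of $(e(0),\zeta)$ from \cref{prop: joint law displacement/duration}, and finally invert the Laplace transform in $\lambda$. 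This identifies, on the event $\{a<\varsigma^T\}$, the law of $(Z^T(b),\, b\in[0,a])$ under $\overline{P}^z$ with that of the time-reversed segment $(S(a-b),\, b\in[0,a])$ of the process $S$ started from $\|z\|_1$ under $\Pb_z$.

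Given \cref{thm: main cone end}, $S$ is the positive self-similar Markov process $\xi^{\uparrow}$ of index $\tfrac32$ with $c_\Lambda=2$. The next step is to invoke the classical time-reversal duality for pssMp's: reversing $\xi^{\uparrow}$ at a suitable last-passage time yields $\xi^{\searrow}$ (a Bertoin--Yor type identity, most transparent through the Lamperti representation and the fact that the Lévy dual of the spectrally positive stable process is spectrally negative). This transfers the identification with the same normalising constant $c_\Lambda=2$, and combined with the self-similarity inherited from \cref{prop: Z self-similar} through the coupling, it yields the claim.

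A more self-contained alternative would bypass the duality theorem and compute the generator of $Z^T$ directly. Using the Markov property (transferred from \cref{cor: Markov S} via \cref{lem:key_joint_nlg}) together with small-$a$ asymptotics analogous to those carried out in the proof of \cref{thm: main cone end}, one shows that for smooth $f$ the quantity $\tfrac{1}{a}\bigl(\Eb^{\overline{P}^z}[f(Z^T(a))]-f(\|z\|_1)\bigr)$ converges, as $a\to 0$, to the action at $\|z\|_1$ of the generator $\mathscr{G}^{\searrow}$ read off from \eqref{eq: Laplace xi searrow} with $\alpha=\tfrac32$ and $c_\Lambda=2$. The main obstacle in either route will be handling the infinite measure $\overline{\nlg}$ to convert the identity in \cref{lem:key_joint_nlg} into a genuine pointwise statement under $\overline{P}^z$ while tracking the precise constant $c_\Lambda=2$; the regularisation sketched above, or an explicit generator calculation reusing the technical estimates \cref{l:stable-moments}--\cref{Lem: asympt LG time-change} from \cref{sec: BM construction conditioning}, should settle the point.
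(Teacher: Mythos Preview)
Your proposal has a genuine gap at the step where you claim that inserting $F(\zeta)=\mathrm{e}^{-\lambda\zeta}$ and disintegrating yields the pointwise identification
\[
  (Z^T(b),\, b\in[0,a]) \text{ under } \overline{P}^z \ \overset{d}{=}\ (S(a-b),\, b\in[0,a]) \text{ under } \Pb_z.
\]
This cannot be right as stated: if $S$ starts from $\lVert z\rVert_1$ under $\Pb_z$, then the reversed segment $b\mapsto S(a-b)$ has $S(a-0)=S(a)$ at $b=0$ (random) and $S(0)=\lVert z\rVert_1$ at $b=a$, whereas $Z^T(0)=\lVert z\rVert_1$. More importantly, \cref{lem:key_joint_nlg} is an equality of \emph{integrals} over $e(0)$ against the measure $\overline{\nlg}$; it does not disintegrate to a pointwise identity. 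The $F$ in that lemma acts on $\zeta(e_a^{(T)})$, the duration of the sub-excursion straddling $T$, not on $\zeta(e)$ or on any function of $e(0)$, so a Laplace inversion in $\lambda$ gives you no handle on the starting point $z$.

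What the paper actually does is different and essential. After \cref{lem:key_joint_nlg} one arrives at
\[
  \overline{\nlg}(H(Z^T)\mathds{1}_{\{a<\varsigma^T\}})
  =\tfrac{1}{\sqrt{3}}\int_0^\infty L^{1/2}\,\mathrm{d}L\;\Eb^{Q_L}[H(S(a-\cdot))],
\]
and the key is to unpack $S=\xi^\uparrow$ as the Doob $h^\uparrow$-transform (with $h^\uparrow(x)=x$) of the killed spectrally positive stable process $S_+$, and then apply \emph{L\'evy duality with respect to Lebesgue measure} at the fixed time $a$ to swap $S_+$ for its dual $S_-=-S_+$. This produces exactly the weight $\ell/S_-(a)^{1/2}$, i.e.\ the $h^\searrow$-transform with $h^\searrow(x)=x^{\alpha-2}=x^{-1/2}$. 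Only after this manipulation does one disintegrate over the starting point (multiplying by test functions of $\lVert e(0)\rVert_1$ and of the angle, using \cref{prop: target invariance Q_L strong}) to obtain the pointwise statement under $\overline{P}^z$. The ``Bertoin--Yor type'' pssMp reversal you invoke concerns last-passage times and does not apply to fixed-time segments; it will not by itself produce the correct $h$-transform weight nor the constant $c_\Lambda=2$. Your alternative generator route would first require proving that $Z^T$ is Markov under $\overline{P}^z$, which is not immediate from \cref{cor: Markov S} without going through the same duality computation.
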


\noindent {Note that this proves \cref{prop: law unif intro} (and \cref{cor: law unif intro}).}

\begin{proof}
{We prove the statement for $z\in (\R_+^*)^2$, noting that the claim then easily follows for $z\in \partial \R_+^2 \setminus \{0\}$ by taking limits, using the convergence in \cref{prop: convergence measures}.}
   Let $a\ge0$ and $H$ be a bounded continuous functional on the space of finite càdlàg paths. By \cref{lem:key_joint_nlg},
  \[
    \overline{\nlg}(H(Z^T(b),\, b \in [0,a])\mathds{1}_{\{a<\varsigma^{T}\}})
    =
    \overline{\nlg}\big(\tilde{h}(-a,e(0))\big)
    =
    \nlg\big(\tilde{h}(-a,e(0)) \zeta(e)\big).
  \]
  Disintegrating the right-hand side over $e(0)$, we get
  \begin{multline*}
    \overline{\nlg}(H(Z^T(b),\, b \in [0,a])\mathds{1}_{\{a<\varsigma^{T}\}})
    =
    \int_{\R_+^2} \frac{\mathrm{d}l \mathrm{d}r}{(l+r)^{5/2}}  \tilde{h}(-a,(l,r)) \Eb^{P^{(l,r)}}[\zeta] \\
    =
    \frac{1}{\sqrt{3}} \int_{\R_+^2} \frac{\mathrm{d}l \mathrm{d}r}{(l+r)^{1/2}}  \tilde{h}(-a,(l,r))
    =
    \frac{1}{\sqrt{3}} \int_{0}^{\infty} L^{1/2} \mathrm{d}L \int_0^L \frac{\mathrm{d}r}{L} \Eb_{L-r,r} [  H\left(S(a-b),\, b\in [0,a]\right) ],
  \end{multline*}
  where we used in the second equality that $\Eb^{P^z}[\zeta] = \frac{\lVert z \rVert_1^2}{\sqrt{3}}$ for all $z\in\R_+^2 \setminus \{0\}$. In other words,
  \begin{equation} \label{eq:key_Z^T}
    \overline{\nlg}(H(Z^T(b),\, b \in [0,a])\mathds{1}_{\{a<\varsigma^{T}\}})
    =
    \frac{1}{\sqrt{3}} \int_{0}^{\infty} L^{1/2} \mathrm{d}L \cdot \Eb^{Q_L} [  H\left(S(a-b),\, b\in [0,a]\right) ].
  \end{equation}

{
  The functional on the right-hand side involves the time-reversal of $S$, \textit{i.e.} the time-reversal of a stable process conditioned to stay positive (according to \cref{thm: main cone end}). Now recall from \cref{sec: pssMp and Levy} that the spectrally positive $\frac32$--stable process conditioned to remain positive can be written as a Doob $h^{\uparrow}$--transform of the spectrally positive $\frac32$--stable process killed when entering the negative half-line, with harmonic function $h^\uparrow(x)=x$. As a consequence, 
  \begin{multline}
    \label{eq:unif_expl_h_transform}
    \overline{\nlg}(H(Z^T(b),\, b \in [0,a])\mathds{1}_{\{a<\varsigma^{T}\}}) \\
    = \frac{1}{\sqrt{3}}\int_{0}^{\infty} \mathrm{d}L \cdot \Eb^{Q_L} \left[ \frac{S_+(a)}{L^{1/2}} H\left(S_+(a-b),\, b\in [0,a]\right) \mathds{1}_{\{\forall b\in[0,a], \; S_+(b)>0\}} \right],
  \end{multline}
where under $Q_{L}$, $S_+$ is the spectrally positive $\frac32$--stable process starting at $L$. 
Now, we use duality with respect to the Lebesgue measure of the Lévy process $S_+$ (see \cite[Section II.1]{Ber}). 
Let $S_-$ be the {spectrally \emph{negative} $\frac32$--stable process (with law $-S_+$) which under $Q_{\ell}$ starts from $\ell\in\R$.} Duality entails 
  \begin{align*}
    \int_{0}^{\infty} \mathrm{d}L & \cdot \Eb^{Q_L} \left[ \frac{S_+(a)}{L^{1/2}} H\left(S_+(a-b),\, b\in [0,a]\right) \mathds{1}_{\{\forall b\in[0,a], \; S_+(b)>0\}} \right] \notag \\
    &= \int_{-\infty}^{\infty} \mathrm{d}\ell \cdot \Eb^{Q_{\ell}} \left[ \frac{\ell}{S_-(a)^{1/2}} H\left(S_-(b),\, b\in [0,a]\right) \mathds{1}_{\{\forall b\in[0,a], \; S_-(b)> 0\}}  \right]  \notag \\
    &= \int_{0}^{\infty} \mathrm{d}\ell \cdot \Eb^{Q_{\ell}} \left[ \frac{\ell}{S_-(a)^{1/2}} H\left(S_-(b),\, b\in [0,a]\right) \mathds{1}_{\{\forall b\in[0,a], \; S_-(b)>0 \}}  \right]. %\label{eq: S_+ time-reversal}
  \end{align*}
 Going back to \eqref{eq:unif_expl_h_transform}, we obtained
  \begin{multline*}
    \overline{\nlg}(H(Z^T(b),\, b \in [0,a])\mathds{1}_{\{a<\varsigma^{T}\}})  \\
    = \frac{1}{\sqrt{3}} \int_{0}^{\infty} \mathrm{d}\ell \cdot \Eb^{Q_{\ell}} \left[ \frac{\ell}{S_-(a)^{1/2}} H\left(S_-(b),\, b\in [0,a]\right) \mathds{1}_{\{\forall b\in[0,a], \; S_-(b)>0 \} }  \right].
  \end{multline*}
Disintegrating the measure $\overline{\nlg}$ over $e(0)$ as in \eqref{eq: sqrt(8/3) law displacement}, we get that 
\begin{multline*}
  \frac{1}{\sqrt{3}}\int_{0}^\infty \frac{\mathrm{d}\ell}{\ell^{3/2}} \int_{z\in\R_+^2 \setminus \{0\}: \lVert z\rVert_1 =1} \mathrm{d}z \Eb^{\overline{P}^{\ell z}}\big[H(Z^T(b),\, b \in [0,a])\mathds{1}_{\{a<\varsigma^T\}}\big] \\
  = \frac{1}{\sqrt{3}} \int_{0}^{\infty} \mathrm{d}\ell \cdot \Eb^{Q_{\ell}} \left[ \frac{\ell}{S_-(a)^{1/2}} H\left(S_-(b),\, b\in [0,a]\right) \mathds{1}_{\{\forall b\in[0,a], \; S_-(b)>0 \} }  \right].
\end{multline*}
The previous arguments extend if we multiply $H$ by an arbitrary function $f$ of $Z^T(0) = \lVert e(0)\rVert_1$ and $g$ of the angular part $\frac{e(0)}{\lVert e(0)\rVert_1}$. By independence between $S$ and $\frac{Y}{S}$ (\cref{prop: target invariance Q_L strong}), the previous identity yields
\begin{multline*}
  \int_{0}^{\infty} \frac{\mathrm{d}\ell}{\ell^{3/2}} \ell^2 f(\ell)
  \int_{z\in\R^2_+: \lVert z\rVert_1 =1} \mathrm{d}z g(z) \Eb^{\overline{P}^{\ell z}}\big[H(Z^T(b),\, b \in [0,a])\mathds{1}_{\{a<\varsigma^T\}}\big] \\
  = \int_{z\in\R^2_+: \lVert z\rVert_1 =1} g(z) \mathrm{d}z \int_{0}^{\infty} \mathrm{d}\ell \cdot  f(\ell) \Eb^{Q_\ell} \bigg[ \frac{\ell}{S_-(a)^{1/2}} H\left(S_-(b),\, b\in [0,a]\right) \mathds{1}_{\{\forall b\in[0,a], \; S_-(b)>0 \} }  \bigg].
\end{multline*}
Here we emphasise that the term $\frac{\ell^2}{\sqrt{3}}$ on the left-hand side comes from the definition of $\overline{P}^{\ell z}$, see \eqref{eq: def P bar}.
This equality holds for all non-negative measurable function $f$, and a continuity argument brings that for all $z\in\R_+^2$ with $\lVert z \rVert_1 = \ell$,
\[
  \Eb^{\overline{P}^z}\big[H(Z^T(b),\, b \in [0,a])\mathds{1}_{\{a<\varsigma^T\}}\big]
  =
  \Eb^{Q_\ell} \bigg[ \frac{\ell^{1/2}}{S_-(a)^{1/2}} H(S_-(b),\, b\in [0,a]) \mathds{1}_{\{\forall b\in[0,a], \; S_-(b)>0 \} }  \bigg].
\]
By the last paragraph of \cref{sec: pssMp and Levy}, we check that the above $h$--transform coincides with that of a spectrally negative $\frac32$--stable Lévy process conditioned to be absorbed continuously at $0$, as we claimed.
}
\end{proof}

%-------------------------------------------------------------------------------------%
%       Loc largest fragment
%------------------------------------------------------------------------------------%
\subsection{The law of the locally largest fragment \texorpdfstring{$Z^*$}{Z*}} \label{sec: law loc largest}

This section is devoted to deriving the law of a specific branch of the process $\Zbf$ of \eqref{eq: GF cones}. We show that this branch has the same law as the so-called \emph{locally largest fragment} $X^{3/2}$ of the growth-fragmentation $\Xbf^{3/2}$ described explicitly at the end of \cref{sec: GF process}. 
Specifically, recall from \eqref{eq: cal Z and Z^t} the notation $Z^t$ for the branch targeted at $t$. Under $\nlg$ (or $P^z$ for $z\in\R_+^2 \setminus \{0\}$, {including the boundary}) we denote by $t^*\in(0,\zeta)$ the unique time $t$ such that, for all $a\in(0,\varsigma^{t})$,
\[
  Z^t(a) > \frac12 Z^t(a^-).
\]
In other words, at each (local) time $a$ when the excursion straddling $t^*$ at time $a$ splits into two excursions, the branch towards time $t^*$ is the one following the largest excursion, in terms of the $1$--norm of their displacements. It can be shown that (under either measure) $t^*$ is well-defined and unique outside of a negligible set, following the topological arguments presented in \cite[Section 2.5]{AD}. Without loss of generality, we henceforth implicitly restrict to this event in all the arguments below.
We define $Z^*:=Z^{t^*}$ and $\varsigma^*:= \varsigma^{t^*}$. We start with a technical lemma, {which is \cite[Lemma 18]{LGR}}.

  \begin{Lem} \label{eq: lem LGR}
   {Let $\xi^\searrow$ be the spectrally negative $\frac32$--stable process conditioned to be absorbed at $0$, as in \cref{sec: pssMp and Levy} with $c_\Lambda = 2$.} Denote by $\widetilde{\xi}$ its underlying Lamperti transform. Then the process 
    \[
      M(a) :=
      \mathrm{e}^{-2 \widetilde{\xi}(a)} \mathds{1}_{\{\forall b\in[0,a], \; \Delta \widetilde{\xi}(b)> -\log(2) \}},
      \quad a\ge 0,
    \]
    is a martingale with respect to the natural filtration associated with $\xi$. In addition, under the corresponding change of measure, the process $\xi$ is a Lévy process with Laplace exponent 
    \begin{equation} \label{eq: Laplace lem LGR}
      \Psi^*(q) 
      :=
      -\frac{16}{3}q + 2\int_{-\log(2)}^0 (\mathrm{e}^{qy}-1-q(\mathrm{e}^y-1)) \mathrm{e}^{-3y/2}(1-\mathrm{e}^y)^{-5/2}\mathrm{d}y,
      \quad
      q\in \R.
    \end{equation}
  \end{Lem}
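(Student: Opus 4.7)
The plan is to verify the martingale property by a direct computation exploiting the L\'evy structure of $\widetilde{\xi}$, after which the identification of $\Psi^*$ under the change of measure is a routine Esscher-transform calculation. Let $\Lambda(\mathrm{d}y) = 2e^{y/2}(1-e^y)^{-5/2}\mathds{1}_{y<0}\,\mathrm{d}y$ be the L\'evy measure of $\widetilde{\xi}$ coming from \eqref{eq: Laplace xi searrow} with $\alpha=3/2$ and $c_\Lambda=2$, and let $T:=\inf\{s>0:\Delta\widetilde{\xi}(s)\le-\log 2\}$. The mass $\lambda:=\Lambda((-\infty,-\log 2])$ is finite, so $T$ is exponential with rate $\lambda$, and the L\'evy-It\^o decomposition allows one to write $\widetilde{\xi} = \widetilde{\xi}^{(1)} + \widetilde{\xi}^{(2)}$ as the sum of two independent L\'evy processes, with $\widetilde{\xi}^{(2)}$ compound Poisson (Laplace exponent $\Psi^{(2)}(q)=\int_{-\infty}^{-\log 2}(e^{qy}-1)\Lambda(\mathrm{d}y)$) and $\widetilde{\xi}^{(1)}$ carrying the small jumps with Laplace exponent $\Psi^{(1)}:=\widetilde{\Psi}-\Psi^{(2)}$ (finite at $q=-2$ since the jumps of $\widetilde{\xi}^{(1)}$ lie in $(-\log 2,0)$). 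On $\{T>a\}$ one has $\widetilde{\xi}^{(2)}(a)=0$, hence $\widetilde{\xi}(a)=\widetilde{\xi}^{(1)}(a)$, so by independence
\[\Eb[M(a)] = \Eb\bigl[e^{-2\widetilde{\xi}^{(1)}(a)}\bigr]\,\Pb(T>a) = \exp\bigl(a(\Psi^{(1)}(-2)-\lambda)\bigr).\]
The full martingale property for $M$ would then follow at once from the identity $\Psi^{(1)}(-2)=\lambda$, together with the stationary and independent increments of $\widetilde{\xi}$ and the strong Markov property at the stopping time $T$.

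The core computation is therefore to verify $\Psi^{(1)}(-2)=\lambda$. First, I would compute the drift coefficient $a^\searrow$ defined below \eqref{eq: Laplace xi searrow}: the substitution $x=\sin^2\theta$ transforms $\int_0^1 \frac{(1-x)^{-1/2}-1}{x^{3/2}}\,\mathrm{d}x$ into $\int_0^{\pi/2}\sec^2(\theta/2)\,\mathrm{d}\theta = 2$, so that $a^\searrow = \tfrac{2}{1/2}-2\cdot 2 = 0$. Expanding $\Psi^{(1)}(-2)=\lambda$ and using $a^\searrow=0$, the desired identity reduces to
\[\int_{-\infty}^{-\log 2}(2e^y-3)\,\Lambda(\mathrm{d}y) + \int_{-\log 2}^{0}(e^{-2y}+2e^y-3)\,\Lambda(\mathrm{d}y) = 0.\]
Both integrals converge (the second because $e^{-2y}+2e^y-3$ vanishes to order two at $y=0$), and the substitution $u=e^y$ recasts them as linear combinations of integrals of the type $\int u^{\beta}(1-u)^{-5/2}\,\mathrm{d}u$ on $(0,1/2)$ and $(1/2,1)$. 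These can be evaluated by integration by parts against antiderivatives of $(1-u)^{-5/2}$, after which the boundary terms and remaining integrals combine to give zero.

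Once the martingale property is proved, the identification of the new Laplace exponent is standard. The change of measure $\mathrm{d}\widetilde{\Pb}/\mathrm{d}\Pb|_{\mathcal{F}_a}=M(a)$ is an Esscher transform with parameter $q=-2$ combined with a killing that suppresses the jumps of size $\le -\log 2$. General L\'evy-process theory ensures $\widetilde{\xi}$ remains a L\'evy process under $\widetilde{\Pb}$, with L\'evy measure obtained by restricting $\Lambda$ to $(-\log 2,0)$ and tilting its density by $e^{-2y}$, giving exactly $2e^{-3y/2}(1-e^y)^{-5/2}\mathds{1}_{-\log 2<y<0}\,\mathrm{d}y$. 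Rewriting the new Laplace exponent in the compensated form $-q(e^y-1)$ produces both the integrand and the drift $-\tfrac{16}{3}q$ in \eqref{eq: Laplace lem LGR}. The main technical obstacle is the integral identity above; the rest of the argument is essentially mechanical.
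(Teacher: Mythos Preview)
Your argument is correct. The decomposition $\widetilde{\xi}=\widetilde{\xi}^{(1)}+\widetilde{\xi}^{(2)}$ with $\widetilde{\xi}^{(2)}$ carrying the jumps in $(-\infty,-\log 2]$ is exactly the right device, and the martingale property does reduce to the single identity $\Psi^{(1)}(-2)=\lambda$. Your computation of $a^{\searrow}=0$ is accurate, and the integral identity you isolate is genuine: after the substitution $u=e^y$ and the factorisation $u^{-2}+2u-3=u^{-2}(1-u)^2(1+2u)$, the two pieces become
\[
2\!\int_0^{1/2}(2u-3)u^{-1/2}(1-u)^{-5/2}\,\mathrm{d}u=-\tfrac{40}{3},
\qquad
2\!\int_{1/2}^{1}(1+2u)u^{-5/2}(1-u)^{-1/2}\,\mathrm{d}u=\tfrac{40}{3},
\]
most quickly via $u=\sin^2\theta$. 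The Esscher part is also right: $\Psi^*(q)=\Psi^{(1)}(q-2)-\Psi^{(1)}(-2)$ has L\'evy measure $e^{-2y}\Lambda(\mathrm{d}y)\mathds{1}_{(-\log 2,0)}$, and the recompensation produces the drift $-\tfrac{16}{3}$ (one can check directly that $-\int_{-\infty}^{-\log 2}(e^y-1)\Lambda(\mathrm{d}y)+\int_{-\log 2}^{0}(e^y-1)(e^{-2y}-1)\Lambda(\mathrm{d}y)=4-\tfrac{28}{3}=-\tfrac{16}{3}$). One small remark: the appeal to the strong Markov property at $T$ is unnecessary---stationary independent increments already give $\Eb[M(a+b)\mid\mathcal{F}_a]=M(a)$ by splitting on $\{T>a\}$.

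The paper takes a different route: rather than carrying out this computation, it simply observes that once the Laplace exponent of $\widetilde{\xi}$ is written down, the statement is precisely \cite[Lemma~18]{LGR} up to tracking the normalising constant $c_\Lambda=2$. Your approach is more self-contained and makes the mechanism (an Esscher shift by $-2$ combined with suppression of large jumps) fully explicit, at the cost of a couple of beta-type integral evaluations; the paper's approach is shorter but relies on an external reference where the same calculation has already been done.
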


  \begin{proof}[Proof of \cref{eq: lem LGR}]
    {
    Recall that the Laplace exponent of $\widetilde{\xi}$ is given according to \eqref{eq: Laplace xi searrow} by
    \[
      \widetilde{\Psi}(q) 
      :=
      2 \int_{-\infty}^0 (\mathrm{e}^{qy}-1-q(\mathrm{e}^y-1)) \frac{\mathrm{e}^{y/2}\mathrm{d}y}{(1-\mathrm{e}^{y})^{5/2}},
      \quad q\ge 0.
    \]
    The claim is therefore \cite[Lemma 18]{LGR}, tracing the normalising constants.
    }
  \end{proof}

We may now derive explicitly the law of the process $Z^*$.

  \begin{Thm} \label{thm: law loc largest}
    Let $z\in \R_+^2 \setminus \{0\}$ and denote $\ell := \lVert z \rVert_1$. Under $P^z$, the process $(Z^*(a), 0\le a<\varsigma^*)$ is a positive self-similar Markov process with index $\frac32$ starting from $\ell$. Its Lamperti representation is 
    \[
      Z^*(a)
      :=
      \ell \exp(\xi^*(\tau^*(\ell^{-3/2}a))),
    \]
    where $\xi^*$ is a Lévy process with Laplace exponent \eqref{eq: Laplace lem LGR}
    and $\tau^*$ is the Lamperti time change
    \[
      \tau^*(t) 
      :=\inf\{s\ge 0, \int_0^s \mathrm{e}^{\tfrac32\xi^*(u)}\mathrm{d}u>t\}, \quad t\ge 0.
    \]
  \end{Thm}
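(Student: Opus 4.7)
The plan is to derive the law of $Z^*$ by comparing it to the law of the uniform exploration $Z^T$ from \cref{prop: law uniform exploration} via a Doob $h$-transform, and then to invoke \cref{eq: lem LGR} to identify the resulting Lévy process. Throughout I write $\ell = \|z\|_1$.

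The crucial step will be to establish the identity, for any bounded measurable $F$ and any $A\ge 0$,
\[
E^{P^z}\bigl[F(Z^*(a), 0\le a\le A)\, Z^*(A)^2\bigr]
\;=\;
\ell^2\, E^{\overline{P}^z}\bigl[F(Z^T(a), 0\le a\le A)\, \mathbf{1}_{\{Z^T(a)=Z^*(a),\ \forall a\le A\}}\bigr].
\]
The heuristic here is that, under $\overline{P}^z$, the uniform time $T$ lies in the sub-excursion traced by the locally largest branch up to local time $A$ with probability equal to the duration of that sub-excursion divided by $\zeta$; by the expression $E^{P^w}[\zeta]=\|w\|_1^2/\sqrt{3}$ (consequence of \cref{prop: intro law duration}) and scaling, this conditional expected duration should equal $Z^*(A)^2/\sqrt{3}$, giving the factor $Z^*(A)^2$ on the left-hand side. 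To justify this rigorously I would disintegrate $\overline{P}^z$ over $T$, and then use a Markov-type argument at the cone split times of $Z^*$ (based on \cref{prop: markov nlg} and its strong Markov extension from \cref{rk: strong markov nlg}, together with the scaling of $P^z$) to show that conditional on the trajectory of $Z^*$ up to local time $A$, the residual sub-excursion of size $Z^*(A)$ has the law $P^{w}$ for some $w\in\R_+^2$ with $\|w\|_1=Z^*(A)$.

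Once the identity is in hand, translate it to Lamperti exponents. By \cref{prop: law uniform exploration}, $Z^T$ under $\overline{P}^z$ is the Lamperti transform of the Lévy process $\widetilde{\xi}$ with Laplace exponent \eqref{eq: Laplace xi searrow} for $\alpha=3/2$, $c_\Lambda=2$. The event $\{Z^T(a)=Z^*(a),\ \forall a\le A\}$ corresponds precisely to the event that every jump of $\widetilde{\xi}$ before the Lamperti image $A_L$ of $A$ exceeds $-\log(2)$ (since $e^{\Delta\widetilde{\xi}}>1/2$ is exactly the condition that the fragment containing $T$ remains the larger of the two at each split). Denoting by $\xi^*$ the putative Lamperti exponent of $Z^*$ and writing $Z^*(A)^2/\ell^2 = e^{2\xi^*(A_L)}$ on the event of coincidence, the identity becomes, for any non-negative measurable $G$,
\[
E\bigl[G(\xi^*(a), a\le t)\bigr]
=
E\Bigl[G(\widetilde{\xi}(a), a\le t)\, e^{-2\widetilde{\xi}(t)}\, \mathbf{1}_{\{\forall b\le t,\ \Delta\widetilde{\xi}(b)>-\log 2\}}\Bigr].
\]
The right-hand side is precisely the Doob $h$-transform of $\widetilde{\xi}$ by the martingale $M$ of \cref{eq: lem LGR}, which by that lemma produces a Lévy process with Laplace exponent $\Psi^*$ given in \eqref{eq: Laplace lem LGR}. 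This identifies $\xi^*$ and proves that $Z^*$ is a positive self-similar Markov process with the stated Lamperti representation.

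The main obstacle will be the rigorous justification of the key identity, and in particular the input that conditionally on $(Z^*(a),a\le A)$, the residual sub-excursion still has a cone-excursion law of the correct size. This is essentially a Markov-type property for the $Z^*$-exploration at cone split times, and cannot be read off from \cref{prop: markov nlg} directly since the latter describes the law of $e$ after a deterministic or stopping time in $t$, rather than at a split of $Z^*$. I expect the proof to proceed via a careful disintegration of $\nlg$ over the starting point, combined with the strong Markov property at the cone times dual to the jumps of $Z^*$, to transfer the scaling $E^{P^w}[\zeta]=\|w\|_1^2/\sqrt 3$ into the conditional expectation that gives the $Z^*(A)^2$ factor.
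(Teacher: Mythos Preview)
Your target identity is correct and is precisely the content of the paper's key formula \eqref{eq: law Z* h-transform} (they are the same identity, just with $F$ replaced by $F\cdot Z^*(A)^2$). The final step, passing to Lamperti co-ordinates and invoking \cref{eq: lem LGR} to identify $\xi^*$ from the martingale $M(t)=\mathrm{e}^{-2\widetilde\xi(t)}\mathds{1}_{\{\Delta\widetilde\xi>-\log 2\}}$, is also exactly what the paper does.

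Where you diverge from the paper is in how the identity itself is obtained. You propose to prove it by establishing that, conditionally on $(Z^*(a),a\le A)$, the residual sub-excursion has law $P^w$ for some $w$ with $\|w\|_1=Z^*(A)$, so that its expected duration is $Z^*(A)^2/\sqrt 3$. You correctly flag this as the main obstacle, and indeed it is not a direct consequence of \cref{prop: markov nlg}. The paper never proves this Markov property as a separate input; instead it bypasses it entirely via Bismut's description. Concretely, the paper starts from the elementary pointwise identity
\[
H(Z^*(b),b\le a)\mathds{1}_{\{a<\varsigma^*\}}=\int_0^{\zeta}H(Z^t(b),b\le a)\,\mathds{1}_{\{\text{coincidence}\}}\frac{\mathrm{d}t}{\zeta(e_a^{(t)})},
\]
takes $\nlg$-expectation, and applies \cref{lem:key_joint_nlg} (a direct consequence of the Bismut theorem, \cref{thm: Bismut}) with $F(\zeta)=1/\zeta$. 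This converts the awkward $1/\zeta(e_a^{(T)})$ into a functional of $e(0)$ under $\nlg$, and the relation $\Eb^{P^z}[\zeta]=\|z\|_1^2/\sqrt 3$ then lets one re-apply \cref{lem:key_joint_nlg} backwards to produce the factor $(Z^T(a))^{-2}$. So the Bismut description is precisely the tool that resolves the obstacle you identified; your ``careful disintegration of $\nlg$ combined with the strong Markov property at cone times'' would, if carried out, presumably rediscover this.
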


\noindent The rest of this section is devoted to the proof of \cref{thm: law loc largest}. 
{
\begin{proof}
We will first relate the processes $Z^*$ and $Z^T$. Let $H$ be a bounded continuous non-negative function defined on the space of finite càdlàg paths, and $a\ge 0$. 
We first observe that for $\nlg$--almost every excursion $e$, 
\begin{equation} \label{eq: loc largest integration}
  H(Z^*(b),\, b \in [0,a])\mathds{1}_{\{a<\varsigma^*\}} = \int_{0}^{\zeta(e)} H(Z^{t}(b), b\in[0,a]) \mathds{1}_{\{\varsigma^t > a\}\cap \{
    \forall b\in[0,a], \; Z^t(b)>\frac12  Z^t(b^-)
  \}} \frac{\mathrm{d}t}{\zeta(e_a^{(t)})},
\end{equation}
where $e_a^{(t)}$ is the subpath of $e$ between $g_t(a)$ and $d_t(a)$ (recall \eqref{eq: def d_t and g_t}). 
Taking expectations in \eqref{eq: loc largest integration} under $\nlg$, we have
\[
  \nlg(H(Z^*(b),\, b \in [0,a])\mathds{1}_{\{a<\varsigma^*\}})
  =
  \overline{\nlg}\bigg( \frac{1}{\zeta(e_a^{(T)})}H(Z^T(b),\, b \in [0,a]) \mathds{1}_{\{a<\varsigma^T\} \cap \{
    \forall b\in[0,a], \; Z^T(b)>\frac12  Z^T(b^-)
  \}}\bigg).
\]
We may now apply \cref{lem:key_joint_nlg} to obtain
\begin{equation} \label{eq:Z*_key}
  \nlg(H(Z^*(b),\, b \in [0,a])\mathds{1}_{\{a<\varsigma^*\}})
  =
  \overline{\nlg}(\tilde{h}(-a,e(0))/\zeta)
  =
  \nlg(\tilde{h}(-a,e(0))),
\end{equation}
with 
\[
  \tilde{h}(-a,(x,y)) = \Eb_{x,y}\big[H(S(a-b),\, b \in [0,a]) \mathds{1}_{\{\forall b\in[0,a], \; S(b)>\frac12 S(b^-)\}}\big].
\]
On the other hand, note that the scaling and conditional independence property of $\zeta$ under $\nlg$ stated in \cref{prop: joint law displacement/duration}, together with the fact that $\Eb^{P^z}[\zeta] = \frac{\lVert z \rVert_1^2}{\sqrt{3}}$ entail that
\[
  \overline{\nlg}\Big(\tilde{h}(-a,e(0)) \lVert e(0) \rVert_1^{-2}\Big)
  =
  \nlg\Big(\tilde{h}(-a,e(0)) \lVert e(0) \rVert_1^{-2} \zeta\Big)
  =
  \frac{1}{\sqrt{3}} \nlg(\tilde{h}(-a,e(0))),
\]
which is nothing but the right-hand side of \cref{eq:Z*_key}. Therefore 
\[
  \nlg(H(Z^*(b),\, b \in [0,a])\mathds{1}_{\{a<\varsigma^*\}})
  =
  \sqrt{3} \cdot \overline{\nlg}\Big(\tilde{h}(-a,e(0)) \lVert e(0) \rVert_1^{-2}\Big).
\]
We apply again \cref{lem:key_joint_nlg}. The above display boils down to
\begin{multline*}
  \nlg(H(Z^*(b),\, b \in [0,a])\mathds{1}_{\{a<\varsigma^*\}}) \\
  =
  \sqrt{3} \cdot \overline{\nlg}\Big( (Z^T(a))^{-2} H(Z^T(b),\, b \in [0,a])\mathds{1}_{\{a<\varsigma^{T}\} \cap \{\forall b\in[0,a], \; Z^T(b)>\frac12  Z^T(b^-) \}}\Big).
\end{multline*}
It remains to disintegrate over $\lVert e(0) \rVert_1$. Using \cref{prop: joint law displacement/duration} and again the fact that $\Eb^{P^z}[\zeta] = \frac{\lVert z \rVert_1^2}{\sqrt{3}}$, we end up with
\begin{align*}
  &\int_0^\infty \frac{\mathrm{d}L}{L^{3/2}} \Eb^{Q_L}[H(Z^*(b),\, b \in [0,a])\mathds{1}_{\{a<\varsigma^*\}}] \\
  &=
  \int_0^\infty \frac{\mathrm{d}L}{L^{3/2}} L^2 \int_{z\in\R^2_+: \lVert z\rVert_1 =1} \mathrm{d}z \Eb^{\overline{P}^{Lz}}\Big[(Z^T(a))^{-2} H(Z^T(b),\, b \in [0,a])\mathds{1}_{\{a<\varsigma^{T}\} \cap \{\forall b\in[0,a], \; Z^T(b)>\frac12  Z^T(b^-) \}}\Big] \\
  &=
  \int_0^\infty \frac{\mathrm{d}L}{L^{3/2}} L^2 \Eb^{\overline{P}^{L}}\Big[(Z^T(a))^{-2} H(Z^T(b),\, b \in [0,a])\mathds{1}_{\{a<\varsigma^{T}\} \cap \{\forall b\in[0,a], \; Z^T(b)>\frac12  Z^T(b^-) \}}\Big],
\end{align*}
where we wrote $\overline{P}^{L} := \overline{P}^{L\cdot (0,1)}$ say, since according to \cref{prop: law uniform exploration} the law of $Z^T$ under $\overline{P}^{Lz}$ {is the same for all $z\in\R_+^2$ such that $\lVert z\rVert_1 = 1$.}%does not depend on $z$ when $\lVert z\rVert_1 =1$.
Note the extra factor $L^2$ coming from the definition of $\overline{P}^{Lz}$ in \eqref{eq: def P bar}. Multiplying $H$ by a function $f$ of $L= Z^*(0) = Z^T(0)$ and using a continuity argument, we obtain that for all $L>0$,
\begin{multline*}
  \Eb^{Q_L}[H(Z^*(b),\, b \in [0,a])\mathds{1}_{\{a<\varsigma^*\}}] \\
  =
  L^2 \Eb^{\overline{P}^{L}}\Big[(Z^T(a))^{-2} H(Z^T(b),\, b \in [0,a])\mathds{1}_{\{a<\varsigma^{T}\} \cap \{\forall b\in[0,a], \; Z^T(b)>\frac12  Z^T(b^-) \}}\Big].
\end{multline*}
The above chain of arguments extends if we add in a functional of the angular part $\frac{e(0)}{\lVert e(0) \rVert_1}$, yielding for all $z\in (\R_+^*)^2$ with $\lVert z\rVert_1 = L>0$,
\begin{multline} \label{eq: law Z* h-transform}
  \Eb^{P^ z}[H(Z^*(b),\, b \in [0,a])\mathds{1}_{\{a<\varsigma^*\}}] \\
  =
  L^2 \Eb^{\overline{P}^{L}}\Big[(Z^T(a))^{-2} H(Z^T(b),\, b \in [0,a])\mathds{1}_{\{a<\varsigma^{T}\} \cap \{\forall b\in[0,a], \; Z^T(b)>\frac12  Z^T(b^-) \}}\Big].
\end{multline}

This essentially describes the law of $Z^*$ as a Doob $h$--transform of the process $Z^T$. In particular, it gives that $Z^*$ is a positive self-similar Markov process with index $\frac32$. To conclude, it remains to work out the Lamperti exponent of $Z^*$. 
To do so, we use \cref{prop: law uniform exploration}, which states that under $\overline{P}^L$, $Z^T$ is a spectrally negative $\frac32$--stable process conditioned to be absorbed at $0$. More precisely, we can write it as the Lamperti transform of the Lévy process $\widetilde{\xi}$ in \cref{eq: lem LGR}. Now equation \eqref{eq: law Z* h-transform} rewrites
  \begin{multline} \label{eq: loc largest xi^0}
    \Eb^{P^z}\big[H(Z^*(b),\, b \in [0,a])\mathds{1}_{\{a<\varsigma^*\}}\big] \\
    =
    \Eb \Big[ \mathrm{e}^{-2 \widetilde{\xi}(\tau(a))} H(\ell \exp(\widetilde{\xi}(\tau(b))),\, b\in [0,a]) \mathds{1}_{\{\forall b\in[0,\tau(a)], \; \Delta \widetilde{\xi}(b)> -\log(2) \} }  \Big],
  \end{multline}
  where $\tau$ is the Lamperti time-change \eqref{eq: Lamperti time change}.
We then short-circuit the derivation of the Lamperti exponent of $Z^*$ using \cref{eq: lem LGR} as an input. 
We first write \eqref{eq: loc largest xi^0} as
  \[
    \Eb^{P^z}\big[H(Z^*(b),\, b \in [0,a])\mathds{1}_{\{a<\varsigma^*\}}\big] 
    =
    \Eb\big[ M(\tau(a)) H(\ell \exp(\widetilde{\xi}(\tau(b))),\, b\in [0,a])  \big].
  \]
  Now for any $c>0$, the optional stopping theorem entails that 
  \[
    \Eb\big[ M(\tau(a)) H(\ell \exp(\widetilde{\xi}(\tau(b))),\, b\in [0,a]) \mathds{1}_{\{\tau(a)<c\}} \big]
    =
    \Eb \big[ M(c) H(\ell \exp(\widetilde{\xi}(\tau(b))),\, b\in [0,a]) \mathds{1}_{\{\tau(a)<c\}} \big].
  \]
  By \cref{eq: lem LGR}, the right-hand side above boils down to
  \[
    \Eb \big[ H(\ell \exp(\xi^*(\tau^*(b))),\, b\in [0,a]) \mathds{1}_{\{\tau^*(a)<c\}} \big],
  \]
  where $\xi^*$ is the Lévy process with Laplace exponent \eqref{eq: Laplace lem LGR} and $\tau^*$ the associated Lamperti time change with exponent $3/2$. Finally, we take $c\to\infty$ to obtain
  \[
    \Eb^{P^z}\big[H(Z^*(b),\, b \in [0,a])\mathds{1}_{\{a<\varsigma^*\}}\big] 
    =
    \Eb^{Q_\ell} \big[ H(\ell \exp(\xi^*(\tau^*(b))),\, b\in [0,a]) \big],
  \]
  which is precisely our claim when $z\in \R_+^{*2}$. For $z\in\partial \R_+^2 \setminus \{0\}$, the statement readily follows from the convergence of measures in \cref{prop: convergence measures}.  
\end{proof}
}

%-------------------------------------------------------------------------------------%
%       Proof of GF claim
%-------------------------------------------------------------------------------------%

\subsection{Proof of \texorpdfstring{\cref{thm: main}}{Theorem \ref*{thm: main}}}
\label{sec:proof of main thm}
In this section, we prove our main result on the growth-fragmentation process. Recall that $\mathbf{X}^{3/2}$ is the growth-fragmentation process introduced in \cref{sec: GF process}.
\begin{Thm} \label{thm: GF process}
  Under $P^z$, the process $\mathbf{Z}$ has the law of the growth-fragmentation process $\mathbf{X}^{3/2}$.
\end{Thm}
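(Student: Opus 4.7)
The strategy is to bootstrap from \cref{thm: law loc largest}, which already identifies the locally largest fragment $Z^*$ under $P^z$ as the positive self-similar Markov process with index $\frac{3}{2}$ whose Lamperti transform has Laplace exponent $\Phi_{3/2}$. Since $\mathbf{X}^{3/2}$ is, by definition, the growth-fragmentation driven by exactly this self-similar Markov process, it suffices to show that $\mathbf{Z}$ admits a cell-system decomposition matching the recursive construction of \cref{sec: GF process}: each negative jump of $Z^*$ should spawn a daughter fragment whose initial size equals the magnitude of the jump, and whose subsequent evolution is, conditionally on the past, an independent copy of $\mathbf{Z}$ started from that size. The full result then follows by induction on the generation, applying the same statement to each daughter.

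The crucial branching step is thus to establish this independence at each jump of $Z^*$. A negative jump at (local) time $a$ corresponds to a split of the interval $(g^{t^*}((\varsigma^{t^*}-a)^-), d^{t^*}((\varsigma^{t^*}-a)^-))$, which can happen for one of two reasons: either $\tilde{g}^{t^*}=\uptau^{t^*}$ jumps, arising from a forward cone excursion of $e^{t^*,-}$ (``left split''), or $d^{t^*}$ jumps, arising from a new simultaneous running infimum of $e$ after $t^*$ (``right split''). In the right-split case, the strong Markov property under $\nlg$ (\cref{prop: markov nlg} and \cref{rk: strong markov nlg}) applied at the jump time, combined with the disintegration of $\nlg$ over the start point (\cref{prop: nlg disintegration}), identifies the cut-off sub-excursion as an independent cone excursion with law $P^{z'}$, where $z'\in\partial\R_+^2\setminus\{0\}$ satisfies $\|z'\|_1$ equal to the jump size. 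The left-split case is handled symmetrically: via the time reversal built into the Bismut description \cref{thm: Bismut} and the disintegration \cref{prop: DMS disintegration}, the forward cone excursion of $e^{t^*,-}$ cut off at such a jump can be reconstructed as an independent cone excursion of the same type.

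To implement this cleanly and simultaneously across all fragments, I would proceed via a Palm-measure / compensation-formula argument modelled on those in \cite{AD,DS,LGR}: weight $\nlg$ by a Lebesgue-distributed time $T$ as in $\overline{\nlg}$, then sum over the Poisson point process of jumps of $Z^*$ using the compensation formula for the Lévy process $\xi^*$ with Laplace exponent $\Phi_{3/2}$. This should produce an integrated identity in the same spirit as \cref{lem:key_joint_nlg}, whose right-hand side factorises into the law of $Z^*$ times an independent copy of $\mathbf{Z}$ started from the jump size. Unwinding the Palm reweighting and iterating generation by generation then yields \cref{thm: GF process}.

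The main technical obstacle I anticipate is the synchronised treatment of the two kinds of splits within a single Markov framework: although both reduce to an independent cone excursion, they arise from structurally different mechanisms (a time reversal through the forward cone excursion process in one case, a hitting time for a simultaneous running infimum in the other). Ensuring that the resulting daughter fragments are jointly independent of $Z^*$ and of each other, coherently across all generations, requires careful compensation-formula bookkeeping rather than a naive iterative application of the Markov property at each individual jump time.
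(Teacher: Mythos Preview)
Your overall strategy---bootstrap from \cref{thm: law loc largest} and then verify that the offspring of $Z^*$ are conditionally independent copies---is correct and matches the paper's. Two points deserve comment.

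\textbf{A missing ingredient.} You never address why the cell system generated by $Z^*$ and its iterated offspring exhausts all of $\mathbf{Z}$. A priori there could be fragments not reachable by repeatedly following the locally largest branch. The paper dispatches this via a short topological argument: for fixed $t\in(0,\zeta)$ and $a<\varsigma^t$, the set
\[
  \mathcal{A}=\bigl\{b\in[0,a]:\ e_b^{(t)}\ \text{appears in the (enhanced) cell system driven by }Z^*\bigr\}
\]
is shown to be open (continue the locally largest exploration inside $e_b^{(t)}$ for a small time) and closed (near an increasing limit $b_\infty$, the excursions $e_b^{(t)}$ for $b$ close to $b_\infty$ are themselves following the locally largest evolution inside some $e_{b_n}^{(t)}$). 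Since $0\in\mathcal{A}$, connectedness gives $\mathcal{A}=[0,a]$. You should add this step.

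\textbf{On the independence of daughters.} Your plan to treat ``left splits'' and ``right splits'' by separate mechanisms (strong Markov under $\nlg$ for one, forward-cone disintegration for the other) is workable but, as you yourself note, makes the \emph{joint} independence awkward. The paper avoids this obstacle entirely. It writes
\[
  \mathds{1}_{\{a<\varsigma^*\}}\prod_{i=1}^n f_i(e_i^a) g_i(z_i^a)
  =
  \int_0^{\zeta}\mathds{1}_{\{e_a^{(t)}=e_a^{(t^*)}\}}\mathds{1}_{\{a<\varsigma^t\}}\prod_{i=1}^n f_i(e_i^{a,t}) g_i(z_i^{a,t})\,\frac{\mathrm{d}t}{\zeta(e_a^{(t)})},
\]
and then applies the Bismut description of $\overline{\nlg}$ (\cref{thm: Bismut}). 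Under Bismut, the sub-excursions cut out on either side of $T$ are precisely atoms of two \emph{independent} Poisson point processes: the forward cone excursions of $W'$ (intensity $\ndms$) and the backward cone excursions of $W$ (intensity $\nlg$). Conditional independence of the daughters given their displacements is then immediate from basic Poisson point process properties, with both split types handled at once. Running Bismut backwards gives the desired identity \eqref{eq: offspring conditional idp}. This is cleaner than the jump-by-jump approach you outline and removes the obstacle you anticipated.
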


\begin{proof}
  The heart of \cref{thm: GF process} has already been proved in \cref{sec: law loc largest}, where we constructed a process $Z^*$ as the driving process of $\mathbf{X}^{3/2}$.
  There are two claims that remain to be proved. Both are adapted from \cite{AD}, so we feel free to only sketch the arguments.

  First, we need to show that almost surely, every fragment in $\mathbf{Z}$ can be found in the lineage of $Z^*$. 
  Note that we can define the cell system driven by $Z^*$ as in \cref{sec: GF process}, where now each cell in the genealogy of $Z^*$ corresponds to a unique collection of decreasing intervals $\{(g_u(\varsigma^u-c),d_u(\varsigma^u-c)); c\in (b^u,\varsigma^u)\}$ for some $u$ and some $0\le b^u\le \varsigma^u$.  For each $b\ge 0$ we can consider the set of cells of this form with $b^u\le b\le \varsigma^u$, and denote by $(\overline{\mathbf{Z}}^*(b), b\ge 0)$ the \emph{enhanced} process that records 
  the sub-excursions $e^{(u)}_{b}$, as defined after equation \eqref{eq: loc largest integration}, corresponding to such $u$. 

From now on, we argue on almost every realisation $e$ under $P^z$, and fix $t\in (0,\zeta)$ and $0\le a < \varsigma^t$. 
  Let 
  \[
    \mathcal{A}
    :=
    \big\{b\in  [0,a], \; e^{(t)}_{b} \in \overline{\mathbf{Z}}^*(b) \big\},
  \]
  where $e^{(t)}_{b}$ was defined after equation \eqref{eq: loc largest integration}. On the one hand, we claim that $\mathcal{A}$ is \emph{open}. In fact, if $b\in\mathcal{A}$ with $b<a$, we can carry on the exploration for a bit by following the locally largest evolution inside the sub-excursion $e^{(t)}_{b}$. Since the locally largest excursions are always in $\overline{\mathbf{Z}}^*$, this proves that $\mathcal{A}$ is open. On the other hand, we claim that $\mathcal{A}$ is also \emph{closed}. Indeed, take an increasing sequence $(b_n, n\in \N)$ in $\mathcal{A}$ that converges to some $b_\infty$. Then by taking $n$ large enough, one can see that the excursions $e_b^{(t)}$, $b_n\le b<b_\infty$, are following the locally largest evolution inside $e_{b_n}^{(t)}$. This proves that $b_\infty\in\mathcal{A}$ and $\mathcal{A}$ is closed. Since $\mathcal{A}$ contains $0$, we get that $\mathcal{A} = [0,a]$ by connectedness. The previous argument holds almost surely for all $a$ and {all rational $t$, and so by \eqref{eq: GF cones}} this proves that every fragment in $\mathbf{Z}$ can be found in the lineage of $Z^*$. 

  Secondly, we need to prove that the children of $Z^*$ are conditionally independent and have the same distribution as $Z^*$ started from their respective sizes. Here we argue under $\nlg$ (the claim then follows from the usual disintegration argument). Fix $a>0$ and denote by $(e_i^a)_{i\ge 1}$ the sub-excursions created by the jumps of $Z^*$ before time $a$, ranked by descending order of the $1$--norm of their displacements $z_i^a := e_i^a(0) $. For $f_i$ and $g_i$, $i\ge 1$, non-negative measurable functions, and any $n\ge 1$, we prove the equality:
  \begin{equation} \label{eq: offspring conditional idp}
    \nlg\bigg( \mathds{1}_{\{a<\varsigma^*\}} \prod_{i=1}^n f_i(e_i^a) g_i(z_i^a)\bigg)  
    =
    \nlg\bigg( \mathds{1}_{\{a<\varsigma^*\}} \prod_{i=1}^n \Eb^{P^{z_i^a}}[f_i] g_i(z_i^a)\bigg). 
  \end{equation}
  This would prove the claim on the law of the children of $Z^*$, since the law of the process $\mathbf{Z}$ under $P^z$ depends only on $\lVert z\rVert_1$. 
  To prove the above equality, we first write that almost surely, 
  \[
    \mathds{1}_{\{a<\varsigma^*\}} \prod_{i=1}^n f_i(e_i^a) g_i(z_i^a)
    =
    \int_0^{\zeta(e)} \mathds{1}_{{\{e_a^{(t)}=e_a^{(t^*)}\}}} \mathds{1}_{\{a<\varsigma^{t}\}} \prod_{i=1}^n f_i(e_i^{a,t}) g_i(z_i^{a,t}) \frac{\mathrm{d}t}{\zeta(e^{(t)}_a)}, 
  \]
  where the $e_i^{a,t}$ and $z_i^{a,t}$ denote the excursions and displacements cut out in the exploration towards $t$ (ranked accordingly). %Recall also from \eqref{eq: def Ecal_a^t} the notation $\mathcal{E}_a^t$.
  Then the idea is to use Bismut's description of $\nlg$ (\cref{thm: Bismut}). From Bismut's description, we see that the excursions $e_i^{a,t}$ come from the backward or forward cone excursions of $W$ or $W'$ respectively. These are two Poisson point processes with respective intensity measures $\nlg$ and $\ndms$. Call these excursions $\varepsilon_i$, with displacements $z_i$ (again ranked accordingly). By basic properties of Poisson point processes, we obtain that conditioned on the $z_i$'s, these excursions are independent with respective laws $P^{z_i}$. 
  We feel free to skip the details to avoid cumbersome technical work. To summarise, we arrive at
  \begin{multline} \label{eq: Bismut offspring idp}
    \nlg\bigg( \mathds{1}_{\{a<\varsigma^*\}} \prod_{i=1}^n f_i(e_i^a) g_i(z_i^a)\bigg)  \\
    =
    \int_a^\infty \mathrm{d}A \Eb \bigg[ \frac{1}{\uptau(A-a)+\sfrak(A-a)}  \Eb_{Y(A-a)}\bigg[ \mathds{1}_{\{\forall b\in[0,a], \; S((a-b)^-)>\frac12 S(a-b) \}} \prod_{i=1}^n \Eb^{P^{z_i}}[f_i] g_i(z_i) \bigg]\bigg].
  \end{multline}
  Here it is important to note that the event $\{\forall b\in[0,a], \; S((a-b)^-)>\frac12 S(a-b) \}$ is measurable with respect to $S$, so that it factors out in the conditioning. Now one can start from the right-hand side of \eqref{eq: Bismut offspring idp} and apply again Bismut's description backwards. This yields our claim \eqref{eq: offspring conditional idp}.
\end{proof}

%-------------------------------------------------------------------%
%               Properties of GF
%-------------------------------------------------------------------%

\subsection{Convergence of the martingale towards the duration}
\label{sec: few prop of Zbf}
{We conclude by providing a proof of \cref{thm: martingale}, putting forward a distinguished martingale that appears for the process $\bf Z$, and establishing its convergence towards the duration of a cone excursion under $P^z$.} These results were already obtained for $\mathbf{X}^{3/2}$ in \cite{BBCK}, but we reprove them using the coupling with the Brownian cone excursion given by $\bf Z$. By analogy with \cref{sec: GF process}, we define the cell system $(\mathcal{Z}_u, u\in \cal U)$ driven by the locally largest fragment $Z^*$ of $\bf Z$. Introduce $\mathcal{G}_n := \sigma(\mathcal{Z}_u, |u|\le n-1)$, $n\ge 1$. {We stress that the definition of the cell system $(\mathcal{Z}_u, u\in \cal U)$ (and hence $\mathcal{G}_n$) depends on the choice of driving cell process. Here we only chose the locally largest evolution to fix ideas, but the same result would still hold for any other choice.}
\begin{Thm} \label{thm: Mcal convergence}
  Let $z\in \partial \R_+^2 \setminus \{0\}$. Under $P^z$, the process 
  \[
    \Mcal(n) :=
    \frac{1}{\sqrt{3}}\sum_{|u|=n} \mathcal{Z}_u(0)^2, 
    \quad n\ge 1,  
  \]
  is a $(\mathcal{G}_n)$--martingale. Furthermore, it is uniformly integrable and converges $P^z$--almost surely and in $L^1$ to the duration of the excursion.
\end{Thm}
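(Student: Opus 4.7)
The plan is to identify $\Mcal(n)$ with the Doob martingale $\Eb[\zeta \mid \mathcal{G}_n]$ associated to the (integrable) random variable $\zeta$. Granted this, the martingale property and uniform integrability are automatic, and a.s.\ and $L^1$ convergence of $\Mcal$ follow from Doob's theorem; the only remaining task is then to identify the $L^1$-limit with $\zeta$ itself.

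To establish the identity $\Mcal(n) = \Eb[\zeta \mid \mathcal{G}_n]$, my first step is the decomposition
\[
\zeta = \sum_{|u|=n} \zeta_u, \qquad \zeta_u := \zeta(e_u), \quad n\ge 0,
\]
where $e_u$ is the subexcursion of $e$ associated to cell $u\in\mathcal{U}$. Indeed, at any generation $n$, the excursion $e$ decomposes as the disjoint union of the branch trajectories $\mathcal{Z}_v$ for $|v|<n$ and the subexcursions $e_u$ for $|u|=n$, and each branch trajectory is parametrised by local time on forward cone-free times of some subpath, so by the regenerative structure of \cref{sec: forward cones} its image in $[0,\zeta]$ has Lebesgue measure zero. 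Combining the above decomposition with \cref{prop: joint law displacement/duration}, which gives $\Eb^{P^z}[\zeta] = \lVert z\rVert_1^2/\sqrt{3}$, and with the branching independence of the cell system, yields $\Eb[\zeta_u \mid \mathcal{G}_n] = \mathcal{Z}_u(0)^2/\sqrt{3}$ for each $|u|=n$. Summing over $|u|=n$ gives $\Mcal(n) = \Eb[\zeta \mid \mathcal{G}_n]$.

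The hardest part, which I expect to be the main obstacle, is to show that the $L^1$-limit $\Mcal_\infty = \Eb[\zeta \mid \mathcal{G}_\infty]$ actually equals $\zeta$ almost surely, or equivalently that $\zeta$ is $\mathcal{G}_\infty$-measurable. Here I would upgrade $L^1$ to $L^p$-convergence for some $p\in(1,3/2)$. By \cref{prop: intro law duration}, the $t^{-5/2}$ tail of the density of $\zeta$ ensures $\Eb^{P^z}[\zeta^p] < \infty$ precisely on $p<3/2$, and in particular excludes the simpler $L^2$-variance argument. Using the conditional independence of $(\zeta_u)_{|u|=n}$ given $\mathcal{G}_n$, the scaling $\zeta_u \stackrel{d}{=} \mathcal{Z}_u(0)^2\, \zeta^{(1)}_u$ with $\zeta^{(1)}_u$ distributed as $\zeta$ under a unit-norm $P$-measure, and the von Bahr--Esseen inequality for sums of centered independent random variables in $L^p$ with $p\in(1,2)$, one obtains a bound of the form
\[
\Eb\big[|\zeta - \Mcal(n)|^p\big] \le C_p\, \lVert z \rVert_1^{2p} \, r(2p)^n,
\]
where $r(q)$ denotes the expected sum of $|\Delta Z^*|^q$ over the negative jumps of $Z^*$ started from $1$, with $Z^*$ the driving cell process of \cref{thm: law loc largest}. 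The martingale property of $\Mcal$ forces $r(2)=1$; a direct compensation-formula computation using the explicit Laplace exponent $\Psi^*$ of \eqref{eq: Laplace lem LGR} then shows that $r$ is strictly decreasing past $q=2$, so that $r(2p)<1$ for $p>1$ sufficiently close to $1$. This yields $\Mcal(n)\to\zeta$ in $L^p$, hence in $L^1$, identifying the limit and completing the proof.
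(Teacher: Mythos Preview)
Your identification $\Mcal(n)=\Eb[\zeta\mid\mathcal{G}_n]$ via the decomposition $\zeta=\sum_{|u|=n}\zeta(e_u)$, the conditional independence of the $e_u$, and the moment formula $\Eb^{P^z}[\zeta]=\lVert z\rVert_1^2/\sqrt{3}$ from \cref{prop: joint law displacement/duration}, is exactly the paper's argument. The paper then finishes in one line: it asserts that $\zeta$ is $\mathcal{G}_\infty$--measurable, so Lévy's theorem gives $\Mcal(n)\to\Eb[\zeta\mid\mathcal{G}_\infty]=\zeta$.

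Where you diverge is that you flag this measurability as the ``hardest part'' and prove $\Mcal(n)\to\zeta$ directly by an $L^p$ estimate. Your route is sound: conditionally on $\mathcal{G}_n$, the summands $\zeta(e_u)-\mathcal{Z}_u(0)^2/\sqrt 3$ are independent, centred, and in $L^p$ for $p<3/2$; von Bahr--Esseen and scaling give the bound you state. The key numerical input $r(2p)<1$ for $p\in(1,3/2)$ does hold here, because the cumulant function $\kappa$ of $X^{3/2}$ has its two roots at $\omega_-=2$ and $\omega_+=3$ (one can check directly from \eqref{eq: Laplace lem LGR} that $\kappa(2)=\kappa(3)=0$), so $\kappa<0$ on $(2,3)$ and hence $r(q)=1+\kappa(q)/(-\Psi^*(q))<1$ there. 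Thus your geometric decay goes through. Your phrasing ``strictly decreasing past $q=2$'' is a bit loose, but what you actually need, $r(2p)<1$ on $(1,3/2)$, is correct.

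In short: the paper's proof is shorter because it takes the $\mathcal{G}_\infty$--measurability of $\zeta$ for granted, while your $L^p$ argument is a legitimate and self-contained alternative that bypasses that point entirely. The paper does not justify the measurability claim in the text, so your more careful treatment is not wasted effort.
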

We stress that this limiting law is explicit, as determined in \cref{prop: joint law displacement/duration}. In particular, the constant $\sqrt{3}$ above comes from the fact that for $z\in\R_+^2 \setminus \{0\}$, $\Eb^{P^z}[\zeta] = \lVert z \rVert_1^2/\sqrt{3}$, as can be seen from \cref{prop: joint law displacement/duration} by simple calculations. Since the duration of the excursion under $P^1$ describes the area of a unit-boundary quantum disc, we can also rephrase the above statement as the convergence of $\Mcal$ towards the area of a unit-boundary quantum disc. The proof is inspired by \cite[Proposition 6.19]{DS}.
\begin{proof}
  By scaling (and symmetry between the axes), we may restrict to the case when $z=1$.
  The key observation is to check that, for all $n\ge 1$, 
  \begin{equation} \label{eq: Mcal Lévy}
    \Mcal(n) = \Eb^{P^1}[\zeta \, | \, \mathcal{G}_n], 
    \quad \text{$P^1$--almost surely}.
  \end{equation}
  Indeed, assuming \eqref{eq: Mcal Lévy} holds, Lévy's theorem implies that $\Mcal$ converges a.s.\ and in $L^1$ to $\Eb^{P^1}[\zeta \, | \, \mathcal{G}_\infty]$, where $\mathcal{G}_\infty := \bigcup_{n\ge 0} \mathcal{G}_n$. Since $\zeta$ is $\mathcal{G}_{\infty}$--measurable, \cref{thm: Mcal convergence} follows.

  It remains to prove \eqref{eq: Mcal Lévy}. We only prove it for $n=1$ since the general case then follows by the branching property of $(\mathcal{Z}_u, u\in \cal U)$. To do so, we split the whole $\zeta$ as a sum of durations of all the excursions at generation $1$. More precisely, we let $(e_i, i\ge 1)$ denote the excursions created by the jumps of $Z^*$, ranked by descending order of $\lVert e_i(0) \rVert_1 = \mathcal{Z}_i(0)$. Since the set of times $s\in(0,\zeta)$ not straddled by any of these $e_i$'s is Lebesgue--negligible, we can write $\zeta = \sum_{i\ge 1} \zeta(e_i)$. Taking the conditional expectation with respect to $\mathcal{G}_1$, we get by the conditional independence of the excursions $e_i$ (see equation \eqref{eq: offspring conditional idp}),
  \[
    \Eb^{P^1}[\zeta \, | \, \mathcal{G}_1] = \sum_{i\ge 1} \Eb^{P^{e_i(0)}}[\zeta].
  \]
  From \cref{prop: joint law displacement/duration}, a back-of-the-envelope calculation shows that $\Eb^{P^z}[\zeta]=\frac{\lVert z \rVert_1^2}{\sqrt{3}}$ for all $z\in\R_+^2 \setminus \{0\}$, and so we end up with 
  \[
    \Eb^{P^1}[\zeta \, | \, \mathcal{G}_1] = \frac{1}{\sqrt{3}}\sum_{i\ge 1} \mathcal{Z}_i(0)^2,
  \]
  which is \eqref{eq: Mcal Lévy} for $n=1$.
\end{proof}

%-------------------------------------------------------------------%
%               BIBLIOGRAPHY
%-------------------------------------------------------------------%

\bibliography{biblio}
\bibliographystyle{alpha}

\end{document}